\apptocmd{\thebibliography}{\raggedright}{}{}
\DeclareFontFamily{U}{matha}{}
\DeclareFontShape{U}{matha}{m}{n}{
  <-5.5>    matha5
  <5.5-6.5> matha6 
  <6.5-7.5> matha7
  <7.5-8.5> matha8
  <8.5-9.5> matha9
  <9.5-11>  matha10
  <11->     matha12
}{}
\DeclareSymbolFont{matha}{U}{matha}{m}{n}
\DeclareFontFamily{U}{mathx}{\hyphenchar\font45}
\DeclareFontShape{U}{mathx}{m}{n}{<-> mathx10}{}
\DeclareSymbolFont{mathx}{U}{mathx}{m}{n}
\DeclareMathDelimiter{\ldbrack}{4}{matha}{"76}{mathx}{"30}
\DeclareMathDelimiter{\rdbrack}{5}{matha}{"77}{mathx}{"38}
\numberwithin{equation}{section}
\theoremstyle{plain}
\newtheorem{theorem}{Theorem}[section]
\newtheorem{maintheorem}{Theorem}
\newtheorem{proposition}[theorem]{Proposition}
\newtheorem{lemma}[theorem]{Lemma}
\newtheorem{principle}[theorem]{Principle}
\newtheorem{conjecture}[theorem]{Conjecture}
\newtheorem*{claim}{Claim}
\newtheorem{casea}{Case}
\newtheorem{steps}{Step}
\newcommand{\bX}{\mathbb{X}}
\newcommand{\bY}{\mathbb{Y}}
\DeclareMathOperator{\Ab}{Ab}
\newcommand{\m}{\to}
\providecommand{\Byk}{Bykovski\u{\i}}
\newcommand{\F}{\mathbb{F}}
\providecommand{\bT}{\mathbb T}
\DeclareMathOperator{\coker}{coker}
\providecommand{\TD}{\cT^{\pm}}
\providecommand{\bTD}{\bT^{\pm}}
\theoremstyle{definition}
\newtheorem{definition}[theorem]{Definition}
\newtheorem{notation}[theorem]{Notation}
\theoremstyle{remark}
\newtheorem{remark}[theorem]{Remark}
\newtheorem{example}[theorem]{Example}
\DeclareMathOperator{\GL}{GL}
\DeclareMathOperator{\SL}{SL}
\newcommand\C{\ensuremath{\mathbb{C}}}
\newcommand\Z{\ensuremath{\mathbb{Z}}}
\newcommand\Q{\ensuremath{\mathbb{Q}}}
\newcommand\Field{\ensuremath{\mathbb{F}}}
\DeclareMathOperator{\HH}{H}
\DeclareMathOperator{\CC}{C}
\DeclareMathOperator{\EE}{E}
\newcommand\RC{\ensuremath{\widetilde{\CC}}}
\newcommand\RH{\ensuremath{\widetilde{\HH}}}
\DeclareMathOperator{\link}{Link}
\newcommand\hlink{\ensuremath{\widehat{\link}}}
\DeclareMathOperator{\rk}{rk}
\newcommand\Span[1]{\ensuremath{\langle #1 \rangle}}
\newcommand\Set[2]{\ensuremath{\left\{\text{#1 $|$ #2}\right\}}}
\newcommand\bbH{\ensuremath{\mathbb{H}}}
\newcommand\cT{\ensuremath{\mathcal{T}}}
\newcommand\obbH{\ensuremath{\overline{\bbH}}}
\newcommand\oV{\ensuremath{\overline{V}}}
\newcommand\oW{\ensuremath{\overline{W}}}
\newcommand\osigma{\ensuremath{\overline{\sigma}}}
\newcommand\uR{\ensuremath{\underline{R}}}
\newcommand\fD{\ensuremath{\mathfrak{D}}}
\newcommand\Figure[4]{
\begin{figure}[t]
\centering
\centerline{\psfig{file=#2,scale=#4}}
\caption{#3}
\label{#1}
\end{figure}}
\DeclareMathOperator{\Gr}{Gr}
\DeclareMathOperator{\Or}{Or}
\newcommand{\Poset}{\ensuremath{\mathcal{P}}}
\DeclareMathOperator{\St}{St}
\DeclareMathOperator{\LetterB}{B}
\DeclareMathOperator{\LetterBA}{BA}
\DeclareMathOperator{\LetterBD}{BD}
\DeclareMathOperator{\LetterBDA}{BDA}
\DeclareMathOperator{\LetterBAO}{BAO}
\DeclareMathOperator{\height}{ht}
\newcommand{\BT}{\ensuremath{\LetterB^{\times}}}
\newcommand{\B}{\ensuremath{\LetterB^{\pm}}}
\newcommand{\BA}{\ensuremath{\LetterBA^{\pm}}}
\newcommand{\BTA}{\ensuremath{\LetterBA^{\times}}}
\newcommand{\BD}{\ensuremath{\LetterBD^{\pm}}}
\newcommand{\BDA}{\ensuremath{\LetterBDA^{\pm}}}
\newcommand{\BAO}{\ensuremath{\LetterBAO^{\pm}}}
\newcommand\Disc[1]{\ensuremath{\ldbrack #1 \rdbrack}}
\newcommand\Sphere[1]{\ensuremath{\overline{\ldbrack #1 \rdbrack}}}
\newcommand{\p}[1]{{\bf #1.}}
\title{\vspace{-40pt}On the top dimensional cohomology groups of congruence subgroups of $\SL_n(\Z)$\vspace{-15pt}}
\author{Jeremy Miller\thanks{Supported in part by NSF grant DMS-1709726} \and Peter Patzt \and Andrew Putman\thanks{Supported in part by NSF grant DMS-1811322}}
\date{}
\begin{document}

\vspace{-10pt}
\maketitle

\vspace{-18pt}
\begin{abstract}
\noindent
Let $\Gamma_n(p)$ be the level-$p$ principal congruence subgroup of $\SL_n(\Z)$.  Borel--Serre proved that
the cohomology of $\Gamma_n(p)$ vanishes above degree $\binom{n}{2}$.  We study the cohomology
in this top degree $\binom{n}{2}$.  Let $\cT_n(\Q)$ denote the Tits building of $\SL_n(\Q)$.
Lee--Szczarba conjectured that $\HH^{\binom{n}{2}}(\Gamma_n(p))$ is isomorphic to
$\RH_{n-2}(\cT_n(\Q)/\Gamma_n(p))$ and proved that this holds for $p=3$.  We partially prove
and partially disprove this conjecture by showing that a natural map 
$\HH^{\binom{n}{2}}(\Gamma_n(p)) \rightarrow \RH_{n-2}(\cT_n(\Q)/\Gamma_n(p))$
is always surjective, but is only injective for $p \leq 5$.  In particular, we completely
calculate $\HH^{\binom{n}{2}}(\Gamma_n(5))$ and improve known lower bounds for
the ranks of $\HH^{\binom{n}{2}}(\Gamma_n(p))$ for $p \geq 5$.
\end{abstract}

\setlength{\parskip}{0pt}
\tableofcontents
\setlength{\parskip}{\baselineskip}

\section{Introduction}
\label{section:introduction}

The cohomology of arithmetic groups plays a fundamental role in algebraic $K$-theory and number theory.
The most basic examples of arithmetic groups are $\SL_n(\Z)$ and its finite-index subgroups.  For
$n \geq 3$, the congruence subgroup property \cite{BassLazardSerre, Mennicke} says that every finite-index subgroup of $\SL_n(\Z)$
contains a {\em principal congruence subgroup}, i.e.\ the kernel $\Gamma_n(\ell)$ of the map
$\SL_n(\Z) \rightarrow \SL_n(\Z/\ell)$ that reduces coefficients modulo $\ell$.  In this paper,
we study the high-dimensional cohomology of $\Gamma_n(p)$ for a prime $p$.

\p{Stable and unstable cohomology}
Borel \cite{BorelStability} calculated $\HH^i(\Gamma_n(p);\Q)$ when $n \gg i$; the resulting cohomology
groups are known as the {\em stable cohomology}.  Borel--Serre \cite{BorelSerreCorners} later showed
that the rational cohomological dimension of $\Gamma_n(p)$ is $\binom{n}{2}$, so
$\HH^i(\Gamma_n(p);\Q) = 0$ for $i > \binom{n}{2}$.  This even holds integrally if $\Gamma_n(p)$ is torsion-free,
i.e.\ if $p \geq 3$.  The ``most unstable'' cohomology group of $\Gamma_n(p)$ is thus in degree
$\binom{n}{2}$.  Our main theorem calculates this when $p \leq 5$ and greatly strengthens the known
lower bounds on it when $p>5$, partially proving and partially disproving a conjecture of
Lee--Szczarba \cite{LeeSzczarba}.

\p{Duality}
Stating this conjecture requires some preliminaries.
Borel--Serre \cite{BorelSerreCorners} proved that $\Gamma_n(p)$ is a rational duality
group of dimension $\binom{n}{2}$, which implies that
\[\HH^{\binom{n}{2}-i}(\Gamma_n(p);\Q) \cong \HH_i(\Gamma_n(p);\fD \otimes \Q) \quad \text{for all $i$}\]
for a $\Gamma_n(p)$-module $\fD$ called the {\em dualizing module}.  This holds integrally
if $p \geq 3$.  In particular,
\[\HH^{\binom{n}{2}}(\Gamma_n(p);\Q) \cong \HH_0(\Gamma_n(p);\fD \otimes \Q) \cong (\fD \otimes \Q)_{\Gamma_n(p)},\]
where the subscript indicates that we are taking coinvariants.

\p{Steinberg modules}
The dualizing module $\fD$ has the following beautiful description.  For a field $\Field$, 
let $\cT_n(\Field)$ be the {\em Tits building} for $\SL_n(\Field)$, that is, the simplicial
complex whose $k$-simplices are flags
\[0 \subsetneq V_0 \subsetneq \cdots \subsetneq V_k \subsetneq \Field^n.\]
This is an $(n-2)$-dimensional simplicial complex, and the Solomon--Tits theorem \cite{Solomon, BrownBuildings} says
that $\cT_n(\Field)$ is homotopy equivalent to a wedge of spheres of dimension $(n-2)$.  The
{\em Steinberg module} for $\SL_n(\Field)$, denoted $\St_n(\Field)$, is $\RH_{n-2}(\cT_n(\Field))$.  
The action of $\SL_n(\Field)$ on $\cT_n(\Field)$ descends to an action on $\St_n(\Field)$.  
For $\Field = \Q$, the group $\Gamma_n(p)$ acts on $\St_n(\Q)$ via the inclusion
$\Gamma_n(p) \hookrightarrow \SL_n(\Z) \hookrightarrow \SL_n(\Q)$, and
Borel--Serre proved that the dualizing module $\fD$ for $\Gamma_n(p)$ is $\St_n(\Q)$.

\p{A first source of cohomology}
The cohomology groups we are interested in are thus isomorphic to $(\St_n(\Q) \otimes \Q)_{\Gamma_n(p)}$, with the $\otimes \Q$ unnecessary if $p \geq 3$.
One simple way to get elements of this is as follows.  There is a bijection between
subspaces of $\Q^n$ and direct summands of $\Z^n$ that takes $V \subset \Q^n$
to $V \cap \Z^n$.  The direct summand $V \cap \Z^n$ can be reduced modulo $p$, giving
a subspace of $\Field_p^n$.  This construction gives a map 
$\cT_n(\Q) \rightarrow \cT_n(\Field_p)$, and passing to homology yields a map
\[\St_n(\Q) = \RH_{n-2}(\cT_n(\Q)) \rightarrow \RH_{n-2}(\cT_n(\Field_p)) = \St_n(\Field_p).\]
It is not hard to see that this map is a surjection.  Since it is $\Gamma_n(p)$-invariant,
it factors through a surjection
\[\HH^{\binom{n}{2}}(\Gamma_n(p)) \cong (\St_n(\Q))_{\Gamma_n(p)} \twoheadrightarrow \St_n(\Field_p).\]
Lee--Szczarba \cite{LeeSzczarba} proved that this map is an isomorphism for $p=3$.  Using
their techniques, it is not hard to see that it is also an isomorphism for $p=2$ (after 
tensoring with $\Q$).

\p{Larger primes}
It is tempting to think that this might hold for all $p$, but unfortunately 
this is false.  Indeed, the Solomon--Tits theorem \cite{Solomon, BrownBuildings} also
says that $\St_n(\Field_p)$ is a free $\Z$-module of rank
\begin{equation}
\label{eqn:steinberg}
p^{\binom{n}{2}},
\end{equation}
but a theorem of Paraschivescu \cite{Para} says that the rank of $\HH^{\binom{n}{2}}(\Gamma_n(p))$ is at least
\begin{equation}
\label{eqn:para}
\left(\frac{p-1}{2}\right)^{n-1} p^{\binom{n}{2}}
\end{equation}
for primes $p \geq 3$.  The equation
\eqref{eqn:para} is greater than \eqref{eqn:steinberg} for primes $p \geq 5$.

\p{A source of additional cohomology}
The quotient map $\cT_n(\Q) \rightarrow \cT_n(\Q) / \Gamma_n(p)$
induces a $\Gamma_n(p)$-invariant map
\begin{equation}
\label{eqn:stnquotient}
\St_n(\Q) = \RH_{n-2}(\cT_n(\Q)) \longrightarrow \RH_{n-2}(\cT_n(\Q) / \Gamma_n(p)).
\end{equation}
It will follow from our results (see below) that this map is surjective, so the coinvariants
$(\St_n(\Q))_{\Gamma_n(p)}$ are at least as large as $\RH_{n-2}(\cT_n(\Q) / \Gamma_n(p))$.  
For $p \leq 3$, it turns out that $\cT_n(\Q) / \Gamma_n(p) \cong \cT_n(\Field_p)$, 
so the map \eqref{eqn:stnquotient} is really the map to $\St_n(\Field_p)$ we
discussed above.  However, for $p \geq 5$ the building
$\cT_n(\Field_p)$ is a proper quotient of $\cT_n(\Q) / \Gamma_n(p)$ and the map
\eqref{eqn:stnquotient} detects more of $\HH^{\binom{n}{2}}(\Gamma_n(p))$ than just an
$\St_n(\Field_p)$-factor.  See Proposition \ref{proposition:tquotient} and 
Remark \ref{remark:orientationirrelevant} for more details about all of this.

\begin{remark}
We will show that \eqref{eqn:stnquotient} also detects more cohomology than
Paraschivescu's bound \eqref{eqn:para}.
\end{remark} 

\p{Lee--Szczarba conjecture and our main theorem}
In \cite[remark on p.\ 28]{LeeSzczarba}, Lee--Szczarba conjectured that 
\eqref{eqn:stnquotient} detects all of $\HH^{\binom{n}{2}}(\Gamma_n(p))$:

\begin{conjecture}[Lee--Szczarba]
\label{conjecture:leeszczarba}
For a prime $p$ and $n \geq 2$, the map
\[(\St_n(\Q))_{\Gamma_n(p)} \longrightarrow \RH_{n-2}(\cT_n(\Q) / \Gamma_n(p))\]
induced by \eqref{eqn:stnquotient} is an isomorphism.  
\end{conjecture}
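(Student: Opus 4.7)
The plan is to analyze the map chain-theoretically, exploiting that $\cT_n(\Q)$ is $(n-2)$-dimensional. Since $\cT_n(\Q)$ has no simplices above degree $n-2$, I can identify $\St_n(\Q)$ with the top cycles $Z_{n-2}(C_*(\cT_n(\Q)))$, and similarly $\RH_{n-2}(\cT_n(\Q)/\Gamma_n(p))$ with $Z_{n-2}$ of the quotient chain complex. My first observation is that $\Gamma_n(p)$ acts on $\cT_n(\Q)$ without inversions: the stabilizer of a flag simplex in $\SL_n(\Z)$ is block upper-triangular, so it fixes each subspace of the flag rather than permuting them. Consequently $C_*(\cT_n(\Q))_{\Gamma_n(p)} \cong C_*(\cT_n(\Q)/\Gamma_n(p))$ as chain complexes, and the conjectured map is identified with the natural comparison
\[
Z_{n-2}\bigl(C_*(\cT_n(\Q))\bigr)_{\Gamma_n(p)} \longrightarrow Z_{n-2}\bigl(C_*(\cT_n(\Q))_{\Gamma_n(p)}\bigr)
\]
sending the coinvariant class of an upstairs cycle to the same class viewed inside the coinvariant chain complex.

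For surjectivity, I would lift cycles. Given a cycle $c'$ downstairs and any chain lift $c \in C_{n-2}(\cT_n(\Q))$, the boundary $\partial c$ lies in $I \cdot C_{n-3}(\cT_n(\Q))$, where $I$ is the augmentation ideal of $\Z[\Gamma_n(p)]$. The aim is to produce $c'' \in I \cdot C_{n-2}$ with $\partial c = \partial c''$, so that $c-c''$ is an upstairs cycle lifting $c'$. Solomon--Tits acyclicity gives $B_k = Z_k$ for $k < n-2$, and iterating the long exact sequence in $\Gamma_n(p)$-homology attached to $0 \to B_k \to C_k \to B_{k-1} \to 0$ should reduce the obstruction to a term that vanishes either for dimensional reasons or from Cohen--Macaulay-type arguments about $\cT_n(\Q)$.

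For injectivity, the kernel of the map is the image of the connecting homomorphism
\[
H_1\bigl(\Gamma_n(p); B_{n-3}(\cT_n(\Q))\bigr) \longrightarrow \bigl(\St_n(\Q)\bigr)_{\Gamma_n(p)}
\]
coming from $0 \to \St_n(\Q) \to C_{n-2}(\cT_n(\Q)) \to B_{n-3}(\cT_n(\Q)) \to 0$. I would attack this via the isotropy spectral sequence for $\Gamma_n(p)$ acting on the subcomplex of $\cT_n(\Q)$ that filters $B_{n-3}$, reducing matters to the stabilizer homology $H_*(\Gamma_\sigma; \Z_\sigma)$ of flag simplices $\sigma$, twisted by orientation characters. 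Matching the resulting upper bound against Paraschivescu's lower bound \eqref{eqn:para} would force the connecting map to vanish whenever the two bounds coincide.

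The hardest step will be the stabilizer-homology analysis. Each $\Gamma_\sigma$ is a congruence-level parabolic subgroup of $\SL_n(\Z)$, and its orientation-twisted first homology depends on $p$ in an irregular way. Pinpointing the primes for which the connecting map actually vanishes --- and, if necessary, constructing explicit kernel elements otherwise --- will likely form the technical heart of the argument, and I suspect it is precisely this analysis that distinguishes the primes $p \leq 5$, where the conjecture should survive, from $p \geq 7$, where one expects the orientation characters to contribute genuine new classes.
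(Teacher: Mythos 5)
The statement you are trying to prove is a \emph{conjecture} that the paper partially disproves. Theorem \ref{theorem:main} shows that the map \eqref{eqn:stnquotient} is always a surjection, but is an injection \emph{only} for $p \leq 5$; for $p \geq 7$ the kernel is genuinely nontrivial, containing explicit classes coming from $\HH_1(\BDA_2(\Field_p))$, which is free of rank $\frac{(p+2)(p-3)(p-5)}{12}$. Your closing paragraph hints at this possibility --- that ``orientation characters'' might contribute new classes for $p \geq 7$ --- but the proposal is framed as a proof of the conjecture, and no proof can exist because the statement is false. The correct cutoff ($p \leq 5$) is a strong constraint, and identifying it requires more than the stabilizer-homology heuristic you sketch.

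Even restricted to the cases where the conjecture does hold, your approach diverges from the paper's and has a concrete dead end. The paper does not study the isotropy spectral sequence for the $\Gamma_n(p)$-action on $\cT_n(\Q)$, nor any stabilizer homology of congruence parabolics. Instead it (a) uses the Church--Putman identification of $\St_n(\Q)$ with $\HH_{n-1}(\BDA_n(\Z),\BDA_n(\Z)')$ via the complex of augmented determinant-$1$ partial $\pm$-bases; (b) takes $\Gamma_n(p)$-coinvariants to obtain $(\St_n(\Q))_{\Gamma_n(p)} \cong \HH_{n-1}(\BDA_n(\Field_p),\BDA_n(\Field_p)')$; (c) identifies the quotient $\cT_n(\Q)/\Gamma_n(p)$ with the $\pm$-oriented building $\TD_n(\Field_p)$; and (d) runs the map-of-posets spectral sequence for the span map $\Psi\colon \Poset(\BDA_n(\Field_p)') \to \bTD_n(\Field_p)$. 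The entire argument hinges on the connectivity of $\BDA_n(\Field_p)$: it is $(n-2)$-connected for all $p$ (giving surjectivity), but proving it is $(n-1)$-connected when $p = 5$ --- the real technical heart of the paper, occupying all of \S\ref{section:smallprimes} --- is what gives injectivity. When $p > 5$, the failure of that connectivity is exactly what produces the kernel.

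Finally, your proposed strategy for injectivity --- ``Matching the resulting upper bound against Paraschivescu's lower bound \eqref{eqn:para} would force the connecting map to vanish whenever the two bounds coincide'' --- cannot work even for $p = 5$. The paper proves (see the paragraph ``Relation with Paraschivescu's bound'') that $t_n > t'_n$ strictly for every $p \geq 5$ and $n \geq 2$, where $t'_n$ is Paraschivescu's bound and $t_n$ is the rank of $\RH_{n-2}(\cT_n(\Q)/\Gamma_n(p))$. For $p = 5$ the conjecture is true and the exact rank equals $t_n$, so Paraschivescu's bound is \emph{strictly} smaller than the true answer and the two bounds never coincide. Your bound-matching criterion would therefore fail to detect injectivity in precisely the hardest case where it holds. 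You need an independent connectivity argument, not a counting argument.
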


As we said above, they proved this for $p=3$, and it is not hard to use their techniques to also
prove it for $p=2$ (though they did not do this).  However, Ash \cite{AshAnnouncement} proved
that Conjecture \ref{conjecture:leeszczarba} fails for $n=3$ and $p \geq 7$.  The proofs
of the results in \cite{AshAnnouncement} were never published, but they follow easily from
the results in \cite{LeeSchwermer}.  The methods of these papers were specific to $n=3$, and
it seems hard to extend them to higher $n$.

Our main theorem completely characterizes when Conjecture \ref{conjecture:leeszczarba} holds:

\begin{maintheorem}
\label{theorem:main}
For a prime $p$ and $n \geq 2$, the map
\begin{equation}
\label{eqn:weprove}
(\St_n(\Q))_{\Gamma_n(p)} \longrightarrow \RH_{n-2}(\cT_n(\Q) / \Gamma_n(p))
\end{equation}
induced by \eqref{eqn:stnquotient} is a surjection.  However, it is an injection if and only if $p \leq 5$.
\end{maintheorem}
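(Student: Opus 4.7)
By Proposition~\ref{proposition:tquotient} and Remark~\ref{remark:orientationirrelevant} the natural map of cellular chain complexes $C_{*}(\cT_n(\Q))_{\Gamma_n(p)} \to C_{*}(\cT_n(\Q)/\Gamma_n(p))$ is an isomorphism, so the problem reduces to comparing the homology of coinvariants with the coinvariants of homology in degree $n-2$ (where, by Solomon--Tits, all of the reduced homology of $\cT_n(\Q)$ lives).  Writing $I_{*} \subset C_{*}(\cT_n(\Q))$ for the subcomplex generated by elements $(g-1)\sigma$ with $g \in \Gamma_n(p)$, the long exact sequence associated to $0 \to I_{*} \to C_{*}(\cT_n(\Q)) \to C_{*}(\cT_n(\Q)/\Gamma_n(p)) \to 0$ reduces the surjectivity claim to the single vanishing statement $H_{n-3}(I_{*})=0$.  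My plan is to establish this vanishing by a direct lift-and-correct argument: given an $(n-3)$-cycle $c \in I_{n-3}$, acyclicity of $C_{*}(\cT_n(\Q))$ in degree $n-3$ (Solomon--Tits) produces $w \in C_{n-2}(\cT_n(\Q))$ with $\partial w = c$, and then one corrects $w$ to land in $I_{n-2}$ by an induction on the $\Gamma_n(p)$-orbit depth of the $(n-2)$-simplices in its support, using Ash--Rudolph's generation of $\St_n(\Q)$ by integral apartment classes to absorb the ambiguity.

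\textbf{Injectivity for $p \le 5$ and failure for $p \ge 7$.}
For $p \le 3$ one invokes Lee--Szczarba directly: $\cT_n(\Q)/\Gamma_n(p) = \cT_n(\F_p)$ and the map becomes the known isomorphism $(\St_n(\Q))_{\Gamma_n(p)} \cong \St_n(\F_p)$ (the case $p=2$ is a minor adaptation of their $p=3$ argument).  For $p = 5$ and for $p \ge 7$, my plan is a rank comparison.  The $\Gamma_n(p)$-orbits of $k$-simplices of $\cT_n(\Q)$ are parametrized by double cosets built from $\SL_n(\F_p)$ and its parabolics, so one can read off the chain groups of $\cT_n(\Q)/\Gamma_n(p)$ and a combinatorial formula for the boundary maps, giving an explicit computation of the rank of $\tilde H_{n-2}(\cT_n(\Q)/\Gamma_n(p))$ as a function of $p$ and $n$.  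Comparing with Paraschivescu's lower bound $((p-1)/2)^{n-1} p^{\binom{n}{2}}$ on the rank of $(\St_n(\Q))_{\Gamma_n(p)}$: at $p = 5$ the two numbers should coincide, which combined with surjectivity forces an isomorphism; for $p \ge 7$, Paraschivescu's bound grows strictly faster in $p$ than the rank of the quotient (the factor $((p-1)/2)^{n-1}$ dominates), so the map must have a nontrivial kernel.

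\textbf{Main obstacle.}
The hardest step is the rank computation of $\tilde H_{n-2}(\cT_n(\Q)/\Gamma_n(p))$.  At $p=5$ one is exactly at the threshold of Paraschivescu's inequality, with no slack available; this demands a precise uniform-in-$n$ combinatorial description of how the $\Delta$-complex $\cT_n(\Q)/\Gamma_n(p)$ differs from $\cT_n(\F_p)$, in particular identifying precisely which adjacent faces of a lifted maximal flag are glued in the quotient and computing the resulting modified boundary maps.  A secondary but still substantial difficulty is controlling $I_{*}$ in the surjectivity step, since the stabilizers of flags in $\Gamma_n(p)$ are infinite and the orbit induction must be carried out without introducing spurious higher homology from those stabilizers.
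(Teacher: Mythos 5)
Your rank-comparison strategy for the injectivity statement is fundamentally flawed, both logically and numerically. Logically: Paraschivescu's bound $t'_n = \left(\frac{p-1}{2}\right)^{n-1} p^{\binom{n}{2}}$ is a \emph{lower} bound on $\rk(\St_n(\Q))_{\Gamma_n(p)}$, and surjectivity already gives a lower bound of $t_n = \rk \RH_{n-2}(\cT_n(\Q)/\Gamma_n(p))$. Having a second lower bound, even one that coincides with $t_n$, cannot force injectivity; for that you would need an \emph{upper} bound matching $t_n$, which Paraschivescu's theorem does not provide. Numerically: your claims about the relative sizes are backwards. One can show from the recursion for $t_n$ (Theorem~\ref{theorem:recursive}) that $t_n > t'_n$ for all $n \geq 2$ and $p \geq 5$, i.e.\ Paraschivescu's bound is \emph{strictly smaller} than the rank of the target. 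This means at $p = 5$ the numbers do \emph{not} coincide, and for $p \geq 7$ Paraschivescu's bound does not exceed $t_n$ and hence cannot detect a kernel. The numbers you quote in the introduction compare Paraschivescu's bound to $\rk \St_n(\Field_p) = p^{\binom{n}{2}}$, the rank of homology of the \emph{building over $\Field_p$}, not to $t_n$, the rank of homology of the \emph{quotient} $\cT_n(\Q)/\Gamma_n(p) \cong \TD_n(\Field_p)$; these are very different, and the quotient is the larger one.

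The surjectivity plan has the right short exact sequence (and the identification $C_*(\cT_n(\Q))_{\Gamma_n(p)} \cong C_*(\cT_n(\Q)/\Gamma_n(p))$ is fine, since flag stabilizers fix each simplex pointwise), but the lift-and-correct argument is unsubstantiated. You need to show that a filling chain $w \in C_{n-2}(\cT_n(\Q))$ of a cycle $c \in I_{n-3}$ can be modified to lie in $I_{n-2}$, and ``absorb the ambiguity using Ash--Rudolph'' is not an argument; in particular, the ambiguity in $w$ lives in $\St_n(\Q)$, and you would need to know how $\St_n(\Q) \to (\St_n(\Q))_{\Gamma_n(p)}$ interacts with the filling, which brings you back to the original question. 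The paper instead reformulates everything in terms of the complex of augmented determinant-$1$ partial $\pm$-bases $\BDA_n(\Field_p)$ (via the Church--Putman presentation of $\St_n(\Q)$) and extracts both surjectivity and the precise structure of the kernel from the map-of-posets spectral sequence applied to the span map $\Psi\colon \Poset(\BDA_n(\Field_p)') \to \bTD_n(\Field_p)$. Crucially, the injectivity for $p \leq 5$ is not a rank count at all: it is the assertion that $\BDA_n(\Field_p)$ is $(n-1)$-connected for $p \leq 5$ (Proposition~\ref{proposition:bases1conimproved}), whose proof occupies all of \S\ref{section:smallprimes} and is the technical heart of the paper, and the failure of injectivity for $p > 5$ is traced to the nonvanishing of $\HH_1(\BDA_2(\Field_p)) \cong \Z^{\frac{(p+2)(p-3)(p-5)}{12}}$ (the level-$p$ modular curve has positive genus exactly when $p > 5$), which is detected as a surviving term $\EE^2_{n-3,1}$ in the spectral sequence. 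None of that mechanism is present in your proposal.
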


We thus see that Conjecture \ref{conjecture:leeszczarba} is true for $p \leq 5$, but is false for larger
primes.  In addition to dealing with the case $p=5$, our techniques also give a new proof for $p=2$ and
$p=3$.

\p{Even more cohomology}
Our proof that \eqref{eqn:weprove} is not injective for $p>5$ actually gives explicit new
cohomology classes, which allow us to give the following even better lower bound on the rank
of $\HH^{\binom{n}{2}}(\Gamma_n(p))$ for $p>5$.  For a vector space $V$,
let $\Gr_k(V)$ be the Grassmannian of $k$-dimensional subspaces of $V$.
See Theorem \ref{theorem:recursive} below
for a calculation of the rank of $\RH_{n-2}(\cT_n(\Q) / \Gamma_n(p))$, which
shows up in the following theorem.

\begin{maintheorem}
\label{theorem:evenbetter}
Fix a prime $p \geq 3$.  For $n \geq 1$, let $t_n$ be the rank of $\RH_{n-2}(\cT_n(\Q) / \Gamma_n(p))$.  Also, set $t_0 = 1$.  Then for $n \geq 3$, the rank of
$\HH^{\binom{n}{2}}(\Gamma_n(p))$ is at least
\[t_n+\frac{(p+2)(p-3)(p-5)(p-1)}{24} \cdot |\Gr_2(\Field_p^n)| \cdot t_{n-2}\]
with equality if $p=3$ or $p=5$.
\end{maintheorem}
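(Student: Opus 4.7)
By Borel--Serre duality (integrally, since $p\geq 3$), $\HH^{\binom{n}{2}}(\Gamma_n(p))\cong (\St_n(\Q))_{\Gamma_n(p)}$, and Theorem A supplies a surjection
\[
\phi\colon (\St_n(\Q))_{\Gamma_n(p)}\twoheadrightarrow \RH_{n-2}(\cT_n(\Q)/\Gamma_n(p))
\]
of image rank $t_n$. Set $c(p)=(p-1)(p-3)(p-5)(p+2)/24$. The claim reduces to showing that $\ker(\phi)$ has rank at least $c(p)\cdot |\Gr_2(\F_p^n)|\cdot t_{n-2}$ for $p\geq 7$, with equality of total rank when $p\in\{3,5\}$. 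The case $p\in\{3,5\}$ is immediate: there $c(p)=0$, and the injectivity half of Theorem A forces $\phi$ to be an isomorphism, so the total rank equals $t_n$ exactly.

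For $p\geq 7$, the plan is to produce explicit kernel classes parameterized by triples $(W,\alpha,\beta)$, where $W\in\Gr_2(\F_p^n)$, $\beta$ runs over a basis of $\RH_{n-4}(\cT_{n-2}(\Q)/\Gamma_{n-2}(p))$, and $\alpha$ runs over a distinguished $c(p)$-dimensional ``local-at-$W$'' subspace $E_W$ of $\SL_2$-level data. A direct manipulation of $c(p)$ gives $c(p)=(p-1)g$ where $g$ is the genus of the modular curve $\Gamma_2(p)\backslash\bbH^\ast$, so $E_W$ should be built from the cuspidal subspace of $(\St_2(\Q))_{\Gamma_2(p)}\cong \HH^1(\Gamma_2(p))$ (rank $2g$), enhanced by an $\F_p^\times$-torus action that accounts for the orbit of lifts of $W$ (the extra factor of $(p-1)/2$). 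Given such a triple, lift $W$ to a rational $2$-plane $\widetilde W\subset\Q^n$. The link of the vertex $[\widetilde W]$ in $\cT_n(\Q)$ is the join $\cT(\widetilde W)\ast \cT(\Q^n/\widetilde W)$, and reduced homology gives a join map
\[
\mu_{\widetilde W}\colon \St_2(\widetilde W)\otimes \St_{n-2}(\Q^n/\widetilde W)\longrightarrow \St_n(\Q).
\]
After descending to $\Gamma_n(p)$-coinvariants, $\mu_{\widetilde W}(\alpha\otimes\beta)$ lands in $\ker(\phi)$: by design $\alpha$ is cuspidal for the $\SL_2$-factor, so it is annihilated by the $n=2$ analog of $\phi$, and this vanishing propagates through the join.

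The main obstacle is linear independence of the resulting $c(p)\cdot|\Gr_2(\F_p^n)|\cdot t_{n-2}$ classes in $(\St_n(\Q))_{\Gamma_n(p)}$. I would construct a detection map in the reverse direction,
\[
(\St_n(\Q))_{\Gamma_n(p)}\longrightarrow \bigoplus_{W\in\Gr_2(\F_p^n)} E_W\otimes \RH_{n-4}(\cT_{n-2}(\Q)/\Gamma_{n-2}(p)),
\]
by filtering the augmented chain complex of $\cT_n(\Q)$ by the rank of a distinguished ``apex $2$-plane'' in each flag and running the associated spectral sequence. The goal is to check that its composition with $\bigoplus_W \mu_{\widetilde W}$ is block-diagonal and restricts to the identity on each $(\alpha,\beta)$-summand. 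Controlling the interaction of the parabolic stabilizers of different $2$-planes in $\Gamma_n(p)$ is the combinatorial core of the argument and is where I expect the bulk of the technical work; the recursive calculation of $t_n$ in the excerpt should feed in precisely here to provide the $t_{n-2}$ factor and close the induction.
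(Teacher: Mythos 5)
Your reduction is correct: by Borel--Serre duality the claim becomes a statement about $(\St_n(\Q))_{\Gamma_n(p)}$, the $p\in\{3,5\}$ case follows from the injectivity half of Theorem A since $c(p)=0$ there, and for $p\geq 7$ it suffices to bound the rank of $\ker(\phi)$ from below by $c(p)\cdot|\Gr_2(\F_p^n)|\cdot t_{n-2}$. You also correctly identify the source of the extra kernel classes -- the modular-curve genus $g=(p+2)(p-3)(p-5)/24$, $\pm$-oriented $2$-planes over $\F_p$, and Steinberg-type classes in a quotient of rank $n-2$ -- and the count $c(p)=(p-1)g$ is the right one.

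The genuine gap is in the second half of the plan. The linear independence of your proposed classes $\mu_{\widetilde W}(\alpha\otimes\beta)$ is exactly what must be proved, and you defer it to an unconstructed ``detection map'' together with a spectral sequence obtained by ``filtering by the rank of a distinguished apex $2$-plane in each flag.'' This is not well-defined as stated: a flag in $\cT_n(\Q)$ need not contain a $2$-plane, and when it does that $2$-plane is not canonically singled out, so neither the filtration nor the associated graded is pinned down. Moreover, working directly in $\cT_n(\Q)$ forces you to track $\Gamma_n(p)$-stabilizers of rational $2$-planes (the ``parabolic interactions'' you flag), and this is precisely the difficulty the paper avoids by first passing to the finite complexes: via Lemma \ref{lemma:coinvariants} one rewrites $(\St_n(\Q))_{\Gamma_n(p)}$ as $\HH_{n-1}(\BDA_n(\Field_p),\BDA_n(\Field_p)')$, and Lemma \ref{lemma:themap} turns $\phi$ into a boundary map followed by a push-forward along the poset map $\Psi\colon\Poset(\BDA_n(\Field_p)')\to\bTD_n(\Field_p)$. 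The map-of-posets spectral sequence for $\Psi$ (Theorem \ref{theorem:spectralsequence}, controlled by Lemma \ref{lemma:vanishingrange}) has $E^2_{n-3,1}\cong\Z[P_2^n]\otimes\RH_{n-4}(\TD_{n-2}(\F_p))\otimes\HH_1(\BDA_2(\F_p))$ with no differentials in or out, so the kernel of the edge map surjects onto it automatically; no explicit class construction or independence argument is needed. Your proposal has the right big picture -- including the observation behind the remark after Theorem B -- but without a precise replacement for that spectral-sequence mechanism it does not yet constitute a proof.
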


\begin{remark}
The astute reader will notice that the term $\frac{(p+2)(p-3)(p-5)}{24}$ in Theorem \ref{theorem:evenbetter} is
the genus of the modular curve of level $p$.  This is not a coincidence. Indeed, we construct a surjective homomorphism from the kernel of $$ \HH^{\binom{n}{2}}(\Gamma_n(p)) \m \RH_{n-2}(\cT_n(\Q) / \Gamma_n(p))  $$ to a group obtained by inducing up the first homology group of the level $p$-modular curve tensored with $\RH_{n-4}(\cT_{n-2}(\Q) / \Gamma_n(p))  $. 
\end{remark}

\p{Size of quotient space}
Recall that the rank of $\RH_{n-2}(\cT_n(\Field_p))$ is
$p^{\binom{n}{2}}$.
There does not seem to be a similar simple closed form expression for the rank of
$\RH_{n-2}(\cT_n(\Q) / \Gamma_n(p))$.  However, we will establish the following
recursive formula for it.  

\begin{maintheorem}
\label{theorem:recursive}
Fix a prime $p \geq 3$.  For $n \geq 1$, let $t_n$ be the rank of $\RH_{n-2}(\cT_n(\Q)/\Gamma_n(p))$.
We then have $t_1 = 1$ and
\[t_n = \left(\frac{p-3}{2}+\left(\frac{p-1}{2}\right)\cdot p^{n-1}\right) t_{n-1}
+ \frac{(p-1)(p-3)}{4} \sum_{k=1}^{n-2} p^k \cdot |\Gr_k(\Field_p^{n-1})| \cdot t_k t_{n-k-1}\]
for $n \geq 2$.
\end{maintheorem}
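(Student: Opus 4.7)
The plan is to establish the recursion by analyzing the cellular structure of $\cT_n(\Q)/\Gamma_n(p)$ through a stratification based on the top subspace of each complete flag and on splits induced by intermediate subspaces. Granting that the homology of the quotient is concentrated in top degree (which should follow from the Cohen--Macaulay property of the Tits building passing to the quotient), the rank $t_n$ can be extracted from a careful count of orbits of top simplices modulo boundary relations.

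First I would parametrize the $\Gamma_n(p)$-orbits of flags in $\Q^n$. Using the bijection between subspaces of $\Q^n$ and direct summands of $\Z^n$ followed by reduction mod $p$, each flag descends to an image flag in $\Field_p^n$, and two flags with the same image differ by additional ``cusp-type'' data at consecutive pairs of subspaces. In the base case $n=2$, this recovers the cusp parametrization of the modular curve $X(p)$, giving $(p^2-1)/2$ orbits of lines and hence $t_2 = (p^2-3)/2$ after subtracting one for reduced homology. The resulting cusp structure is responsible for the coefficients $(p-1)/2$ and $(p-3)/2$ appearing throughout the recursion.

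Next I would stratify the top simplices of the quotient by the $\Gamma_n(p)$-orbit of their top hyperplane $V_{n-1}$. For each such orbit, the star of $V_{n-1}$ in $\cT_n(\Q)$ is isomorphic to $\cT_{n-1}(\Q)$, and after quotienting by the stabilizer one obtains $\cT_{n-1}(\Q)/\Gamma_{n-1}(p)$ decorated with cusp data; summing over hyperplane orbits yields the first term $\bigl(\tfrac{p-3}{2}+\tfrac{p-1}{2}p^{n-1}\bigr)t_{n-1}$. The remaining sum arises from flags in which an intermediate subspace $V_k \subset V_{n-1}$ forces a split: the link of $V_k$ inside the hyperplane is a join $\cT_k(\Q) * \cT_{n-k-1}(\Q)$, and cusp data on both factors contributes the product $t_k t_{n-k-1}$. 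The count $|\Gr_k(\Field_p^{n-1})|$ records the number of such $V_k$ inside $V_{n-1}$, while the factor $p^k(p-1)(p-3)/4$ captures a two-fold cusp-like construction at the split.

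The main obstacle is pinning down these coefficients precisely and verifying that no double counting occurs between the two strata. The factor $(p-1)(p-3)/4$ in particular---closely related to the level-$p$ modular curve genus appearing in Theorem B---requires careful combinatorial bookkeeping of the cusp-like data attached to each split, and the appearance of $\Gr_k(\Field_p^{n-1})$ (rather than $\Gr_k(\Field_p^n)$) reflects the stratification being by hyperplanes. Formalizing the argument likely requires a Mayer--Vietoris decomposition or a spectral sequence over the hyperplane stratification, with the technical heart being identification of the image of the top boundary map so that the rank count telescopes into the stated recursion.
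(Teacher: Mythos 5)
Your high-level reading of the structure is partly on target: the homology of the quotient is concentrated in top degree (the paper establishes this via Lemma \ref{lemma:tdconn}, showing $\TD_n(\Field_p)$ is Cohen--Macaulay), and the ``cusp-type data'' you describe is indeed the $\pm$-orientation data that the paper formalizes in Proposition \ref{proposition:tquotient}, which identifies $\cT_n(\Q)/\Gamma_n(p)$ with the $\pm$-oriented Tits building $\TD_n(\Field_p)$. Your $n=2$ cusp count and the observation that the $t_k t_{n-k-1}$ term should come from a join $\cT_k * \cT_{n-k-1}$ in a link are also correct at the level of intuition.

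However, the filtration you propose does not match the paper's, and I don't see how it can be made to work as stated. You want to ``stratify the top simplices by the $\Gamma_n(p)$-orbit of their top hyperplane $V_{n-1}$'' and then assemble via Mayer--Vietoris or a spectral sequence over the hyperplane stratification. But the open stars of distinct hyperplanes overlap (any flag whose top step is a subspace of dimension $\leq n-2$ extends to flags through many different hyperplanes), so this is not a stratification of the simplicial complex, and the intersections in a Mayer--Vietoris cover would themselves be complicated pieces of Tits buildings; you give no indication of how the resulting inclusion--exclusion or spectral sequence would telescope. The paper instead fixes a single \emph{line} $\ell$ and builds an increasing filtration $Y_0 \subset Y_1 \subset \cdots \subset Y_{n-1} = \TD_n(\Field_p)$, where $Y_0$ is the subcomplex of $\pm$-oriented subspaces containing $\ell$, and $Y_k$ adjoins all $\pm$-oriented $k$-dimensional subspaces not containing $\ell$. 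Each step is a coning-off of links that are shown to be wedges of $(n-3)$-spheres, so the homotopy type changes by wedging on $(n-2)$-spheres, and the count is clean. Going by dimension-from-below, with a distinguished line, is what makes the links decompose as joins of three tractable pieces ($\TD(V)$, $\Or(\Span{V,\ell})$, and $\TD(\Field_p^n / \Span{V,\ell})$).

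This also explains why your account of the coefficient $p^k \cdot |\Gr_k(\Field_p^{n-1})|$ is unconvincing. In the paper this product is not ``the number of $V_k$ inside $V_{n-1}$ times a cusp-like construction''; by Lemma \ref{lemma:countit} it is precisely the number of $k$-dimensional subspaces of $\Field_p^n$ that \emph{do not contain the fixed line} $\ell$, which are exactly the new (unoriented) vertices adjoined at stage $k$ of the filtration. The factor $\frac{(p-1)(p-3)}{4}$ then decomposes as $\frac{p-1}{2}$ (choices of $\pm$-orientation on the new $V$) times $\frac{p-3}{2}$ (the reduced $0$-th Betti number of $\Or(\Span{V,\ell})$). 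Your proposal attributes all of $p^k(p-1)(p-3)/4$ to vaguely described ``two-fold cusp-like data at the split,'' and separately attributes $|\Gr_k(\Field_p^{n-1})|$ to subspaces of the hyperplane, which double-counts the role of the Grassmannian factor and never pins down where $p^k$ comes from. You honestly flag this yourself, calling the coefficient bookkeeping the ``main obstacle,'' so the proposal does not constitute a proof; the missing idea is exactly the line-based filtration and the link computation that make the recursion drop out mechanically.
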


\begin{remark}
An easy calculation shows that $|\Gr_k(\Field_p^n)| = \prod_{i=0}^{k-1} \frac{p^n-p^i}{p^k-p^i}$.
\end{remark}

\p{Relation with Paraschivescu's bound}
Recall from above that Paraschivescu \cite{Para} proved that for $p \geq 3$, the rank
of $\HH^{\binom{n}{2}}(\Gamma_n(p)) = (\St_n(\Q))_{\Gamma_n(p)}$ is at least
\[t'_n = \left(\frac{p-1}{2}\right)^{n-1} p^{\binom{n}{2}}.\]
Letting $t_n$ be as in Theorem \ref{theorem:recursive}, Theorem \ref{theorem:evenbetter} 
shows that the rank of $\HH^{\binom{n}{2}}(\Gamma_n(p))$ 
is also at least $t_n$.  For $p \geq 5$ and $n \geq 2$, our bound $t_n$ is always stronger than
Paraschivescu's bound $t'_n$.  Indeed, $t'_n$ satisfies the recursive formula
\[t'_1 = 1 \quad \text{and} \quad t'_n = \left(\frac{p-1}{2}\right) p^{n-1} t'_{n-1} \quad \quad (n \geq 2).\]
We thus have $t_1 = t'_1$, and for $n \geq 2$ and $p \geq 5$ we have
\begin{align*}
t_n &= \left(\frac{p-3}{2}+\left(\frac{p-1}{2}\right)\cdot p^{n-1}\right) t_{n-1}
+ \frac{(p-1)(p-3)}{4} \sum_{k=1}^{n-2} p^k \cdot |\Gr_k(\Field_p^{n-1})| \cdot t_k t_{n-k-1}\\
&> \left(\frac{p-1}{2}\right)p^{n-1} t_{n-1} \geq \left(\frac{p-1}{2}\right)p^{n-1} t'_{n-1} = t'_n.
\end{align*}

\p{Comments on bounds}
To the best of our knowledge, Theorem \ref{theorem:evenbetter} gives the best known lower bounds on these ranks
for general $n$.  It gives a complete calculation of $\HH^{\binom{n}{2}}(\Gamma_n(5))$; see
Table \ref{table:5ranks} for a table of values for $n\le 15$.  This table was produced in less than a second
using a personal computer, which can compute all $t_n$ for $n \leq 200$ within a minute.
There have been extensive computer calculations
of the cohomology of finite-index subgroups of $\SL_n(\Z)$ for small $n$ 
using the theory of Voronoi tessellations (see, e.g.\ 
\cite{AGM1,AGM2,AGM3}).  However, for $n \geq 5$ we believe that the 
computation in Theorem \ref{theorem:evenbetter} is beyond the reach of such 
computer calculations with present technology using those techniques.  

\begin{table}
\begin{footnotesize}
\begin{tabular}{r|r}
$n$ & $\rk \HH^{\binom{n}{2}}(\Gamma_n(5))$\\
\hline
1&1\\
2&11\\
3&621\\
4&176331\\
5&250654141\\
6&1781972405051\\
7&63346001119010061\\
8&11259312615761079960171\\
9&10006344346503001479394156381\\
10&44464067922769996760030750509009691\\
11&987899991107026778582667588995859270541101\\
12&109745515200463561297438405787408294210000904481611\\
13&60957982865169441101378571385234702783255341037103258372221\\
14&169295103797089744818524470008237065225058191012577153712309414663931\\
15&2350867829470159774034814041007591566603522538519291648712545382850352884817741
\end{tabular}
\end{footnotesize}
\caption{Calculations of the ranks of $\HH^{\binom{n}{2}}(\Gamma_n(5))$ for $n\le 15$.}
\label{table:5ranks}
\end{table}

\p{Outline}
The proof of Theorem \ref{theorem:main} has two main ingredients: connectivity/non-connectivity results for 
certain simplicial complexes built from bases of $\Field_p^n$, and a spectral sequence argument. 

The connectivity results are proven in \S \ref{section:simplicialcomplexes},
where the primary difficulty is the case $p=5$.  For $p=2$ or $3$, the field $\Field_p$ has the property 
that every unit lifts to a unit in $\Z$, which greatly simplifies the arguments.  Although this is not 
true for $p=5$, there still are not ``too many'' units that do not come from units in $\Z$.  For example, 
a key property about the number $5$ that we use is that if $a$ and $b$ are units in $\Field_5$ which 
do not lift to units in $\Z$, then there is a choice of signs such that $1=\pm a \pm b$. 

The spectral sequence arguments are in \S \ref{section:proofs}, which contains the proof
of Theorem \ref{theorem:main}.  For $p \leq 5$, this spectral sequence argument 
is relatively standard and is similar to the one used
by Church--Putman \cite{CP}.  However, for $p>5$, it is more novel.  We use the 
failure of certain simplicial complexes to be highly acyclic to produce elements in the kernel of the 
map 
\[\HH^{\binom{n}{2}}(\Gamma_n(p)) \longrightarrow \RH_{n-2}(\cT_n(\Q)/\Gamma_n(p)).\] 
These classes in the kernel that we describe are all induced up in some sense from classes in the 
kernel for $n=2$.  This new spectral sequence argument that we introduce in this paper has had 
applications to other questions concerning the cohomology of arithmetic groups (see, e.g., \cite{MPWY}). 

We close with the computational \S \ref{section:computation}, which proves Theorems \ref{theorem:evenbetter} and \ref{theorem:recursive}.

\p{Acknowledgments}
We thank Ben McReynolds and Simon Rubinstein--Salzedo for helpful conversations.  We would also like to thank
Avner Ash, Tom Church, and Paul Gunnells for helpful comments on previous versions of this paper.
Finally, the second and third authors would like to thank MSRI for their hospitality.

\section{Simplicial complexes associated to free \texorpdfstring{$R$}{R}-modules}
\label{section:simplicialcomplexes}

Fix a commutative ring $R$.  Our proof will require various simplicial complexes associated to a free $R$-module.
The rings we will make serious use of are $R = \Z$ and $R$ a field.  

\subsection{Complexes of bases and augmented bases}
\label{section:complexesbases}

We start by discussing four versions of these complexes: the complexes
of partial $R^{\times}$-bases, augmented partial $R^{\times}$-bases, partial $\pm$-bases,
and augmented partial $\pm$-bases.

\subsubsection{Partial \texorpdfstring{$R^{\times}$}{Rx}-bases}
\label{section:rxbases}

Let $R^{\times}$ be the set of units in $R$.  We make the following definition.

\begin{definition}
An {\em $R^{\times}$-vector} in $R^n$ is a set of the form $\Set{$c \vec{v}$}{$c \in R^{\times}$}$
for a nonzero $\vec{v} \in R^n$.  Given a nonzero $\vec{v} \in R^n$, we will write $[\vec{v}]$ for the
associated $R^{\times}$-vector.
\end{definition}

We then make the following definition.

\begin{definition}
A {\em partial basis} for $R^n$ is a set
of elements of $R^n$ that is a subset of a free basis for $R^n$.  A
{\em partial $R^{\times}$-basis} for $R^n$ is a set $\{[\vec{v}_1],\ldots,[\vec{v}_k]\}$ of
$R^{\times}$-vectors in $R^n$ such that
the set $\{\vec{v}_1,\ldots,\vec{v}_k\}$ is a partial
basis for $R^n$.  This does not depend on the choice of the representatives $\vec{v}_i$.
\end{definition}

We now turn these into a simplicial complex as follows.  Here and throughout the rest of this paper, we will define
simplicial complexes by specifying that their simplices are certain sets.  What we mean by this is that
the $k$-simplices are such sets containing $(k+1)$-elements, and the face relations between simplices
are simply inclusions of sets.

\begin{definition}
The {\em complex of partial $R^{\times}$-bases}
for $R^n$, denoted $\BT_n(R)$, is the simplicial complex whose simplices are
partial $R^{\times}$-bases for $R^n$.
\end{definition}

To understand $\BT_n(R)$ in an inductive way, we will need to understand links of simplices
in it.  We thus make the following definition.

\begin{definition}
Let $\{\vec{e}_1,\ldots,\vec{e}_{n+m}\}$
be the standard basis for $R^{n+m}$.  Define
$\BT_{n,m}(R) = \link_{\BT_{n+m}(R)} \{[\vec{e}_1],\ldots,[\vec{e}_m]\}$.
\end{definition}

Recall that a simplicial complex $X$ is {\em Cohen--Macaulay} of dimension $r$ if
it satisfies the following conditions:
\begin{compactitem}
\item $X$ is $r$-dimensional and $(r-1)$-connected, and
\item for all $k$-simplices $\sigma$ of $X$, the complex $\link_X(\sigma)$ is
$(r-k-1)$-dimensional and $(r-k-2)$-connected.
\end{compactitem}
Church--Putman \cite{CP} proved the following.

\begin{theorem}[{\cite[Theorem 4.2]{CP}}]
\label{theorem:ztbasescon}
The complex $\BT_{n,m}(\Z)$ is Cohen--Macaulay of dimension $(n-1)$ for all $n,m \geq 0$.
\end{theorem}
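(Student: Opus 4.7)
The plan is to induct on $n$, simultaneously over all $m \geq 0$, showing that $\BT_{n,m}(\Z)$ is Cohen--Macaulay of dimension $n-1$. The base cases $n=0$ (empty complex) and $n=1$ (nonempty $0$-dimensional) are immediate. The dimension count follows because a top simplex of $\BT_{n,m}(\Z)$ corresponds to completing $\{\vec{e}_1,\ldots,\vec{e}_m\}$ to a basis of $\Z^{n+m}$, which requires exactly $n$ additional vectors.

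The link condition in the Cohen--Macaulay definition falls out of the inductive hypothesis. Given a $k$-simplex $\sigma = \{[\vec{v}_0],\ldots,[\vec{v}_k]\}$ of $\BT_{n,m}(\Z)$, the set $\{\vec{e}_1,\ldots,\vec{e}_m,\vec{v}_0,\ldots,\vec{v}_k\}$ is a partial basis of $\Z^{n+m}$, and an element of $\GL_{n+m}(\Z)$ carrying it to $\{\vec{e}_1,\ldots,\vec{e}_{m+k+1}\}$ identifies $\link_{\BT_{n,m}(\Z)}(\sigma)$ with $\BT_{n-k-1,m+k+1}(\Z)$. This is Cohen--Macaulay of dimension $n-k-2$ by induction, which is exactly what is needed.

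The heart of the argument---and the main obstacle---is to prove that $\BT_{n,m}(\Z)$ is itself $(n-2)$-connected. I plan to use the Maazen--van der Kallen height-reduction technique: introduce an integer-valued complexity $h$ measuring how far a vertex $[\vec{v}]$ is from the ``core'' simplex $\{[\vec{e}_{m+1}],\ldots,[\vec{e}_{m+n}]\}$ (for instance $h([\vec{v}]) = \max_i |a_i|$ when $\vec{v} = \sum a_i \vec{e}_i$). Given a simplicial map $f \colon S^d \to \BT_{n,m}(\Z)$ with $d \leq n-2$, iteratively select a simplex whose image has maximal height and, using the inductive connectivity of the corresponding link, retriangulate to replace its high-complexity vertices by lower-complexity ones of the form $[\vec{v} \pm \vec{e}_j]$.

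Carrying out this replacement coherently is the delicate step. The crucial feature is that $\Z^{\times} = \{\pm 1\}$: an elementary operation subtracting $\pm \vec{e}_j$ from $\vec{v}$ preserves primitivity (so the new vertex still lies in $\BT_{n,m}(\Z)$) and strictly reduces a chosen coordinate, so iterating terminates inside the core. This simplicity is exactly what fails over $\Field_p$ for $p \geq 5$ and is the source of the more delicate arguments occupying the rest of \S\ref{section:simplicialcomplexes}. Once every vertex in the image has been pushed into the contractible core, the sphere bounds a disk, completing the induction.
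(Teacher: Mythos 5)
The paper does not reprove this; it is quoted from \cite[Theorem 4.2]{CP}. The only thing the paper offers about the method is the gloss in the proof of Proposition~\ref{proposition:augtbasescon}: the Morse function is $R(v)=|a_{n+m}|$, the \emph{last} coordinate, and the reduction move is the unimodular column operation $\vec{v}_i \mapsto \vec{v}_i + \nu_i\vec{v}_1$ inside a simplex containing the vertex $v_1$ of maximal $R$-value, which pushes all other vertices into the hyperplane $a_{n+m}=0$ and hence into the star of $[\vec{e}_{n+m}]$, a cone.

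Your framework (induct on $n$ over all $m$; identify $\link(\sigma)\cong\BT_{n-k-1,m+k+1}(\Z)$; reduce Cohen--Macaulayness to $(n-2)$-connectivity; prove connectivity by height reduction) is exactly right and matches the Maazen/van~der~Kallen/Church--Putman strategy. But two of the specific choices you make break:

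First, the move $[\vec{v}]\mapsto[\vec{v}\pm\vec{e}_j]$ is \emph{not} a valid retriangulation move, because it does not preserve the partial-basis condition within a simplex. For example in $\BT_{2,0}(\Z)$, if $\{\vec{v},\vec{v}_2\}$ is a basis with $\det(\vec{v}\ \vec{v}_2)=\pm 1$, then $\det((\vec{v}-\vec{e}_1)\ \vec{v}_2)=\pm 1 - (\vec{v}_2)_2$, which is generally not $\pm 1$. The reduction moves that do preserve simplices are column operations among the vectors of the simplex itself (adding integer multiples of the ``bad'' vertex to the others), which is what \cite{CP} uses; modifying by external standard basis vectors only preserves simplices when the index $j$ lies among the linked-off core indices $1,\dots,m$, which does nothing for the coordinate you actually want to shrink.

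Second, the termination claim is unjustified. Lowering $\max_i|a_i|$ to $1$ does not place a vertex in the closed star of $\{[\vec{e}_{m+1}],\dots,[\vec{e}_{m+n}]\}$: a vector like $\vec{e}_{m+1}+\vec{e}_{m+2}-\vec{e}_{m+3}$ has $\max_i|a_i|=1$ but does not extend that face to a simplex in general, so the sphere is not visibly nullhomotopic at the end of your iteration. The last-coordinate Morse function avoids this because $R\equiv 0$ on the image forces the image into $\St([\vec{e}_{n+m}])$, which \emph{is} a cone. If you adopt $R(v)=|a_{n+m}|$ and the within-simplex column operations, your outline becomes the Church--Putman argument; as written, the gap in both the reduction move and the endgame would have to be closed.
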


We will need the analogous fact with $\Z$ replaced by a field:

\begin{proposition}
\label{proposition:tbasescon}
For a field $\Field$, the complex $\BT_{n,m}(\Field)$ is Cohen--Macaulay of dimension $(n-1)$
for all $n, m \geq 0$.
\end{proposition}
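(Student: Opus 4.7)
The plan is to argue by induction on $n$ (uniformly in $m \geq 0$) that $\BT_{n,m}(\Field)$ is Cohen--Macaulay of dimension $n-1$. The cases $n \leq 1$ are immediate. For the inductive step, the link condition is inherited from the inductive hypothesis: given a $k$-simplex $\sigma = \{[\vec{v}_1], \ldots, [\vec{v}_{k+1}]\}$ of $\BT_{n,m}(\Field)$, the set $\{\vec{e}_1, \ldots, \vec{e}_m, \vec{v}_1, \ldots, \vec{v}_{k+1}\}$ is a partial basis of $\Field^{n+m}$ and, since $\Field$ is a field, extends to a full basis. The associated change-of-basis automorphism gives an isomorphism
\[
  \link_{\BT_{n,m}(\Field)}(\sigma) \cong \BT_{n-k-1,\,m+k+1}(\Field),
\]
which by induction is Cohen--Macaulay of dimension $n-k-2$, as required.

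It then remains to show that $\BT_{n,m}(\Field)$ itself is $(n-2)$-connected. Let $\Delta \subseteq \BT_{n,m}(\Field)$ be the contractible full $(n-1)$-simplex on $\{[\vec{e}_{m+1}], \ldots, [\vec{e}_{m+n}]\}$. I would then invoke a standard ``bad simplex'' lemma (in the spirit of Hatcher--Vogtmann), which reduces the claim to showing that for every $k$-simplex $\sigma$ of $\BT_{n,m}(\Field)$ not contained in $\Delta$, the subcomplex $\link_{\BT_{n,m}(\Field)}(\sigma) \cap \Delta$ is $(n-k-3)$-connected.

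To verify this, let $W \subseteq \Field^n = \Field^{n+m}/\langle \vec{e}_1, \ldots, \vec{e}_m\rangle$ be the $(k+1)$-dimensional subspace spanned by the projections of the $\vec{v}_i$. A simplex of $\link_{\BT_{n,m}(\Field)}(\sigma) \cap \Delta$ is precisely a subset of $\{[\vec{e}_{m+1}], \ldots, [\vec{e}_{m+n}]\}$ whose images in $\Field^n/W$ are linearly independent, i.e., an independent set in the representable matroid on the nonzero images of $\bar{\vec{e}}_1, \ldots, \bar{\vec{e}}_n$ in $\Field^n/W$. This matroid has rank $n-k-1$, so by Bj\"orner's theorem its independence complex is shellable and hence Cohen--Macaulay of top dimension $n-k-2$, and in particular $(n-k-3)$-connected, exactly as the bad-simplex lemma requires.

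The hardest part is the bad-simplex reduction itself, but this is an entirely standard technique. Otherwise, the field case is significantly cleaner than the $\Z$ case of Theorem \ref{theorem:ztbasescon}: over a field every linearly independent set extends to a basis, so one avoids the Bezout-type elementary operations needed in \cite{CP}, and the link-intersections with the standard basis simplex admit the clean matroidal description above.
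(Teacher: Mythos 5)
Your proof is correct, but it takes a genuinely different route from the paper's. The paper reduces, exactly as you do, to showing that $\BT_{n,m}(\Field)$ is $(n-2)$-connected, but then it cites van der Kallen's Theorem 2.6 for the corresponding complex $\LetterB_{n,m}(\Field)$ built from actual vectors rather than $R^{\times}$-vectors, and transfers the connectivity by observing that a choice of representative $\vec{v}\in v$ for each $R^\times$-vector $v$ yields a simplicial section $\phi\colon\BT_{n,m}(\Field)\to\LetterB_{n,m}(\Field)$ of the projection $\pi$, so that $\BT_{n,m}(\Field)$ is a retract of a $(n-2)$-connected complex. Your argument instead proves the connectivity directly via a bad-simplex reduction toward the full simplex $\Delta$ on $\{[\vec e_{m+1}],\ldots,[\vec e_{m+n}]\}$, identifying $\link(\sigma)\cap\Delta$ with the independence complex of the representable matroid on the nonzero images of the $\bar{\vec e}_i$ in $\Field^n/\bar{W}$ and invoking Bj\"orner's shellability theorem. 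Both are valid; the paper's remark after the proposition even notes that a more direct argument (there, imitating \cite[Theorem 4.2]{CP}) exists and that the cited route was chosen to emphasize that the content really lives in \cite{VanDerKallen}, so your alternative is squarely within the range of approaches the authors had in mind. Two small phrasing notes: the bad-simplex machinery (e.g.\ the form in \cite[Lemma 2.3]{HatcherWahl}) is usually stated for simplices \emph{disjoint} from $\Delta$, not merely ``not contained in $\Delta$'' --- your verification covers all $\sigma$, so this is harmless, but the statement of what must be checked is slightly stronger than necessary --- and it is worth saying explicitly that $\Delta$ really is a simplex of $\BT_{n,m}(\Field)$ (the standard basis is free), which makes it a legitimate contractible full subcomplex to push into.
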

\begin{proof}
Since the link of a $k$-simplex in $\BT_{n,m}(\Field)$ is isomorphic to $\BT_{n-k-1,m+k+1}(\Field)$,
it is enough to prove that $\BT_{n,m}(\Field)$ is $(n-2)$-connected for all $n,m \geq 0$.
Let $\LetterB_{n,m}(\Field)$ be the complex defined just like $\BT_{n,m}(\Field)$ but
using actual vectors rather than $R^{\times}$-vectors.  By
\cite[Theorem 2.6]{VanDerKallen}, the complex $\LetterB_{n,m}(\Field)$ is
$(n-2)$-connected.  Let $\pi\colon \LetterB_{n,m}(\Field) \rightarrow \BT_{n,m}(\Field)$
be the projection.  For each $R^{\times}$-vector $v$ in $\Field^{n+m}$, choose
an arbitrary $\vec{v} \in v$.  We can then define a simplicial map
$\phi\colon \BT_{n,m}(\Field) \rightarrow \LetterB_{n,m}(\Field)$ via the formula
$\phi(v) = \vec{v}$ for all vertices $v$ of $\BT_{n,m}(\Field)$.  We clearly
have $\pi \circ \phi = \text{id}$, so $\phi$ is an embedding and
$\pi$ is a retraction of $\LetterB_{n,m}(\Field)$ onto the image of $\phi$.
This implies that $\BT_{n,m}(\Field)$ is $(n-2)$-connected, as desired.
\end{proof}

\begin{remark}
Rather than deducing Proposition \ref{proposition:tbasescon} from
\cite[Theorem 2.6]{VanDerKallen}, it could instead be proved by
imitating the proof of \cite[Theorem 4.2]{CP}.  We
proved it the way we did above to emphasize that the essential core
of the result was in \cite{VanDerKallen}.
\end{remark}

\subsubsection{Augmented partial \texorpdfstring{$R^{\times}$}{Rx}-bases}
\label{section:augmentedrxbases}

We now add certain simplices to $\BT_{n,m}(R)$.  The key definition is as follows.

\begin{definition}
An {\em augmented partial $R^{\times}$-basis} for $R^n$ is a
set $\{[\vec{v}_0],\ldots,[\vec{v}_k]\}$ of $R^{\times}$-vectors in $R^n$ that can be reordered
such that the following hold:
\begin{compactitem}
\item $\{[\vec{v}_1],\ldots,[\vec{v}_k]\}$ is a partial $R^{\times}$-basis for $R^n$.
\item There exist
units $\lambda,\nu \in R^{\times}$ such that $\vec{v}_0 = \lambda \vec{v}_1 + \nu \vec{v}_2$.
The existence of $\lambda$ and $\nu$ does not depend on the choice of the
representatives $\vec{v}_1$ and $\vec{v}_2$.
\end{compactitem}
We will call the $R^{\times}$-vectors $\{[\vec{v}_0],[\vec{v}_1],[\vec{v}_2]\}$
the {\em additive core} of $\{[\vec{v}_0],\ldots,[\vec{v}_k]\}$.
\end{definition}

\begin{remark}
The additive core of an augmented partial $R^{\times}$-basis $\sigma$ for $R^n$ can be
characterized intrinsically as the set of all $v \in \sigma$ such that
$\sigma \setminus \{v\}$ is a partial $R^{\times}$-basis for $R^n$.
\end{remark}

A subset of an augmented partial $R^{\times}$-basis is either an augmented partial $R^{\times}$-basis
(if the subset contains the entire additive core) or a partial $R^{\times}$-basis
(if the subset does not contain the entire additive core).  We thus can make the following definition.

\begin{definition}
The {\em complex of augmented partial $R^{\times}$-bases} for
$R^n$, denoted $\BTA_n(R)$, is the simplicial complex whose simplices consist
of partial $R^{\times}$-bases and augmented partial $R^{\times}$-bases for $R^n$.
\end{definition}

Again, we will need to study links of simplices in $\BTA_n(R)$.  However, for technical reasons
we will not study the entire link, but rather the following subcomplex of it.

\begin{definition}
Let $\sigma = \{[\vec{v}_1],\ldots,[\vec{v}_k]\}$ be a simplex of $\BTA_n(R)$.
The {\em augmented link} of $\sigma$, denoted
$\hlink_{\BTA_n(R)}(\sigma)$, is the full subcomplex of $\link_{\BTA_n(R)}(\sigma)$ spanned
by vertices $[\vec{w}]$ of $\link_{\BTA_n(R)}(\sigma)$ such that
$\vec{w} \notin \Span{\vec{v}_1,\ldots,\vec{v}_k}$.  This
definition does not depend on the choice of the representatives $\vec{v}_i$ or $\vec{w}$.
\end{definition}

The simplices of $\hlink_{\BTA_n(R)}(\sigma)$ fall into the following three classes:

\begin{definition}
Let $\sigma = \{[\vec{v}_1],\ldots,[\vec{v}_k]\}$ be a
simplex of $\BTA_n(R)$.  Let $\eta$ be a simplex of $\hlink_{\BTA_n(R)}(\sigma)$.
Then one of the following three conditions hold.
\begin{compactitem}
\item $\eta$ is a partial $R^{\times}$-basis for $R^n$.  We will then call $\eta$ a
{\em standard simplex}.
\item $\eta$ is an augmented partial $R^{\times}$-basis for $R^n$, i.e.\ we can write
$\eta = \{[\vec{w}_0],\ldots,[\vec{w}_{\ell}]\}$ such that
$\vec{w}_0 = \lambda \vec{w}_1 + \nu \vec{w}_2$ with $\lambda, \nu \in R^{\times}$.
We will then call $\eta$ an {\em internally additive simplex}.
\item We can write $\eta = \{[\vec{w}_0],\ldots,[\vec{w}_{\ell}]\}$
with $\vec{w}_0 = \lambda \vec{w}_1 + \nu \vec{v}_i$ for some $\lambda,\nu \in R^{\times}$
and some $1 \leq i \leq k$.  We will then call $\eta$ an {\em externally additive simplex}.
\end{compactitem}
We will sometimes call a simplex that is either internally or externally additive
simply an {\em additive simplex}.
\end{definition}

Just like for $\BT_{n,m}(R)$, we make the following definition.

\begin{definition}
Let $\{\vec{e}_1,\ldots,\vec{e}_{n+m}\}$
be the standard basis for $R^{n+m}$.  Define
$\BTA_{n,m}(R) = \hlink_{\BTA_{n+m}(R)} \{[\vec{e}_1],\ldots,[\vec{e}_m]\}$.
\end{definition}

Church--Putman \cite{CP} proved the following, which is an analogue of Theorem \ref{theorem:ztbasescon} for
$\BTA_{n,m}(\Z)$.

\begin{theorem}[{\cite[Theorem C$'$]{CP}}]
\label{theorem:zaugtbasescon}
The complex $\BTA_{n,m}(\Z)$ is Cohen--Macaulay of dimension $n$ for all $n \geq 1$ and $m \geq 0$ with
$n+m \geq 2$.
\end{theorem}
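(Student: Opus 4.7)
The plan is to prove the statement by induction on $n$, with small cases verified directly. The dimension claim is elementary: by the characterization of additive cores, every simplex of $\BTA_{n,m}(\Z)$ is either a partial $\Z^{\times}$-basis extending $\{[\vec{e}_1],\ldots,[\vec{e}_m]\}$ (with at most $n$ vertices by Theorem \ref{theorem:ztbasescon}) or such a partial basis augmented by one additional additive vertex, so has at most $n+1$ vertices, giving dimension at most $n$; standard constructions realize this upper bound.

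For the Cohen--Macaulay condition on links, I would classify a $k$-simplex $\sigma$ of $\BTA_{n,m}(\Z)$ by whether or not it contains an additive core, and in each case use an automorphism of $\Z^{n+m}$ fixing $\Span{\vec{e}_1,\ldots,\vec{e}_m}$ to normalize $\sigma$. When $\sigma$ is a standard partial basis of size $k+1$, such a change of basis identifies $\link_{\BTA_{n,m}(\Z)}(\sigma)$ with $\BTA_{n-k-1,m+k+1}(\Z)$, and induction on $n$ gives the required connectivity. When $\sigma$ contains its additive core, every extension of $\sigma$ to a simplex of $\BTA$ must preserve that core, so the link is identified with a complex of partial $\Z^{\times}$-bases of the form $\BT_{n-k,m+k}(\Z)$; the required connectivity then follows from Theorem \ref{theorem:ztbasescon}.

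The substantive step is to show that $\BTA_{n,m}(\Z)$ itself is $(n-1)$-connected. My plan is to exploit the subcomplex $\BT_{n,m}(\Z) \subset \BTA_{n,m}(\Z)$, which is $(n-2)$-connected by Theorem \ref{theorem:ztbasescon}, and show that the additional augmented simplices suffice to bound $(n-1)$-cycles. Given a simplicial map $f\colon S \to \BTA_{n,m}(\Z)$ from a simplicial $(n-1)$-sphere, I would first homotope $f$ into the subcomplex $\BT_{n,m}(\Z)$ using the link analysis above together with a standard simplicial approximation argument, and then exhibit explicit fillings: for each top $(n-1)$-simplex $\tau = \{[\vec{v}_1],\ldots,[\vec{v}_n]\}$ of the image, the configuration $\tau \cup \{[\vec{v}_1+\vec{v}_2]\}$ is an $n$-simplex of $\BTA_{n,m}(\Z)$ with additive core $\{[\vec{v}_1],[\vec{v}_2],[\vec{v}_1+\vec{v}_2]\}$, providing a cone across $\tau$.

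The hard part will be globalizing these local fillings into a coherent null-homotopy: at each step one must verify that newly introduced additive vertices do not conflict with one another, and one must organize the argument by induction on some complexity measure of the cycle. I would expect to use a Quillen-type bad simplex filtration, ordering the top simplices of the sphere by a height function on partial bases and repeatedly invoking the link Cohen--Macaulay property for $\BTA_{n-1,m+1}(\Z)$---available by the inductive hypothesis---to simplify the star of each bad simplex before collapsing it. The role of $\Z$ specifically, as opposed to a general ring, enters through the fact that the only units are $\pm 1$, which sufficiently constrains the combinatorics of possible additive cores to make the bookkeeping tractable.
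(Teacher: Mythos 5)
This theorem is cited from Church--Putman rather than proved in the present paper, but the paper does sketch their argument in the proof of Proposition~\ref{proposition:augtbasescon}: the key tool is a Morse-type height function $R(v)=|a_{n+m}|$ on \emph{vertices} (the absolute value of the last coordinate); spheres are homotoped so as to decrease $\max R$ by exploiting the division algorithm in $\Z$; and, crucially, extending the reduction to the augmented simplices requires what the paper calls ``an elaborate analysis of the process of carrying during integer multiplication.''

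Your proposal diverges from this at exactly the point where the real work happens, and that divergence is a genuine gap. The scaffolding --- the dimension count, identifying links with $\BTA_{n-k-1,m+k+1}(\Z)$ or $\BT_{n-k,m+k}(\Z)$, and homotoping a sphere off the additive simplices into $\BT_{n,m}(\Z)$ (the same move as Lemma~\ref{lemma:bases1surject}) --- is fine. The problem is the filling step. For a top $(n-1)$-simplex $\tau=\{[\vec v_1],\ldots,[\vec v_n]\}$ you cone to $[\vec v_1+\vec v_2]$, but this cone vertex depends on an arbitrary choice of a pair within $\tau$; adjacent top simplices will generically cone to different vertices, so the cones do not glue into a filling of the sphere --- the proposed chain's boundary is not the cycle you started with. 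You concede that ``globalizing these local fillings'' is the hard part and gesture at a bad-simplex filtration governed by ``a height function on partial bases,'' but you supply no candidate height function, no mechanism to reconcile cone vertices on overlapping simplices, and no termination argument. This is precisely where the Church--Putman proof earns its keep: their complexity measure lives on vertices rather than on top simplices, its decrease is driven by Euclidean division in $\Z$, and the augmented simplices require the careful carrying analysis the paper flags. Relatedly, you attribute the $\Z$-specific content of the theorem to $\Z^\times=\{\pm1\}$; that simplification is used, but the essential role of $\Z$ is the Euclidean algorithm, which your outline never invokes.
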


We will need the analogous fact with $\Z$ replaced by a field:

\begin{proposition}
\label{proposition:augtbasescon}
For a field $\Field$, the complex $\BTA_{n,m}(\Field)$ is Cohen--Macaulay of dimension $n$
for all $n \geq 1$ and $m \geq 0$ with $n+m \geq 2$.
\end{proposition}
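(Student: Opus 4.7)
The plan is to follow the two-step pattern from the proof of Proposition \ref{proposition:tbasescon}: reduce the Cohen--Macaulay statement to a connectivity statement, then establish that connectivity via a retraction from an auxiliary complex built from actual vectors. I would proceed by induction on $n$, with small base cases ($n=1$, any $m \geq 1$) verified directly. For the inductive step, the link of a $k$-simplex $\sigma$ in $\BTA_{n,m}(\Field)$ splits into two essentially distinct cases. If $\sigma$ is a standard simplex (a partial $\Field^{\times}$-basis), then a change of coordinates identifies its link with $\BTA_{n-k-1,m+k+1}(\Field)$, which is Cohen--Macaulay by the inductive hypothesis. If $\sigma$ is an additive simplex, uniqueness of its additive core (the Remark following the definition of augmented partial $R^{\times}$-basis) forces any extension to add only standard vertices, so the link is isomorphic to $\BT_{n-k,m+k}(\Field)$, which is Cohen--Macaulay by Proposition \ref{proposition:tbasescon}. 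In both cases the link has the required dimension $n-k-1$ and connectivity $n-k-2$.

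It then remains to show that $\BTA_{n,m}(\Field)$ itself is $(n-1)$-connected. Following the recipe of Proposition \ref{proposition:tbasescon}, I would introduce the auxiliary complex $\LetterBA_{n,m}(\Field)$ defined exactly like $\BTA_{n,m}(\Field)$ except that simplices consist of actual nonzero vectors rather than $\Field^{\times}$-vectors, with the augmented relation $\vec{v}_0 = \lambda \vec{v}_1 + \nu \vec{v}_2$ imposed directly at the vector level for units $\lambda,\nu$. Choosing an arbitrary representative $\vec{v} \in v$ for every $\Field^{\times}$-vector $v$ defines a simplicial section $\phi\colon \BTA_{n,m}(\Field) \to \LetterBA_{n,m}(\Field)$ of the natural projection $\pi$; the key point is that if $\vec{v}_0 = \lambda \vec{v}_1 + \nu \vec{v}_2$ with $\lambda,\nu \in \Field^{\times}$, then replacing each $\vec{v}_i$ by a unit multiple only changes $\lambda$ and $\nu$ to different units, so the augmented structure is preserved under $\phi$. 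Since $\pi \circ \phi = \mathrm{id}$, the map $\pi$ retracts $\LetterBA_{n,m}(\Field)$ onto the image of $\phi$, and thus $(n-1)$-connectivity of $\LetterBA_{n,m}(\Field)$ transfers to $\BTA_{n,m}(\Field)$.

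The principal obstacle is therefore to establish $(n-1)$-connectivity of $\LetterBA_{n,m}(\Field)$, which is the field analogue of Church--Putman's \cite[Theorem C$'$]{CP}. My plan is to imitate their proof over $\Field$, using \cite[Theorem 2.6]{VanDerKallen} in place of \cite[Theorem 4.2]{CP} for the underlying $(n-2)$-connectivity of $\LetterB_{n,m}(\Field)$, and then bumping the connectivity up by one via bad-simplex arguments that inductively attach augmented simplices along the $\LetterB_{n,m}(\Field) \subset \LetterBA_{n,m}(\Field)$ inclusion. The field setting is in fact easier than $\Z$ in several respects because every nonzero element is a unit, which trivialises various primitive-vector lemmas needed over $\Z$. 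The main risk is that some step of the integer argument silently relies on an integral feature; if so, I would fall back on a Quillen-style poset argument in which the Cohen--Macaulay property of $\BT_{n,m}(\Field)$ from Proposition \ref{proposition:tbasescon} feeds directly into the induction.
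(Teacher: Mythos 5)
Your proposal takes a genuinely different route from the paper. The paper's own proof of Proposition \ref{proposition:augtbasescon} has a single step: it declares that the Morse-theoretic argument of \cite[Theorem C$'$]{CP} carries over verbatim to $\BTA_{n,m}(\Field)$, after replacing the integral complexity function $R(v) = |a_{n+m}|$ by the much simpler two-valued function $R(v) = 1$ if $a_{n+m}\ne 0$ and $R(v)=0$ otherwise, the point being that the division algorithm in a field always lets you clear the last coordinate in one step. You instead propose a CM reduction via links, a retraction from an auxiliary actual-vector complex $\LetterBA_{n,m}(\Field)$, and a separate connectivity argument for that auxiliary complex. The retraction observation is correct and worth noting: your verification that the section $\phi$ preserves augmented simplices (changing representatives only changes $\lambda,\nu$ to other units) is exactly right, so $\BTA_{n,m}(\Field)$ is indeed a retract of $\LetterBA_{n,m}(\Field)$.

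However, there is a genuine gap in your link analysis, and it occurs precisely in the standard-simplex case. You assert that for a standard $k$-simplex $\sigma$ of $\BTA_{n,m}(\Field)$, a change of coordinates identifies $\link_{\BTA_{n,m}(\Field)}(\sigma)$ with $\BTA_{n-k-1,m+k+1}(\Field)$. This is false. The complex $\BTA_{n-k-1,m+k+1}(\Field)$ is by definition the \emph{augmented} link $\hlink$, i.e.\ the full subcomplex on vertices lying outside $\Span{\vec{e}_1,\ldots,\vec{e}_{m+k+1}}$. But the ordinary link, which is what the Cohen--Macaulay condition refers to, contains additional vertices: for any two basis vectors $a,b$ among $\sigma\cup\{\vec{e}_1,\ldots,\vec{e}_m\}$ with not both in $\{\vec{e}_1,\ldots,\vec{e}_m\}$, and any $\lambda,\nu\in\Field^\times$, the $\Field^\times$-vector $[\lambda a+\nu b]$ is a vertex of the link of $\sigma$ (the whole set forms an augmented simplex with that vertex as additive core apex) but lies inside the span and hence is excluded from $\BTA_{n-k-1,m+k+1}(\Field)$. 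A concrete instance: in $\BTA_2(\Field)$ the vertex $[\vec{e}_1+\vec{e}_2]$ belongs to $\link(\{[\vec{e}_1],[\vec{e}_2]\})$ but $\BTA_{0,2}(\Field)$ is empty. So the induction as stated does not go through. The conclusion (that the full link is still $(n-k-2)$-connected) is in fact true, but it requires a further argument: each extra vertex $w$ is the apex of a cone $\{w\}\ast\BT_{n-k-1,m+k+1}(\Field)$, the different extra vertices are pairwise non-adjacent, and a Mayer--Vietoris/van~Kampen argument using Proposition \ref{proposition:tbasescon} shows the union of these cones with the augmented link preserves the needed connectivity. None of this is in your writeup. (Your claim for additive simplices, in contrast, is correct: once the additive core is fixed, only standard vertices can be added, and the link really is $\BT_{n-k,m+k}(\Field)$.)

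The other weak point is the key connectivity step. You defer the $(n-1)$-connectivity of $\LetterBA_{n,m}(\Field)$ to ``bad-simplex arguments'' bumping up from van~der~Kallen, but this is the entire content of the Church--Putman argument, and the paper's own remark after Proposition \ref{proposition:augbasescon} warns that the Church--Putman proof actually \emph{fails} for the $\pm$-vector analogue $\BA_{n,m}(\Field)$ because a retraction it uses breaks. Your auxiliary complex uses actual vectors, a still finer equivalence relation, and you give no argument that the same obstruction does not recur there. The route the paper takes---running the Morse argument directly on the $\Field^\times$-vector complex, where the offending retraction does survive---avoids this risk entirely and renders the auxiliary complex unnecessary.
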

\begin{proof}
We start by briefly describing the proof of \cite[Theorem C$'$]{CP}.
Consider a vertex $v$ of $\BTA_{n,m}(\Z)$.  Pick $\vec{v} \in v$, and write
$\vec{v} = (a_1,\ldots,a_{n+m}) \in \Z^{n+m}$.  Define
\[R(v) = |a_{n+m}| \in \Z_{\geq 0}.\]
This does not depend on the choice of $\vec{v}$.  In
\cite[Theorem C$'$]{CP}, the function $R(v)$ is used
as a sort of Morse function, and spheres in $\BTA_{n,m}(\Z)$ are homotoped
so as to decrease the largest value of $R(v)$ as $v$ ranges over the vertices
of the sphere.  This homotopy makes use of the division algorithm in $\Z$ via
the observation that if $\{v_1,\ldots,v_k\}$ is a standard simplex of
$\BTA_{n,m}(\Z)$ with $R(v_1) > 0$ and if $\vec{v}_i \in v_i$ are representatives, then we
can find $\nu_2,\ldots,\nu_k \in \Z$ such that letting
$\vec{v}'_i = \vec{v}_i + \nu_i \vec{v}_1$ and $v'_i = [\vec{v}'_i]$
for $2 \leq i \leq k$, we have
\[R(v'_i) < R(v_1) \quad \quad (2 \leq i \leq k),\]
while $\{v_1,v_2',\ldots,v_k'\}$ is still a standard simplex of $\BTA_{n,m}(\Z)$.  To
extend this to the augmented simplices, an elaborate analysis of the process
of carrying during integer multiplication is required.

A very similar (but much easier proof) works for $\BTA_{n,m}(\Field)$.  The
appropriate complexity function $R$ is defined as follows.  Consider a vertex
$v$ of $\BTA_{n,m}(\Field)$.  Pick $\vec{v} \in v$, and write
$\vec{v} = (a_1,\ldots,a_{n+m}) \in \Field^{n+m}$.  Define
\[R(v) = \begin{cases}
1 & \text{if $a_{n+m} \neq 0$},\\
0 & \text{if $a_{n+m} = 0$}.
\end{cases}\]
The division algorithm is much easier in a field.  Indeed, the appropriate
analogue of the above fact is that if $\{v_1,\ldots,v_k\}$ is a standard simplex of
$\BTA_{n,m}(\Field)$ with $R(v_1) = 1$ and if $\vec{v}_i \in v_i$ are representatives, then we
can find $\nu_2,\ldots,\nu_k \in \Field$ such that letting
$\vec{v}'_i = \vec{v}_i + \nu_i \vec{v}_1$ and $v'_i = [\vec{v}'_i]$
for $2 \leq i \leq k$, we have
\[R(v'_i) = 0 \quad \quad (2 \leq i \leq k),\]
while $\{v_1,v_2',\ldots,v_k'\}$ is still a standard simplex of $\BTA_{n,m}(\Z)$.
With this definition, the entire proof of \cite[Theorem C$'$]{CP}
goes through with minimal changes.  We omit the details.
\end{proof}

\subsubsection{Partial \texorpdfstring{$\pm$}{plus-minus}-bases}
\label{section:pmbases}

We now turn to a different complex where we only
allow multiplication by $-1$.  We start with the following. 

\begin{definition}
A {\em $\pm$-vector} in $R^n$ is a set $v = \{\vec{v},-\vec{v}\}$ with
$\vec{v} \in R^n$ nonzero.  Given a nonzero $\vec{v} \in R^n$, we will write $\pm \vec{v}$ for
the associated $\pm$-vector $\{\vec{v},-\vec{v}\}$.
\end{definition}

We then make the following definition.

\begin{definition}
A {\em partial $\pm$-basis} for $R^n$ is a set $\{\pm \vec{v}_1,\ldots,\pm \vec{v}_k\}$ of
$\pm$-vectors in $R^n$ such that 
the set $\{\vec{v}_1,\ldots,\vec{v}_k\}$ is a partial
basis for $R^n$.  This does not depend on the choice of the representatives $\vec{v}_i$.
\end{definition}

These form a simplicial complex:

\begin{definition}
The {\em complex of partial $\pm$-bases}
for $R^n$, denoted $\B_n(R)$, is the simplicial complex whose simplices are
partial $\pm$-bases for $R^n$.
\end{definition}

To understand $\B_n(R)$ in an inductive way, we will need to understand links of simplices
in it.  We thus make the following definition.

\begin{definition}
Let $\{\vec{e}_1,\ldots,\vec{e}_{n+m}\}$
be the standard basis for $R^{n+m}$.  Define 
$\B_{n,m}(R) = \link_{\B_{n+m}(R)} \{\pm \vec{e}_1,\ldots,\pm \vec{e}_m\}$.
\end{definition}

\begin{remark}
Since $\Z^{\times} = \{\pm 1\}$, we have $\B_{n,m}(\Z) = \BT_{n,m}(\Z)$.
\end{remark}

The following is the analogue for $\B_{n,m}(\Field)$ of Proposition \ref{proposition:tbasescon}.

\begin{proposition}
\label{proposition:basescon}
For a field $\Field$, the complex $\B_{n,m}(\Field)$ is Cohen--Macaulay of dimension $(n-1)$
for all $n, m \geq 0$.
\end{proposition}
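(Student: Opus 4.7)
The plan is to mirror the proof of Proposition \ref{proposition:tbasescon} almost verbatim, the only essential difference being that $R^{\times}$-vectors are replaced by $\pm$-vectors. First I would reduce the Cohen--Macaulay property to a connectivity bound: the link in $\B_{n,m}(\Field)$ of a $k$-simplex $\{\pm \vec{v}_0,\ldots,\pm \vec{v}_k\}$ is isomorphic to $\B_{n-k-1,\,m+k+1}(\Field)$ via the change of basis sending $\vec{v}_0,\ldots,\vec{v}_k$ to $\vec{e}_{m+1},\ldots,\vec{e}_{m+k+1}$ (the choice of representatives is irrelevant, since sign changes preserve all the relevant data). So by induction on $n$, it suffices to show that $\B_{n,m}(\Field)$ is $(n-2)$-connected for all $n,m \geq 0$.

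For the connectivity, I would invoke van der Kallen's theorem \cite[Theorem 2.6]{VanDerKallen}, which asserts that the complex $\LetterB_{n,m}(\Field)$ of actual partial bases is $(n-2)$-connected. The natural projection $\pi\colon \LetterB_{n,m}(\Field) \rightarrow \B_{n,m}(\Field)$ sending $\vec{v}$ to $\pm\vec{v}$ is a simplicial surjection. To build a section, I would choose an arbitrary representative $\vec{v} \in v$ for each $\pm$-vector $v$ in $\Field^{n+m}$, and define $\phi\colon \B_{n,m}(\Field) \rightarrow \LetterB_{n,m}(\Field)$ on vertices by $\phi(v) = \vec{v}$. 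This is a well-defined simplicial map because scaling vectors in a partial basis by $\pm 1$ preserves the property of being a partial basis. Since $\pi \circ \phi = \mathrm{id}$, the map $\phi$ embeds $\B_{n,m}(\Field)$ as a retract of $\LetterB_{n,m}(\Field)$, and $(n-2)$-connectivity of the latter descends to the former.

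There is essentially no obstacle in this argument; it is a direct adaptation of the proof used for $\BT_{n,m}(\Field)$, and the only tiny point to verify is the simpliciality of $\phi$, which is immediate. As in the remark following Proposition \ref{proposition:tbasescon}, one could alternatively prove Proposition \ref{proposition:basescon} by imitating the proof of \cite[Theorem 4.2]{CP}, but the retraction argument above has the virtue of isolating the essential input as van der Kallen's connectivity theorem.
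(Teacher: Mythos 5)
Your proof is correct, but it takes a genuinely different route from the one in the paper. You directly mimic the proof of Proposition \ref{proposition:tbasescon}, exhibiting $\B_{n,m}(\Field)$ as a retract of van der Kallen's complex $\LetterB_{n,m}(\Field)$ via a choice of representatives; this works verbatim for $\pm$-vectors just as it does for $R^{\times}$-vectors. The paper instead compares $\B_{n,m}(\Field)$ with $\BT_{n,m}(\Field)$: writing $\Lambda = \Field^{\times}/\{\pm 1\}$, the vertices of $\B_{n,m}(\Field)$ are identified with pairs $(v,\lambda)$ with $v$ a vertex of $\BT_{n,m}(\Field)$ and $\lambda \in \Lambda$, which exhibits $\B_{n,m}(\Field)$ as a \emph{complete join complex} over $\BT_{n,m}(\Field)$ in the sense of Hatcher--Wahl; Cohen--Macaulayness then follows from \cite[Proposition 3.5]{HatcherWahl} together with Proposition \ref{proposition:tbasescon}. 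Your approach is more self-contained and avoids the complete-join-complex machinery. The paper's route has the advantage of being parallel to the proof of the augmented Proposition \ref{proposition:augbasescon}, where a direct retraction to a known-connected complex is not available (van der Kallen gives no connectivity statement for the augmented complex, and as the paper remarks, attempting to run the Church--Putman Morse-theoretic argument directly on $\BA_{n,m}(\Field)$ fails because a certain retraction in it breaks); there the complete-join argument over $\BTA_{n,m}(\Field)$ is essential, and using it for the unaugmented case too keeps the two proofs uniform.
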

\begin{proof}
We compare $\B_{n,m}(\Field)$ with $\BT_{n,m}(\Field)$.  For each vertex $v$ of $\BT_{n,m}(\Field)$, choose
an arbitrary element $\vec{v} \in v$.  Define $\Lambda = \Field^{\times} / \{\pm 1\}$, considered as a multiplicative
group.  Vertices of $\B_{n,m}(\Field)$ can then be identified with pairs $(v,\lambda)$ with $v$ a vertex of $\BT_{n,m}(\Field)$
and $\lambda \in \Lambda$ via the identification that takes $(v,\lambda)$ to $\pm (\lambda \vec{v})$.  This
expression makes sense even though $\lambda \in \Lambda$ rather than $\Field^{\times}$ since we are considering
$\pm$-vectors.  Under this identification, a set $\{(v_0,\lambda_0),\ldots,(v_k,\lambda_k)\}$ forms a
$k$-simplex precisely when $\{v_0,\ldots,v_k\}$ forms a $k$-simplex of $\BT_{n,m}(\Field)$ (which implies that
the $v_i$ are distinct).  This means that $\B_{n,m}(\Field)$ is a {\em complete join complex} over
$\BT_{n,m}(\Field)$ in the sense of \cite[Definition 3.2]{HatcherWahl}.  
Proposition \ref{proposition:tbasescon} says that $\BT_{n,m}(\Field)$
is Cohen--Macaulay of dimension $(n-1)$.  
In \cite[Proposition 3.5]{HatcherWahl}, Hatcher--Wahl proved that if $Y$ is a complete
join complex over a complex $X$ that is Cohen--Macaulay of dimension $r$, then
$Y$ is also Cohen--Macaulay of dimension $r$.  Applying this to our situation, we deduce
that $\B_{n,m}(\Field)$ is Cohen--Macaulay of dimension $(n-1)$, as desired.
\end{proof}

\subsubsection{Augmented \texorpdfstring{$\pm$}{plus-minus}-bases}
\label{section:augpmbases}

We now define the augmented version of $\B_{n,m}(R)$.  The key definition is as follows.

\begin{definition}
An {\em augmented partial $\pm$-basis} for $R^n$ is a
set $\{\pm \vec{v}_0,\ldots,\pm \vec{v}_k\}$ of $\pm$-vectors in $R^n$ that can be reordered
such that the following hold:
\begin{compactitem}
\item $\{\pm \vec{v}_1,\ldots,\pm \vec{v}_k\}$ is a partial $\pm$-basis for $R^n$.
\item There exist
units $\lambda,\nu \in R^{\times}$ such that $\vec{v}_0 = \lambda \vec{v}_1 + \nu \vec{v}_2$.
The existence of $\lambda$ and $\nu$ does not depend on the choice of the
representatives $\vec{v}_1$ and $\vec{v}_2$ -- making the other choice merely 
multiplies them by $-1$.
\end{compactitem}
We will call the $\pm$-vectors $\{\pm \vec{v}_0,\pm \vec{v}_1,\pm \vec{v}_2\}$ 
the {\em additive core} of $\{\pm \vec{v}_0,\ldots,\pm \vec{v}_k\}$.
\end{definition}

\begin{remark}
The additive core of an augmented partial $\pm$-basis $\sigma$ for $R^n$ can be
characterized intrinsically as the set of all $v \in \sigma$ such that
$\sigma \setminus \{v\}$ is a partial $\pm$-basis for $R^n$.
\end{remark}

A subset of an augmented partial $\pm$-basis is either an augmented partial $\pm$-basis
(if the subset contains the entire additive core) or a partial $\pm$-basis
(if the subset does not contain the entire additive core).  We thus can make the following definition.

\begin{definition}
The {\em complex of augmented partial $\pm$-bases} for
$R^n$, denoted $\BA_n(R)$, is the simplicial complex whose simplices consist
of partial $\pm$-bases and augmented partial $\pm$-bases for $R^n$.
\end{definition}

Again, we will need to study links of simplices in $\BA_n(R)$.  Just like
for $\BTA_n(R)$, we make the following definition.

\begin{definition}
Let $\sigma = \{\pm \vec{v}_1,\ldots,\pm \vec{v}_k\}$ be a simplex of $\BA_n(R)$.
The {\em augmented link} of $\sigma$, denoted
$\hlink_{\BA_n(R)}(\sigma)$, is the full subcomplex of $\link_{\BA_n(R)}(\sigma)$ spanned
by vertices $\pm \vec{w}$ of $\link_{\BA_n(R)}(\sigma)$ such that
$\vec{w} \notin \Span{\vec{v}_1,\ldots,\vec{v}_k}$.  This
definition does not depend on the choice of the representatives $\vec{v}_i$ or $\vec{w}$.
\end{definition}

The simplices of $\hlink_{\BA_n(R)}(\sigma)$ fall into the following three classes:

\begin{definition}
Let $\sigma = \{\pm \vec{v}_1,\ldots,\pm \vec{v}_k\}$ be a 
simplex of $\BA_n(R)$.  Let $\eta$ be a simplex of $\hlink_{\BA_n(R)}(\sigma)$.  
Then one of the following three conditions hold.
\begin{compactitem}
\item $\eta$ is a partial $\pm$-basis for $R^n$.  We will then call $\eta$ a
{\em standard simplex}.
\item $\eta$ is an augmented partial $\pm$-basis for $R^n$, i.e.\ we can write
$\eta = \{\pm \vec{w}_0,\ldots,\pm \vec{w}_{\ell}\}$ such that
$\vec{w}_0 = \lambda \vec{w}_1 + \nu \vec{w}_2$ with $\lambda, \nu \in R^{\times}$.  
We will then call $\eta$ an {\em internally additive simplex}.
\item We can write $\eta = \{\pm \vec{w}_0,\ldots,\pm \vec{w}_{\ell}\}$ 
with $\vec{w}_0 = \lambda \vec{w}_1 + \nu \vec{v}_i$ for some $\lambda,\nu \in R^{\times}$
and some $1 \leq i \leq k$.  We will then call $\eta$ an {\em externally additive simplex}.
\end{compactitem}
We will sometimes call a simplex that is either internally or externally additive
simply an {\em additive simplex}.
\end{definition}

Just like for $\BTA_{n,m}(R)$, we make the following definition.

\begin{definition}
Let $\{\vec{e}_1,\ldots,\vec{e}_{n+m}\}$
be the standard basis for $R^{n+m}$.  Define
$\BA_{n,m}(R) = \hlink_{\BA_{n+m}(R)} \{\pm \vec{e}_1,\ldots,\pm \vec{e}_m\}$.
\end{definition}

\begin{remark}
Since $\Z^{\times} = \{\pm 1\}$, we have $\BA_{n,m}(\Z) = \BTA_{n,m}(\Z)$.
\end{remark}

The analogue of Proposition \ref{proposition:augtbasescon} is the following.

\begin{proposition}
\label{proposition:augbasescon}
For a field $\Field$, the complex $\BA_{n,m}(\Field)$ is Cohen--Macaulay of dimension $n$
for all $n \geq 1$ and $m \geq 0$ with $n+m \geq 2$.
\end{proposition}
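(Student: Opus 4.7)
The plan is to adapt the proof of Proposition \ref{proposition:basescon}: I will show that $\BA_{n,m}(\Field)$ is a complete join complex over $\BTA_{n,m}(\Field)$ in the sense of \cite[Definition 3.2]{HatcherWahl}, and then deduce the Cohen--Macaulay conclusion from Proposition \ref{proposition:augtbasescon} via \cite[Proposition 3.5]{HatcherWahl}. To set this up, I define the natural simplicial projection $\pi\colon \BA_{n,m}(\Field) \to \BTA_{n,m}(\Field)$ sending $\pm \vec{v}$ to $[\vec{v}]$. Letting $\Lambda = \Field^{\times}/\{\pm 1\}$ and choosing a representative $\vec{v} \in v$ for each vertex $v$ of $\BTA_{n,m}(\Field)$, the fiber of $\pi$ over $v$ is naturally in bijection with $\Lambda$ via $\lambda \leftrightarrow \pm(\lambda \vec{v})$.

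The key step is to verify the complete join criterion: a set $\{(v_0,\lambda_0),\ldots,(v_k,\lambda_k)\}$ is a simplex of $\BA_{n,m}(\Field)$ precisely when the $v_i$ are pairwise distinct and $\{v_0,\ldots,v_k\}$ is a simplex of $\BTA_{n,m}(\Field)$. For standard (partial $\pm$-basis) simplices this is immediate, since linear independence of vectors is preserved under rescaling by nonzero scalars. The content is in the augmented case. If the underlying simplex in $\BTA_{n,m}(\Field)$ has additive core relation $\vec{v}_0 = \lambda \vec{v}_1 + \nu \vec{v}_2$ with $\lambda,\nu \in \Field^{\times}$, then after any rescaling $\vec{v}_i' = c_i \vec{v}_i$ with $c_i \in \Field^{\times}$ we still have
\[ \vec{v}_0' = (c_0\lambda/c_1)\, \vec{v}_1' + (c_0\nu/c_2)\, \vec{v}_2', \]
with unit coefficients, so the $\pm$-vectors $\pm \vec{v}_i'$ form an augmented partial $\pm$-basis. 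The converse direction, projecting an augmented simplex of $\BA_{n,m}(\Field)$ down to $\BTA_{n,m}(\Field)$, is analogous and equally immediate. With the complete join structure in hand, \cite[Proposition 3.5]{HatcherWahl} combined with Proposition \ref{proposition:augtbasescon} yields the result.

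The main obstacle is bookkeeping for the augmented simplices: in contrast to Proposition \ref{proposition:basescon}, the simplex condition in $\BA_n(R)$ is not purely about directions, because the additive core is defined via a specific linear relation between chosen representatives with specified unit coefficients. The observation above --- that rescaling is absorbed cleanly into the unit coefficients of the relation --- is exactly what is needed to make the complete join structure work. This uniform argument applies in every characteristic; in characteristic two, $\Lambda$ becomes all of $\Field^{\times}$, but the identification and verification go through unchanged.
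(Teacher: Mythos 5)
Your proof is correct and follows essentially the same route as the paper: identify $\BA_{n,m}(\Field)$ as a complete join complex over $\BTA_{n,m}(\Field)$ via the $(v,\lambda)$ parametrization with $\Lambda = \Field^{\times}/\{\pm 1\}$, then invoke \cite[Proposition 3.5]{HatcherWahl} together with Proposition \ref{proposition:augtbasescon}. Your explicit rescaling computation for the additive core is exactly the (unstated) reason the paper's ``forms a $k$-simplex precisely when'' claim holds, so you have if anything filled in the one detail the paper leaves to the reader.
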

\begin{proof}
Just like we did in the proof of Proposition \ref{proposition:basescon}, 
we compare $\BA_{n,m}(\Field)$ with $\BTA_{n,m}(\Field)$.  For each vertex $v$ of $\BTA_{n,m}(\Field)$, choose
an arbitrary element $\vec{v} \in v$.  Define $\Lambda = \Field^{\times} / \{\pm 1\}$, considered as a multiplicative
group.  Vertices of $\BA_{n,m}(\Field)$ can then be identified with pairs $(v,\lambda)$ with $v$ a 
vertex of $\BTA_{n,m}(\Field)$ and $\lambda \in \Lambda$ via the identification that takes 
$(v,\lambda)$ to $\pm (\lambda \vec{v})$.  This expression makes sense even though $\lambda \in \Lambda$ 
rather than $\Field^{\times}$ since we are considering
$\pm$-vectors.  Under this identification, a set $\{(v_0,\lambda_0),\ldots,(v_k,\lambda_k)\}$ forms a
$k$-simplex precisely when $\{v_0,\ldots,v_k\}$ forms a $k$-simplex of $\BTA_{n,m}(\Field)$; for additive
simplices, the additive core of $\{(v_0,\lambda_0),\ldots,(v_k,\lambda_k)\}$ consists of the $v_i$
that lie in the additive core of $\{v_0,\ldots,v_k\}$.
This means that $\BA_{n,m}(\Field)$ is a complete join complex over
$\BTA_{n,m}(\Field)$ in the sense of \cite[Definition 3.2]{HatcherWahl}.  By \cite[Proposition 3.5]{HatcherWahl},
the fact that $\BA_{n,m}(\Field)$ is Cohen--Macaulay of dimension $n$ now follows from
Proposition \ref{proposition:augtbasescon}, which says that $\BTA_{n,m}(\Field)$ is Cohen--Macaulay of dimension $n$.
\end{proof}

\begin{remark}
It is tempting to try to prove Proposition \ref{proposition:augbasescon} by mimicking the proof
of the analogous result over $\Z$ from \cite{CP}, just like we did
for Proposition \ref{proposition:augtbasescon}.  Since we will only use Proposition \ref{proposition:augbasescon}
and not Proposition \ref{proposition:augtbasescon} later in the paper, this would allow us to 
totally ignore the complexes of $R^{\times}$-bases.  Unfortunately, it turns out that the
proof in \cite{CP} breaks down for $\BA_{n,m}(\Field)$ (a certain retraction
it uses breaks), so this strategy does not work.  This was why we ended up introducing
the complexes of $R^{\times}$-bases.
\end{remark}

\subsection{Complexes of determinant-\texorpdfstring{$1$}{1} partial \texorpdfstring{$\pm$}{plus-minus}-bases}
\label{section:det}

For our proof, what we really need are certain subcomplexes of the complexes of partial $\pm$-bases
where we impose a determinant condition.

\subsubsection{Determinant-\texorpdfstring{$1$}{1} partial \texorpdfstring{$\pm$}{plus-minus}-bases}
\label{section:det1}

We make the following definition.

\begin{definition}
A partial $\pm$-basis $\{\pm \vec{v}_1,\ldots,\pm \vec{v}_k\}$ for $R^n$ 
is a {\em determinant-$1$ partial $\pm$-basis} if it satisfies the following 
condition.
\begin{compactitem}
\item If $k = n$, then we require that the determinant of the matrix
$(\vec{v}_1\ \cdots\ \vec{v}_n)$ whose columns are the $\vec{v}_i$ is equal
to either $1$ or $-1$.  This does not depend on the choice of the
$\vec{v}_i$ or their ordering.
\item If $k < n$, then no additional condition needs to be satisfied.\qedhere
\end{compactitem}
\end{definition}

These form a simplicial complex:

\begin{definition}
The {\em complex of determinant-$1$ partial $\pm$-bases}
for $R^n$, denoted $\BD_n(R)$, is the simplicial complex whose simplices are
determinant-$1$ partial $\pm$-bases for $R^n$.
\end{definition}

Just like for $\B_n(R)$, we need to consider links as well:

\begin{definition}
Let $\{\vec{e}_1,\ldots,\vec{e}_{n+m}\}$
be the standard basis for $R^{n+m}$.  Define
$\BD_{n,m}(R) = \link_{\BD_{n+m}(R)} \{\pm \vec{e}_1,\ldots,\pm \vec{e}_m\}$.
\end{definition}

\begin{remark}
Since $\Z^{\times} = \{\pm 1\}$, we have $\BD_{n,m}(\Z) = \B_{n,m}(\Z) = \BT_{n,m}(\Z)$.
\end{remark}

With these definitions, we have the following key lemma.  Recall that $\Gamma_n(p)$
is the level-$p$ congruence subgroups of $\SL_n(\Z)$.

\begin{lemma}
\label{lemma:quotientbd}
For a prime $p$, we have $\B_{n}(\Z) / \Gamma_n(p) \cong \BD_n(\Field_p)$ for all
$n \geq 1$.
\end{lemma}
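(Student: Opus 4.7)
The plan is to define a simplicial map $\phi\colon \B_n(\Z) \to \BD_n(\Field_p)$ by reduction modulo $p$, verify it is $\Gamma_n(p)$-invariant, and then show that the induced map on orbits of simplices is a bijection. On a vertex $\pm\vec{v}$ of $\B_n(\Z)$, set $\phi(\pm\vec{v}) = \pm\bar{\vec{v}}$, where $\bar{\vec{v}}$ is the mod-$p$ reduction. Well-definedness, including the determinant-$1$ condition in top degree, follows because a partial $\pm$-basis $\{\pm\vec{v}_1,\ldots,\pm\vec{v}_k\}$ of $\Z^n$ extends to a $\Z$-basis of determinant $\pm 1$, and reducing modulo $p$ yields a $\Field_p$-basis of determinant $\pm 1$ extending $\{\pm\bar{\vec{v}}_1,\ldots,\pm\bar{\vec{v}}_k\}$. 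These reductions are distinct since they are part of an $\Field_p$-basis. Invariance under $\Gamma_n(p)$ is immediate because $\gamma \equiv I \pmod{p}$ forces $\gamma\vec{v} \equiv \vec{v} \pmod p$.

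For surjectivity on simplices, given $\bar\sigma = \{\pm\bar{\vec{v}}_1,\ldots,\pm\bar{\vec{v}}_k\}$ in $\BD_n(\Field_p)$, extend it (flipping one sign if necessary) to a determinant-$+1$ basis of $\Field_p^n$, view this as an element of $\SL_n(\Field_p)$, and lift to an element of $\SL_n(\Z)$ using the surjectivity of $\SL_n(\Z) \to \SL_n(\Field_p)$ (which holds because both groups are generated by elementary matrices). The first $k$ columns give a partial $\pm$-basis of $\Z^n$ whose image under $\phi$ is $\bar\sigma$.

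The main obstacle, and the heart of the argument, is showing that if $\phi(\sigma) = \phi(\tau)$ then some $\gamma \in \Gamma_n(p)$ carries $\sigma$ to $\tau$. Writing $\sigma = \{\pm\vec{v}_1,\ldots,\pm\vec{v}_k\}$ and $\tau = \{\pm\vec{w}_1,\ldots,\pm\vec{w}_k\}$, after reordering $\tau$ and choosing signs of representatives we may assume $\vec{v}_i \equiv \vec{w}_i \pmod p$ for all $i$. The strategy is to complete both partial bases to $\Z$-bases $V = (\vec{v}_1\mid\cdots\mid\vec{v}_n)$ and $W = (\vec{w}_1\mid\cdots\mid\vec{w}_n)$ of determinant $+1$ (adjusting the sign of the final column if needed), in such a way that $\bar V = \bar W$ in $\SL_n(\Field_p)$; then $\gamma := WV^{-1} \in \SL_n(\Z)$ reduces to the identity mod $p$ and satisfies $\gamma \vec{v}_i = \vec{w}_i$ for all $i$, so $\gamma \in \Gamma_n(p)$ carries $\sigma$ to $\tau$.

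To arrange $\bar V = \bar W$, start from any such completions. Because $\bar V$ and $\bar W$ agree in their first $k$ columns, $\bar V^{-1}\bar W$ has block form $\bigl(\begin{smallmatrix} I_k & B \\ 0 & C \end{smallmatrix}\bigr)$ with $C \in \SL_{n-k}(\Field_p)$. Lift $B$ entrywise and lift $C$ to $\tilde C \in \SL_{n-k}(\Z)$ using surjectivity of $\SL_{n-k}(\Z) \to \SL_{n-k}(\Field_p)$; the resulting block upper-triangular matrix $A \in \SL_n(\Z)$ reduces to $\bar V^{-1}\bar W$. Replace $V$ by $V' := VA$. Since $A$ fixes the first $k$ columns, $V'$ still extends $\{\vec{v}_1,\ldots,\vec{v}_k\}$, and $\bar{V'} = \bar V \cdot \bar V^{-1}\bar W = \bar W$. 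The argument is then complete. The slight sign subtleties (flipping representatives to match reductions, and adjusting a single column to fix determinants to $+1$) are harmless because they do not alter the underlying $\pm$-vectors or simplices, and they work uniformly for every prime $p$, including $p = 2$ where the $\pm$-structure is trivial and the determinant condition is automatic.
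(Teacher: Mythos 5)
Your proof is correct and follows the same basic strategy as the paper: reduce coefficients modulo $p$ and exploit the classical fact that $\SL_n(\Z) \to \SL_n(\Field_p)$ is surjective with kernel $\Gamma_n(p)$. The paper's published proof is much terser: it reinterprets simplices of $\BD_n(R)$ as sets of $\pm$-vectors that appear among the columns of a matrix in $\SL_n(R)$, notes $\BD_n(\Z) = \B_n(\Z)$, and then declares the lemma to follow ``immediately'' from the surjectivity-with-kernel fact. Your argument supplies exactly the content that is being implicitly invoked there: to show two simplices of $\B_n(\Z)$ with equal mod-$p$ image are $\Gamma_n(p)$-equivalent, one must adjust the extension of a partial basis to a full basis so that the two resulting $\SL_n(\Z)$-matrices have the same reduction, and your block upper-triangular construction (with the lift of $C \in \SL_{n-k}(\Field_p)$ to $\SL_{n-k}(\Z)$) is the right device for this. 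This is not a different route so much as the paper's route unpacked to full rigor, and it makes visible that one actually needs the surjectivity of $\SL_m(\Z) \to \SL_m(\Field_p)$ for all $m \le n$, a point the paper's phrasing glosses over.
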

\begin{proof}
For a commutative ring $R$, the complex $\BD_n(R)$ can be viewed as the one
whose simplices are collections of $\pm$-vectors $\{v_1,\ldots,v_k\}$ in
$R^n$ such that there exist representatives $\vec{v}_i \in v_i$ that arise
as some of the columns in a matrix in $\SL_n(R)$.  We remark that we only need matrices
of determinant $1$ (rather than $\pm 1$) since we are free to multiply the $\vec{v}_i$
by $-1$ as needed.  In light of the fact that $\BD_{n}(\Z) = \B_n(\Z)$, the lemma 
now immediately follows from the classical fact that the group homomorphism
\[\SL_n(\Z) \longrightarrow \SL_n(\Field_p)\]
that reduces matrices modulo $p$ is surjective with kernel $\Gamma_n(p)$.
\end{proof}

\subsubsection{Augmented determinant-\texorpdfstring{$1$}{1} partial \texorpdfstring{$\pm$}{plus-minus}-bases}
\label{section:det1aug}

We make the following definition.

\begin{definition}
An {\em augmented determinant-$1$ partial $\pm$-basis} for $R^n$ is a
set $\{\pm \vec{v}_0,\ldots,\pm \vec{v}_k\}$ of $\pm$-vectors in $R^n$ that can be reordered
such that the following hold:
\begin{compactitem}
\item $\{\pm \vec{v}_1,\ldots,\pm \vec{v}_k\}$ is a determinant-$1$ 
partial $\pm$-basis for $R^n$.
\item There exist $\lambda,\nu \in \{\pm 1\}$ 
such that $\vec{v}_0 = \lambda \vec{v}_1 + \nu \vec{v}_2$.
\end{compactitem}
We will call the $\pm$-vectors $\{\pm \vec{v}_0,\pm \vec{v}_1,\pm \vec{v}_2\}$
the {\em additive core} of $\{\pm \vec{v}_0,\ldots,\pm \vec{v}_k\}$.
\end{definition}

\begin{remark}
The additive core of an augmented determinant-$1$ partial $\pm$-basis $\sigma$ for $R^n$ can be
characterized intrinsically as the set of all $v \in \sigma$ such that
$\sigma \setminus \{v\}$ is a determinant-$1$ partial $\pm$-basis for $R^n$.
\end{remark}

A subset of an augmented determinant-$1$ partial $\pm$-basis is either an augmented 
determinant-$1$ partial $\pm$-basis
(if the subset contains the entire additive core) or a 
determinant-$1$ partial $\pm$-basis
(if the subset does not contain the entire additive core; this uses the fact that the constants
$\lambda$ and $\nu$ are $\pm 1$ rather than general units).  
We thus can make the following definition.

\begin{definition}
The {\em complex of augmented determinant-$1$ partial $\pm$-bases} for
$R^n$, denoted $\BDA_n(R)$, is the simplicial complex whose simplices consist
of determinant-$1$ partial $\pm$-bases and augmented determinant-$1$
partial $\pm$-bases for $R^n$.
\end{definition}

We now make a series of definitions that are very similar to the ones we made
for $\BA_n(R)$.

\begin{definition}
Let $\sigma = \{\pm \vec{v}_1,\ldots,\pm \vec{v}_k\}$ be a simplex of $\BDA_n(R)$.
The {\em augmented link} of $\sigma$, denoted
$\hlink_{\BDA_n(R)}(\sigma)$, is the full subcomplex of $\link_{\BDA_n(R)}(\sigma)$ spanned
by vertices $\pm \vec{w}$ of $\link_{\BDA_n(R)}(\sigma)$ such that
$\vec{w} \notin \Span{\vec{v}_1,\ldots,\vec{v}_k}$.  This
definition does not depend on the choice of the representatives $\vec{v}_i$ or $\vec{w}$.
\end{definition}

The simplices of $\hlink_{\BDA_n(R)}(\sigma)$ fall into the following three classes:

\begin{definition}
Let $\sigma = \{\pm \vec{v}_1,\ldots,\pm \vec{v}_k\}$ be a
simplex of $\BDA_n(R)$.  Let $\eta$ be a simplex of $\hlink_{\BDA_n(R)}(\sigma)$.
Then one of the following three conditions hold.
\begin{compactitem}
\item $\eta$ is a determinant-$1$ partial $\pm$-basis for $R^n$.  We will then call $\eta$ a
{\em standard simplex}.
\item $\eta$ is an augmented determinant-$1$ partial $\pm$-basis for $R^n$, i.e.\ we can write
$\eta = \{\pm \vec{w}_0,\ldots,\pm \vec{w}_{\ell}\}$ such that
$\vec{w}_0 = \lambda \vec{w}_1 + \nu \vec{w}_2$ with $\lambda, \nu \in \{\pm 1\}$.
We will then call $\eta$ an {\em internally additive simplex}.
\item We can write $\eta = \{\pm \vec{w}_0,\ldots,\pm \vec{w}_{\ell}\}$
with $\vec{w}_0 = \lambda \vec{w}_1 + \nu \vec{v}_i$ for some $\lambda,\nu \in \{\pm 1\}$
and some $1 \leq i \leq k$.  We will then call $\eta$ an {\em externally additive simplex}.
\end{compactitem}
We will sometimes call a simplex that is either internally or externally additive 
simply an {\em additive simplex}.
\end{definition}

\begin{definition}
Let $\{\vec{e}_1,\ldots,\vec{e}_{n+m}\}$
be the standard basis for $R^{n+m}$.  Define
$\BDA_{n,m}(R) = \hlink_{\BDA_{n+m}(R)} \{\pm \vec{e}_1,\ldots,\pm \vec{e}_m\}$.
\end{definition}

\begin{remark}
Since $\Z^{\times} = \{\pm 1\}$, we have $\BDA_{n,m}(\Z) = \BA_{n,m}(\Z) = \BTA_{n,m}(\Z)$.
\end{remark}

The analogue of Lemma \ref{lemma:quotientbd} for $\BDA_n(\Z) = \BA_n(\Z)$ is as follows.

\begin{lemma}
\label{lemma:quotientbda}
For a prime $p$, we have 
$\BA_n(\Z) / \Gamma_n(p) \cong \BDA_{n}(\Field_p)$ for all $n \geq 2$.
\end{lemma}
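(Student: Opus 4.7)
The plan is to mimic the proof of Lemma \ref{lemma:quotientbd}, treating the additive core as additional data. First, define the natural reduction map $\phi\colon \BA_n(\Z) \to \BDA_n(\Field_p)$ sending $\pm \vec{v}$ to $\pm \bar{\vec{v}}$, where $\bar{\vec{v}}$ denotes reduction modulo $p$. Since vertices are represented by primitive vectors, $\bar{\vec{v}}$ is nonzero, so $\phi$ is well-defined on vertices; it is simplicial because partial $\pm$-bases reduce to determinant-$1$ partial $\pm$-bases by Lemma \ref{lemma:quotientbd}, and the augmented relation $\vec{v}_0 = \lambda \vec{v}_1 + \nu \vec{v}_2$ with $\lambda, \nu \in \{\pm 1\}$ reduces to the same form. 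Since $\Gamma_n(p)$ acts trivially modulo $p$, the map factors as $\bar\phi\colon \BA_n(\Z)/\Gamma_n(p) \to \BDA_n(\Field_p)$.

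Surjectivity of $\bar\phi$ on simplices reduces via Lemma \ref{lemma:quotientbd} to the augmented case. Given an augmented simplex $\{\pm \vec{w}_0, \ldots, \pm \vec{w}_k\}$ of $\BDA_n(\Field_p)$ with $\vec{w}_0 = \lambda \vec{w}_1 + \nu \vec{w}_2$, I would use Lemma \ref{lemma:quotientbd} to lift the non-additive vertices to a partial $\pm$-basis $\{\pm \vec{v}_1, \ldots, \pm \vec{v}_k\}$ of $\Z^n$, choosing representatives with $\vec{v}_i \equiv \vec{w}_i \pmod p$, and then set $\vec{v}_0 := \lambda \vec{v}_1 + \nu \vec{v}_2 \in \Z^n$. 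Because $\lambda, \nu \in \{\pm 1\} \subset \Z^{\times}$, the resulting set is an augmented simplex of $\BA_n(\Z)$ reducing to the given one.

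For injectivity, take $\sigma, \sigma' \in \BA_n(\Z)$ with $\phi(\sigma) = \phi(\sigma')$. Since the additive core is intrinsically characterized, it is preserved by $\phi$, so the standard case is immediate from Lemma \ref{lemma:quotientbd}. In the augmented case, after relabeling and adjusting representatives, write $\sigma = \{\pm \vec{v}_0, \ldots, \pm \vec{v}_k\}$ and $\sigma' = \{\pm \vec{v}'_0, \ldots, \pm \vec{v}'_k\}$ with $\vec{v}_i \equiv \vec{v}'_i \pmod p$, additive cores indexed by $\{0, 1, 2\}$, and relations $\vec{v}_0 = \lambda \vec{v}_1 + \nu \vec{v}_2$ and $\vec{v}'_0 = \lambda' \vec{v}'_1 + \nu' \vec{v}'_2$. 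Applying Lemma \ref{lemma:quotientbd} to the non-additive parts yields $\gamma \in \Gamma_n(p)$ with $\gamma \vec{v}_i = \vec{v}'_i$ for $i = 1, \ldots, k$. Then $\gamma \vec{v}_0 = \lambda \vec{v}'_1 + \nu \vec{v}'_2$, while $\vec{v}'_0 = \lambda' \vec{v}'_1 + \nu' \vec{v}'_2$, and both reduce to the same vector modulo $p$; linear independence of $\bar{\vec{v}}'_1, \bar{\vec{v}}'_2$ in $\Field_p^n$ forces $\lambda \equiv \lambda'$ and $\nu \equiv \nu'$ modulo $p$, hence $\lambda = \lambda'$ and $\nu = \nu'$ in $\{\pm 1\}$, so $\gamma \vec{v}_0 = \vec{v}'_0$ and $\gamma \sigma = \sigma'$.

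The main obstacle is the bookkeeping in the injectivity step. The additive core is symmetric in its three vertices, so matching up the two cores requires choosing consistent labelings, and changes of representative $\vec{v}_i \to -\vec{v}_i$ alter the constants $\lambda, \nu$. The reason one can upgrade the set-level $\gamma$ from Lemma \ref{lemma:quotientbd} to one respecting the chosen vertex labels is that $\gamma \equiv I$ modulo $p$ combined with the fact that the reduced $\pm$-vectors $\bar{\vec{v}}'_i$ are distinct in $\Field_p^n$; once this is set up carefully, the remaining computations are routine.
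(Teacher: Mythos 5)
Your approach is essentially the same as the paper's: parametrize the augmented simplices by lifting the standard part via Lemma~\ref{lemma:quotientbd} and then handling the additive vertex.  The paper's proof is a terse counting argument (there are exactly two extensions $\pm(\vec v_1 + \vec v_2)$ and $\pm(\vec v_1 - \vec v_2)$ of a standard simplex to an additive one, on both sides of the correspondence), and your proof is the same argument written out as an explicit surjectivity/injectivity check.  For $p \geq 3$ your argument is correct, granting the bookkeeping you flag at the end: from $\gamma \equiv I \pmod p$ and the fact that distinct $\pm$-vectors of a simplex of $\BD_n(\Field_p)$ are genuinely distinct, one gets $\gamma\vec v_i = \pm \vec v'_i$ with the correct index, and since a nonzero vector of $\Field_p^n$ differs from its negative when $p>2$ the sign is forced.

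There is, however, a genuine gap at $p = 2$: the step ``$\lambda \equiv \lambda' \pmod p$ hence $\lambda = \lambda'$ in $\{\pm 1\}$'' fails, since $1 \equiv -1 \pmod 2$, and for the same reason you cannot upgrade $\gamma\vec v_i = \pm\vec v'_i$ to $\gamma\vec v_i = \vec v'_i$.  This is not merely a cosmetic problem: for $n = 2$, $p = 2$ the statement itself is false.  By Lemma~\ref{lemma:n2} the complex $\BDA_2(\Z)$ is the Farey tessellation of $\obbH^2$, so $\BDA_2(\Z)/\Gamma_2(2)$ is the level-$2$ modular curve $X(2)$, a sphere triangulated with $3$ vertices, $3$ edges, and $2$ triangles; but $\BDA_2(\Field_2)$ is a single $2$-simplex.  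Concretely, the triangles $\{\pm(1,1),\pm(1,0),\pm(0,1)\}$ and $\{\pm(1,-1),\pm(1,0),\pm(0,1)\}$ of $\BDA_2(\Z)$ both reduce to the unique triangle of $\BDA_2(\Field_2)$, yet no element of $\Gamma_2(2)$ takes one to the other: any $\gamma \in \SL_2(\Z)$ stabilizing $\{\pm\vec e_1,\pm\vec e_2\}$ and lying in $\Gamma_2(2)$ is $\pm I$, which fixes $\pm(\vec e_1+\vec e_2)$.  For $n \geq 3$ and $p = 2$ the lemma does hold, but one needs a separate argument (there is room to flip the sign of $\vec v_2$ together with some third basis vector by a single $\gamma \in \Gamma_n(2)$, which is impossible when $n = 2$).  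The paper's own proof only handles $p = 2$ with a parenthetical and the lemma is used downstream only for odd primes, so the cleanest fix for you is to state and prove the lemma for $p \geq 3$.
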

\begin{proof}
For a standard simplex $\{\pm \vec{v}_1,\ldots,\pm \vec{v}_k\}$ of $\BA_n(\Z) = \BDA_n(\Z)$, there
are precisely two choices of $\pm \vec{v}_0$ such that
$\{\pm \vec{v}_0,\ldots,\pm \vec{v}_k\}$ is an additive simplex whose additive core
is $\{\pm \vec{v}_0,\pm \vec{v}_1,\pm \vec{v}_2\}$, namely
\[\pm \vec{v}_0 = \pm(\vec{v}_1 + \vec{v}_2) \quad \text{and} \quad \pm \vec{v}_0 = \pm(\vec{v}_1 - \vec{v}_2).\]
A similar observation holds for $\BDA_n(\Field_p)$ (unless $p=2$, in which case both of the above choices
are the same).  From this, the lemma easily follows from Lemma \ref{lemma:quotientbd}.
\end{proof}

\subsubsection{The case \texorpdfstring{$n=2$}{n=2}}
\label{section:det1n2}

We now specialize to the case $n=2$, where these complexes have a simple description.

\begin{lemma}
\label{lemma:n2}
For a prime $p \geq 3$, the complex $\BDA_2(\Field_p)$ is homeomorphic to a closed oriented surface of genus $\frac{(p+2)(p-3)(p-5)}{24}$.  Also, the complex $\BDA_2(\Field_2)$ is contractible.
\end{lemma}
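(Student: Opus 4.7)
The plan is to dispose of $p=2$ by direct enumeration, and for $p \geq 3$ to (a) show $\BDA_2(\Field_p)$ is a closed surface by computing the link of a single vertex, (b) count simplices to obtain the Euler characteristic, and (c) establish orientability (and connectedness) by identifying the complex with the Farey triangulation of the modular curve $X(p)$. The main obstacle is orientability, since the Euler characteristic alone cannot distinguish an orientable surface from a non-orientable one.

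\textbf{The $p=2$ case.} The nonzero vectors of $\Field_2^2$ are $\vec{e}_1$, $\vec{e}_2$, and $\vec{e}_1+\vec{e}_2$; every pair has determinant $1$, and the triple forms an augmented simplex whose additive core is the entire set. Hence $\BDA_2(\Field_2)$ is a single $2$-simplex, which is contractible.

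\textbf{The $p \geq 3$ case: surface structure and Euler characteristic.} Since $\SL_2(\Field_p)$ acts transitively on vertices, it suffices to compute $\link(\pm\vec{e}_1)$. Because $\Field_p^2$ admits no partial basis of size $3$, every $2$-simplex of $\BDA_2(\Field_p)$ is augmented; normalizing a neighbor of $\pm\vec{e}_1$ so that its representative has last coordinate $1$ gives $p$ link vertices $\pm(a,1)$ indexed by $a \in \Field_p$. A short analysis of the additive relation $\vec{e}_1 = \pm\vec{w}_1 \pm\vec{w}_2$ shows that $\pm(a,1)$ and $\pm(b,1)$ span an edge of the link precisely when $a-b = \pm 1$, so the link is the cycle $C_p$ and $\BDA_2(\Field_p)$ is a closed surface. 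A vertex count gives $V=(p^2-1)/2$; since each vertex has $p$ neighbors, $E = p(p^2-1)/4$, and since each vertex lies in $p$ triangles (one per edge of its link), $T = p(p^2-1)/6$. Hence $\chi = (p^2-1)(6-p)/12$, and algebraic simplification yields $(2-\chi)/2 = (p+2)(p-3)(p-5)/24$ as the candidate genus.

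\textbf{Orientability via the modular curve.} By Lemma \ref{lemma:quotientbda}, $\BDA_2(\Field_p) \cong \BA_2(\Z)/\Gamma_2(p)$, and the geometric realization of $\BA_2(\Z)$ is canonically the classical Farey triangulation of $\bbH^2 \cup \bbP^1(\Q)$: the vertex $\pm(a,b)$ corresponds to the cusp $a/b$, edges are Farey geodesics (pairs with $\det = \pm 1$), and augmented $2$-simplices are ideal Farey triangles $\{a/b,\, c/d,\, (a+c)/(b+d)\}$. For $p \geq 3$, the subgroup $\Gamma_2(p) \leq \SL_2(\Z)$ is torsion-free and acts freely on $\bbH^2$ by orientation-preserving hyperbolic isometries, so the quotient with its cusps filled in is a connected compact oriented surface, namely the modular curve $X(p)$. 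Its classical genus equals the value computed above, completing the proof.
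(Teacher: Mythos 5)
Your proof is correct, and its crux --- identifying $\BDA_2(\Z)=\BA_2(\Z)$ with the Farey tessellation of the bordified hyperbolic plane so that, via Lemma \ref{lemma:quotientbda}, the quotient $\BDA_2(\Field_p)$ becomes the modular curve $X(p)$ --- is exactly the paper's argument. The vertex-link computation and Euler characteristic count in your parts (a) and (b) are a nice combinatorial cross-check, but they are subsumed by the modular curve identification in (c), which already supplies the surface structure, connectedness, orientability, and the genus formula.
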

\begin{proof}
The complex $\BDA_2(\Field_2)$ is easily seen to be a single triangle with vertices
$\pm(1,0)$ and $\pm(0,1)$ and $\pm(1,1)$, and is thus contractible.  Assume now that
$p$ is an odd prime.

Consider the usual bordification of the upper half plane $\bbH^2$ whose points are
\[\obbH^2 = \bbH^2 \cup (\Q \cup \{\infty\}) \subset \C \cup \{\infty\}.\]
In this bordification, the topology on $\obbH^2$ restricts to the usual topology on $\bbH^2$, but the topology
on $\obbH^2$ is not the subspace topology from $\C \cup \{\infty\}$, but rather one where open horoballs
centered at the ideal points $\Q \cup \{\infty\}$ form neighborhood bases of these ideal points.
The group $\SL_2(\Z)$ acts on $\obbH^2$ by linear fractional transformations, and the quotient
$\obbH^2 / \Gamma_2(p)$ is the level-$p$ modular curve.  This modular curve is a closed
oriented surface of genus $\frac{(p+2)(p-3)(p-5)}{24}$; see \cite[Theorem 8]{GunningBook}.  It is enough,
therefore, to prove that there is an $\SL_2(\Z)$-equivariant homeomorphism between
$\BDA_2(\Z)$ and $\obbH^2$.

This homeomorphism is implicit in \cite[pp.\ 1002--1004]{CP}, so we
only briefly describe it:
\begin{compactitem}
\item For a vertex $\pm (a,b)$ of $\BDA_2(\Z)$, the associated point of $\obbH^2$ is $a/b \in \Q \cup \{\infty\}$.
\item For an edge $e$ of $\BDA_2(\Z)$, the associated portion of $\obbH^2$ is the hyperbolic
geodesic joining the ideal points corresponding to the endpoints of $e$.
\item For a triangle $t$ of $\BDA_2(\Z)$, the associated portion of $\obbH^2$ is the
hyperbolic ideal triangle whose boundary consists of the geodesics corresponding to the boundary
of $t$.\qedhere
\end{compactitem}
\end{proof}

\subsubsection{The unaugmented determinant-\texorpdfstring{$1$}{1} complex is highly connected}
\label{section:det1con}

We now turn to proving that the complexes $\BD_{n,m}(\Field)$ are Cohen--Macaulay.

\begin{proposition}
\label{proposition:bases1con}
For a field $\Field$, the complex $\BD_{n,m}(\Field)$ is Cohen--Macaulay of dimension $(n-1)$ 
for all $n, m \geq 0$.
\end{proposition}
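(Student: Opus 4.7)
The plan is to induct on $n$. The base cases $n = 0, 1$ are immediate: $\BD_{0,m}(\Field)$ is empty (Cohen--Macaulay of dimension $-1$ by convention), and $\BD_{1,m}(\Field)$ is a nonempty discrete set. For $n \geq 2$, the dimension claim is clear, and for the link condition one observes that any $(k-1)$-simplex $\sigma = \{\pm \vec{v}_1, \ldots, \pm \vec{v}_k\}$ of $\BD_{n,m}(\Field)$ extends to a determinant-$1$ basis of $\Field^{n+m}$ (extend arbitrarily, then rescale one of the added vectors); this produces an element of $\SL_{n+m}(\Field)$ carrying $\{\pm \vec{e}_1, \ldots, \pm \vec{e}_m\} \cup \sigma$ to $\{\pm \vec{e}_1, \ldots, \pm \vec{e}_{m+k}\}$ and preserving the determinant condition, yielding an isomorphism $\link_{\BD_{n,m}(\Field)}(\sigma) \cong \BD_{n-k, m+k}(\Field)$, which is Cohen--Macaulay of dimension $n-k-1$ by the inductive hypothesis.

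The substantive step is to show $\BD_{n,m}(\Field)$ is $(n-2)$-connected. The key observation is that the determinant-$\pm 1$ condition only restricts top-dimensional simplices, so $\BD_{n,m}(\Field)$ and $\B_{n,m}(\Field)$ have identical $(n-2)$-skeletons. Since $\B_{n,m}(\Field)$ is $(n-2)$-connected by Proposition \ref{proposition:basescon}, this immediately yields $(n-3)$-connectivity of $\BD_{n,m}(\Field)$: any sphere of dimension at most $n-3$ bounds a disk in the common $(n-2)$-skeleton, which lies in $\BD_{n,m}(\Field)$. To upgrade to $(n-2)$-connectivity I plan to use the long exact sequence of the pair $(\B_{n,m}(\Field), \BD_{n,m}(\Field))$: the relative chain complex is concentrated in degree $n-1$ with one generator per ``bad'' top simplex (one with determinant $\neq \pm 1$), so vanishing of $H_{n-2}(\BD_{n,m}(\Field))$ reduces to showing that $\partial \tau$ is null-homologous in $\BD_{n,m}(\Field)$ for every bad $\tau$. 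For $n \geq 4$ the $(n-3)$-connectivity provides simple connectivity, so Hurewicz promotes this homological vanishing to homotopical $(n-2)$-connectedness; the low cases $n = 2, 3$ can be handled directly by verifying path- and simple-connectedness.

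To show $\partial \tau$ bounds in $\BD_{n,m}(\Field)$ for a bad simplex $\tau = \{\pm \vec{v}_1, \ldots, \pm \vec{v}_n\}$ with determinant $d \in \Field^{\times} \setminus \{\pm 1\}$, the plan is as follows. The rescaled simplex $\tau' = \{\pm(d^{-1}\vec{v}_1), \pm \vec{v}_2, \ldots, \pm \vec{v}_n\}$ lies in $\BD_{n,m}(\Field)$, and $\partial \tau - \partial \tau'$ is supported on the ``side'' faces containing either $\pm \vec{v}_1$ or $\pm(d^{-1}\vec{v}_1)$. For each codimension-$2$ face $\rho = \tau \setminus \{\pm \vec{v}_1, \pm \vec{v}_i\}$ (with $i \geq 2$), the inductive hypothesis identifies $\link_{\BD_{n,m}(\Field)}(\rho) \cong \BD_{2, m+n-2}(\Field)$, which is connected; a path from $\pm \vec{v}_1$ to $\pm(d^{-1}\vec{v}_1)$ in this link, joined with $\rho$, produces an $(n-1)$-chain in $\BD_{n,m}(\Field)$ whose boundary cancels the discrepancy at that face up to residual terms supported on $\partial \rho$. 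These residual terms are in turn handled by applying the same procedure recursively in deeper links of higher codimension, all sufficiently connected by the inductive hypothesis. The hard part will be organizing this recursive bookkeeping so that all residual boundary terms cancel simultaneously, producing a single coherent $(n-1)$-chain in $\BD_{n,m}(\Field)$ that bounds $\partial \tau$.
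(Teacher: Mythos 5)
Your overall framework---induct on $n$, observe that the determinant condition only affects top-dimensional simplices, and reduce $(n-2)$-connectivity via the long exact sequence of the pair $(\B_{n,m}(\Field), \BD_{n,m}(\Field))$ to showing that $\partial\tau$ bounds in $\BD_{n,m}(\Field)$ for each bad top simplex $\tau$---is sound and is a legitimate alternative to the paper's route, which instead builds an explicit simplicial retraction $\rho\colon \B_{n,m}(\Field) \to \BD_{n,m}(\Field)$ and invokes the $(n-2)$-connectivity of $\B_{n,m}(\Field)$ directly. But your plan for actually producing the bounding chain has a genuine gap: after rescaling $\vec{v}_1$ to $d^{-1}\vec{v}_1$ you want to ``fill the prism'' between $\partial\tau$ and $\partial\tau'$ face by face using connectivity of links, and you yourself acknowledge that the residual boundary terms this generates require a recursive bookkeeping you have not carried out. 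That bookkeeping is not routine---each coning operation introduces new lower-dimensional discrepancies, and there is no argument given that the resulting tower of corrections terminates coherently in a single $(n-1)$-chain.

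The idea you are missing is that there is a single choice that resolves all faces of $\tau$ at once, with no recursion. Set $\vec{w} = \tfrac{1}{d}(\vec{v}_1 + \cdots + \vec{v}_n)$, where $d = \det(\vec{e}_1\ \cdots\ \vec{e}_m\ \vec{v}_1\ \cdots\ \vec{v}_n)$. Then for every $i$,
\[
\det\bigl(\vec{e}_1\ \cdots\ \vec{e}_m\ \vec{v}_1\ \cdots\ \widehat{\vec{v}_i}\ \cdots\ \vec{v}_n\ \vec{w}\bigr)
= \tfrac{1}{d}\det\bigl(\vec{e}_1\ \cdots\ \vec{e}_m\ \vec{v}_1\ \cdots\ \widehat{\vec{v}_i}\ \cdots\ \vec{v}_n\ \vec{v}_i\bigr) = \pm 1,
\]
since all the other summands in $\vec{w}$ already appear as columns. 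Thus each $\{\pm\vec{v}_1,\ldots,\widehat{\pm\vec{v}_i},\ldots,\pm\vec{v}_n,\pm\vec{w}\}$ is a simplex of $\BD_{n,m}(\Field)$, and the signed sum of these $n$ simplices is an $(n-1)$-chain in $\BD_{n,m}(\Field)$ whose boundary is exactly $\partial\tau$ (it is the boundary of the subdivision of $\tau$ at the new vertex $\vec{w}$, minus $\tau$ itself). This closes the gap in your argument immediately. The paper organizes the same observation more efficiently as a global retraction: define $\rho$ to be the identity away from bad simplices, and on each bad $\sigma$ first star-subdivide at a barycenter $x_\sigma$ and then send $x_\sigma \mapsto \pm\vec{w}$; the determinant computation above is precisely what shows $\rho$ lands in $\BD_{n,m}(\Field)$, and then $(n-2)$-connectivity of $\BD_{n,m}(\Field)$ follows at once from that of $\B_{n,m}(\Field)$, no induction or case analysis of small $n$ required. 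You should also note that rescaling only $\vec{v}_1$ as you proposed cannot have this simultaneous-resolution property, which is exactly why your version runs into the bookkeeping problem.
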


The heart of the proof of Proposition \ref{proposition:bases1con} is the following.

\begin{lemma}
\label{lemma:bases1retract}
For a field $\Field$, the complex $\BD_{n,m}(\Field)$ is a retract of
$\B_{n,m}(\Field)$ for all $n,m \geq 0$.
\end{lemma}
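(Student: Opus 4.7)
My plan is to construct a continuous retraction $r \colon |\B_{n,m}(\Field)| \to |\BD_{n,m}(\Field)|$ by extending the identity on $|\BD_{n,m}(\Field)|$ over each top simplex of $\B_{n,m}(\Field)$ not in $\BD_{n,m}(\Field)$, using a cone-subdivision construction.

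First, dispense with the trivial cases. When $|\Field| = 2$, the units are $\{\pm 1\}$, so the determinant-$\pm 1$ condition is automatic and $\B_{n,m}(\Field) = \BD_{n,m}(\Field)$; take $r = \mathrm{id}$. When $n = 0$ both complexes are empty. When $n = 1$ they are discrete nonempty sets, and any map extending the identity on $\BD_{1,m}(\Field)$ works.

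For $n \geq 2$ and $|\Field| \geq 3$, observe that $\B_{n,m}(\Field)$ and $\BD_{n,m}(\Field)$ have the same vertex set and coincide on the $(n-2)$-skeleton; they differ only in top-dimensional simplices. Call a top simplex $\sigma = \{\pm \vec v_1, \ldots, \pm \vec v_n\}$ of $\B_{n,m}(\Field)$ \emph{bad} if $\alpha(\sigma) := \det(\vec e_1 | \cdots | \vec e_m | \vec v_1 | \cdots | \vec v_n) \notin \{\pm 1\}$. Set $r$ equal to the identity on $|\BD_{n,m}(\Field)|$. To extend $r$ over each bad $|\sigma|$, introduce the $n$ linear functionals
\[L_i(\vec w) = \det\bigl(\vec e_1 | \cdots | \vec e_m | \vec v_1 | \cdots | \vec v_{i-1} | \vec w | \vec v_{i+1} | \cdots | \vec v_n\bigr).\]
Since $(\vec e_1, \ldots, \vec e_m, \vec v_1, \ldots, \vec v_n)$ is a basis of $\Field^{n+m}$, each $L_i$ is $\alpha(\sigma)$ times a coordinate functional in that basis, so the $L_i$'s are linearly independent. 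The system $L_i(\vec w) \in \{\pm 1\}$ for all $i$ therefore has $2^n \cdot |\Field|^m$ solutions; pick one and call it $\vec w_\sigma$. Each $L_i(\vec w_\sigma) \neq 0$ implies both that $\vec w_\sigma \notin \Span(\vec e_1, \ldots, \vec e_m)$, so that $w_\sigma := \pm \vec w_\sigma$ is a vertex of $\B_{n,m}(\Field)$, and that $(\sigma \setminus \{\pm \vec v_i\}) \cup \{w_\sigma\}$ is a top simplex of $\BD_{n,m}(\Field)$ for each $i$.

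Subdivide $|\sigma|$ by adding an interior point $c_\sigma$, producing $n$ top-dimensional pieces $|c_\sigma * \partial_i \sigma|$ where $\partial_i \sigma = \sigma \setminus \{\pm \vec v_i\}$. Extend $r$ over $|\sigma|$ by $c_\sigma \mapsto w_\sigma$, fixing the vertices of $\sigma$, and interpolating linearly on each piece. Each piece lands inside the $\BD_{n,m}(\Field)$-simplex $\partial_i \sigma \cup \{w_\sigma\}$, and $r|_{\partial \sigma} = \mathrm{id}$ because the subdivision leaves $\partial \sigma$ untouched. Hence the extensions over different bad top simplices agree on their shared faces and with the identity on $|\BD_{n,m}(\Field)|$, producing a well-defined continuous retraction.

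The technical heart of the plan is proving existence of $\vec w_\sigma$, which reduces to the linear-independence of the $L_i$'s — an immediate consequence of the invertibility of the matrix with columns $\vec e_1, \ldots, \vec e_m, \vec v_1, \ldots, \vec v_n$. Everything else (piecewise-linear gluing, checking that $w_\sigma$ is a legitimate vertex, and verifying the determinant conditions on the cone-simplices) is routine linear algebra.
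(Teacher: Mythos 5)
Your proof is correct and follows essentially the same route as the paper's: subdivide each bad top simplex at an interior cone point, send that cone point to a vector $\vec w_\sigma$ chosen so that all the resulting cone-simplices have determinant $\pm 1$, and glue. The only difference is that the paper gives the explicit choice $\vec w_\sigma = \tfrac{1}{d}(\vec v_1 + \cdots + \vec v_n)$ (which automatically has $L_i(\vec w_\sigma)=1$ for all $i$), whereas you argue existence non-constructively via linear independence of the $L_i$; both are fine.
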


Before proving Lemma \ref{lemma:bases1retract}, we derive Proposition \ref{proposition:bases1con} from it.

\begin{proof}[Proof of Proposition \ref{proposition:bases1con}, assuming Lemma \ref{lemma:bases1retract}]
Combining Lemma \ref{lemma:bases1retract} with Proposition \ref{proposition:basescon} (which
says that $\B_{n,m}(\Field)$ is $(n-2)$-connected), we deduce that
$\BD_{n,m}(\Field)$ is $(n-2)$-connected.  Since
the link of a $k$-simplex in $\BD_{n,m}(\Field)$ is isomorphic to $\BD_{n-k-1,m+k+1}(\Field)$,
this implies that $\BD_{n,m}(\Field)$ is Cohen--Macaulay of dimension $(n-1)$, as desired.
\end{proof}

\begin{proof}[Proof of Lemma \ref{lemma:bases1retract}]
Let $\{\vec{e}_1,\ldots,\vec{e}_{n+m}\}$ be the standard basis for the vector space $\Field^{n+m}$.
To define a retraction $\rho \colon \B_{n,m}(\Field) \rightarrow \BD_{n,m}(\Field)$, it is enough
to say what $\rho$ does to a simplex $\sigma$ of $\B_{n,m}(\Field)$ that does not lie in
$\BD_{n,m}(\Field)$.  The only such simplices are $(n-1)$-dimensional simplices
$\sigma = \{v_1,\ldots,v_n\}$ such that $\{\pm \vec{e}_1,\ldots,\pm \vec{e}_m,v_1,\ldots,v_n\}$ is
not a determinant-$1$ total $\pm$-basis for $\Field^{n+m}$.
Arbitrarily pick some $\vec{v}_i \in v_i$ for $1 \leq i \leq n$, and let $d \neq \pm 1$ be
the determinant of the matrix 
\[(\vec{e}_1\ \ \ \cdots\ \ \ \vec{e}_m\ \ \ \vec{v}_1\ \ \ \cdots\ \ \ \vec{v}_n).\] 
Let $S(\sigma)$ be the result of subdividing $\sigma$ with a new vertex $x_{\sigma}$.  The
top-dimensional simplices of $S(\sigma)$ are then of the form
\[\{v_1,\ldots,\widehat{v_i},\ldots,v_n,x_{\sigma}\} \quad \quad (1 \leq i \leq n).\]
Define 
\[\rho|_{\sigma}\colon \sigma \cong S(\sigma) \longrightarrow \B_{n,m}(\Field)\]
to be the map that fixes the vertices $v_1,\ldots,v_n$ and takes the vertex $x_{\sigma}$
to $\frac{1}{d} (\vec{v}_1 + \cdots + \vec{v}_n)$.  We must check that this extends over
the top-dimensional simplices of $S(\sigma)$, which follows from the calculation
\begin{align*}
&\det\left(\vec{e}_1\ \ \ \cdots\ \ \ \vec{e}_m\ \ \ \vec{v}_1\ \ \ \cdots\ \ \ \widehat{\vec{v}_i}\ \ \ \cdots\ \ \ \vec{v}_n\ \ \ \frac{1}{d} \left(\vec{v}_1 + \cdots + \vec{v}_n\right)\right)\\
&\quad \quad = \frac{1}{d} \det\left(\vec{e}_1\ \ \ \cdots\ \ \ \vec{e}_m\ \ \ \vec{v}_1\ \ \ \cdots\ \ \ \widehat{\vec{v}_i}\ \ \ \cdots\ \ \ \vec{v}_n\ \ \ \vec{v}_i\right) = \pm d/d = \pm 1. \qedhere
\end{align*}
\end{proof}

\subsubsection{The augmented determinant-\texorpdfstring{$1$}{1} complex is highly connected}
\label{section:det1acon}

We now prove that the complex $\BDA_{n,m}(\Field)$ is $(n-2)$-connected.
We remark that it is $n$-dimensional, so this is a weaker range of connectivity than
would be implied by it being Cohen--Macaulay.

\begin{proposition}
\label{proposition:bases1acon}
For a field $\Field$, the complex $\BDA_{n,m}(\Field)$ is $(n-2)$-connected
for all $n, m \geq 0$.
\end{proposition}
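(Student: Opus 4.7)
The first approach I would try is to construct a retraction $\rho\colon \BA_{n,m}(\Field)\to\BDA_{n,m}(\Field)$, generalizing the retraction $\B_{n,m}(\Field)\to\BD_{n,m}(\Field)$ from Lemma \ref{lemma:bases1retract}. Combining this with Proposition \ref{proposition:augbasescon}, which says that $\BA_{n,m}(\Field)$ is Cohen--Macaulay of dimension $n$ and hence $(n-1)$-connected, would yield that $\BDA_{n,m}(\Field)$ is also $(n-1)$-connected, which is strictly stronger than the stated $(n-2)$-connectivity.

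To define the retraction, on simplices of $\BA_{n,m}(\Field)$ that already lie in $\BDA_{n,m}(\Field)$ I would set $\rho$ to be the identity. The only remaining standard simplices are top-dimensional $(n-1)$-simplices $\sigma=\{v_1,\ldots,v_n\}$ whose defining matrix has determinant $d\notin\{\pm 1\}$; on these I would stellar-subdivide exactly as in Lemma \ref{lemma:bases1retract}, introducing the new vertex $x_\sigma=[d^{-1}(\vec{v}_1+\cdots+\vec{v}_n)]$ and checking that each top-dimensional sub-simplex is a determinant-$1$ partial $\pm$-basis. The substantive case is an augmented simplex $\sigma=\{v_0,v_1,\ldots,v_k\}$ with additive core $\{v_0,v_1,v_2\}$ and relation $\vec{v}_0=\lambda\vec{v}_1+\nu\vec{v}_2$ for which either $\lambda,\nu\notin\{\pm 1\}$ or $k=n$ with bad underlying determinant. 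I would handle this by stellar-subdividing $\sigma$ with a new interior vertex chosen so that every resulting top-dimensional sub-simplex has its augmentation coefficients normalized to $\pm 1$ and satisfies the determinant condition.

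The main obstacle is that the naive remedy—replacing $v_0$ with the ``canonical'' augmentation $[\vec{v}_1+\vec{v}_2]$—fails because that $\pm$-vector is collinear with $v_0$ in $\Field^{n+m}$ and so the two cannot coexist in a single simplex of $\BA_{n,m}(\Field)$. The plan for getting around this is to use an interior vertex chosen outside the plane $\Span\{\vec{v}_1,\vec{v}_2\}$, for example by mixing in a contribution from one of the $\vec{v}_i$ with $i\geq 3$; the delicate point is arranging the sub-simplices so that the defining augmentation always has $\pm 1$ coefficients and so that the construction is compatible across shared faces of $\sigma$, which are themselves smaller augmented or standard simplices.

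If this subdivision scheme turns out to be intractable, a fallback plan is to bypass $\BA_{n,m}(\Field)$ and argue directly that the inclusion $\BD_{n,m}(\Field)\hookrightarrow\BDA_{n,m}(\Field)$ is $(n-2)$-connected. Since $\BD_{n,m}(\Field)$ is already $(n-2)$-connected by Proposition \ref{proposition:bases1con}, this would suffice; the mechanics would be to push any simplicial $k$-sphere in $\BDA_{n,m}(\Field)$ off of additive simplices one dimension at a time, using that each augmented vertex $v_0$ with $\vec{v}_0=\pm\vec{v}_1\pm\vec{v}_2$ can be ``collapsed'' into the standard face $\sigma\setminus\{v_0\}$ after a local simplicial homotopy provided in the augmented link.
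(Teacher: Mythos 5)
Your first approach is doomed for a structural reason you don't flag: a retraction $\rho\colon \BA_{n,m}(\Field)\to\BDA_{n,m}(\Field)$ would imply, as you note, that $\BDA_{n,m}(\Field)$ is $(n-1)$-connected. But this is false. Lemma \ref{lemma:n2} shows that $\BDA_2(\Field_p)$ is a closed oriented surface of genus $\frac{(p+2)(p-3)(p-5)}{24}$, which is positive for every prime $p\geq 7$; in particular $\BDA_2(\Field_p)$ has nontrivial $\pi_1$ and cannot be a retract of the simply connected $\BA_2(\Field_p)$. The paper devotes all of \S\ref{section:smallprimes} to upgrading $(n-2)$-connectivity to $(n-1)$-connectivity, and succeeds only for $p\leq 5$; the failure of your retraction to exist for $p\geq 7$ is exactly the source of the paper's counterexample to the Lee--Szczarba conjecture. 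So the difficulty is not that the subdivision scheme is intractable; the object it would produce does not exist. (A related cautionary remark already appears after Proposition \ref{proposition:augbasescon}: the determinant-free analogue of the $\Z$-proof breaks for $\BA_{n,m}(\Field)$, and here the situation is worse.)

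Your fallback plan is essentially the paper's actual argument, and it is the right one. Proposition \ref{proposition:bases1con} gives $(n-2)$-connectivity of $\BD_{n,m}(\Field)$, and Lemma \ref{lemma:bases1surject} shows the inclusion $\BD_{n,m}(\Field)\hookrightarrow\BDA_{n,m}(\Field)$ is surjective on $\pi_k$ for $k\leq n-1$, which together give the claim. Your sketch of the mechanism (``collapse $v_0$ into the standard face after a local homotopy in the augmented link'') is a bit loose; the actual step is a coning argument: for a $2$-dimensional internally additive $\eta=\{\pm\vec v_0,\pm\vec v_1,\pm\vec v_2\}$ one has $\link_{\BDA_{n,m}(\Field)}(\eta)=\link_{\BD_{n,m}(\Field)}(\{\pm\vec v_1,\pm\vec v_2\})\cong\BD_{n-2,m+2}(\Field)$, which is $(n-4)$-connected by Proposition \ref{proposition:bases1con}. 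Since the source sphere has dimension at most $n-1$ and $\sigma$ has dimension at least $2$, the restricted map on $\link_X(\sigma)$ lands in a complex of sufficiently high connectivity to be nullhomotopic, and one cones it off through the link of $\eta$; externally additive simplices are handled analogously. Carrying out that dimension count is the substance you would need to supply.
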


\begin{remark}
For $\Field = \Field_p$ with $p \leq 5$, we will improve this to $(n-1)$-connected
in Proposition \ref{proposition:bases1conimproved} below.
\end{remark}

Proposition \ref{proposition:bases1con} implies that $\BD_{n,m}(\Field)$ is $(n-2)$-connected,
so Proposition \ref{proposition:bases1acon} is an immediate consequence of the following lemma.

\begin{lemma}
\label{lemma:bases1surject}
For a field $\Field$, the inclusion map $\BD_{n,m}(\Field) \hookrightarrow \BDA_{n,m}(\Field)$
induces a surjection on $\pi_k$ for $0 \leq k \leq n-1$ for all $n,m \geq 0$.
\end{lemma}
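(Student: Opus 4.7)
The plan is to show that any simplicial map $f\colon S^k \to \BDA_{n,m}(\Field)$ with $k \leq n-1$ can be homotoped within $\BDA_{n,m}(\Field)$ to a map whose image lies entirely in $\BD_{n,m}(\Field)$. A useful preliminary observation is that $\BD_{n,m}(\Field)$ and $\BDA_{n,m}(\Field)$ share the same $1$-skeleton, because every additive simplex has at least three vertices; this handles the cases $k \in \{0,1\}$ immediately.

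For $k \geq 2$, I would first apply simplicial approximation so that $f$ is simplicial with respect to some triangulation $T$ of $S^k$. Call a simplex $\tau$ of $T$ \emph{bad} if $f(\tau)$ is an additive simplex of $\BDA_{n,m}(\Field)$. The plan is then a bad-simplex induction: iteratively modify $f$ by simplicial homotopy, first reducing the number of bad simplices of the largest occurring dimension, then of the next, and so on; once no bad simplices remain, $f$ maps into $\BD_{n,m}(\Field)$, and the composition of all the intermediate homotopies provides the desired homotopy in $\BDA_{n,m}(\Field)$.

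For the inductive step, I would take a bad simplex $\tau$ of $T$ of maximal dimension $d$, and write its image as $\sigma = f(\tau)$, an additive simplex of dimension at most $d$ with additive core $\{\pm \vec{v}_0, \pm \vec{v}_1, \pm \vec{v}_2\}$ where $\vec{v}_0 = \vec{v}_1 + \vec{v}_2$ (say). The standard face $\sigma' := \sigma \setminus \{\pm\vec{v}_0\}$ lies in $\BD_{n,m}(\Field)$. By Proposition \ref{proposition:bases1con}, $\BD_{n,m}(\Field)$ is Cohen--Macaulay of dimension $n-1$, so the link of $\sigma'$ inside $\BD_{n,m}(\Field)$ is Cohen--Macaulay of dimension $n - \dim \sigma - 1$ and in particular $(n-\dim\sigma-2)$-connected. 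Since $\dim\sigma \leq \dim\tau \leq k \leq n-1$, this link is always nonempty, and whenever $\dim\sigma < n-1$ it is path-connected. The replacement idea is then to star-subdivide the closed star of $\tau$ in $S^k$ and redirect the new apex, which would otherwise map to $\pm\vec{v}_0$, to a vertex $\pm\vec{w}$ of this link; the intermediate positions during the straight-line simplicial homotopy remain in $\BDA_{n,m}(\Field)$ because $\sigma \cup \{\pm\vec{w}\}$ is a genuine augmented simplex whose additive core is still $\{\pm\vec{v}_0, \pm\vec{v}_1, \pm\vec{v}_2\}$.

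The main obstacle is the extremal case $\dim\sigma = d = k = n-1$, in which the link of $\sigma'$ inside $\BD_{n,m}(\Field)$ is merely nonempty and can fail to be path-connected. Here one has to choose the replacement vertex $\pm\vec{w}$ with some care and check that the modified map does not produce new bad simplices of dimension $\geq d$; one may well need to subdivide a neighborhood larger than the star of $\tau$ in order to harmonize the replacement with the rest of $f$, and to deal with those faces of $\sigma$ that still contain the entire additive core $\{\pm\vec{v}_0, \pm\vec{v}_1, \pm\vec{v}_2\}$ and are therefore themselves bad after subdivision. Managing this bookkeeping so that the induction strictly decreases the chosen lexicographic complexity is the delicate part of the argument.
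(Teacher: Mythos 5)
Your high-level strategy --- homotope $f$ to push it off the additive simplices --- is the same as the paper's, but the particular induction you set up does not close, and you have in fact correctly located the place where it fails. Two points before the main one. First, the preliminary claim that $\BD_{n,m}(\Field)$ and $\BDA_{n,m}(\Field)$ share a $1$-skeleton is false for $m \geq 1$: a $1$-dimensional externally additive simplex $\{\pm\vec w_0, \pm\vec w_1\}$ with $\vec w_0 = \lambda\vec w_1 + \nu\vec e_i$ lies in $\BDA_{n,m}(\Field)$ but not in $\BD_{n,m}(\Field)$, so the $k=1$ case is not disposed of by that observation. Second, ``$f(\tau)$ is additive'' is inherited by every coface of $\tau$ (any simplex containing a full additive core is additive), so a bad simplex of maximal dimension is automatically a top-dimensional $k$-simplex of $T$, and when $f|_\tau$ is injective this forces $\dim\sigma = k$. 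In your extremal case $k = n-1$, the link $\link_{\BD_{n,m}(\Field)}(\sigma')$ is Cohen--Macaulay of dimension $0$, hence merely nonempty, and your replacement step genuinely fails to make progress there: after star-subdividing $\tau$ with apex $x$ and sending $x$ to some $\pm\vec w$ in that link, each new top-dimensional simplex $\{x\}\cup(\tau\setminus\{a_i\})$ with $f(a_i)$ outside the additive core still maps to a simplex containing the whole core together with $\pm\vec w$, hence is again bad of dimension $k$, so the count of bad top simplices does not decrease; and with the link disconnected there is no nullhomotopy available to cone off. This is not a bookkeeping delicacy --- it is the argument breaking.

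The paper's proof avoids all of this by keeping the \emph{image}, not the source, small. Rather than a maximal bad simplex, it picks a simplex $\sigma$ of the source of maximal dimension $\ell$ among those whose image $\eta = \phi(\sigma)$ is a \emph{minimal} additive simplex, namely a $2$-dimensional internally additive simplex or a $1$-dimensional externally additive simplex (every additive simplex has such a face). Maximality of $\ell$ forces $\phi(\link_X(\sigma)) \subset \link_{\BDA_{n,m}(\Field)}(\eta)$, since otherwise $\sigma$ together with an offending vertex would be a larger simplex still mapping to $\eta$. Because $\eta$ is small, $\link_{\BDA_{n,m}(\Field)}(\eta) = \link_{\BD_{n,m}(\Field)}(\eta')$ for a simplex $\eta'$ of dimension $1$ or $0$, which is isomorphic to $\BD_{n-2,m+2}(\Field)$ or $\BD_{n-1,m+1}(\Field)$ and hence $(n-4)$- or $(n-3)$-connected by Proposition~\ref{proposition:bases1con}; meanwhile $\dim\link_X(\sigma) \leq (n-1) - \ell - 1$, which is $\leq n-4$ or $\leq n-3$ since $\ell \geq 2$ or $\geq 1$. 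So $\phi|_{\link_X(\sigma)}$ is nullhomotopic in the target link with no extremal case, and one replaces $\phi$ on the closed star of $\sigma$ by the join of $\phi|_{\partial\sigma}$ with a cone over that nullhomotopy. The missing idea in your write-up is exactly this asymmetry: cap the size of the \emph{image} simplex (a triangle or an edge) and maximize the \emph{source} simplex subject to that, rather than maximizing over all bad source simplices and letting the image grow.
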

\begin{proof}
Let $X$ be a compact simplicial complex of dimension at most $(n-1)$ and let
$\phi\colon X \rightarrow \BDA_{n,m}(\Field)$ be a simplicial map.  It is enough to 
prove that $\phi$ can be homotoped to a map whose image is contained in
$\BD_{n,m}(\Field)$.

If the image of $\phi$ is not contained in $\BD_{n,m}(\Field)$, then the image of $\phi$
contains either a $2$-dimensional internally additive simplex or a $1$-dimensional
externally additive simplex.  Let $\sigma$ be a simplex of $X$ whose image is
of this form whose dimension $\ell$ is as large as possible.  
Since $\phi$ need not be injective, it might be the case that $\ell > \dim(\phi(\sigma)) \in \{1,2\}$.

Let $\ast$ be the simplicial join, so $\sigma \ast \link_X(\sigma) \subset X$.  Let
\[f\colon \sigma \ast \link_X(\sigma) \rightarrow \BDA_{n,m}(\Field)\]
be the restriction of $\phi$.  What we will do is construct a subdivision $Z$ of $\sigma \ast \link_X(\sigma)$
along with a map $g\colon Z \rightarrow \BDA_{n,m}(\Field)$ with the following properties:
\begin{compactitem}
\item No simplices of $\partial \sigma \ast \link_X(\sigma)$ are subdivided when forming $Z$.
\item $f$ and $g$ restrict to the same map on $\partial \sigma \ast \link_X(\sigma)$.
\item $f$ and $g$ are homotopic through maps fixing $\partial \sigma \ast \link_X(\sigma)$.
\item There are no simplices of dimension at least $\ell$ in $Z$ that map to either $2$-dimensional internally
simplices or $1$-dimensional externally additive simplices.
\end{compactitem}
From this, we see that we can subdivide $X$ to replace $\sigma \ast \link_X(\sigma)$ with $Z$ and then
homotope $\phi$ so as to replace $f$ by $g$.  This eliminates $\sigma$, and repeating this over and
over again homotopes $\phi$ to a map whose image is contained in $\BD_{n,m}(\Field)$, as desired.

It remains to construct $Z$ and $g$.  We will show how to do this when $\eta = \phi(\sigma)$ is a $2$-dimensional
internally additive simplex.  The case where $\eta$ is a $1$-dimensional externally additive simplex is similar.
Write $\eta = \{\pm \vec{v}_0,\pm \vec{v}_1,\pm \vec{v}_2\}$ with
$\vec{v}_0 = \lambda \vec{v}_1 + \nu \vec{v}_2$ for some $\lambda,\nu \in \{\pm 1\}$.  

Since the dimension of $\sigma$ is as large as possible, we have
\[f\left(\link_X\left(\sigma\right)\right) \subset \link_{\BDA_{n,m}(\Field)}\left(\eta\right).\]
Setting $\eta' = \{\pm \vec{v}_1, \pm\vec{v}_2\}$, the key observation is that
\[\link_{\BDA_{n,m}(\Field)}\left(\eta\right) = \link_{\BD_{n,m}(\Field)}\left(\eta'\right) \cong \BD_{n-2,m+2}(\Field).\]
Proposition \ref{proposition:bases1con} says that $\BD_{n-2,m+2}(\Field)$ is $(n-4)$-connected.  Since $X$
has dimension at most $(n-1)$ and $\sigma$ has dimension $\ell \geq 2$, the complex
$\link_X(\sigma)$ has dimension at most $(n-4)$.  We conclude that the map
\begin{equation}
\label{eqn:tocone}
\link_X\left(\sigma\right) \longrightarrow \link_{\BDA_{n,m}(\Field)}\left(\eta\right)
\end{equation}
obtained by restricting $f$ is nullhomotopic.

Letting $\{p_0\}$ denote a $1$-point space, we conclude that
\eqref{eqn:tocone} extends to a continuous map
\[F\colon \{p_0\} \ast \link_X\left(\sigma\right) \rightarrow \link_{\BDA_{n,m}(\Field)}\left(\eta\right)\]
that is simplicial with respect to some subdivision $Z'$ of its domain that does not subdivide any
simplices of $\link_X(\sigma)$.  Define
\[Z = \partial \sigma \ast Z' \cong \partial \sigma \ast \{p_0\} \ast \link_X\left(\sigma\right) \cong \sigma \ast \link_X\left(\sigma\right).\]
The $\cong$ here are topological homeomorphisms where the domain is a subdivision of the codomain.  Finally, define
$g\colon Z \rightarrow \BDA_{n,m}(\Field)$ to be
\[Z = \partial \sigma \ast Z' \cong \partial \sigma \ast \left(\{p_0\} \ast \link_X\left(\sigma\right)\right) \xrightarrow{f|_{\partial \sigma} \ast F} \BDA_{n,m}(\Field).\]
It is clear that this has the desired properties.
\end{proof}

\subsection{Improving the connectivity for small primes}
\label{section:smallprimes}

In this section, we show that the connectivity range for $\BDA_n(\Field_p)$ can be
improved for $p \leq 5$.  We state our result and give the skeleton of its
proof in \S \ref{section:smallprimesstatement}.  This depends on several lemmas
which are proved in subsequent sections.

\subsubsection{Statement and skeleton of proof}
\label{section:smallprimesstatement}

Our result is as follows.

\begin{proposition}
\label{proposition:bases1conimproved}
For a prime $p \leq 5$, the complex $\BDA_n(\Field_p)$ is $(n-1)$-connected
for $n \geq 1$.
\end{proposition}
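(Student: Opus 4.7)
My plan is to induct on $n$. The base case $n = 1$ is trivial ($\BDA_1(\Field_p)$ is a single vertex), while $n = 2$ follows directly from Lemma \ref{lemma:n2}: the level-$p$ modular curve has genus $(p+2)(p-3)(p-5)/24$, which vanishes for $p \leq 5$, so $\BDA_2(\Field_p)$ is either $S^2$ or contractible, hence $1$-connected as required.

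For the inductive step $n \geq 3$, the cases $p \in \{2, 3\}$ are essentially immediate: since $\Field_p^{\times} = \{\pm 1\}$, both the determinant-$\pm 1$ restriction and the restriction of additive-core coefficients to $\{\pm 1\}$ are automatic, so $\BDA_n(\Field_p) = \BA_n(\Field_p)$, which is Cohen--Macaulay of dimension $n$ by Proposition \ref{proposition:augbasescon} and hence $(n-1)$-connected.

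The substantive case is $p = 5$, where $\BDA_n(\Field_5)$ is a proper subcomplex of the $(n-1)$-connected complex $\BA_n(\Field_5)$. Given a simplicial $(n-1)$-sphere $\phi \colon X \to \BDA_n(\Field_5)$, the $(n-1)$-connectedness of $\BA_n(\Field_5)$ from Proposition \ref{proposition:augbasescon} lets us extend $\phi$ to a simplicial disc $\Phi \colon Y \to \BA_n(\Field_5)$, and the task reduces to homotoping $\Phi$ rel $\partial Y$ into $\BDA_n(\Field_5)$. The simplices of $\BA_n(\Field_5) \setminus \BDA_n(\Field_5)$ are of two types: (i) standard $(n-1)$-simplices whose underlying basis has determinant $\pm 2$, and (ii) augmented simplices whose additive core $\vec{v}_0 = \lambda \vec{v}_1 + \nu \vec{v}_2$ has $\lambda$ or $\nu$ in $\{\pm 2\}$. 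Type (i) is fixed by the barycentric-style subdivision in Lemma \ref{lemma:bases1retract} (insert a rescaled vertex $\tfrac{1}{d}(\vec{v}_1 + \cdots + \vec{v}_n)$). Type (ii) is the new difficulty, and this is precisely where the defining arithmetic property of $\Field_5$ enters: for any bad units $a, b \in \{\pm 2\}$ there exist signs $\epsilon, \delta \in \{\pm 1\}$ with $\epsilon a + \delta b = 1$. This identity should enable a bad augmented simplex to be subdivided into a chain of good augmented simplices whose cores use only $\pm 1$ coefficients, by introducing an auxiliary vertex (for instance $\pm(\epsilon \vec{v}_1 + \delta \vec{v}_2)$ or a modification leveraging additional basis-completion vectors from the ambient $\Field_5^n$).

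The main obstacle will be executing these local replacements coherently across the entire filling disc $Y$: adjacent bad simplices sharing faces must be subdivided compatibly, mixed-defect $n$-simplices (combining a bad underlying determinant with a bad additive core) must be addressed, and the auxiliary-vertex choices must agree on shared faces. A Morse-theoretic argument with a complexity function on the image of $\Phi$ counting the bad features (number of bad cores and bad determinants), combined with an inductive reduction in the spirit of the proof of \cite[Theorem C$'$]{CP} underlying Proposition \ref{proposition:augbasescon} and of Lemma \ref{lemma:bases1surject}, should organize the argument.
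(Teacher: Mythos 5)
Your base case, your treatment of $n = 2$ via Lemma \ref{lemma:n2}, and your dispatch of $p \in \{2,3\}$ via $\BDA_n(\Field_p) = \BA_n(\Field_p)$ and Proposition \ref{proposition:augbasescon} all match the paper. The $p = 5$ inductive step, however, has a genuine gap. Your plan is to extend a sphere $\phi\colon X \to \BDA_n(\Field_5)$ to a filling disc $\Phi\colon Y \to \BA_n(\Field_5)$ and then homotope $\Phi$ rel $\partial Y$ into $\BDA_n(\Field_5)$ by local surgery on the two kinds of bad simplices. But the homotopy is exactly what you leave unproved: you yourself correctly diagnose the obstruction (compatibility of auxiliary-vertex choices across shared faces, mixed-defect $n$-simplices) and then defer it to an unspecified ``Morse-theoretic argument.'' That compatibility problem \emph{is} the theorem; knowing the arithmetic identity $\pm 2 \pm 2 \equiv \pm 1 \pmod 5$ is necessary input, but it does not by itself resolve the local-to-global issue on a filling disc.

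The paper avoids the disc-repair problem entirely by taking a different route. It first applies Lemma \ref{lemma:bases1surject} to see that $\pi_{n-1}(\BD_n(\Field_5)) \to \pi_{n-1}(\BDA_n(\Field_5))$ is surjective, which means one only has to kill spheres coming from $\BD_n(\Field_5)$; since $\BD_n$ has no augmented simplices at all, bad additive cores are eliminated from the picture at the start. It then uses the retraction $\rho\colon \B_n(\Field_5) \to \BD_n(\Field_5)$ of Lemma \ref{lemma:bases1retract} to pull back to $\B_n(\Field_5)$, and Lemma \ref{lemma:identifygen} (built up through Lemmas \ref{lemma:identifygenrel}, \ref{lemma:identifygenab}, \ref{lemma:identifygenab5}) produces an explicit generating set for $\pi_{n-1}(\B_n(\Field_5))$ consisting of initial D-triangle maps and initial D-suspend maps. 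Your arithmetic identity is used precisely there, in Lemma \ref{lemma:identifygenab5} and Figure \ref{figure:breakup}, where a triangle with coefficients in $\{\pm 2\}$ is rewritten as a \emph{sum of homology classes} (D-triangle plus double-suspend plus D-suspend), not as a naive subdivision of a single simplex. Finally, Lemmas \ref{lemma:kill1} and \ref{lemma:kill2} (supported by Lemmas \ref{lemma:killaugmented}, \ref{lemma:retractindependent}, \ref{lemma:multiadd}, \ref{lemma:disguisedfix}, \ref{lemma:fixsubdivision}) show that each generator becomes nullhomotopic under $\iota \circ \rho$ in $\BDA_n(\Field_5)$. Killing a controlled list of generators one at a time is what replaces, and evades, the global coherence problem your approach runs into.
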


\begin{remark}
For primes $p>5$, Lemma \ref{lemma:n2} implies that this is false for $n=2$.
We do not know whether or not it holds for $p>5$ and $n \geq 3$.
\end{remark}

\begin{proof}[{Skeleton of proof of Proposition \ref{proposition:bases1conimproved}}]
We outline the proof of the proposition, reducing it to several lemmas.  For $n=1$, the
complex $\BDA_n(\Field_p)$ is a single point and the proposition is trivial, so we can assume
that $n \geq 2$.  For $p \in \{2,3\}$, we
have $\BDA_n(\Field_p) = \BA_n(\Field_p)$, so the proposition follows from
Proposition \ref{proposition:augbasescon}.  We thus only need to deal with the case $p=5$.  

The proof will be by induction on $n$.  The base case $n=2$ follows from Lemma \ref{lemma:n2}, which
says that $\BDA_2(\Field_5)$ is homeomorphic to a $2$-sphere.  Assume now that $n>2$ and that
the result is true for all smaller $n$.  Proposition \ref{proposition:bases1acon} says that
$\BDA_n(\Field_5)$ is $(n-2)$-connected, so we must only show that $\pi_{n-1}(\BDA_n(\Field_5)) = 0$.

Lemma \ref{lemma:bases1surject} says that the inclusion $\iota\colon \BD_n(\Field_5) \hookrightarrow \BDA_n(\Field_5)$
induces a surjection on $\pi_{n-1}$, so it is enough to prove that it also induces the zero map
on $\pi_{n-1}$.  We will do this by identifying generators for $\pi_{n-1}(\BD_n(\Field_5))$ and
then showing that these generators all lie in the kernel of the map
$\iota_{\ast}\colon \pi_{n-1}(\BD_n(\Field_5)) \rightarrow \pi_{n-1}(\BDA_n(\Field_5))$.  Since $n \geq 3$, Proposition
\ref{proposition:bases1con} says that $\BD_n(\Field_5)$ is $1$-connected, so we can ignore
basepoints and represent elements of $\pi_{n-1}(\BD_n(\Field_5))$ by unbased maps of $(n-1)$-spheres
into $\BD_n(\Field_5)$.

Lemma \ref{lemma:bases1retract} says that there is a retraction
$\rho\colon \B_n(\Field_5) \rightarrow \BD_n(\Field_5)$, so if $S$ is a generating set
for $\pi_{n-1}(\B_n(\Field_5))$, then $\Set{$\rho_{\ast}(s)$}{$s \in S$}$ is a generating
set for $\pi_{n-1}(\BD_n(\Field_5))$.  To describe generators for $\pi_{n-1}(\B_n(\Field_5))$,
we first introduce some notation.

\begin{notation}
Let $X$ be a simplicial complex and let $\Delta^{k-1}$ be an $(k-1)$-simplex.
\begin{compactitem}
\item Let $v_1,\ldots,v_k$ be (not necessarily distinct) vertices of $X$ such that $\{v_1,\ldots,v_k\}$
is a simplex.  Define $\Disc{v_1,\ldots,v_k}$ to be the map
\[\Disc{v_1,\ldots,v_k}\colon \Delta^{k-1} \longrightarrow X\]
taking the vertices of $\Delta^{k-1}$ to the $v_i$.
\item Let $v_1,\ldots,v_k$ be (not necessarily distinct) vertices of $X$ such that $\{v_1,\ldots,\widehat{v_i},\ldots,v_k\}$
is a simplex of $X$ for all $1 \leq i \leq k$.  Define $\Sphere{v_1,\ldots,v_k}$ to be the map
\[\Sphere{v_1,\ldots,v_k}\colon \partial \Delta^{k-1} \longrightarrow X\]
taking the vertices of $\partial \Delta^{k-1}$ to the $v_i$.
\item Let $Y$ and $Z$ be simplicial complexes and let $f\colon Y \rightarrow X$ and $g\colon Z \rightarrow X$
be simplicial maps.  Assume that for all simplices $\sigma$ of $Y$ and $\eta$ of $Z$, the join
$f(\sigma) \ast g(\eta)$ is a simplex of $X$.  Then let $f \ast g$ denote the natural
map $f \ast g \colon Y \ast Z \rightarrow X$.
\end{compactitem}
\end{notation}

The following lemma now gives generators for
$\pi_{n-1}(\B_n(\Field_5))$.  It will be proved in \S \ref{section:identifygen}.
For a finite-dimensional $\Field_5$-vector space $V$, we write $\B(V)$ for the complex
of partial $\pm$-bases of $V$, so $\B_n(\Field_5) = \B(\Field_5^n)$.

\begin{lemma}
\label{lemma:identifygen}
For $n \geq 3$, the group $\pi_{n-1}(\B_n(\Field_5))$ is generated by the following two families
of generators.
\begin{compactitem}
\item The {\bf initial D-triangle maps}.  Let $\sigma = \{\pm \vec{v}_0, \pm \vec{v}_1, \pm \vec{v}_2\}$ be a $2$-dimensional
additive simplex of $\BA_n(\Field_5)$ with $\vec{v}_0 = \lambda \vec{v}_1+\nu \vec{v}_2$ for some $\lambda, \nu \in \{\pm 1\}$.
Let $f\colon S^{n-3} \rightarrow \link_{\BA_n(\Field_5)}(\sigma)$
be a simplicial map for some triangulation of $S^{n-3}$.  The associated initial D-triangle map is then
\[\Sphere{\pm \vec{v}_0, \pm \vec{v}_1, \pm \vec{v}_2} \ast f \colon \partial \Delta^2 \ast S^{n-3} \cong S^{n-1} \longrightarrow \B_n(\Field_5).\]
\item The {\bf initial D-suspend maps}.  Let $\vec{v} \in \Field_5^n$ be a nonzero vector, let $W \subset \Field_5^n$ 
be an $(n-1)$-dimensional subspace such that $\Field_5^n = \Span{\vec{v}} \oplus W$, and let $\vec{w} \in W$ be nonzero.  
Let $f\colon S^{n-2} \rightarrow \B(W)$ be a simplicial map for some triangulation of $S^{n-2}$.  The associated
initial D-suspend map is then
\[\Sphere{\pm \vec{v}, \pm (\vec{v}+\vec{w})} \ast f\colon \partial \Delta^1 \ast S^{n-2} \cong S^{n-1} \longrightarrow \B_n(\Field_5).\]
\end{compactitem}
\end{lemma}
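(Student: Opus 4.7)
The plan is to relate $\pi_{n-1}(\B_n(\Field_5))$ to the relative homotopy group $\pi_n(\BA_n(\Field_5), \B_n(\Field_5))$ and then decompose this relative group according to additive $n$-simplices. By Proposition \ref{proposition:augbasescon} applied with $m=0$, the complex $\BA_n(\Field_5)$ is $(n-1)$-connected, and by Proposition \ref{proposition:basescon} the complex $\B_n(\Field_5)$ is $(n-2)$-connected. The long exact sequence of the pair $(\BA_n(\Field_5), \B_n(\Field_5))$ then yields a surjection
\[
\partial \colon \pi_n(\BA_n(\Field_5), \B_n(\Field_5)) \twoheadrightarrow \pi_{n-1}(\B_n(\Field_5)),
\]
so it suffices to exhibit a generating set of the relative group whose image under $\partial$ consists of the two claimed families.

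To do this, I would represent an element of $\pi_{n-1}(\B_n(\Field_5))$ by a simplicial sphere $\phi \colon S^{n-1} \to \B_n(\Field_5)$, extend to a simplicial map $\Phi \colon D^n \to \BA_n(\Field_5)$ using $(n-1)$-connectivity of $\BA_n(\Field_5)$, and then perform a surgery on the top-dimensional simplices of a triangulation of $D^n$. Call a top-dimensional simplex $\tau$ of $D^n$ \emph{bad} if $\Phi(\tau)$ is an additive $n$-simplex of $\BA_n(\Field_5)$, and \emph{good} otherwise. Good simplices lie in $\B_n(\Field_5)$, so they contribute nothing to $\phi$ after surgery. The key step is to show that each bad simplex can be excised from $\Phi$ by replacing it with a disc in $\B_n(\Field_5)$ at the cost of adding a single sphere of one of the two claimed forms to $\phi$.

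For a bad $\tau$ with $\Phi(\tau) = \eta$, write $\eta = \sigma \cup \nu$ where $\sigma = \{\pm \vec{v}_0, \pm \vec{v}_1, \pm \vec{v}_2\}$ is the $2$-dimensional additive core and $\nu$ is an $(n-3)$-simplex of $\link_{\BA_n(\Field_5)}(\sigma)$. The boundary $\partial \eta$ decomposes as $(\partial \sigma \ast \nu) \cup (\sigma \ast \partial \nu)$, where $\partial \sigma \ast \nu$ lies in $\B_n(\Field_5)$ while $\sigma \ast \partial \nu$ still contains the additive triangle $\sigma$. Since $\link_{\BA_n(\Field_5)}(\sigma)$ is $(n-4)$-connected by the Cohen--Macaulay property from Proposition \ref{proposition:augbasescon}, the sphere $\partial \nu$ can be capped off within the link by a disc. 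Using this disc to kill the additive piece of $\partial \eta$ produces a surgery that trades the contribution of $\tau$ for a sphere of the form $\Sphere{\pm \vec{v}_0, \pm \vec{v}_1, \pm \vec{v}_2} \ast f$ for some $f \colon S^{n-3} \to \link_{\BA_n(\Field_5)}(\sigma)$; this is precisely an initial D-triangle map.

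The initial D-suspend generators arise from a complementary configuration in the surgery: namely when the additive core takes the form $\sigma = \{\pm \vec{v}, \pm \vec{w}, \pm(\vec{v}+\vec{w})\}$ with $\vec{w}$ lying in an $(n-1)$-dimensional subspace $W \subset \Field_5^n$ complementary to $\Span{\vec{v}}$. In this case the cancellation described above degenerates, because the core vertex $\pm \vec{w}$ may already appear among the image vertices of the surrounding disc, and the surgery output takes the lower join form $\Sphere{\pm \vec{v}, \pm(\vec{v}+\vec{w})} \ast f$ for some $f \colon S^{n-2} \to \B(W)$, matching the D-suspend form. The main obstacle is the combinatorial bookkeeping needed to verify that every bad top-dimensional simplex produces, after surgery, a sphere of exactly one of the two claimed normal forms, and that iterating this surgery across all bad simplices terminates with $\Phi$ taking values in $\B_n(\Field_5)$; the intrinsic characterization of the additive core and the Cohen--Macaulay conclusion of Proposition \ref{proposition:augbasescon} are the key tools that make this bookkeeping tractable.
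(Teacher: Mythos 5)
Your surgery picture (exchanging a top-dimensional simplex of $D^n$ mapping to an additive $n$-simplex of $\BA_n(\Field_5)$ for a sphere of the form $\Sphere{\pm\vec v_0,\pm\vec v_1,\pm\vec v_2}\ast f$, capping off $\partial\nu$ using the Cohen--Macaulayness of the link) is essentially the chain-level argument the paper runs in Lemma~\ref{lemma:identifygenrel}, so that part is fine. The problem is that this surgery, as you describe it, never uses anything special about $\Field_5$, and so at best it can produce \emph{triangle} maps $\Sphere{\pm\vec v_0,\pm\vec v_1,\pm\vec v_2}\ast f$ with an arbitrary additive core $\vec v_0=\lambda\vec v_1+\nu\vec v_2$, $\lambda,\nu\in\Field_5^\times=\{\pm1,\pm2\}$, together with suspends $\Sphere{\pm\vec v,\pm(\lambda\vec v+\vec w)}\ast f$ with arbitrary $\lambda\in\Field_5^\times$. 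The lemma you are trying to prove asserts something strictly stronger: that $\lambda,\nu$ can be taken in $\{\pm 1\}$ for the triangles, and $\lambda=1$ for the suspends. For a general field the stronger statement is false, so your argument must break somewhere once specialized to $\Field_5$, and indeed the step ``produces a surgery that trades the contribution of $\tau$ for a sphere of [D-triangle] form'' has no justification when $\lambda$ or $\nu$ equals $\pm2$.

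The missing idea is an explicit arithmetic reduction specific to $\Field_5$: since $\Field_5^\times=\{\pm1,\pm2\}$ and $2=1+1$, a triangle with $\lambda=\nu=2$ decomposes in homology as a D-triangle plus a join of a double-suspend $\Sphere{\pm\vec v,\pm2\vec v}$ and a D-suspend, a triangle with $\{\lambda,\nu\}=\{1,2\}$ decomposes as a D-triangle minus such a join, and a suspend with $\lambda=2$ decomposes as a double-suspend plus a D-suspend (the identities underlying Figure~\ref{figure:breakup} and Lemma~\ref{lemma:identifygenab5}). One then still needs the inductive join decomposition of Lemma~\ref{lemma:identifygenab} to push these rewritings down through the links of $\B_{n,m}(\Field_5)$, and a final absorption argument to dispose of the auxiliary double-suspends (observing that when $m=0$ at least one factor must be a D-triangle or D-suspend, which can absorb them into an initial map). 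Your proposal also leaves unexplained why the $f$ in a D-suspend should land in $\B(W)$ rather than merely in a link; in the paper this comes from choosing the D-suspend factor with the largest index in the join decomposition, which again requires the inductive join structure you have not set up. In short, the homotopy-theoretic scaffolding you describe recovers the generic generating set, but the heart of the lemma is the $\Field_5$-specific rewriting of coefficients to $\{\pm1\}$, and that step is absent.
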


\begin{remark}
The ``D'' in D-triangle and D-suspend maps are there to distinguish them from more general 
ones we will introduce in the next section.  Since $n \geq 3$ in Lemma \ref{lemma:identifygen},
the $\sigma$ in the definition of an initial D-triangle map is actually a simplex
of $\BDA_n(\Field_5)$; however, we define it like we did since later we will talk about
them when $n=2$, in which case we do {\em not} want to require a determinant condition.
\end{remark}

To finish the proof, it is now enough to prove the following two lemmas.

\begin{lemma}[Kill initial D-triangle maps]
\label{lemma:kill1}
For some $n \geq 3$, let $g\colon S^{n-1} \rightarrow \B_n(\Field_5)$ be an initial D-triangle map,
let $\rho\colon \B_n(\Field_5) \rightarrow \BD_n(\Field_5)$ be
the retraction given by Lemma \ref{lemma:bases1retract}, and let $\iota\colon \BD_n(\Field_5) \hookrightarrow \BDA_n(\Field_5)$
be the inclusion.  Then $\iota \circ \rho \circ g\colon S^{n-1} \rightarrow \BDA_n(\Field_5)$ is nullhomotopic.
\end{lemma}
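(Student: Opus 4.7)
The plan is to construct an explicit null-homotopy of $\iota \circ \rho \circ g$ in $\BDA_n(\Field_5)$ using the additive simplex $\sigma = \{\pm \vec{v}_0, \pm \vec{v}_1, \pm \vec{v}_2\}$ as a 2-disk filling of the 2-sphere $\Sphere{\pm \vec{v}_0, \pm \vec{v}_1, \pm \vec{v}_2}$. The critical observation is that since $n \geq 3$, the augmented simplex $\sigma$ lies in $\BDA_n(\Field_5)$: it has $\lambda,\nu \in \{\pm 1\}$ by hypothesis, and its standard part $\{\pm \vec{v}_1, \pm \vec{v}_2\}$ has size $2 < n$, so the determinant-$1$ condition in the definition of $\BDA_n$ is vacuous. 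Thus $\sigma$ itself realizes a 2-disk $\Disc{\pm \vec{v}_0, \pm \vec{v}_1, \pm \vec{v}_2}$ in $\BDA_n(\Field_5)$.

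Write $g = \Sphere{\pm \vec{v}_0, \pm \vec{v}_1, \pm \vec{v}_2} \ast f$ with $f\colon S^{n-3} \to \link_{\BA_n(\Field_5)}(\sigma)$. I would extend $\iota \circ \rho \circ g$ to a map $\Delta^2 \ast S^{n-3} \to \BDA_n(\Field_5)$ by filling each top $n$-cell $\Delta^2 \ast \tau$ (for $\tau$ a top $(n-3)$-simplex of $S^{n-3}$) by a chain of augmented $n$-simplices with additive core $\sigma$. Fix $\tau$ with $f(\tau) = \{\pm \vec{w}_1, \ldots, \pm \vec{w}_{n-2}\}$, and set $d = \det(\vec{v}_1, \vec{v}_2, \vec{w}_1, \ldots, \vec{w}_{n-2})$. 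When $d = \pm 1$, the three boundary edge-determinants $d_{ij}$ all equal $\pm d = \pm 1$ (using $\vec{v}_0 = \lambda \vec{v}_1 + \nu \vec{v}_2$), so $\rho$ acts trivially on $\partial \Delta^2 \ast \tau$; the augmented $n$-simplex $\sigma \cup f(\tau)$ lies in $\BDA_n(\Field_5)$, and fills $\Delta^2 \ast \tau$ directly.

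When $d \neq \pm 1$, $\rho$ subdivides each boundary $(n-1)$-simplex $e_{ij} \ast \tau$ by a center vertex mapping to $\vec{c}_{ij}^\tau = \tfrac{1}{d_{ij}}(\vec{v}_i + \vec{v}_j + \sum_r \vec{w}_r)$. A direct multilinearity computation shows that for each $i < j$ in $\{0,1,2\}$ and each $k \in \{1,\ldots,n-2\}$, the augmented $n$-simplex
\[
A_{ij,k}^\tau \;:=\; \sigma \cup \bigl(f(\tau) \setminus \{\pm \vec{w}_k\}\bigr) \cup \{\pm \vec{c}_{ij}^\tau\}
\]
has determinant-$\pm 1$ standard part and therefore lies in $\BDA_n(\Field_5)$. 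I would assemble these augmented $n$-simplices into a cell decomposition of $\Delta^2 \ast \tau$, using the 2-disk $\sigma$ to fill $\partial \Delta^2$, whose boundary matches $\rho$'s subdivision of $\partial \Delta^2 \ast \tau$.

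The hard part is the combinatorial bookkeeping of this cell decomposition. The three boundary centers $\vec{c}_{01}^\tau, \vec{c}_{02}^\tau, \vec{c}_{12}^\tau$ are generally distinct $\pm$-vectors (for instance $\vec{c}_{12}^\tau$ involves $\vec{v}_1 + \vec{v}_2$ while $\vec{c}_{02}^\tau$ involves $\vec{v}_0 + \vec{v}_2 = \lambda \vec{v}_1 + (\nu+1)\vec{v}_2$), so a single cone on $\sigma$ does not suffice. The filling must simultaneously stitch together the augmented simplices $A_{ij,k}^\tau$ along all three subdivided boundary edges while remaining consistent across neighboring cells $\Delta^2 \ast \tau'$ sharing a face in $\partial \tau$. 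I expect to verify that these chains do fit together by a careful accounting of which boundary faces of each $A_{ij,k}^\tau$ are either (i) top simplices of $\rho$'s subdivision of some $e_{ij}\ast\tau$, (ii) augmented $(n-1)$-faces shared between adjacent $A_{ij,k}^\tau$ (which cancel), or (iii) augmented faces lying in $\sigma \ast \partial \tau$ (which match the filling of an adjacent cell).
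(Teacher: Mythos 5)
Your proposal follows the same high-level strategy as the paper: extend $\iota \circ \rho \circ g$ over $\Delta^2 \ast S^{n-3}$, observing that $\sigma$ fills $\partial\Delta^2$ and that the only obstruction lives in the top $n$-cells $\Delta^2 \ast \tau$ where the determinant is $\pm 2$. The multilinearity check showing that each $A_{ij,k}^\tau$ has determinant-$\pm 1$ standard part is correct. However, there is a genuine and substantial gap at exactly the point you flag as ``the hard part'': you never show that the simplices $A_{ij,k}^\tau$ actually assemble into a filling of $\Delta^2 \ast \tau$ whose boundary matches $\rho$'s subdivision. This is not bookkeeping; it is the entire content of the lemma. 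Worse, the specific collection you propose is unlikely to work as written. All $3(n-2)$ of your $A_{ij,k}^\tau$ contain the common $2$-face $\sigma$, so (for instance, when $n=3$) your three tetrahedra glued along a shared triangle form a ``book,'' not a $3$-ball, and do not tile $\Delta^3$ with the required boundary subdivision.

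The paper isolates this difficulty into Lemma~\ref{lemma:killaugmented}, which proves that $\rho \circ \Sphere{\pm(\vec{v}_1+\vec{v}_2),\pm\vec{v}_1,\ldots,\pm\vec{v}_n}$ is nullhomotopic, and invokes it via obstruction theory for each problematic $n$-cell; that lemma's proof is itself a delicate argument that decomposes the sphere into $n$ pieces and treats the pieces for $i=1,2$ with further work. The key ingredient you are missing is Principle~\ref{principle:changesub} (backed by Lemma~\ref{lemma:retractindependent}): the center vertex used to subdivide a bad $(n-1)$-simplex can be chosen freely among the $2^n$ legal options, and the paper exploits this freedom repeatedly --- e.g.\ choosing $\vec{w} = 2\vec{v}_1 - 2\vec{v}_2 + 2\vec{v}_3 + \cdots + 2\vec{v}_n$ rather than the ``all-plus'' vector, and later forcing two subdivided faces to use the \emph{same} center so that they can be merged. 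Your fixed formula $\vec{c}_{ij}^\tau = \tfrac{1}{d_{ij}}(\vec{v}_i + \vec{v}_j + \sum_r \vec{w}_r)$ commits to the canonical (all-plus) choice, which removes precisely the degree of freedom the argument needs. To make your approach rigorous, you would at minimum need to prove (and use) something equivalent to Lemma~\ref{lemma:retractindependent} and then carry out the geometric decomposition in full, which essentially reproduces the paper's Lemma~\ref{lemma:killaugmented}.
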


\begin{lemma}[Kill initial D-suspend maps]
\label{lemma:kill2}
For some $n \geq 3$, let $g\colon S^{n-1} \rightarrow \B_n(\Field_5)$ be an initial D-suspend map,
let $\rho\colon \B_n(\Field_5) \rightarrow \BD_n(\Field_5)$ be
the retraction given by Lemma \ref{lemma:bases1retract}, and let $\iota\colon \BD_n(\Field_5) \hookrightarrow \BDA_n(\Field_5)$be the inclusion.  Assume that $\pi_{n-2}(\BDA_{n-1}(\Field_5)) = 0$.
Then $\iota \circ \rho \circ g\colon S^{n-1} \rightarrow \BDA_n(\Field_5)$ is nullhomotopic.
\end{lemma}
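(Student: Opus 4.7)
The plan is to construct an explicit nullhomotopy of $\iota \circ \rho \circ g$ in $\BDA_n(\Field_5)$ by exploiting the internally additive $2$-simplex $\tau = \{\pm \vec{v}, \pm \vec{w}, \pm(\vec{v}+\vec{w})\}$, which is a simplex of $\BDA_n(\Field_5)$ because the relation $\vec{v}+\vec{w} = 1\cdot \vec{v} + 1\cdot \vec{w}$ has coefficients in $\{\pm 1\}$. The map $g = \Sphere{\pm\vec{v}, \pm(\vec{v}+\vec{w})} \ast f$ exhibits its domain $\partial\Delta^1 \ast S^{n-2}$ as the boundary of the disk $\Delta^1 \ast S^{n-2}$, with the two endpoints of $\Delta^1$ mapping to the cone points $\pm\vec{v}$ and $\pm(\vec{v}+\vec{w})$. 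These endpoints are two of the three vertices of $\tau$, and the basic idea is to replace the interval $\Delta^1$ by the ``V''-shape $\partial\Delta^2$ minus the open edge opposite to $\pm\vec{w}$ (topologically still an interval, but now passing through $\pm\vec{w}$), and to use the join of this V-shape with $S^{n-2}$ as an $n$-disk filling $\iota \circ \rho \circ g$ up to an allowable homotopy.

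The obstruction to making this join into a simplicial map to $\BDA_n(\Field_5)$ is that for a simplex $\eta$ of $S^{n-2}$, the sets $\{\pm\vec{v}, \pm\vec{w}\} \cup f(\eta)$ and $\{\pm\vec{w}, \pm(\vec{v}+\vec{w})\} \cup f(\eta)$ must each be a simplex of $\BDA_n(\Field_5)$. This fails whenever $f(\eta)$ contains a $\pm$-vector in $\Span{\vec{w}}$ (namely $\pm\vec{w}$ or $\pm 2\vec{w}$), and there are additional determinant-$\pm 1$ conditions at the top dimension. To handle this, I would use the hypothesis $\pi_{n-2}(\BDA_{n-1}(\Field_5)) = 0$ to homotope $f$ (in $\BDA_n(\Field_5)$, after composition with $\iota \circ \rho$) so that its image lies in the augmented link $\hlink_{\BDA_n(\Field_5)}(\pm\vec{w})$. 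This link is isomorphic to $\BDA_{n-1,1}(\Field_5)$, and the hypothesis (combined with a comparison argument analogous to the proof of Proposition \ref{proposition:augbasescon}) implies it is $(n-2)$-connected, so $f$ can indeed be pushed off the stars of $\pm\vec{w}$ and $\pm 2\vec{w}$.

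Once $f$ has been replaced by a homotopic map $\tilde{f}$ landing in $\hlink_{\BDA_n(\Field_5)}(\pm\vec{w})$, the V-shape joined with $\tilde{f}$ yields a map from an $n$-disk into $\BDA_n(\Field_5)$ whose boundary is homotopic to $\iota \circ \rho \circ g$, thereby providing the nullhomotopy. The retraction $\rho$ subdivides the top-dimensional simplices of the original $g$-image that violate the determinant-$\pm 1$ condition; after the homotopy, these subdivision vertices match up coherently with the V-shape-join-$\tilde{f}$ filling to give a genuine simplicial filling in $\BDA_n(\Field_5)$. I expect the hard part to be this final matching --- more specifically, the homotopy of $f$ into the augmented link of $\pm\vec{w}$ in a way that respects $\rho$'s subdivision structure --- since controlling how the subdivision vertices lie with respect to $\hlink_{\BDA_n(\Field_5)}(\pm\vec{w})$ requires careful bookkeeping.
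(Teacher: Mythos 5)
Your central construction does not work, for a reason that is more basic than the bookkeeping issues you flag at the end. You propose to replace the interval $\partial\Delta^1 \to \{\pm\vec{v},\pm(\vec{v}+\vec{w})\}$ by the broken path through $\pm\vec{w}$ and to use the join of this ``V''-shape with $S^{n-2}$ as the filling disk. But the filling disk then contains the subcomplex $\{\pm\vec{w}\}\ast S^{n-2}$, so for every top-dimensional simplex $\eta$ of $S^{n-2}$ you need $\{\pm\vec{w}\}\cup f(\eta)$ to be a simplex of $\BDA_n(\Field_5)$. Since $f$ maps into $\B(W)$ and $W$ is $(n-1)$-dimensional, a top-dimensional $\eta$ has $f(\eta)$ equal to (generically) $n-1$ $\pm$-vectors spanning $W$; adjoining $\pm\vec{w}\in W$ yields $n$ vectors inside the $(n-1)$-dimensional space $W$, which is automatically linearly dependent. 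Such a set is a simplex of $\BDA_n(\Field_5)$ only if the dependence has the very special form ``one vector is $\pm$ another $\pm$ a third,'' which of course fails in general. So the V-shape join is simply not a simplicial map into $\BDA_n(\Field_5)$, and no amount of pushing $f$ off the stars of $\pm\vec{w}$ and $\pm2\vec{w}$ can fix this, because the obstruction is forced by a dimension count rather than by incidence with those two vertices.

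Separately, even setting aside the main issue, the connectivity input you invoke is not available: you want $\hlink_{\BDA_n(\Field_5)}(\pm\vec{w})\cong\BDA_{n-1,1}(\Field_5)$ to be $(n-2)$-connected, but the hypothesis $\pi_{n-2}(\BDA_{n-1}(\Field_5))=0$ concerns $\BDA_{n-1,0}$, Proposition \ref{proposition:bases1acon} only gives $(n-3)$-connectivity for $\BDA_{n-1,1}$, and the complete-join comparison that proves Proposition \ref{proposition:augbasescon} is exactly the one the paper observes does not go through in the determinant-$1$ setting.

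The paper's proof avoids both problems by a different mechanism. It first applies Lemma \ref{lemma:fixsubdivision} simplex-by-simplex to move the retraction inward, replacing $\rho\circ\bigl(\Sphere{\pm\vec{e}_n,\pm(\vec{e}_n+\vec{w})}\ast f\bigr)$ by $\Sphere{\pm\vec{e}_n,\pm(\vec{e}_n+\vec{w})}\ast(\rho'\circ f)$, where $\rho'$ is the retraction for $\BAO_{n-1}$. Now $\rho'\circ f$ lands in $\BDA_{n-1}(\Field_5)$, which by the hypothesis is $(n-2)$-connected, so $\rho'\circ f$ is nullhomotopic \emph{there}. Crucially, the two suspension points are $\vec{e}_n$ and $\vec{e}_n+\vec{w}$, both of which have last coordinate $1$, so each cone $\{\pm\vec{e}_n\}\ast\BDA_{n-1}(\Field_5)$ and $\{\pm(\vec{e}_n+\vec{w})\}\ast\BDA_{n-1}(\Field_5)$ lies inside $\BDA_n(\Field_5)$, letting one suspend the nullhomotopy. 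The dimension-count obstruction that sinks your $\pm\vec{w}$ does not arise for $\pm\vec{e}_n$ or $\pm(\vec{e}_n+\vec{w})$ precisely because these vectors are transverse to $W$.
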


We will prove Lemma \ref{lemma:kill1} in \S \ref{section:kill1} and Lemma \ref{lemma:kill2} in
\S \ref{section:kill2}.
\end{proof}

Here is an outline of the remainder of this section.  In \S \ref{section:identifygen},
we will prove Lemma \ref{lemma:identifygen} above.  Next, in \S \ref{section:theretract}
we will prove some preliminary results about the retraction given by Lemma \ref{lemma:bases1retract}.
Finally, in \S \ref{section:kill1} and \S \ref{section:kill2} we will prove Lemmas \ref{lemma:kill1}
and \ref{lemma:kill2}.

\subsubsection{Identifying the generators}
\label{section:identifygen}

This section proves Lemma \ref{lemma:identifygen}, which identifies generators
for $\pi_{n-1}(\B_n(\Field_5))$.  The main idea of our proof will be to include $\B_n(\Field_5)$ into
$\BA_n(\Field_5)$, which by Proposition \ref{proposition:augbasescon} is $(n-1)$-connected.  We will
construct our generators inductively, and this section
is the one where it will be important for us to use the complexes $\B_{n,m}(\Field_5)$ built from links.

We start by proving two results that work over any field.  Our initial results will
be phrased in terms of homology groups rather than homotopy groups since that
is how our proofs function (and it allows us to avoid worrying about basepoints).
We will later use the Hurewicz theorem to translate this into information about homotopy
groups.  Throughout this section, our convention is that $S^{-1}$ is the empty set.

\begin{lemma}[Inductive generators]
\label{lemma:identifygenrel}
Let $\Field$ be a field.  Let $n \geq 1$ and $m \geq 0$ be such that $n+m \geq 2$.
Then the group $\RH_{n-1}(\B_{n,m}(\Field))$ is generated by the images of the fundamental classes
under the following two families of maps.
\begin{compactitem}
\item The {\bf initial triangle maps}, which require $n \geq 2$.  Let $\sigma = \{\pm \vec{v}_0, \pm \vec{v}_1, \pm \vec{v}_2\}$ be a $2$-dimensional
internally additive simplex of $\BA_{n,m}(\Field)$, so $\vec{v}_0 = \lambda \vec{v}_1+\nu \vec{v}_2$ for some
$\lambda,\nu \in \Field^{\times}$.  Let $f\colon S^{n-3} \rightarrow \link_{\BA_{n,m}(\Field)}(\sigma)$
be a simplicial map for some triangulation of $S^{n-3}$.  The associated initial triangle map is then
\[\Sphere{\pm \vec{v}_0, \pm \vec{v}_1, \pm \vec{v}_2} \ast f \colon \partial \Delta^2 \ast S^{n-3} \cong S^{n-1} \longrightarrow \B_{n,m}(\Field).\]
\item The {\bf initial external suspend maps}, which 
require $m \geq 1$.  Let $\sigma = \{\pm \vec{v}_0, \pm \vec{v}_1\}$ be
a $1$-dimensional externally additive simplex of $\BA_{n,m}(\Field)$.
Let $f\colon S^{n-2} \rightarrow \link_{\BA_{n,m}(\Field)}(\sigma)$ be a simplicial map for some
triangulation of $S^{n-2}$.  The associated initial external suspend map is then 
\[\Sphere{\pm \vec{v}_0, \pm \vec{v}_1} \ast f\colon \partial \Delta^1 \ast S^{n-2} \cong S^{n-1} \longrightarrow \B_{n,m}(\Field).\]
\end{compactitem}
\end{lemma}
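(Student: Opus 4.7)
The plan is to exploit the long exact sequence of the pair $(\BA_{n,m}(\Field), \B_{n,m}(\Field))$. Proposition \ref{proposition:augbasescon} gives that $\BA_{n,m}(\Field)$ is Cohen--Macaulay of dimension $n$, hence $(n-1)$-connected, so $\RH_{n-1}(\BA_{n,m}(\Field))=0$. The connecting homomorphism therefore provides a surjection
\[\delta\colon \HH_n\bigl(\BA_{n,m}(\Field), \B_{n,m}(\Field)\bigr) \twoheadrightarrow \RH_{n-1}(\B_{n,m}(\Field)),\]
and the strategy is to identify generators for the source and track their images under $\delta$.

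The relative chain complex is supported on additive simplices. The intrinsic characterization of the additive core (as the set of vertices whose removal leaves a standard simplex) shows that removing a non-core vertex from an additive simplex preserves its core. Thus every relative $n$-cycle $c$ decomposes as $c = \sum_{\sigma} c_\sigma$ indexed by additive cores $\sigma$, where $\sigma$ is either an internal $2$-simplex $\{\pm\vec{v}_0, \pm\vec{v}_1, \pm\vec{v}_2\}$ with $\vec{v}_0 = \lambda \vec{v}_1 + \nu \vec{v}_2$ or an external $1$-simplex $\{\pm\vec{v}_0, \pm\vec{v}_1\}$ with $\vec{v}_0 = \lambda \vec{v}_1 + \nu \vec{e}_i$. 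Each $c_\sigma$ is itself a relative cycle, so the two types of cores can be treated independently.

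Fix an internal core $\sigma$. Every $n$-simplex with core $\sigma$ has the form $\sigma \cup \tau$ for a unique $(n-3)$-simplex $\tau$ in $\link_{\BA_{n,m}(\Field)}(\sigma)$, so $c_\sigma$ can be written as $\sigma \ast \tilde c_\sigma$ for an $(n-3)$-chain $\tilde c_\sigma$ in the link. Uniqueness of the additive core forces the link to consist entirely of standard simplices, and after an obvious change of coordinates it is isomorphic to $\B_{n-2,m+2}(\Field)$; by Proposition \ref{proposition:basescon} this is Cohen--Macaulay of dimension $n-3$, hence homotopy equivalent to a wedge of $(n-3)$-spheres. The relative cycle condition forces $\tilde c_\sigma$ to be a cycle, and unwinding the connecting map gives $\delta[c_\sigma] = \bigl[(\partial \sigma) \ast \tilde c_\sigma\bigr] = \bigl[\Sphere{\pm\vec{v}_0, \pm\vec{v}_1, \pm\vec{v}_2} \ast \tilde c_\sigma\bigr]$. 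Writing $\tilde c_\sigma$ as a sum of fundamental classes $f_{i*}[S^{n-3}]$ of simplicial maps $f_i\colon S^{n-3} \to \link_{\BA_{n,m}(\Field)}(\sigma)$---possible since the link is a top-dimensional wedge of spheres---expresses $\delta[c_\sigma]$ as a sum of images of fundamental classes under initial triangle maps.

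The external case is entirely parallel: for a $1$-dimensional external core $\sigma$, the link $\link_{\BA_{n,m}(\Field)}(\sigma)$ is isomorphic to $\B_{n-1,m+1}(\Field)$ (hence Cohen--Macaulay of dimension $n-2$), and $\delta[c_\sigma]$ decomposes as a sum of initial external suspend maps by the same argument. The main delicate point throughout is rigorously realizing a link cycle as a sum of fundamental classes of simplicial sphere maps; this is standard for a top-dimensional wedge of spheres, but requires some care with varying triangulations and with absorbing integer coefficients into either disjoint unions of spheres or branched covers.
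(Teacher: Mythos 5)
Your argument is essentially the paper's proof, just phrased cycle-by-cycle rather than as a direct-sum decomposition of the relative chain complex.  The paper formalizes the core-preservation observation by showing that $\CC_{\bullet}(\BA_{n,m}(\Field),\B_{n,m}(\Field))$ splits as a direct sum of subcomplexes $D_{\bullet}(s)$ (indexed by ordered internal cores $s$) and $E_{\bullet}(t)$ (indexed by ordered external cores $t$), each of which is identified with $\RC_{\bullet-3}(\link_{\B_{n,m}(\Field)}(\{\pm\vec{v}_1,\pm\vec{v}_2\})) \cong \RC_{\bullet-3}(\B_{n-2,m+2}(\Field))$, respectively $\RC_{\bullet-2}(\B_{n-1,m+1}(\Field))$; one then invokes Proposition \ref{proposition:basescon} on these links exactly as you do.  Your final remark about realizing a top-dimensional homology class by a simplicial sphere map is the same point the paper relies on; it is handled by the Hurewicz theorem together with simplicial approximation, applied to the $(n-4)$-connected, $(n-3)$-dimensional complex $\B_{n-2,m+2}(\Field)$, so there is no genuine extra delicacy beyond what you flag.
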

\begin{proof}
Proposition \ref{proposition:augbasescon} says that $\BA_{n,m}(\Field)$ is $(n-1)$-connected, so the long
exact sequence in homology for the pair $(\BA_{n,m}(\Field),\B_{n,m}(\Field))$ contains the segment
\[\HH_n(\BA_{n,m}(\Field),\B_{n,m}(\Field)) \rightarrow \RH_{n-1}(\B_{n,m}(\Field)) \rightarrow 0.\]
The group $\RH_{n-1}(\B_{n,m}(\Field))$ is thus generated by the image under the boundary map of generators for
$\HH_n(\BA_{n,m}(\Field),\B_{n,m}(\Field))$.

For a $k$-simplex $\{\pm \vec{v}_0,\ldots,\pm \vec{v}_k\}$ of $\BA_{n,m}(\Field)$, write
$[\pm \vec{v}_0,\ldots,\pm \vec{v}_k]$ for the associated element of the relative simplicial chains
$\CC_k(\BA_{n,m}(\Field),\B_{n,m}(\Field))$.  We thus have $[\pm \vec{v}_0,\ldots,\pm \vec{v}_k] = 0$
if $\{\pm \vec{v}_0,\ldots,\pm \vec{v}_k\}$ is a standard simplex.  We now identify two important
subcomplexes of $\CC_{\bullet}(\BA_{n,m}(\Field),\B_{n,m}(\Field))$.

\begin{steps}
\label{step:internal}
Let $s = (\pm \vec{v}_0, \pm \vec{v}_1, \pm \vec{v}_2)$ be an ordered internally additive
simplex of $\BA_{n,m}(\Field)$.  We then define a subcomplex $D_{\bullet}(s)$ of the
chain complex $\CC_{\bullet}(\BA_{n,m}(\Field),\B_{n,m}(\Field))$ such that the image
of the composition
\[\HH_n(D_{\bullet}(s)) \rightarrow \HH_n(\BA_{n,m}(\Field),\B_{n,m}(\Field)) \rightarrow \RH_{n-1}(\B_{n,m}(\Field))\]
is contained in the subgroup generated by the images of the fundamental classes under the initial triangle maps.
\end{steps}
\begin{proof}[Proof of Step \ref{step:internal}]
For all $k$, let $D_k(s)$ be the subgroup of $\CC_k(\BA_{n,m}(\Field),\B_{n,m}(\Field))$ spanned by elements
of the form $[\pm \vec{v}_0,\ldots,\pm \vec{v}_k]$, where $\{\pm \vec{v}_0,\ldots,\pm \vec{v}_k\}$ is
an internally additive simplex of $\BA_{n,m}(\Field)$ starting with the elements of $s$.  For any $0 \leq i \leq 2$, deleting
$\pm \vec{v}_i$ from this gives a standard simplex, so
\begin{align*}
\partial [\pm \vec{v}_0,\ldots,\pm \vec{v}_k] &= \sum_{i=0}^k [\pm \vec{v}_0,\ldots,\widehat{\pm \vec{v}_i},\ldots,\pm \vec{v}_k] \\
&= \sum_{i=3}^k (-1)^i [\pm \vec{v}_0,\ldots,\widehat{\pm \vec{v}_i},\ldots,\pm \vec{v}_k].
\end{align*}
It follows that $D_{\bullet}(s)$ is a subcomplex of the chain complex
$\CC_{\bullet}(\BA_{n,m}(\Field),\B_{n,m}(\Field))$.  Moreover, our boundary formula also implies that
\begin{align*}
D_{\bullet}(s) &\cong \RC_{\bullet-3}(\link_{\BA_{n,m}(\Field)}(\{\pm \vec{v}_0, \pm \vec{v}_1, \pm \vec{v}_2\}))\\
&\cong \RC_{\bullet-3}(\link_{\B_{n,m}(\Field)}(\{\pm \vec{v}_1, \pm \vec{v}_2\})).
\end{align*}
The complex 
\[\link_{\B_{n,m}(\Field)}(\{\pm \vec{v}_1, \pm \vec{v}_2\}) \cong \B_{n-2,m+2}(\Field)\] 
is $(n-4)$-connected by Proposition \ref{proposition:basescon}, so
\[\RH_{n-3}(\link_{\B_{n,m}(\Field)}(\{\pm \vec{v}_1, \pm \vec{v}_2\}))\]
is generated by the images of fundamental classes under maps
\[f\colon S^{n-3} \rightarrow \link_{\B_{n,m}(\Field)}(\{\pm \vec{v}_1, \pm \vec{v}_2\})\]
that are simplicial for some triangulation of $S^{n-3}$.  The claim about the image
of $\HH_n(D_{\bullet}(s))$ in $\RH_{n-1}(\B_{n,m}(\Field))$ follows.
\end{proof}

\begin{steps}
\label{step:external}
Let $t = (\pm \vec{v}_0, \pm \vec{v}_1)$ be an ordered externally additive
simplex of $\BA_{n,m}(\Field)$.  We then define a subcomplex $E_{\bullet}(t)$ of the
chain complex $\CC_{\bullet}(\BA_{n,m}(\Field),\B_{n,m}(\Field))$ such that the image
of the composition
\[\HH_n(E_{\bullet}(t)) \rightarrow \HH_n(\BA_{n,m}(\Field),\B_{n,m}(\Field)) \rightarrow \RH_{n-1}(\B_{n,m}(\Field))\]
is contained in the subgroup generated by the images of the fundamental classes under the initial external suspend maps.
\end{steps}
\begin{proof}[Proof of Step \ref{step:external}]
For all $k$, let $E_k(t)$ be the subgroup of $\CC_k(\BA_{n,m}(\Field),\B_{n,m}(\Field))$ spanned by elements
of the form $[\pm \vec{v}_0,\ldots,\pm \vec{v}_k]$, where $\{\pm \vec{v}_0,\ldots,\pm \vec{v}_k\}$ is
an externally additive simplex of $\BA_{n,m}(\Field)$ starting with the elements of $t$.  Just like
in Step \ref{step:internal}, this is a subcomplex of $\CC_{\bullet}(\BA_{n,m}(\Field),\B_{n,m}(\Field))$.
Generators for the image of $\HH_n(E_{\bullet}(t))$ in $\RH_{n-1}(\B_{n,m}(\Field))$ can also be
calculated just like in Step \ref{step:internal}, so we omit the details.
\end{proof}

To conclude the proof, let $I$ be the set of all $2$-dimensional internally additive simplices
of $\BA_{n,m}(\Field)$ and let $J$ be the set of all $1$-dimensional externally additive simplices
of $\BA_{n,m}(\Field)$.  We thus have $I = \emptyset$ if $n=1$ and $J = \emptyset$ if $m=0$.  Endow each element of $I$ and $J$
with an arbitrary ordering.  Examining the above constructions, we then see that we have an isomorphism
\[\CC_{\bullet}(\BA_{n,m}(\Field),\B_{n,m}(\Field)) \cong \left(\bigoplus_{s \in I} D_{\bullet}\left(s\right)\right) \oplus \left(\bigoplus_{t \in J} E_{\bullet}\left(t\right)\right)\]
of chain complexes.  The above two steps show that the image in $\RH_{n-1}(\B_{n,m}(\Field))$ of the $n^{\text{th}}$
homology group of
each term on the right-hand side of this isomorphism is contained in the subgroup generated by the generators claimed in
the lemma.  The lemma follows.
\end{proof}

\begin{lemma}[Absolute generators]
\label{lemma:identifygenab}
Let $\Field$ be a field.  Let $n \geq 1$ and $m \geq 0$ be such that $n+m \geq 2$.
Then the group $\RH_{n-1}(\B_{n,m}(\Field))$ is generated by the images of the fundamental 
classes under maps of the form
\[f_1 \ast \cdots \ast f_k\colon \partial \Delta^{r_1} \ast \cdots \ast \partial \Delta^{r_k} \cong S^{n-1} \rightarrow \B_{n,m}(\Field),\]
where the $f_i$ are as follows.  There exists a decomposition 
$\Field^{n+m} = \Field^m \oplus A_1 \oplus \cdots \oplus A_k$, and
for $1 \leq i \leq k$ the map $f_i$ falls into one of the following two classes:
\begin{compactitem}
\item A {\bf triangle}.  There exists a $2$-dimensional internally additive simplex $\{\pm \vec{v}_0,\pm \vec{v}_1, \pm \vec{v}_2\}$
of $\BA_{n,m}(\Field)$ such that
\[f_i = \Sphere{\pm \vec{v}_0,\pm \vec{v}_1,\pm \vec{v}_2}\colon \partial \Delta^2 \rightarrow \B_{n,m}(\Field)\]
and such that $A_i = \Span{\vec{v}_0,\vec{v}_1,\vec{v}_2}$.  Note that $A_i$ is $2$-dimensional.
\item A {\bf suspend}.  There exist nonzero vectors $\vec{v} \in A_i$ and 
$\vec{w} \in \Field^m \oplus A_1 \oplus \cdots \oplus A_{i-1}$
and some $\lambda \in \Field^{\times}$ such that
\[f_i = \Sphere{\pm \vec{v}, \pm (\lambda \vec{v}+\vec{w})}\colon \partial \Delta^1 \rightarrow \B_{n,m}(\Field)\]
and such that $A_i = \Span{\vec{v}}$.  Note that $A_i$ is $1$-dimensional.
\end{compactitem}
\end{lemma}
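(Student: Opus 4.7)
The plan is to induct on $n$, using Lemma \ref{lemma:identifygenrel} as the source of inductive generators at each step and unfolding the ``tail'' map $f$ via the inductive hypothesis. For the base case $n=1$ (which forces $m \geq 1$), Lemma \ref{lemma:identifygenrel} only produces initial external suspend maps $\Sphere{\pm \vec{v}_0, \pm \vec{v}_1}$, since the triangle case requires $n \geq 2$ and the tail $S^{-1}$ is empty. Each such map is already an absolute generator with $k=1$, $A_1 = \Span{\vec{v}_1}$, $\vec{w} = \nu \vec{e}_i \in \Field^m$, and $\lambda = \lambda$, so the base case is immediate.

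For the inductive step with $n \geq 2$, I would handle the two families supplied by Lemma \ref{lemma:identifygenrel} in parallel. For an initial triangle map $\Sphere{\pm \vec{v}_0, \pm \vec{v}_1, \pm \vec{v}_2} \ast f$ with $\vec{v}_0 = \lambda \vec{v}_1 + \nu \vec{v}_2$ and $f\colon S^{n-3} \to \link_{\BA_{n,m}(\Field)}(\sigma)$, I would first identify
\[\link_{\BA_{n,m}(\Field)}(\sigma) = \link_{\B_{n,m}(\Field)}(\{\pm \vec{v}_1, \pm \vec{v}_2\}) \cong \B_{n-2,m+2}(\Field),\]
viewing $f$ as a cycle in $\RH_{n-3}(\B_{n-2,m+2}(\Field))$. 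The inductive hypothesis (applied with $n' = n-2$, $m'=m+2$) rewrites $f$ as a sum of absolute generators $g_2 \ast \cdots \ast g_k$ with decomposition $\Field^{n+m} = (\Field^m \oplus \Span{\vec{v}_1, \vec{v}_2}) \oplus A_2 \oplus \cdots \oplus A_k$. Setting $A_1 = \Span{\vec{v}_0, \vec{v}_1, \vec{v}_2} = \Span{\vec{v}_1, \vec{v}_2}$ and $f_1 = \Sphere{\pm \vec{v}_0, \pm \vec{v}_1, \pm \vec{v}_2}$ produces an absolute generator $f_1 \ast g_2 \ast \cdots \ast g_k$ of triangle-first form.

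The initial external suspend case is handled analogously: given $\Sphere{\pm \vec{v}_0, \pm \vec{v}_1} \ast f$ with $\vec{v}_0 = \lambda \vec{v}_1 + \nu \vec{e}_i$ (so $1 \leq i \leq m$), the relevant link identification is
\[\link_{\BA_{n,m}(\Field)}(\{\pm \vec{v}_0, \pm \vec{v}_1\}) = \link_{\B_{n,m}(\Field)}(\{\pm \vec{v}_1\}) \cong \B_{n-1,m+1}(\Field).\]
Applying the inductive hypothesis with $n' = n-1$, $m' = m+1$ yields a decomposition $\Field^{n+m} = (\Field^m \oplus \Span{\vec{v}_1}) \oplus A_2 \oplus \cdots \oplus A_k$. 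Taking $A_1 = \Span{\vec{v}_1}$, $\vec{v} = \vec{v}_1$, $\vec{w} = \nu \vec{e}_i \in \Field^m$, and $f_1 = \Sphere{\pm \vec{v}_1, \pm (\lambda \vec{v}_1 + \vec{w})}$ gives an absolute generator of suspend-first form; note that the induction places later $\vec{w}_j$ in $\Field^{m+1} \oplus A_2 \oplus \cdots \oplus A_{j-1} = \Field^m \oplus A_1 \oplus \cdots \oplus A_{j-1}$, which is exactly what the statement demands.

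The main obstacle is justifying the two link identifications cleanly. The key observation is that if a simplex $\tau$ of $\link_{\BA_{n,m}(\Field)}(\sigma)$ contained its own additional additive relation (either internal to $\tau$ or external via some $\vec{e}_j$), then the enlarged simplex $\sigma \cup \tau$ (together with the ambient $\pm \vec{e}_\bullet$'s) would admit two distinct additive cores inside $\BA_{n+m}(\Field)$. This contradicts the intrinsic characterization of the additive core as the set of vertices whose removal produces a partial $\pm$-basis, since neither core could be removed without leaving the other additive relation intact. Hence $\tau$ must be a standard simplex, and the link coincides with the corresponding link in $\B_{n,m}(\Field)$, making the inductive hypothesis directly applicable.
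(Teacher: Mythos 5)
Your proof is correct and follows essentially the same inductive strategy as the paper's: induct on $n$, invoke Lemma \ref{lemma:identifygenrel} for the inductive generators, identify the link of the initial additive simplex with a $\B_{n',m'}(\Field)$ of lower complexity, and apply the inductive hypothesis to the tail $f$. You also write out the initial external suspend case, which the paper leaves to the reader, and you supply a justification for the link identification $\link_{\BA_{n,m}(\Field)}(\sigma) = \link_{\B_{n,m}(\Field)}(\sigma \setminus \{\pm\vec{v}_0\})$ that the paper asserts without comment; the phrasing about ``two distinct additive cores'' yielding a ``contradiction'' is slightly loose (the real point is simply that a set with two independent linear relations among its members fails to be an augmented partial $\pm$-basis, hence is not a simplex of $\BA_{n+m}(\Field)$ at all), but the conclusion and the underlying idea are right.
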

\begin{proof}
To simplify our exposition, we will abuse notation and identify maps of spheres into $\B_{n,m}(\Field)$
with the associated elements of reduced homology.
Let $\Lambda_{n,m}$ be the subgroup of $\RH_{n-1}(\B_{n,m}(\Field))$ generated
by the indicated generators.  We must prove that $\Lambda_{n,m} = \RH_{n-1}(\B_{n,m}(\Field))$.
We will prove this by induction on $n$.

The base case $n=1$ follows immediately
from Lemma \ref{lemma:identifygenrel}.  Indeed, in this base case,
for dimension reasons there are no initial triangle maps, so Lemma \ref{lemma:identifygenrel}
says that $\RH_{n-1}(\B_{n,m}(\Field))$ is generated by initial suspend
maps, which in this degenerate case are simply 
$f_1\colon \partial \Delta^1 \rightarrow \B_{n,m}(\Field)$ with $f_1$ a suspend.

Assume now that $n \geq 2$ and that the lemma is true for all smaller $n$.
Applying Lemma \ref{lemma:identifygenrel}, it is enough to prove that $\Lambda_{n,m}$
contains all initial triangle maps and initial suspend maps.  The proofs
of these two facts are similar, so we will show how to prove that
initial triangle maps are in $\Lambda_{n,m}$ and leave the case of initial suspend
maps to the reader.

Consider an initial triangle map
\begin{equation}
\label{eqn:makeitgen}
\Sphere{\pm \vec{v}_0, \pm \vec{v}_1, \pm \vec{v}_2} \ast f \colon \partial \Delta^2 \ast S^{n-3} \cong S^{n-1} \longrightarrow \B_{n,m}(\Field).
\end{equation}
By definition, $\sigma = \{\pm \vec{v}_0, \pm \vec{v}_1, \pm \vec{v}_2\}$ is
a $2$-dimensional internally additive simplex of $\BA_{n,m}(\Field)$ and
$f\colon S^{n-3} \rightarrow \B_{n,m}(\Field)$ is a simplicial map for
some triangulation of $S^{n-3}$ whose image lies in
\[\link_{\BA_{n,m}(\Field)}(\sigma) = \link_{\B_{n,m}}(\{\pm \vec{v}_1, \pm \vec{v}_2\}) \cong \B_{n-2,m+2}(\Field).\]
Let 
\[\Psi\colon \B_{n-2,m+2}(\Field) \rightarrow \link_{\BA_{n,m}(\Field)}(\sigma)\]
be this isomorphism.  By induction, $\Lambda_{n-2,m+2} = \RH_{n-3}(\B_{n-2,m+2}(\Field))$.  
For each generator $f'$ of $\Lambda_{n-2,m+2}$, the map
\[\Sphere{\pm \vec{v}_0, \pm \vec{v}_1, \pm \vec{v}_2} \ast \Psi(f')\]
is a generator for $\Lambda_{n,m}$.
Since $\Psi^{-1}(f) \in \RH_{n-3}(\B_{n-2,m+2}(\Field)) = \Lambda_{n-2,m+2}$ 
can be expressed as a product of these generators, it follows that \eqref{eqn:makeitgen}
lies in $\Lambda_{n,m}$, as desired.
\end{proof}

We now give a useful variant of Lemma \ref{lemma:identifygenab} for $\Field = \Field_5$.

\begin{lemma}[{Absolute generators, $\Field_5$}]
\label{lemma:identifygenab5}
Let $n \geq 1$ and $m \geq 0$ be such that $n+m \geq 2$.
Then the group $\RH_{n-1}(\B_{n,m}(\Field_5))$ is generated by the images of the fundamental
classes under maps of the form
\[f_1 \ast \cdots \ast f_k\colon \partial \Delta^{r_1} \ast \cdots \ast \partial \Delta^{r_k} \cong S^{n-1} \rightarrow \B_{n,m}(\Field_5),\]
where the $f_i$ are as follows.  There exists a decomposition
$\Field_5^{n+m} = \Field_5^m \oplus A_1 \oplus \cdots \oplus A_k$, and
for $1 \leq i \leq k$ the map $f_i$ falls into one of the following three classes:
\begin{compactitem}
\item A {\bf D-triangle}.  There is a $2$-dimensional internally additive simplex $\{\pm \vec{v}_0,\pm \vec{v}_1, \pm \vec{v}_2\}$ of $\BA_{n,m}(\Field_5)$ with $\vec{v}_0 = \lambda \vec{v}_1+\nu \vec{v}_2$ for some $\lambda,\nu \in \{\pm 1\}$ such that
\[f_i = \Sphere{\pm \vec{v}_0,\pm \vec{v}_1,\pm \vec{v}_2}\colon \partial \Delta^2 \rightarrow \B_{n,m}(\Field_5)\]
and such that $A_i = \Span{\vec{v}_0,\vec{v}_1,\vec{v}_2}$.  Note that $A_i$ is $2$-dimensional.
\item A {\bf D-suspend}.  There are nonzero vectors $\vec{v} \in A_i$ and $\vec{w} \in \Field_5^m \oplus A_1 \oplus \cdots \oplus A_{i-1}$
and such that
\[f_i = \Sphere{\pm \vec{v}, \pm (\vec{v}+\vec{w})}\colon \partial \Delta^1 \rightarrow \B_{n,m}(\Field_5)\]
and such that $A_i = \Span{\vec{v}}$.  Note that $A_i$ is $1$-dimensional.
\item A {\bf double-suspend}.  There is a nonzero vector $\vec{v} \in A_i$ such that
\[f_i = \Sphere{\pm \vec{v}, \pm 2\vec{v}}\colon \partial \Delta^1 \rightarrow \B_{n,m}(\Field_5)\]
and such that $A_i = \Span{\vec{v}}$.  Note that $A_i$ is $1$-dimensional.
\end{compactitem}
Moreover, if $m=0$ then at least one of the $f_i$ is either a D-triangle or a D-suspend.
\end{lemma}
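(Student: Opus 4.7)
The plan is to derive Lemma \ref{lemma:identifygenab5} from Lemma \ref{lemma:identifygenab} by rewriting each of the latter's generators into the form required for $\Field_5$. The crucial algebraic input is that $\Field_5^\times = \{\pm 1, \pm 2\}$, so every coefficient is either in $\{\pm 1\}$ (giving a D-factor) or in $\{\pm 2\}$ (which will be absorbed into a double-suspend via the chain identity below).

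First I would invoke Lemma \ref{lemma:identifygenab} over $\Field_5$ to express $\RH_{n-1}(\B_{n,m}(\Field_5))$ as generated by joins $f_1 \ast \cdots \ast f_k$ of triangles and suspends with arbitrary coefficients in $\Field_5^\times$. The key reduction step rests on the chain-level identity
\[\Sphere{a,c} \ast Y = \Sphere{a,b} \ast Y + \Sphere{b,c} \ast Y\]
in $\CC_\bullet(\B_{n,m}(\Field_5))$, valid for any three vertices $a,b,c$ and any chain $Y$ joinable with each of them (this follows from $([c]-[a]) \ast Y = ([c]-[b])\ast Y + ([b]-[a])\ast Y$). Applied with $a = \pm\vec{v}$, $b = \pm 2\vec{v}$, $c = \pm(2\vec{v}+\vec{w})$, this rewrites a bad suspend $\Sphere{\pm\vec{v}, \pm(2\vec{v}+\vec{w})} \ast Y$ as the sum of a D-suspend $\Sphere{\pm 2\vec{v}, \pm(2\vec{v}+\vec{w})} \ast Y$ (treating $2\vec{v}$ as the generator of $A_i$) and a double-suspend $\Sphere{\pm\vec{v}, \pm 2\vec{v}} \ast Y$.

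For a bad triangle $\sigma = \{\pm\vec{v}_0, \pm\vec{v}_1, \pm\vec{v}_2\}$ with $\vec{v}_0 = 2\vec{v}_1+\vec{v}_2$, comparing $\Sphere{\sigma}$ with the D-triangle $\Sphere{\sigma'}$, where $\sigma' = \{\pm\vec{v}_0, \pm 2\vec{v}_1, \pm\vec{v}_2\}$, gives
\[\Sphere{\sigma} - \Sphere{\sigma'} = \pm \Sphere{\pm\vec{v}_1, \pm 2\vec{v}_1} \ast \Sphere{\pm\vec{v}_0, \pm\vec{v}_2}\]
at the chain level; the first factor is a double-suspend on $\Span{\vec{v}_1}$, and in the refined decomposition $\Span{\vec{v}_1} \oplus \Span{\vec{v}_2}$ the edge $\Sphere{\pm\vec{v}_0, \pm\vec{v}_2}$ is a D-suspend with $A' = \Span{\vec{v}_2}$, $\vec{v}' = \vec{v}_2$, and $\vec{w}' = 2\vec{v}_1 \in \Span{\vec{v}_1}$. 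The doubly-bad case $\vec{v}_0 = 2\vec{v}_1+2\vec{v}_2$ is handled analogously, comparing $\Sphere{\sigma}$ with the D-triangle $\Sphere{\sigma_1}$ where $\sigma_1 = \{\pm(\vec{v}_1+\vec{v}_2), \pm\vec{v}_1, \pm\vec{v}_2\}$: one verifies that $\Sphere{\pm\vec{v}_1,\pm\vec{v}_2}$ is a D-suspend in the decomposition $\Span{\vec{v}_1+\vec{v}_2} \oplus \Span{\vec{v}_1}$ via $\vec{v}' = \vec{v}_1$ and $\vec{w}' = -(\vec{v}_1+\vec{v}_2)$.

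For the moreover clause, note that when $m = 0$ each initial generator from Lemma \ref{lemma:identifygenab} must have $f_1$ a triangle (a suspend at position $1$ would require a nonzero $\vec{w}$ in $\Field_5^0 \oplus \cdots = 0$). Rewriting this $f_1$ yields either a D-triangle at position $1$ or a join having a double-suspend at position $1$ and a D-suspend at position $2$; subsequent rewrites of bad $f_i$ for $i \geq 2$ introduce further D-suspends or double-suspends at position $i$ but preserve the D-factor inherited from the $f_1$ rewrite. The main obstacle I expect is the bookkeeping: each rewrite may refine the decomposition $\Field_5^{n+m} = \Field_5^m \oplus A_1 \oplus \cdots \oplus A_k$ by splitting one $A_i$ into two, and one must verify that each new D-suspend has its $\vec{w}$ in the appropriate earlier summand and that no new bad factors sneak back in. Organizing an induction on the total number of bad-coefficient occurrences handles this systematically.
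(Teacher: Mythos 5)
Your proof is correct and follows essentially the same strategy as the paper's: invoke Lemma \ref{lemma:identifygenab} and rewrite each triangle and suspend with a non-$\{\pm 1\}$ coefficient as a sum of a D-triangle (or D-suspend) and a join of a double-suspend with a D-suspend, refining the decomposition $\Field_5^m \oplus A_1 \oplus \cdots \oplus A_k$ in the process. The only superficial difference is that you make the ``cut-and-rejoin'' step explicit through the chain identity $([c]-[a]) \ast Y = ([c]-[b])\ast Y + ([b]-[a])\ast Y$, whereas the paper presents the identical decompositions pictorially (Figure~\ref{figure:breakup}); your choice of representatives in the $\lambda=\nu=2$ case ($\Span{\vec{v}_1+\vec{v}_2}\oplus\Span{\vec{v}_1}$ versus the paper's $\Span{\vec{v}_1+\vec{v}_2}\oplus\Span{\vec{v}_2}$) is an inconsequential variant, and the ``moreover'' argument is likewise identical.
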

\begin{proof}
To simplify our exposition, we will abuse notation and identify maps of spheres into $\B_{n,m}(\Field_5)$
with the associated elements of reduced homology.
Lemma \ref{lemma:identifygenab} says that $\RH_{n-1}(\B_{n,m}(\Field_5))$ is generated
by maps $f_1 \ast \cdots \ast f_k$, where each $f_i$ is either a triangle or a
suspend.  To express this in terms of our new generators, it is enough to show
how to write triangles and suspends as sums of D-triangles, D-suspends, and
double-suspends.

We start with triangles.  Consider a triangle 
$\Sphere{\pm \vec{v}_0,\pm \vec{v}_1, \pm \vec{v}_2}$.  By definition,
$\{\pm \vec{v}_0, \pm \vec{v}_1, \pm \vec{v}_2\}$ is an internally
additive simplex of $\BA_{n,m}(\Field_5)$.  We thus have 
$\vec{v}_0 = \lambda \vec{v}_1+\nu\vec{v}_2$ with $\lambda,\nu \in \Field_5^{\times}$.
We remark that no reordering of the $\vec{v}_i$ is necessary for this.  Multiplying
$\vec{v}_1$ and/or $\vec{v}_2$ by $-1$ if necessary, we can assume that
$\lambda,\nu \in \{1,2\}$.  There are now three cases.

If $\lambda=\nu=1$, then our triangle is already a D-triangle.

\Figure{figure:breakup}{BreakUp}{Decomposing triangles/suspends into sums of D-triangles, D-suspends, and double suspends.  To avoid clutter, we omit the $\pm$'s.}{100}

If $\lambda=\nu=2$, then as in Figure \ref{figure:breakup} we can write
\begin{align*}
\Sphere{\pm (2 \vec{v}_1 + 2 \vec{v}_2), \pm \vec{v}_1, \pm \vec{v}_2} = &\Sphere{\pm(\vec{v}_1+\vec{v}_2), \pm \vec{v}_1, \pm \vec{v}_2}  \\
&+ \Sphere{\pm (\vec{v}_1+\vec{v}_2),\pm 2(\vec{v}_1+\vec{v}_2)} \ast \Sphere{\pm \vec{v}_2, \pm \vec{v}_1} \\
= &\Sphere{\pm(\vec{v}_1+\vec{v}_2), \pm \vec{v}_1, \pm \vec{v}_2}  \\
&+ \Sphere{\pm (\vec{v}_1+\vec{v}_2),\pm 2(\vec{v}_1+\vec{v}_2)} \ast \Sphere{\pm \vec{v}_2, \pm (\vec{v}_2 - (\vec{v}_1+\vec{v}_2))}.
\end{align*}
The right-hand side of our equation consists of a D-triangle and the join of
a double-suspend and a D-suspend.

Assume now that one of $\lambda$ and $\nu$ is $1$ and the other is $2$.  Swapping
them if necessary, we can assume that $\lambda=2$ and $\nu=1$.  As in
Figure \ref{figure:breakup}, we can write
\begin{align*}
\Sphere{\pm (2\vec{v}_1+\vec{v}_2), \pm \vec{v}_1, \pm \vec{v}_2} = &\Sphere{\pm (2 \vec{v}_1+\vec{v}_2),\pm 2\vec{v}_1, \pm \vec{v}_2,} \\
&- \Sphere{\pm \vec{v}_1,\pm 2\vec{v}_1} \ast \Sphere{\pm \vec{v}_2, \pm (\vec{v}_2+2\vec{v}_1)}.
\end{align*}
The right-hand side of our equation consists of a D-triangle and the join
of a double-suspend and a D-suspend.

Having dealt with triangles, we now must deal with suspends.  Consider
a suspend $\Sphere{\pm \vec{v},\pm(\lambda \vec{v} + \vec{w})}$.  We thus
have $\lambda \in \Field_5^{\times}$.  Multiplying $\vec{v}$ by $-1$ if
necessary, we can assume that $\lambda \in \{1,2\}$.  If $\lambda=1$, then
our suspend is already a D-suspend.  If $\lambda=2$, then as in Figure \ref{figure:breakup},
we can write
\[\Sphere{\pm \vec{v},\pm(2\vec{v} + \vec{w})} = \Sphere{\pm \vec{v},\pm 2\vec{v}} + \Sphere{\pm 2\vec{v},\pm(2 \vec{v} + \vec{w})}.\]
This is the sum of a double-suspend and a D-suspend.

All that remains to prove is the final claim of the lemma: if $m=0$, then
in our generators we can require at least one of the $f_i$ to either
be a D-triangle or a D-suspend.  For this, observe that the condition $m=0$
ensures that in the generators $f_1 \ast \cdots f_k$ given by
Lemma \ref{lemma:identifygenab}, the term $f_1$ must be a triangle (there
is no way to choose a nonzero $\vec{w}$ as in the definition of a suspend for it).
When we expand out the triangle $f_1$ as above, every term that appears has
either a D-triangle or a D-suspend in it.  The lemma follows.
\end{proof}

We finally prove Lemma \ref{lemma:identifygen}.

\begin{proof}[Proof of Lemma \ref{lemma:identifygen}]
Fix some $n \geq 3$.  Recall that our goal is to prove that $\pi_{n-1}(\B_n(\Field_5))$ is generated
by the initial D-triangle maps and the initial D-suspend maps.
Proposition \ref{proposition:basescon} says
that $\B_n(\Field_5)$ is $(n-2)$-connected, so the Hurewicz theorem gives an isomorphism
$\pi_{n-1}(\B_n(\Field_5)) \cong \HH_{n-1}(\B_n(\Field_5))$.
It is thus enough to prove that 
$\HH_{n-1}(\B_n(\Field_5))$ is generated by the images of the fundamental classes under these generators.  To simplify our expressions, we will abuse notation and identify our generators with the
images of the fundamental classes in $\HH_{n-1}(\B_n(\Field_5))$ under them.

Consider one of the generators
\[f_1 \ast \cdots \ast f_k\colon \partial \Delta^{r_1} \ast \cdots \ast \partial \Delta^{r_k} \cong S^{n-1} \rightarrow \B_{n}(\Field_5)\]
for $\HH_{n-1}(\B_n(\Field_5))$ identified by Lemma \ref{lemma:identifygenab5}.  Let
$\Field_5^n = A_1 \oplus \cdots \oplus A_k$ be the associated direct sum decomposition.  We will prove
that up to signs, in $\HH_{n-1}(\B_n(\Field_5))$ the element $f_1 \ast \cdots \ast f_k$ equals
either an initial D-triangle map or an initial D-suspend map.

Assume first that there exists some $1 \leq i_0 \leq k$ such that $f_{i_0}$ is a D-triangle.  We then have
$r_{i_0} = 2$.  As in the definition of a D-triangle, write
\[f_{i_0} = \Sphere{\pm \vec{v}_0, \pm \vec{v}_1, \pm \vec{v}_2}\colon \partial \Delta^2 \rightarrow \B_n(\Field_5)\]
for an additive simplex $\sigma = \{\pm \vec{v}_0, \pm \vec{v}_1, \pm \vec{v}_2\}$ of $\BA_n(\Field_5)$ with $\vec{v}_0 = \lambda \vec{v}_1+\nu\vec{v}_2$ for some $\lambda, \nu \in \{\pm 1\}$.
Set
\[f = f_1 \ast \cdots \ast \widehat{f_{i_0}} \ast \cdots \ast f_k\colon \partial \Delta^{r_1} \ast \cdots \ast \widehat{\partial \Delta^{r_{i_0}}} \ast \cdots \ast \partial \Delta^{r_k} \cong S^{n-3} \rightarrow \B_{n}(\Field_5).\]
We thus have that the image of $f$ lies in $\link_{\BA_n(\Field_5)}(\sigma)$.
Up to signs, in $\HH_{n-1}(\B_n(\Field_5))$ the element $f_1 \ast \cdots \ast f_k$ equals the
initial D-triangle map
\[\Sphere{\pm \vec{v}_0, \pm \vec{v}_1, \pm \vec{v}_2} \ast f\colon \partial \Delta^2 \ast S^{n-3} \rightarrow \B_n(\Field_5),\]
as desired.

We thus can assume that none of the $f_i$ are D-triangles.  Since at least one of the $f_i$ is either
a D-triangle or a D-suspend, there must exist some $1 \leq i_0 \leq k$ such that $f_{i_0}$ is a
D-suspend.  Pick $i_0$ such that it is as large as possible.  Set
\[W = A_1 \oplus \cdots \oplus \widehat{A_{i_0}} \oplus \cdots \oplus A_k,\]
and as in the definition of a D-suspend write
\[f_{i_0} = \Sphere{\pm \vec{v}, \pm (\vec{v}+\vec{w})}\colon \partial \Delta^1 \rightarrow \B_n(\Field_5).\]
We thus have $\vec{v} \in A_{i_0}$ and $\vec{w} \in W$.  Moreover, setting
\[f = f_1 \ast \cdots \ast \widehat{f_{i_0}} \ast \cdots \ast f_k\colon \partial \Delta^{r_1} \ast \cdots \ast \widehat{\partial \Delta^{r_{i_0}}} \ast \cdots \ast \partial \Delta^{r_k} \cong S^{n-2} \rightarrow \B_{n}(\Field_5)\]
we have that the image of $f$ lies in $\B(W)$ (this is where we use the fact that $i_0$ is as 
large as possible).  Up to signs, in $\HH_{n-1}(\B_n(\Field_5))$ the element $f_1 \ast \cdots \ast f_k$ equals the
initial D-suspend map
\[\Sphere{\pm \vec{v}, \pm (\vec{v}+\vec{w})} \ast f\colon \partial \Delta^2 \ast S^{n-3} \rightarrow \B_n(\Field_5),\]
as desired.
\end{proof}

\subsubsection{The retraction}
\label{section:theretract}

We now discuss the retraction $\rho\colon \B_n(\Field_5) \rightarrow \BD_n(\Field_5)$ provided
by Lemma \ref{lemma:bases1retract}.  In fact, for later use we will extend it to the following
larger complex.

\begin{definition}
Let $\BAO_n(\Field_5)$ be the subcomplex of $\BA_n(\Field_5)$ consisting of
$\BDA_n(\Field_5)$ along with all standard simplices of $\BA_n(\Field_5)$.
\end{definition}

We will construct a retraction $\rho\colon \BAO_n(\Field_5) \rightarrow \BDA_n(\Field_5)$
that extends the one given by Lemma \ref{lemma:bases1retract}.
The only simplices of $\BAO_n(\Field_5)$ that do not lie in $\BDA_n(\Field_5)$ are of the form
$\sigma = \{\pm \vec{v}_1,\ldots,\pm \vec{v}_n\}$ with $\det(\vec{v}_1\ \cdots\ \vec{v}_n) = \pm 2$.
Letting $S(\sigma)$ be the result of subdividing $\sigma$ with a new vertex $x_{\sigma}$, the
map $\rho$ is defined by setting $\rho(x_{\sigma}) = \pm \vec{w}$ and extending linearly, where
$\vec{w} \in \Field_5^n$ is chosen such that
\[\det(\vec{v}_1\ \cdots\ \widehat{\vec{v}_i}\ \cdots\ \vec{v}_n\ \vec{w}) = \pm 1 \quad \quad \text{for all $1 \leq i \leq n$}.\]
The only possible choices for $\vec{w}$ are of the form
\[\vec{w} = 2c_1 \vec{v}_1 + \cdots + 2c_n \vec{v}_n \quad \text{for some $c_1,\ldots,c_n \in \{\pm 1\}$}.\]
It is annoying that $\rho$ depends on the choice of these $c_i$; however, the following lemma implies that
all possible choices result in homotopic $\rho$:

\begin{lemma}
\label{lemma:retractindependent}
For some $n \geq 2$, let $\vec{v}_1,\ldots,\vec{v}_n \in \Field_5^n$ be such that
$\det(\vec{v}_1\ \cdots\ \vec{v}_n) = \pm 2$.  Let $\vec{w}_1, \vec{w}_2 \in \Field_5^n$ be
such that
\[\det(\vec{v}_1\ \cdots\ \widehat{\vec{v}_i}\ \cdots\ \vec{v}_n\ \vec{w}_j) = \pm 1 \quad \text{for all $1 \leq i \leq n$ and $1 \leq j \leq 2$}.\]
Then the maps
\[\Disc{\pm \vec{w}_1} \ast \Sphere{\pm \vec{v}_1,\ldots,\pm \vec{v}_n}\colon \Delta^0 \ast \partial \Delta^{n-2} \rightarrow \BDA_n(\Field_5)\]
and
\[\Disc{\pm \vec{w}_2} \ast \Sphere{\pm \vec{v}_1,\ldots,\pm \vec{v}_n}\colon \Delta^0 \ast \partial \Delta^{n-2} \rightarrow \BDA_n(\Field_5)\]
are homotopic relative to $\partial(\Delta^0 \ast \partial \Delta^{n-2}) = \Delta^{n-2}$.
\end{lemma}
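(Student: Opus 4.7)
The plan is to first reduce to a ``single-flip'' case and then construct the nullhomotopy using an explicit $n$-ball in $\BDA_n(\Field_5)$.  Any valid $\vec{w}$ in the lemma has the form $\vec{w}(c_1,\ldots,c_n) = 2c_1\vec{v}_1 + \cdots + 2c_n\vec{v}_n$ for some $(c_1,\ldots,c_n) \in \{\pm 1\}^n$, and any two such can be connected by a sequence of single sign-flips of the $c_\ell$'s.  By composing homotopies, it therefore suffices to handle the case where $\vec{w}_2$ is obtained from $\vec{w}_1$ by flipping a single sign $c_i$.  The key identity $-4 \equiv 1 \pmod 5$ then gives $\vec{w}_2 - \vec{w}_1 = c_i\vec{v}_i$ in $\Field_5^n$, so $T := \{\pm\vec{w}_1, \pm\vec{w}_2, \pm\vec{v}_i\}$ is an internally additive $2$-simplex of $\BDA_n(\Field_5)$ with core $T$ itself.

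The two maps are homotopic rel boundary if and only if the $(n-1)$-sphere $\Sigma$ obtained by gluing the two disks $D_k = \{\pm\vec{w}_k\} \ast \partial\sigma$ along $\partial\sigma$ is nullhomotopic in $\BDA_n(\Field_5)$; since $\BDA_n(\Field_5)$ is $(n-2)$-connected by Proposition~\ref{proposition:bases1acon}, by Hurewicz this reduces to showing $[\Sigma] = 0$ in $\HH_{n-1}(\BDA_n(\Field_5))$.  My main construction is the join $T \ast L$, where $L := \link_{\partial\sigma}(\pm\vec{v}_i) \cong S^{n-3}$; topologically this is an $n$-ball, and I claim it sits inside $\BDA_n(\Field_5)$.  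Its top-dimensional simplices are the $\tau_j = T \cup \{\pm\vec{v}_\ell : \ell \neq i,j\}$ for $j \neq i$, each augmented with core $T$; the face obtained by removing $\pm\vec{v}_i$ has determinant $\pm 1$ by a direct $2 \times 2$ change-of-basis computation using the identities $-8 \equiv 2$ and $4 \equiv -1 \pmod 5$.  Using $T \ast L$ to fill the ``star-of-$\pm\vec{v}_i$'' portion of $\Sigma$ reduces the problem to nullhomotoping a residual $(n-1)$-sphere $\Sigma'$, topologically the boundary of the abstract $n$-simplex $\hat{\tau} = \{\pm\vec{w}_1,\pm\vec{w}_2\} \cup \{\pm\vec{v}_\ell : \ell \neq i\}$.

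The hard part will be filling $\Sigma'$, since $\hat{\tau}$ itself is \emph{not} a simplex of $\BDA_n(\Field_5)$: generically there is no $3$-element additive core with $\pm 1$ coefficients among its $n+1$ vertices.  My plan for this step is to introduce auxiliary vertices of the form $\pm(\vec{v}_\ell + \vec{v}_{\ell'})$ for suitable pairs $\ell,\ell' \neq i$, each of which yields an internally additive triangle $\{\pm(\vec{v}_\ell + \vec{v}_{\ell'}), \pm\vec{v}_\ell, \pm\vec{v}_{\ell'}\}$ in $\BDA_n(\Field_5)$, and to exhibit a family of augmented $n$-simplices obtained by coning various faces of $\hat{\tau}$ over these auxiliary vertices.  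Verifying that the needed $n$-simplices actually lie in $\BDA_n(\Field_5)$ (via further determinant-$\pm 1$ computations in $\Field_5$, again leveraging $-4 \equiv 1$ and $-8 \equiv 2 \pmod 5$) and organizing them into a coherent $n$-chain whose boundary kills $\Sigma'$ is the principal technical difficulty I expect to encounter.
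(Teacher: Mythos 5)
Your setup is sound and mirrors the paper's decomposition closely: reducing to a single sign flip, noting that $-4 \equiv 1 \pmod 5$ makes $T = \{\pm\vec{w}_1,\pm\vec{w}_2,\pm\vec{v}_i\}$ an additive $2$-simplex, and observing that the $(n-1)$-sphere $\Sigma$ is the join $\Sphere{\pm\vec{w}_1,\pm\vec{w}_2}\ast\Sphere{\pm\vec{v}_1,\ldots,\pm\vec{v}_n}$ are all exactly what the paper does. Your $T\ast L$ ball, with the determinant check that each $\tau_j$ (for $j\neq i$) is an augmented determinant-$1$ simplex with core $T$, is a correct and explicit way to fill the ``easy'' part of $\Sigma$; the paper treats the same $n-1$ spheres as trivially-filled boundaries of additive simplices.

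The genuine gap is the step you flag yourself: filling the residual sphere $\Sigma' = \Sphere{\pm\vec{w}_1,\pm\vec{w}_2,\pm\vec{v}_\ell : \ell\neq i}$. This is not a minor technical loose end; it is the actual content of the lemma, and your plan of coning faces of $\hat\tau$ over auxiliary vertices $\pm(\vec{v}_\ell+\vec{v}_{\ell'})$ has no clear reason to close up. What you are missing is a second use of the arithmetic identity $2\cdot 2 = -1 \pmod 5$. After normalizing the representatives so that $\vec{w}_1 = 2\vec{v}_1 + \cdots + 2\vec{v}_n$ (all coefficients $+2$, which is possible since you may flip the signs of the $\vec{v}_\ell$ with $\ell\neq i$ and then relabel so $i=n$), one computes
\[
-\bigl(\vec{w}_1 + \vec{v}_1 + \cdots + \vec{v}_{n-1}\bigr) \;=\; -3\vec{v}_1 - \cdots - 3\vec{v}_{n-1} - 2\vec{v}_n \;=\; 2\vec{v}_1 + \cdots + 2\vec{v}_{n-1} + 3\vec{v}_n \;=\; \vec{w}_2
\]
in $\Field_5^n$, so $\pm\vec{w}_2 = \pm(\vec{w}_1 + \vec{v}_1 + \cdots + \vec{v}_{n-1})$. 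Thus $\Sigma'$ is, up to relabeling, the ``multi-add'' sphere $\Sphere{\pm\vec{u}_1,\ldots,\pm\vec{u}_n,\pm(\vec{u}_1+\cdots+\vec{u}_n)}$ for the determinant-$\pm 1$ basis $\{\vec{v}_1,\ldots,\vec{v}_{n-1},\vec{w}_1\}$. Filling this sphere is precisely Lemma~\ref{lemma:multiadd}, whose proof lifts the basis to $\Z^n$ and invokes the Cohen--Macaulay result for $\BA_n(\Z)$ (Theorem~\ref{theorem:zaugtbasescon}); there is no purely mod-$5$ filling visible from your auxiliary-vertex plan, and trying to construct one would essentially amount to reproving that theorem. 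You should replace your speculative plan for filling $\Sigma'$ with the sign normalization, the computation above, and a direct appeal to Lemma~\ref{lemma:multiadd}.
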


Before we prove this lemma, we highlight how we will use it:

\begin{principle}
\label{principle:changesub}
Given a map $f\colon S^{n-1} \rightarrow \BAO_n(\Field_5)$ that is simplicial with respect to a triangulation
of $S^{n-1}$, if we want to prove that $\rho \circ f \colon S^{n-1} \rightarrow \BDA_n(\Field_5)$ is
nullhomotopic in $\BDA_n(\Field_5)$, then we can choose any way we want to subdivide the image of any
$(n-1)$-simplex $\sigma$ in $S^{n-1}$ such that $f(\sigma)$ is not a simplex of $\BDA_n(\Field_5)$.
\end{principle}

Indeed, by Lemma \ref{lemma:retractindependent} we can make an initial homotopy of $\rho \circ f$ to
change the original subdivision coming from $\rho$ to our arbitrary one.

We now turn to the proof of Lemma \ref{lemma:retractindependent}.  This proof will require the following lemma.

\begin{lemma}
\label{lemma:multiadd}
For some $n \geq 2$, let $\{\pm \vec{v}_1,\ldots,\pm \vec{v}_n\}$ be an $(n-1)$-simplex in $\BD_n(\Field_5)$.
Then the map
\[\Sphere{\pm \vec{v}_1,\ldots,\pm \vec{v}_n,\pm(\vec{v}_1+\cdots+\vec{v}_n)}\colon \partial \Delta^n \rightarrow \BD_n(\Field_5)\]
is nullhomotopic in $\BDA_n(\Field_5)$.
\end{lemma}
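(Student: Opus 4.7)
The plan is to proceed by induction on $n$. For the base case $n=2$, the sphere $\Sphere{\pm\vec{v}_1, \pm\vec{v}_2, \pm(\vec{v}_1+\vec{v}_2)}$ is the boundary of the internally additive $2$-simplex $\{\pm\vec{v}_1, \pm\vec{v}_2, \pm(\vec{v}_1+\vec{v}_2)\}$ of $\BDA_2(\Field_5)$, so it is nullhomotopic.

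For the inductive step with $n\geq 3$, I would introduce the auxiliary partial sums $\vec{u}_k = \vec{v}_1 + \cdots + \vec{v}_k$ for $k = 2, \ldots, n-1$, using the convention $\vec{u}_1 := \vec{v}_1$. The crucial structural fact is that $\vec{u}_k = \vec{u}_{k-1} + \vec{v}_k$, so each triple $\{\pm\vec{u}_{k-1}, \pm\vec{v}_k, \pm\vec{u}_k\}$ is an additive core in $\BDA_n(\Field_5)$. These additive cores can be extended to valid top-dimensional simplices: for each $k$, the set
\[
\tau_k = \{\pm\vec{u}_{k-1}, \pm\vec{v}_k, \pm\vec{u}_k\} \cup \{\pm\vec{v}_j : j \notin \{k-1, k\}\}
\]
is a valid $n$-simplex of $\BDA_n(\Field_5)$ with additive core the stated triple; the determinant conditions on the faces obtained by removing a core vertex follow from elementary column operations that turn each partial sum back into a single basis vector $\vec{v}_i$.

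For $n=3$ the single cone $\pm\vec{u}_2 \ast S_3$ already fills the sphere $S_3$: one verifies case by case that each of the four cone tetrahedra is a valid augmented $3$-simplex of $\BDA_3(\Field_5)$ (two have additive core $\{\pm\vec{v}_1, \pm\vec{v}_2, \pm\vec{u}_2\}$ and two have core $\{\pm\vec{u}_2, \pm\vec{v}_3, \pm\vec{u}_3\}$). For $n\geq 4$ this simple cone approach breaks down, because the simplex $\{\pm\vec{v}_1,\ldots,\pm\vec{v}_n, \pm\vec{u}_{n-1}\}$ is no longer valid when $\vec{u}_{n-1}$ is a sum of $n-1\geq 3$ basis vectors rather than a pair sum. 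The remedy is to construct a more elaborate $n$-chain $X$ in $\BDA_n(\Field_5)$ from the simplices $\tau_k$, their analogues obtained by permuting the roles of the basis vectors, and pair-sum simplices $\tau_{ij} = \{\pm\vec{v}_1,\ldots,\pm\vec{v}_n, \pm(\vec{v}_i+\vec{v}_j)\}$; these are chosen so as to explicitly realize the facet $F_0 = \{\pm\vec{v}_1,\ldots,\pm\vec{v}_n\}$ (via the $\tau_{ij}$) and each facet $F_l = \{\pm\vec{v}_1,\ldots,\widehat{\vec{v}_l},\ldots,\pm\vec{v}_n,\pm\vec{u}_n\}$ (via $n$-simplices of the form $F_l \cup \{\pm(\vec{u}_n - \vec{v}_i)\}$ with $i \neq l$, which are augmented with core $\{\pm\vec{u}_n, \pm\vec{v}_i, \pm(\vec{u}_n - \vec{v}_i)\}$) as boundary contributions. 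Any remaining augmented intermediate faces are then killed by invoking the inductive hypothesis in the coordinate subspaces $\Span{\vec{v}_j : j \in S}$ for proper subsets $S \subsetneq \{1,\ldots,n\}$.

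The main obstacle will be the combinatorial bookkeeping: checking that the chain $X$ can be assembled so that all extraneous determinant-$\pm 1$ faces (i.e., those which are not among the $n+1$ facets of $S_n$) cancel in pairs, and that every augmented face appearing in $\partial X$ is itself a boundary in $\BDA_n(\Field_5)$ via the inductive hypothesis, so that $\partial X = S_n$ exactly.
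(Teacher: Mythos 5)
Your approach takes a genuinely different route from the paper's, and it has a real gap. The paper's proof is short: by Lemma \ref{lemma:quotientbd}, one lifts the simplex $\{\pm\vec{v}_1,\ldots,\pm\vec{v}_n\}$ to an $(n-1)$-simplex $\{\pm\vec{V}_1,\ldots,\pm\vec{V}_n\}$ of $\B_n(\Z)$, observes that the lifted sphere $\Sphere{\pm\vec{V}_1,\ldots,\pm\vec{V}_n,\pm(\vec{V}_1+\cdots+\vec{V}_n)}$ is nullhomotopic in $\BA_n(\Z)$ by Theorem \ref{theorem:zaugtbasescon} (Church--Putman's Cohen--Macaulayness of $\BTA_{n,m}(\Z)$), and then pushes the nullhomotopy through the projection $\BA_n(\Z)\to\BDA_n(\Field_5)$ of Lemma \ref{lemma:quotientbda}. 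This off-loads all of the combinatorics you are attempting to assemble onto an already-proven integral result.

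Your direct construction is fine for $n=2$ and $n=3$, and you rightly note that the cone over $\pm\vec{u}_{n-1}$ fails once $\vec{u}_{n-1}$ is a sum of three or more basis vectors. But your remedy for $n\geq 4$ is a plan, not a proof. Two concrete problems. First, ``killing remaining augmented intermediate faces via the inductive hypothesis in coordinate subspaces'' is not well-posed: the inductive hypothesis says a certain $(k-1)$-sphere is nullhomotopic in $\BDA_k(\Field_5)$, but the subcomplex of $\BDA_n(\Field_5)$ supported on a $k$-dimensional coordinate subspace $W=\Span{\vec{v}_j:j\in S}$ with $k<n$ is \emph{not} a copy of $\BDA_k(\Field_5)$ --- in $\BDA_n(\Field_5)$ simplices of size $k<n$ carry no determinant constraint, so that subcomplex is $\BA(W)$ rather than a determinant-restricted complex, and the inductive hypothesis does not transport to where you wish to apply it. Second, you cannot kill unwanted faces of $\partial X$ one at a time; you would need to show the defect $\partial X$ minus the fundamental cycle of the sphere organizes itself into cycles that are separately fillable in $\BDA_n(\Field_5)$, and you have explicitly deferred this ``combinatorial bookkeeping.'' Until that is carried out, $\partial X$ equals the target cycle is unproven and the lemma is not established. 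I would recommend abandoning the explicit chain construction in favor of the paper's lift-to-$\Z$ argument, which sidesteps all of this.
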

\begin{proof}
Using Lemma \ref{lemma:quotientbd}, we can find an $(n-1)$-simplex $\{\pm \vec{V}_1,\ldots,\pm \vec{V}_n\}$
in $\B_n(\Z)$ mapping to $\{\pm \vec{v}_1,\ldots,\pm \vec{v}_n\}$ under the projection
$\B_n(\Z) = \BD_n(\Z) \rightarrow \BD_n(\Field_5)$.  Changing the signs of the $\vec{V}_i$, we can assume
that $\vec{V}_i \in \Z^n$ projects to $\vec{v}_i \in \Field_5^n$ for all $i$.  We then have a map
\[\Sphere{\pm \vec{V}_1,\ldots,\pm \vec{V}_n,\pm(\vec{V}_1+\cdots+\vec{V}_n)}\colon \partial \Delta^n \rightarrow \B_n(\Z)\]
whose postcomposition with the projection $\B_n(\Z) \rightarrow \BD_n(\Field_5)$ is the map
we are trying to prove is nullhomotopic.
Theorem \ref{theorem:zaugtbasescon} says that $F$ is nullhomotopic in $\BTA_n(\Z) = \BA_n(\Z)$.
Composing this homotopy with the projection $\BA_n(\Z) \rightarrow \BDA_n(\Field_5)$ given by
Lemma \ref{lemma:quotientbda}, we get our desired homotopy.
\end{proof}

\begin{proof}[Proof of Lemma \ref{lemma:retractindependent}]
Write
\[\vec{w}_1 = 2c_1 \vec{v}_1 + \cdots + 2c_n \vec{v}_n \quad \text{and} \quad \vec{w}_2 = 2d_1 \vec{v}_1 + \cdots + 2d_n \vec{v}_n\]
with $c_i,d_i \in \{\pm 1\}$ for all $1 \leq i \leq n$.  It is enough to deal with the case where all but
one of the $c_i$ and $d_i$ are equal.  Reordering the $\vec{v}_i$, possibly multiplying them by $-1$,
and possibly flipping $\vec{w}_1$ and $\vec{w}_2$,
we can assume that $c_i = d_i = 1$ for $1 \leq i \leq n-1$ and that $c_n = 1$ and $d_n = -1$.  Since
$2c_n = 2$ and $2d_n = -2 = 3$, we thus have that $\vec{w}_2 = \vec{w}_1 + \vec{v}_n$.

\Figure{figure:retractindependent}{RetractIndependent}{The sphere in the proof of Lemma \ref{lemma:retractindependent} in the case $n=3$, along with the result of breaking it into $n=3$ spheres.  To avoid clutter, we omit the
$\pm$'s.}{100}

Our goal is equivalent to proving that
\[\Sphere{\pm \vec{w}_1,\pm(\vec{w}_1+\vec{v}_n)} \ast \Sphere{\pm \vec{v}_1,\ldots,\pm \vec{v}_n}\colon \partial \Delta^1 \ast \partial \Delta^{n-1} \cong S^{n-1} \rightarrow \BD_n(\Field_5)\]
is nullhomotopic in $\BDA_n(\Field_5)$; see Figure \ref{figure:retractindependent}.  As
is clear from that figure, as an element of $\pi_{n-1}(\BDA_n(\Field_5))$ our sphere is the sum
of the following $n$ spheres:
\[\Sphere{\pm \vec{v}_1,\ldots,\widehat{\pm \vec{v}_i},\ldots,\pm \vec{v}_n,\pm \vec{w}_1, \pm (\vec{w}_1+\vec{v}_n)} \quad \quad (1 \leq i \leq n).\]
For $1 \leq i \leq n-1$, these are the boundaries of additive simplices, and thus are trivially nullhomotopic
in $\BDA_n(\Field_5)$.  For $i=n$, since
\begin{align*}
\pm (\vec{w}_1+\vec{v}_n) &= \pm (2\vec{v}_1+\cdots+2\vec{v}_{n-1}+3\vec{v}_n) = \pm (3 \vec{v}_1+\cdots+3\vec{v}_{n-1}+2\vec{v}_n)\\
&= \pm (\vec{w}_1 + \vec{v}_1 + \cdots + \vec{v}_{n-1}),
\end{align*}
this is precisely the sphere that Lemma \ref{lemma:multiadd} says is nullhomotopic.  The lemma follows.
\end{proof}

\subsubsection{Killing initial D-triangle maps}
\label{section:kill1}

We now turn to proving Lemma \ref{lemma:kill1}, whose statement we will recall below.  This
will require the following lemma.

\Figure{figure:subdividedaugmented1}{SubdividedAugmented1}{On the left is the sphere $\rho \circ f$ in
the proof of Lemma \ref{lemma:killaugmented} in the case $n=3$ with its three subdivided faces.
On the right is the $n=3$ spheres it can be cut into (with the required subdivisions omitted to
improve readability).  To avoid clutter, we omit the $\pm$'s.}{100}

\begin{lemma}
\label{lemma:killaugmented}
For some $n \geq 2$, let $\rho\colon \BAO_n(\Field_5) \rightarrow \BDA_n(\Field_5)$ be
the retraction constructed in \S \ref{section:theretract}.  Let $\{\vec{v}_1,\ldots,\vec{v}_n\}$
be a basis of $\Field_5^n$.  Then
\[\rho \circ \Sphere{\pm (\vec{v}_1+\vec{v}_2), \pm \vec{v}_1,\ldots,\pm \vec{v}_n}\colon \partial \Delta^n \rightarrow \BDA_n(\Field_5)\]
is nullhomotopic.
\end{lemma}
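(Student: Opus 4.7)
The argument splits on $d := \det(\vec{v}_1,\ldots,\vec{v}_n) \in \Field_5^{\times}/\{\pm 1\} = \{\pm 1, \pm 2\}$. In the trivial case $d = \pm 1$, the augmented $n$-simplex $U := \{\pm(\vec{v}_1+\vec{v}_2), \pm\vec{v}_1, \ldots, \pm\vec{v}_n\}$ has determinant-$\pm 1$ standard part $\{\pm\vec{v}_1,\ldots,\pm\vec{v}_n\}$, so $U$ already lies in $\BDA_n(\Field_5)$; the retraction $\rho$ is the identity on the sphere $\Sphere{\pm(\vec{v}_1+\vec{v}_2),\pm\vec{v}_1,\ldots,\pm\vec{v}_n} = \partial U$, which is then trivially nullhomotopic as the boundary of a simplex of $\BDA_n(\Field_5)$.

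Assume from now on that $d = \pm 2$. Then exactly three of the $(n-1)$-faces of $\partial\Delta^n$---those obtained by removing $\pm(\vec{v}_1+\vec{v}_2)$, $\pm\vec{v}_1$, or $\pm\vec{v}_2$---are full standard $\pm$-bases of determinant $\pm 2$, and $\rho$ must subdivide each of them, inserting an interior vertex which I will denote by $\pm\vec{w}_0, \pm\vec{w}_1, \pm\vec{w}_2$ respectively. The remaining $n-2$ faces (those obtained by removing $\pm\vec{v}_j$ for $j\ge 3$) are augmented $(n-1)$-simplices whose standard part has only $n-1 < n$ elements, so no determinant condition applies and they already lie in $\BDA_n(\Field_5)$. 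Invoking Principle \ref{principle:changesub} (justified by Lemma \ref{lemma:retractindependent}), I am free to choose the $\vec{w}_j$ myself, and a natural choice is $\vec{w}_0 = 2(\vec{v}_1+\cdots+\vec{v}_n)$ together with $\vec{w}_1,\vec{w}_2$ picked so that $\vec{w}_0 - \vec{w}_1 \in \Span{\vec{v}_2}$ and $\vec{w}_0 - \vec{w}_2 \in \Span{\vec{v}_1}$.

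With the subdivision in place, the plan is to decompose $\rho\circ\Sphere{\pm(\vec{v}_1+\vec{v}_2),\pm\vec{v}_1,\ldots,\pm\vec{v}_n}$ as a sum of $n$ smaller $(n-1)$-spheres in $\BDA_n(\Field_5)$, as depicted for $n=3$ in Figure \ref{figure:subdividedaugmented1}. Each summand will fall into one of two nullhomotopic classes: either it is the boundary of an augmented determinant-$\pm 1$ $n$-simplex of $\BDA_n(\Field_5)$, which bounds trivially; or it has the shape $\Sphere{\pm\vec{u}_1,\ldots,\pm\vec{u}_n,\pm(\vec{u}_1+\cdots+\vec{u}_n)}$ for some determinant-$\pm 1$ $\pm$-basis $\{\pm\vec{u}_i\}$ of $\Field_5^n$, in which case it is killed by Lemma \ref{lemma:multiadd}. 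The bases feeding into these nullhomotopic pieces are built from the $\vec{v}_i$ together with the $\vec{w}_j$; the key arithmetic input is that $\{\pm\vec{v}_1,\ldots,\widehat{\pm\vec{v}_j},\ldots,\pm\vec{v}_n,\pm\vec{w}_0\}$ has determinant $\pm 1$ for every $j$, and analogously for the $\vec{w}_1$- and $\vec{w}_2$-bases.

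The main obstacle is carrying out this decomposition explicitly and verifying the bookkeeping: checking that the $n$ pieces really do sum to $\rho\circ\Sphere{\pm(\vec{v}_1+\vec{v}_2),\pm\vec{v}_1,\ldots,\pm\vec{v}_n}$ as simplicial chains, that all shared internal faces cancel in pairs, and that each piece matches one of the two prescribed nullhomotopic forms. The relations between $\vec{w}_0, \vec{w}_1, \vec{w}_2$ arranged in the previous paragraph are precisely what make those internal cancellations work, and the awkward fact that three faces must be subdivided while only one distinguished vertex $\pm(\vec{v}_1+\vec{v}_2)$ is present is what forces the three $\vec{w}_j$ (rather than a single one) and accounts for most of the combinatorial complexity.
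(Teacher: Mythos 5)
There is a genuine gap, and it sits in what you call your ``natural choice'' of subdivision vertex. With $\vec{w}_0 = 2(\vec{v}_1+\cdots+\vec{v}_n)$, the set $\{\vec{w}_0,\ \vec{v}_1+\vec{v}_2,\ \vec{v}_3,\ldots,\vec{v}_n\}$ is linearly \emph{dependent}: indeed
\[
\vec{w}_0 = 2(\vec{v}_1+\vec{v}_2) + 2\vec{v}_3 + \cdots + 2\vec{v}_n.
\]
But that very set shows up as a face of the pieces $f_1$ and $f_2$ in the decomposition of Figure~\ref{figure:subdividedaugmented1} (it is the face of $f_i = \Sphere{\pm\vec{w}_0, \pm(\vec{v}_1+\vec{v}_2), \pm\vec{v}_1,\ldots,\widehat{\pm\vec{v}_i},\ldots,\pm\vec{v}_n}$ obtained by deleting the surviving one of $\pm\vec{v}_1, \pm\vec{v}_2$). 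Since it is not a partial $\pm$-basis, those pieces are not even well-defined maps into $\BDA_n(\Field_5)$, so the decomposition you propose collapses before the bookkeeping can start. The paper's choice $\vec{w} = 2\vec{v}_1 - 2\vec{v}_2 + 2\vec{v}_3 + \cdots + 2\vec{v}_n$ is not incidental: the opposite signs on the $\vec{v}_1$- and $\vec{v}_2$-coefficients (equivalently, $c_1 \neq c_2$ in the list of admissible subdivision vertices) are exactly what keeps $\vec{w}$ out of $\Span{\vec{v}_1+\vec{v}_2, \vec{v}_3, \ldots, \vec{v}_n}$, and that same sign asymmetry is used later in the argument when a single vertex $\pm\vec{u}$ must simultaneously serve as a legal subdivision vertex for two different faces of $f_2$.

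Your endgame is also not quite how the proof closes. The $n-2$ pieces $f_i$ with $i \geq 3$ are indeed boundaries of augmented $n$-simplices of $\BDA_n(\Field_5)$, as you predict. But the two remaining pieces $f_1$ and $f_2$ are not of the ``multiadd'' form $\Sphere{\pm\vec{u}_1,\ldots,\pm\vec{u}_n, \pm(\vec{u}_1+\cdots+\vec{u}_n)}$ and are not handled by applying Lemma~\ref{lemma:multiadd} directly. Each of $\rho\circ f_1$ and $\rho\circ f_2$ still has two further faces that $\rho$ must subdivide; the paper uses Principle~\ref{principle:changesub} a second time to subdivide both of those with the \emph{same} carefully chosen vertex $\pm\vec{u}$, then slides the pair of subdivisions across their shared $(n-2)$-face, and only after that homotopy do all resulting $(n-1)$-spheres become boundaries of additive simplices. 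Lemma~\ref{lemma:multiadd} is invoked upstream, inside the proof of Lemma~\ref{lemma:retractindependent}, to justify the freedom of Principle~\ref{principle:changesub}; it is not one of the two terminal shapes in this lemma's decomposition. So even after repairing the choice of $\vec{w}$, your proposed classification of the pieces would not go through without an additional homotopy step that you have not anticipated.
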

\begin{proof}
Set $f = \Sphere{\pm (\vec{v}_1+\vec{v}_2), \pm \vec{v}_1,\ldots,\pm \vec{v}_n}$.
If $\det(\vec{v}_1\ \cdots\ \vec{v}_n) = \pm 1$, then $\rho \circ f = f$ and the image of
$f$ is the boundary of an additive simplex of $\BDA_n(\Field_5)$, so the lemma is trivial.  We
can thus assume that this determinant is $\pm 2$.

In the image of $\rho \circ f$, exactly $3$ faces of the image of $f$ are subdivided, namely
the images of
\begin{align*}
&\{\pm \vec{v}_1,\pm \vec{v}_2,\pm \vec{v}_3,\ldots,\pm \vec{v}_n\} \quad \text{and} \quad
\{\pm (\vec{v}_1+\vec{v}_2),\pm \vec{v}_2,\pm \vec{v}_3,\ldots,\pm \vec{v}_n\} \\
&\quad \quad \text{and} \quad
\{\pm \vec{v}_1,\pm(\vec{v}_1+\vec{v}_2),\pm \vec{v}_3,\ldots,\pm \vec{v}_n\}.
\end{align*}
See Figure \ref{figure:subdividedaugmented1}.  By Principle \ref{principle:changesub}, we can choose
the $\pm$-vector we use for each subdivision arbitrarily.  We will use $\pm \vec{w}$ with
\[\vec{w} = 2 \vec{v}_1 - 2 \vec{v}_2 + 2\vec{v}_3 + \cdots + 2\vec{v}_n\]
for $\{\pm \vec{v}_1,\pm \vec{v}_2,\pm \vec{v}_3,\ldots,\pm \vec{v}_n\}$ and leave the others
unspecified (for the moment).

As in Figure \ref{figure:subdividedaugmented1}, in $\pi_{n-1}(\BDA_n(\Field_5))$ the sphere
$\rho \circ f$ is the sum of the $n$ spheres $\rho \circ f_i$ with
\[f_i = \Sphere{\pm \vec{w}, \pm (\vec{v}_1+\vec{v}_2), \pm \vec{v}_1,\ldots,\widehat{\pm \vec{v}_i},\ldots,\pm \vec{v}_n} \quad \text{for $1 \leq i \leq n$}.\]
For $3 \leq i \leq n$, we have $\rho \circ f_i = f_i$ and the image of $f_i$ is the boundary
of an augmented simplex in $\BDA_n(\Field_5)$, so it is trivially nullhomotopic.
We thus must only deal with $i=1$ and $i=2$.  The proofs in these two cases are similar, so we
will do the case $i=2$ and leave the case $i=1$ to the reader.

\Figure{figure:subdividedaugmented2}{SubdividedAugmented2}{On the left is the sphere appearing in
the case $i=2$ of the proof of Lemma \ref{lemma:killaugmented} for the case $n=3$.  On the right is the result
of homotoping it to the union of two tetrahedra.  To avoid clutter, we omit the $\pm$'s.}{100}

When forming $\rho \circ f_2$ for
\[f_2 = \Sphere{\pm \vec{w}, \pm (\vec{v}_1+\vec{v}_2),\pm \vec{v}_1,\pm \vec{v}_3,\ldots,\pm \vec{v}_n},\]
only two faces are subdivided, namely the images of
\begin{equation}
\label{eqn:twosubdivided}
\{\pm (\vec{v}_1+\vec{v}_2), \pm \vec{v}_1,\pm \vec{v}_3,\ldots,\pm \vec{v}_n\} \quad \text{and} \quad
\{\pm \vec{w}, \pm (\vec{v}_1+\vec{v}_2), \pm \vec{v}_3,\ldots,\pm \vec{v}_n\}.
\end{equation}
See Figure \ref{figure:subdividedaugmented2}.  The key observation is that by Principle \ref{principle:changesub}
we can use the same
vertex for both of these faces, namely $\pm \vec{u}$ with
\[\vec{u} = \vec{v}_1 - 2 \vec{v}_2 + 2 \vec{v}_3 + \cdots + 2\vec{v}_n.\]
This follows from the fact that
\[\vec{v}_1 - 2 \vec{v}_2 + 2 \vec{v}_3 + \cdots + 2\vec{v}_n = -2(\vec{v}_1+\vec{v}_2) - 2 \vec{v}_1 + 2 \vec{v}_3 + \cdots + 2 \vec{v}_n\]
and
\begin{align*}
\vec{v}_1 - 2 \vec{v}_2 + 2 \vec{v}_3 + \cdots + 2\vec{v}_n &= 2(\vec{v}_1+\vec{v}_2)+2 \vec{w} - 2 \vec{v}_3 - \cdots - 2 \vec{v}_n \\
&= 2(\vec{v}_1+\vec{v}_2) + 2(2 \vec{v}_1 - 2 \vec{v}_2 + 2\vec{v}_3 + \cdots + 2\vec{v}_n) - 2 \vec{v}_3 - \cdots - 2 \vec{v}_n.
\end{align*}
The two $(n-1)$-dimensional faces \eqref{eqn:twosubdivided} meet in
a common $(n-2)$-dimensional simplex
\[\eta = \{\pm (\vec{v}_1+\vec{v}_2),\pm \vec{v}_3,\ldots,\pm \vec{v}_n\}.\]
As in Figure \ref{figure:subdividedaugmented2}, we can homotope $\rho \circ f_2$
so as to replace the two subdivisions of the faces \eqref{eqn:twosubdivided} with
a single subdivision of the $(n-2)$-simplex $\eta$ by $\pm \vec{u}$.
The result is the sum in $\pi_{n-1}(\BDA_n(\Field_5))$ of $(n-1)$ different spheres
\begin{align*}
&\Sphere{\pm \vec{w}, \pm \vec{v}_1, \pm \vec{u}, \pm \vec{v}_3,\ldots,\pm \vec{v}_n}\\
&\quad \text{and} \quad
\Sphere{\pm \vec{w}, \pm \vec{v}_1, \pm \vec{u}, \pm(\vec{v}_1+\vec{v}_2),\pm \vec{v}_3,\ldots,\widehat{\pm \vec{v}_i},\ldots\pm \vec{v}_n} \quad \text{for $3 \leq i \leq n$.}
\end{align*}
These correspond to all the ways of replacing a vertex of $\eta$ with $\pm \vec{u}$ and
then adding the vertices $\pm \vec{w}$ and $\pm \vec{v}_1$ that do not appear in $\eta$.
Since $\vec{w} = \vec{u} + \vec{v}_1$, these are all
the boundaries of additive simplices in $\BDA_n(\Field_5)$, and hence are all
nullhomotopic.
\end{proof}

\begin{proof}[Proof of Lemma \ref{lemma:kill1}]
We first recall the statement.  For some $n \geq 3$, let $g\colon S^{n-1} \rightarrow \B_n(\Field_5)$ be an
initial D-triangle map, let $\rho\colon \B_n(\Field_5) \rightarrow \BD_n(\Field_5)$ be
the retraction given by Lemma \ref{lemma:bases1retract}, and let 
$\iota\colon \BD_n(\Field_5) \hookrightarrow \BDA_n(\Field_5)$
be the inclusion.  We must prove that $\iota \circ \rho \circ g\colon S^{n-1} \rightarrow \BDA_n(\Field_5)$ is nullhomotopic.

By definition, the initial D-triangle map $g$ is of the following form.
Let $\sigma = \{\pm \vec{v}_0, \pm \vec{v}_1, \pm \vec{v}_2\}$ be a $2$-dimensional
additive simplex of $\BA_n(\Field_5)$ such that $\vec{v}_0 = \lambda \vec{v}_1+\nu \vec{v}_2$
with $\lambda,\nu \in \{\pm 1\}$.  Multiplying $\vec{v}_1$ and/or $\vec{v}_2$ by
$-1$ if necessary, we can assume that $\lambda = \nu = 1$.
Let $f\colon S^{n-3} \rightarrow \link_{\BA_n(\Field_5)}(\sigma)$
be a simplicial map for some triangulation of $S^{n-3}$.  We then have 
\[g = \Sphere{\pm \vec{v}_0, \pm \vec{v}_1, \pm \vec{v}_2} \ast f = \Sphere{\pm(\vec{v}_1+\vec{v}_2),\pm \vec{v}_1,\pm \vec{v}_2} \ast f\colon \partial \Delta^2 \ast S^{n-3} \cong S^{n-1} \longrightarrow \B_n(\Field_5).\]
Our goal then is to show that the map
\[\iota \circ \rho \circ \left(\Sphere{\pm (\vec{v}_1 + \vec{v}_2), \pm \vec{v}_1, \pm \vec{v}_2} \ast f\right) \colon \partial \Delta^2 \ast S^{n-3} \rightarrow \BDA_n(\Field_5)\]
is nullhomotopic.

It is enough to show that it extends over $\Delta^2 \ast S^{n-3}$.  The only simplices
of $\Delta^2 \ast S^{n-3}$ whose image under this map are not simplices
of $\BDA_n(\Field_5)$ are of the form $\Delta^2 \ast \sigma$ where $\sigma$ maps
to a simplex $\{\pm \vec{v}_3,\ldots,\pm \vec{v}_n\}$ such that
$\det(\vec{v}_1\ \cdots\ \vec{v}_n) = \pm 2$.  By obstruction theory, it is enough
to show that $\partial(\Delta^2 \ast \sigma)$ is mapped to an $(n-1)$-sphere
that is nullhomotopic.  Since the restriction of our map
to $\partial(\Delta^2 \ast \sigma)$ is
\[\Sphere{\pm (\vec{v}_1+\vec{v}_2),\pm \vec{v}_1, \pm \vec{v}_2, \pm \vec{v}_3,\ldots,\pm \vec{v}_n},\]
this follows immediately from Lemma \ref{lemma:killaugmented}.
\end{proof}

\subsubsection{Killing initial D-suspend maps}
\label{section:kill2}

We now turn to proving Lemma \ref{lemma:kill2}, whose statement we will recall below.  This
will require two lemmas.

\begin{lemma}
\label{lemma:disguisedfix}
For some $n \geq 2$, let $\rho\colon \BAO_n(\Field_5) \rightarrow \BDA_n(\Field_5)$
be the retraction constructed in \S \ref{section:theretract}.
Let $\{\vec{v}_1,\ldots,\vec{v}_n\}$ be a basis for $\Field_5^n$ such that
$\det(\vec{v}_1\ \cdots\ \vec{v}_n)=\pm 2$.  Pick some $\vec{u} \in \Span{\vec v_1, \dots, \vec v_{n-1}} \subset \Field_5^n$.
Then the maps
\[\rho \circ \left(\Disc{\pm \vec{v}_n} \ast \Sphere{\pm \vec{v}_1,\ldots,\pm \vec{v}_{n-1}, \pm(2\vec{v}_1+\cdots+2\vec{v}_{n-1})}\right)\colon \Delta^0 \ast \partial \Delta^{n-1} \longrightarrow \BD_n(\Field_5)\]
and
\[\rho \circ \left(\Disc{\pm (\vec{v}_n+\vec{u})} \ast \Sphere{\pm \vec{v}_1,\ldots,\pm \vec{v}_{n-1}, \pm(2\vec{v}_1+\cdots+2\vec{v}_{n-1})}\right) \colon \Delta^0 \ast \partial \Delta^{n-1} \longrightarrow \BD_n(\Field_5)\]
are homotopic in $\BDA_n(\Field_5)$ through maps fixing
$\partial (\Delta^0 \ast \partial \Delta^{n-1}) = \partial \Delta^{n-1}$.
\end{lemma}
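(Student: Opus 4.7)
The plan is to reinterpret the lemma as the statement that a specific $(n-1)$-sphere is nullhomotopic in $\BDA_n(\Field_5)$. Setting $\vec v_* := 2(\vec v_1+\cdots+\vec v_{n-1})$, both maps in the statement are $(n-1)$-disks (since $\Delta^0 \ast \partial\Delta^{n-1} \cong D^{n-1}$) that agree on their common boundary, the base sphere $\Sphere{\pm\vec v_1,\ldots,\pm\vec v_{n-1},\pm\vec v_*}$. Gluing the two disks along this boundary produces an $(n-1)$-sphere $\Sigma$ in $\BDA_n(\Field_5)$. Since $\BDA_n(\Field_5)$ is $(n-2)$-connected by Proposition \ref{proposition:bases1acon}, it is simply connected for $n\geq 3$, so $\pi_{n-1}(\BDA_n(\Field_5))$ is abelian and nullhomotopy of $\Sigma$ is equivalent to the desired rel-boundary homotopy; the case $n=2$ will be checked separately using Lemma \ref{lemma:n2}'s explicit surface description of $\BDA_2(\Field_5)$. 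Explicitly, $\Sigma = \rho\circ F$ where
\[
F \;=\; \Sphere{\pm\vec v_n,\ \pm(\vec v_n+\vec u)} \ast \Sphere{\pm\vec v_1,\ldots,\pm\vec v_{n-1},\ \pm\vec v_*},
\]
and a quick check of top-dimensional faces shows that only the two of determinant $\pm 2$, namely $\{\pm\vec v_n,\pm\vec v_1,\ldots,\pm\vec v_{n-1}\}$ and $\{\pm(\vec v_n+\vec u),\pm\vec v_1,\ldots,\pm\vec v_{n-1}\}$, require subdivision by $\rho$; the new vertices $\pm\vec w$, $\pm\vec w'$ used in these subdivisions may be chosen freely by Principle \ref{principle:changesub}.

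Writing $\vec u = \sum_{i=1}^{n-1} a_i\vec v_i$, I would then reduce to the ``atomic'' case $\vec u = c\vec v_j$ (with $c\in\Field_5^\times$ and $1\leq j\leq n-1$) by induction on the size of the support of $\vec u$. For $\vec u \neq 0$, fix $j$ with $a_j\neq 0$ and set $\vec u' := \vec u - a_j\vec v_j$. At the chain level, the $0$-sphere identity
\[
[\pm(\vec v_n+\vec u)] - [\pm\vec v_n] \;=\; \bigl([\pm(\vec v_n+\vec u')] - [\pm\vec v_n]\bigr) + \bigl([\pm(\vec v_n+\vec u)] - [\pm(\vec v_n+\vec u')]\bigr)
\]
joins with the base sphere (and composes with $\rho$) to express $[\Sigma]$ in $\pi_{n-1}(\BDA_n(\Field_5))$ as a sum of two analogous classes: one for the apex pair $(\vec v_n,\vec v_n+\vec u')$, which is the lemma with $\vec u$ replaced by the smaller-support $\vec u'$ (handled by the inductive hypothesis), and one for the pair $(\vec v_n+\vec u',\,(\vec v_n+\vec u')+a_j\vec v_j)$. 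After relabeling $\vec v_n \mapsto \vec v_n+\vec u'$, which preserves the determinant hypothesis because $\vec u' \in W := \Span{\vec v_1,\ldots,\vec v_{n-1}}$, the latter is the lemma in the atomic case $\vec u = a_j\vec v_j$. A second, similar telescoping through $\pm(\vec v_n+\vec v_j)$ then reduces the atomic case from arbitrary $c\in\Field_5^\times$ to $c = \pm 1$.

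For the base case $\vec u = c\vec v_j$ with $c = \pm 1$, the $2$-simplex $\{\pm\vec v_n, \pm(\vec v_n+c\vec v_j), \pm\vec v_j\}$ is an augmented simplex of $\BDA_n(\Field_5)$, and I would exploit this triangle together with coordinated choices of the subdivision vertices $\pm\vec w, \pm\vec w'$ from Principle \ref{principle:changesub} to express the chain underlying $\Sigma$ as a signed sum of boundaries of augmented $\BDA_n$-simplices plus spheres of the form appearing in Lemma \ref{lemma:killaugmented} and Lemma \ref{lemma:multiadd}, each nullhomotopic in $\BDA_n(\Field_5)$. The hard part is this final step: the vertex $\pm\vec v_j$ simultaneously belongs to the base sphere and to the augmented triangle, so one cannot naively join the triangle with the base. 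The only face of $\partial\Delta^{n-1}$ naturally joinable with the triangle is the one obtained by deleting $\pm\vec v_j$, and the other faces must be handled by separately decomposing the part of $\Sigma$ coming from each face, invoking Lemmas \ref{lemma:killaugmented} and \ref{lemma:multiadd} with carefully chosen subdivisions $\pm\vec w$ and $\pm\vec w'$ to pair up and cancel contributions. The combinatorial bookkeeping required to track signs, subdivision choices, and cancellations through this face-by-face decomposition is the technical crux of the argument.
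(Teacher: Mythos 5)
Your framework matches the paper's approach: reinterpret the rel-boundary homotopy as nullhomotopy of the $(n-1)$-sphere $\Sigma = \rho\circ\bigl(\Sphere{\pm\vec v_n, \pm(\vec v_n+\vec u)} \ast \Sphere{\pm\vec v_1,\ldots,\pm\vec v_{n-1},\pm\vec v_*}\bigr)$, telescope to the atomic case $\vec u = \vec v_j$, and then decompose $\Sigma$ and invoke Lemma \ref{lemma:killaugmented} on the pieces. However, you leave the atomic case unfinished, describing the needed decomposition and cancellations only as ``the technical crux'' to be worked out, and this is precisely where the proof lives. The gap is genuine: the face of $\partial\Delta^{n-1}$ obtained by deleting $\pm\vec v_j$ produces a piece that does \emph{not} contain the additive triple $\{\vec v_n, \vec v_n + \vec v_j, \vec v_j\}$, so Lemma \ref{lemma:killaugmented} cannot be applied to it, and neither can Lemma \ref{lemma:multiadd} (which in fact is not used here; it only enters via Lemma \ref{lemma:retractindependent}).

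The missing idea is a degeneracy argument, not combinatorial cancellation. Decompose $\Sigma$ into the $n$ spheres $\partial(\Delta^1 \ast \tau)$, one for each facet $\tau$ of $\partial\Delta^{n-1}$: for $\tau = \{\pm\vec v_1,\ldots,\pm\vec v_{n-1}\}$ and for $\tau = \{\pm\vec v_1,\ldots,\widehat{\pm\vec v_i},\ldots,\pm\vec v_{n-1},\pm\vec v_*\}$ with $i\neq j$, Lemma \ref{lemma:killaugmented} applies directly because $\pm\vec v_j$ is present and forms an additive core with the two apex vertices. For the remaining facet $\tau = \{\pm\vec v_1,\ldots,\widehat{\pm\vec v_j},\ldots,\pm\vec v_{n-1},\pm\vec v_*\}$, one checks by computing determinants that exactly one subface of $\partial(\Delta^1 \ast \tau)$ is subdivided by $\rho$, namely $\{\pm\vec v_n, \pm(\vec v_n+\vec v_j)\} \cup (\tau\setminus\{\pm\vec v_*\})$. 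By Principle \ref{principle:changesub} the subdivision vertex may be chosen to be $\pm\vec v_*$ itself (since $-2\vec v_n + 2(\vec v_n+\vec v_j) + 2\sum_{i\neq j}\vec v_i = \vec v_*$), and then the resulting sphere has a repeated vertex $\pm\vec v_*$ and is therefore trivially nullhomotopic. You were right that only one face causes trouble, but wrong about why it is hard: it is not the bookkeeping across faces, but finding this single well-chosen subdivision.
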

\begin{proof}
It is enough to deal with the case where $\vec{u} = \vec{v}_i$ for some $1 \leq i \leq n-1$; the general case can then
be deduced via a sequence of these homotopies.  Since everything is symmetric, we can in fact assume that
$\vec{u} = \vec{v}_1$.

\Figure{figure:disguisedfix}{DisguisedFix}{The sphere in the proof of Lemma \ref{lemma:disguisedfix}
in the case $n=3$, along with the result of breaking it into $n=3$ spheres.  To avoid clutter, we omit the $\pm$'s.}{100}

Our goal is equivalent to showing that the map
\begin{align*}
&\rho \circ \left(\Sphere{\pm \vec{v}_n, \pm (\vec{v}_n+\vec{v}_1)} \ast \Sphere{\pm \vec{v}_1,\ldots,\pm \vec{v}_{n-1}, \pm(2\vec{v}_1+\cdots+2\vec{v}_{n-1})}\right)\colon \\
&\quad\quad \partial \Delta^1 \ast \partial \Delta^{n-1} \longrightarrow \BD_n(\Field_5)
\end{align*}
is nullhomotopic in $\BDA_n(\Field_5)$.  See Figure \ref{figure:disguisedfix}.  As is shown
in that figure, as an element of $\pi_{n-1}(\BDA_n(\Field_5))$ this is the sum of $n$ spheres.

The first is the sphere
\[\rho \circ \Sphere{\pm \vec{v}_n, \pm (\vec{v}_n+\vec{v}_1), \pm \vec{v}_1,\ldots,\pm \vec{v}_{n-1}}\colon \partial \Delta^n \longrightarrow \BDA_n(\Field_5),\]
which is nullhomotopic by Lemma \ref{lemma:killaugmented}.

The other $(n-1)$ are the spheres
\begin{align*}
&\rho \circ \Sphere{\pm \vec{v}_n, \pm (\vec{v}_n+\vec{v}_1), \pm \vec{v}_1,\ldots,\widehat{\pm \vec{v}_i},\ldots,\pm \vec{v}_{n-1}, \pm(2\vec{v}_1+\cdots+2\vec{v}_{n-1})}\colon \\
&\quad \quad \partial \Delta^1 \ast \partial \Delta^{n-1} \longrightarrow \BDA_n(\Field_5)
\end{align*}
for $1 \leq i \leq n-1$.  These are of two types:
\begin{compactitem}
\item For $2 \leq i \leq n-1$, these are nullhomotopic by Lemma \ref{lemma:killaugmented}.
\item For $i=1$, this is a bit more unusual.  The key observation here is that precisely one face of this is subdivided
by $\rho$, namely
\[\Disc{\pm \vec{v}_n, \pm (\vec{v}_n+\vec{v}_1), \pm \vec{v}_2,\ldots,\pm \vec{v}_{n-1}}.\]
By Principle \ref{principle:changesub}, we can choose the vertex we use in this subdivision arbitrarily.  If we use
$\pm \vec{w}$ with
\[\vec{w} = -2\vec{v}_n + 2(\vec{v}_n+\vec{v}_1) + 2\vec{v}_2 + \cdots + 2\vec{v}_{n-1} = 2\vec{v}_1+\cdots+2\vec{v}_{n-1},\]
then our sphere is the degenerate sphere
\[\Sphere{\pm (2\vec{v}_1+\cdots+2\vec{v}_{n-1}),\pm(2\vec{v}_1+\cdots+2\vec{v}_{n-1})} \ast \Sphere{\pm \vec{v}_n, \pm (\vec{v}_n+\vec{v}_1), \pm \vec{v}_2,\ldots,\pm \vec{v}_{n-1}},\]
which is trivially nullhomotopic.\qedhere
\end{compactitem}
\end{proof}

\Figure{figure:fixsubdivision}{FixSubdivision}{The homotopy we are trying to achieve
in Lemma \ref{lemma:fixsubdivision} for $n=3$.  To avoid clutter, we omit the $\pm$'s.}{100}

\begin{lemma}
\label{lemma:fixsubdivision}
For some $n \geq 3$, let $\rho\colon \BAO_n(\Field_5) \rightarrow \BDA_n(\Field_5)$
and $\rho'\colon \BAO_{n-1}(\Field_5) \rightarrow \BDA_{n-1}(\Field_5)$ be
the retractions constructed in \S \ref{section:theretract}.
Let $\{\vec{e}_1,\ldots,\vec{e}_n\}$ be the standard basis of $\Field_5^n$,
let $\{\vec{v}_1,\ldots,\vec{v}_{n-1}\}$ be some basis of $\Field_5^{n-1} \subset \Field_5^n$,
and let $\vec{u} \in \Field_5^{n-1}$.  Then the maps
\[\rho \circ \left(\Sphere{\pm \vec{e}_n,\pm(\vec{e}_{n}+\vec{u})} \ast \Disc{\pm \vec{v}_1,\ldots,\pm \vec{v}_{n-1}}\right)\colon \partial \Delta^1 \ast \Delta^{n-1} \rightarrow \BDA_n(\Field_5)\]
and
\[\Sphere{\pm \vec{e}_n,\pm (\vec{e}_n+\vec{u})} \ast \left(\rho' \circ \Disc{\pm \vec{v}_1,\ldots,\pm \vec{v}_{n-1}}\right) \colon \partial \Delta^1 \ast \Delta^{n-1} \rightarrow \BDA_n(\Field_5)\]
are homotopic through maps fixing
$\partial (\partial \Delta^1 \ast \Delta^{n-1}) = \partial \Delta^1 \ast \partial \Delta^{n-1}$.
\end{lemma}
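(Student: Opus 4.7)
The plan is to reduce to the case where $d := \det(\vec{v}_1\ \cdots\ \vec{v}_{n-1}) = \pm 2$ in $\Field_5$, choose the subdivision vertices performed by $\rho$ and $\rho'$ coherently, and then exhibit the resulting pointwise difference of the two maps as a sphere in $\BDA_n(\Field_5)$ that decomposes, in the style of Lemma \ref{lemma:killaugmented} and Lemma \ref{lemma:disguisedfix}, into pieces that are manifestly nullhomotopic.

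First I would dispose of the easy case.  If $d = \pm 1$, then $\{\pm \vec{v}_1,\ldots,\pm \vec{v}_{n-1}\}$ is already a simplex of $\BDA_{n-1}(\Field_5)$, and the top $(n-1)$-simplices $F_N = \{\pm \vec{e}_n,\pm \vec{v}_1,\ldots,\pm \vec{v}_{n-1}\}$ and $F_S = \{\pm(\vec{e}_n+\vec{u}),\pm \vec{v}_1,\ldots,\pm \vec{v}_{n-1}\}$ both have determinant $\pm 1$ in $\Field_5^n$, so neither $\rho$ nor $\rho'$ performs any subdivision and the two maps in the lemma are literally equal.

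In the substantive case $d = \pm 2$, I would invoke Principle \ref{principle:changesub} (justified by Lemma \ref{lemma:retractindependent}) to choose the subdivision vertices as follows: take $\vec{w}' = 2\vec{v}_1+\cdots+2\vec{v}_{n-1}$ for $\rho'$, and take $\vec{w}_N = 2\vec{e}_n + \vec{w}'$ and $\vec{w}_S = 2(\vec{e}_n+\vec{u}) + \vec{w}'$ for the two subdivisions performed by $\rho$ on $F_N$ and $F_S$.  A direct determinant check (using $2 \cdot 2 = -1$ in $\Field_5$) confirms that each of these choices satisfies the determinant conditions required by the definitions of $\rho$ and $\rho'$.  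With these coherent choices, the two maps agree pointwise on $\partial\Delta^1 \ast \partial\Delta^{n-2}$ (where no subdivision is ever needed), so their pointwise difference assembles into a map $\Sigma\colon S^{n-1} \to \BDA_n(\Field_5)$, and the lemma reduces to showing that $\Sigma$ is nullhomotopic.

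I would cut $\Sigma$ along the image of the middle simplex $T = \Delta^{n-2}$ of the domain into a ``North piece'' (involving only the cone point $\pm\vec{e}_n$) and a symmetric ``South piece'' (involving $\pm(\vec{e}_n+\vec{u})$); by the evident symmetry in the two cone points, it suffices to kill the North piece.  This piece is the $(n-1)$-sphere obtained by gluing $\Disc{\pm\vec{w}_N} \ast \Sphere{\pm\vec{e}_n,\pm\vec{v}_1,\ldots,\pm\vec{v}_{n-1}}$ (the $\rho$-subdivision of $F_N$ coming from case~1) to $\Disc{\pm\vec{e}_n} \ast \Disc{\pm\vec{w}'} \ast \Sphere{\pm\vec{v}_1,\ldots,\pm\vec{v}_{n-1}}$ (the image of $\{N\}\ast T$ coming from case~2) along the boundary of $F_N$.

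The main technical step, paralleling the sphere-decomposition arguments in Lemma \ref{lemma:killaugmented} and Lemma \ref{lemma:disguisedfix}, is to cut this North sphere into a sum of $n$ smaller $(n-1)$-spheres in $\BDA_n(\Field_5)$ by successively inserting $\pm\vec{w}'$ as an additional subdivision vertex along the appropriate $(n-2)$-faces (using Principle \ref{principle:changesub} again to adjust the intermediate subdivision vertices introduced by $\rho$).  Each resulting small sphere will then either be the boundary of an augmented simplex of $\BDA_n(\Field_5)$ (exploiting the relation $\vec{w}_N = 2\vec{e}_n+\vec{w}'$), hence trivially nullhomotopic, or be precisely of the form killed by Lemma \ref{lemma:disguisedfix} applied to the basis $\{\vec{v}_1,\ldots,\vec{v}_{n-1},\vec{e}_n\}$ (which has determinant $\pm 2$) with shift vector some $\vec{v}_i \in \Span{\vec{v}_1,\ldots,\vec{v}_{n-1}}$.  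The principal obstacle I anticipate is the sign and compatibility bookkeeping in this decomposition --- in particular, verifying at each stage (via Principle \ref{principle:changesub}) that the auxiliary subdivision vertices chosen by $\rho$ can be aligned across the shared faces of neighboring small spheres, much as in the careful choice of the vertex $\pm\vec{u}$ in the proof of Lemma \ref{lemma:killaugmented}.  Once the North piece is killed, the South piece is disposed of symmetrically.
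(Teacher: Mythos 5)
Your overall setup is correct (reduce to the case $\det(\vec{v}_1\ \cdots\ \vec{v}_{n-1}\ \vec{e}_n)=\pm 2$, choose $\vec{w}' = 2\vec{v}_1+\cdots+2\vec{v}_{n-1}$ by Principle \ref{principle:changesub}, form the difference sphere $\Sigma$ and kill it), but you are working much harder than necessary and one of your steps has a real error.

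The key observation you are missing is that $\Sigma$ is \emph{directly} the sphere that Lemma \ref{lemma:disguisedfix} kills, with no further decomposition needed. You should cut $\Sigma$ not along the image of the middle simplex but along the $(n-2)$-sphere $\Sphere{\pm \vec{v}_1,\ldots,\pm \vec{v}_{n-1},\pm \vec{w}'}$: the ``north'' disc you obtain (the $\rho$-subdivided $\Disc{\pm\vec{e}_n,\pm\vec{v}_1,\ldots,\pm\vec{v}_{n-1}}$ together with the $n-1$ unsubdivided simplices $\Disc{\pm\vec{e}_n,\pm\vec{v}_1,\ldots,\widehat{\pm\vec{v}_i},\ldots,\pm\vec{v}_{n-1},\pm\vec{w}'}$) is literally $\rho\circ(\Disc{\pm\vec{e}_n}\ast \Sphere{\pm\vec{v}_1,\ldots,\pm\vec{v}_{n-1},\pm\vec{w}'})$, and symmetrically the ``south'' disc is $\rho\circ(\Disc{\pm(\vec{e}_n+\vec{u})}\ast \Sphere{\pm\vec{v}_1,\ldots,\pm\vec{v}_{n-1},\pm\vec{w}'})$. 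These are precisely the two maps of Lemma \ref{lemma:disguisedfix} (applied with $\vec{v}_n=\vec{e}_n$ and shift vector $\vec{u}$), which that lemma asserts are homotopic rel boundary. Hence $\Sigma$ is nullhomotopic, and you are done. Note that with this cut you do not need to pick $\vec{w}_N$ or $\vec{w}_S$ at all; $\rho$ is already built into the maps of Lemma \ref{lemma:disguisedfix}.

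Your further plan --- assembling a ``North sphere'' by regluing $D_{N,1}$ and $D_{N,2}$ along the \emph{full} boundary of $F_N$, then decomposing it into $n$ pieces each of which is ``the boundary of an augmented simplex \dots exploiting the relation $\vec{w}_N = 2\vec{e}_n+\vec{w}'$'' --- has a concrete gap. The relation $\vec{w}_N = 2\vec{e}_n + \vec{w}'$ is \emph{not} an additive relation in the sense required for an augmented simplex of $\BDA_n(\Field_5)$: the coefficient $2$ is not $\pm 1$ (nor is any sign variant of it, since $\vec{w}_N - \vec{w}' = \pm 2\vec{e}_n$ has coefficient $\pm 2$ no matter how you choose the $\rho$-subdivision vertex). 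So $\{\pm\vec{w}_N,\pm\vec{e}_n,\pm\vec{w}'\}$ is not an additive core, and the pieces you produce would not be trivially nullhomotopic for the reason you state. Moreover, the ``North sphere'' as you describe it is not a subcomplex of $\Sigma$: the middle face of $D_{N,1}$ is glued to $D_{S,1}$ in $\Sigma$, and the subdivided middle of $D_{N,2}$ is glued to $D_{S,2}$, so forming your sphere requires an extra identification (and a resubdivision of $D_{N,1}$'s middle face) that introduces bookkeeping you have not verified.
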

\begin{proof}
If $\det(\vec{v}_1\ \cdots\ \vec{v}_{n-1}\ \vec{e}_n) = \pm 1$, then these maps
are equal, so assume that this determinant is $\pm 2$.  In this case,
$\partial \Delta^1 \ast \Delta^{n-1}$ consists of two $n$-simplices the image of both
of which under the first map above are subdivided by $\rho$.  Moreover,
$\rho'$ subdivides the image of $\Disc{\pm \vec{v}_1,\ldots,\pm \vec{v}_{n-1}}$.  Using
Principle \ref{principle:changesub}, we can use $\pm \vec{w}$ with
\[\vec{w} = 2 \vec{v}_1 + \cdots + 2 \vec{v}_{n-1}\]
for this subdivision.  See Figure \ref{figure:fixsubdivision} for a picture of
the homotopy we are trying to achieve.  The key observation is that this is really a
disguised version of Lemma \ref{lemma:disguisedfix}; indeed, if you cut it open along
the central
\[\Sphere{\pm \vec{v}_1,\ldots,\pm \vec{v}_{n-1}, \pm (2 \vec{v}_1 + \cdots + 2 \vec{v}_{n-1})}\]
you get precisely the two discs that Lemma \ref{lemma:disguisedfix} claims are homotopic
via a homotopy fixing their boundary.  The lemma follows.
\end{proof}

\begin{proof}[Proof of Lemma \ref{lemma:kill2}]
We first recall the statement.
For some $n \geq 3$, let $g\colon S^{n-1} \rightarrow \B_n(\Field_5)$ be an initial D-suspend map,
let $\rho\colon \B_n(\Field_5) \rightarrow \BD_n(\Field_5)$ be
the retraction given by Lemma \ref{lemma:bases1retract}, and let $\iota\colon \BD_n(\Field_5) \hookrightarrow \BDA_n(\Field_5)$
be the inclusion.  Assume that $\pi_{n-2}(\BDA_{n-1}(\Field_5)) = 0$.
We must prove that $\iota \circ \rho \circ g\colon S^{n-1} \rightarrow \BDA_n(\Field_5)$ is nullhomotopic.

By definition, the initial D-suspend map $g$ is of the following form.
Let $\vec{v} \in \Field_5^n$ be a nonzero vector, let $W \subset \Field_5^n$
be an $(n-1)$-dimensional subspace such that $\Field_5^n = \Span{\vec{v}} \oplus W$, and let $\vec{w} \in W$ be nonzero.
Let $f\colon S^{n-2} \rightarrow \B(W)$ be a simplicial map for some triangulation of $S^{n-2}$.  We then have
\[g = \Sphere{\pm \vec{v}, \pm (\vec{v}+\vec{w})} \ast f\colon \partial \Delta^1 \ast S^{n-2} \cong S^{n-1} \longrightarrow \B_n(\Field_5).\]

Let $\{\vec{e}_1,\ldots,\vec{e}_n\}$ be the standard basis of $\Field_5^n$.
Changing coordinates with an element of $\SL_n(\Field_5)$, we can assume
that $\vec{v} = \vec{e}_n$ and that $W = \Field_5^{n-1}$.  Our map $f$
thus lands in $\B_{n-1}(\Field_5)$, and our goal is to prove that the
map
\[\iota \circ \rho \circ \left(\Sphere{\pm \vec{e}_n, \pm (\vec{e}_n+\vec{w})} \ast f\right) \colon \partial \Delta^1 \ast S^{n-2} \rightarrow \BDA_n(\Field_5)\]
is nullhomotopic.

Let $\rho'\colon \BAO_{n-1}(\Field_5) \rightarrow \BDA_{n-1}(\Field_5)$ be
the retraction constructed in \S \ref{section:theretract}.
Applying Lemma \ref{lemma:fixsubdivision} to $S^0 \ast \sigma$ for each $(n-2)$-simplex
$\sigma$ of $S^{n-2}$, we see that our map is homotopic to
\begin{equation}
\label{eqn:kill2tokill}
\Sphere{\pm \vec{e}_n, \pm (\vec{e}_n+\vec{w})} \ast (\rho' \circ f)\colon \partial \Delta^1 \ast S^{n-2} \rightarrow \BDA_n(\Field_5).
\end{equation}
Since $\BDA_{n-1}(\Field_5)$ is $(n-2)$-connected, the map $\rho' \circ f$ is nullhomotopic
in $\BDA_{n-1}(\Field_5)$.  Since the suspension of
$\BDA_{n-1}(\Field_5)$ with suspension points $\vec{e}_n$ and $\vec{e}_n+\vec{w}$ lies
in $\BDA_n(\Field_5)$, we conclude that \eqref{eqn:kill2tokill} is nullhomotopic,
as desired.
\end{proof}

\section{The Lee--Szczarba conjecture}
\label{section:proofs}

This section contains the proofs of our main results.  It has two sections.  In \S \ref{section:preliminaries},
we discuss some preliminary results, and in \S \ref{section:maintheorem}, we prove Theorem \ref{theorem:main}.  

\subsection{Preliminaries}
\label{section:preliminaries}

There are two sections of preliminaries.  In \S \ref{section:posets}, we
review the map-of-posets spectral sequence, and in \S \ref{section:titsquotient}, we give a concrete description
of the quotient of the Tits building $\cT_n(\Q)$ by the congruence subgroup $\Gamma_n(p)$.

\subsubsection{The map-of-posets spectral sequence}
\label{section:posets}

In this subsection, we review some results about the homology of posets with coefficients in 
a functor and about the map-of-posets spectral sequence.  Much of this is due to 
Quillen \cite{Q78} and Charney \cite{Charney}.  We begin with some definitions 
concerning posets. 

\begin{definition}
Let $\bX$ be a poset and $x \in \bX$.  We say $x$ has height $m$ and write $\height(x)=m$ if
$m$ is the largest integer such that there exists a chain
\[x_0 < \cdots < x_m = x \quad \quad \text{with $x_i \in \bX$ for all $0 \leq i \leq m$}.\]
We write $\bX_{>x}$ for the subposet of $\bX$ consisting of elements strictly larger than
$x$.  For a map $f\colon \bY \rightarrow \bX$ of posets, we write $f_{\leq x}$ for the
subposet of $\bY$ consisting of all $y \in \bY$ such that $f(y) \leq x$.
\end{definition}

A poset $\bX$ can be viewed as a category with a single morphism from 
$x \in \bX$ to $x' \in \bX$ precisely when $x \leq x'$.  Letting
$\Ab$ denote the category of abelian groups, we now recall the definition
of the homology of a poset with coefficients in a functor $F\colon \bX \rightarrow \Ab$.

\begin{definition}
Let $\bX$ be a poset and let $F\colon \bX \rightarrow \Ab$ be a functor.  Define
$\CC_{\bullet}(\bX;F)$ to be the following chain complex.  For $k \geq 0$, we set
\[\CC_k(\bX;F) = \bigoplus_{x_0<\dots<x_k} F(x_0),\] 
where the $x_i$ are understood to be elements of $\bX$.
The differential $\partial\colon \CC_k(\bX;F) \rightarrow \CC_{k-1}(\bX;F)$ is defined
to be $\sum_{i=0}^k (-1)^i \partial_i$, where 
$\partial_i\colon \CC_k(\bX;F) \rightarrow \CC_{k-1}(\bX;F)$ is as follows:
\begin{compactitem}
\item For $0 < i \leq k$, the map $\partial_i$ takes the $x_0 < \cdots < x_k$ summand
of $\CC_{k}(\bX;F)$ to the $x_0 < \cdots < \widehat{x_i} < \cdots < x_k$ summand
of $\CC_{k-1}(\bX;F)$ via the identity map $F(x_0) \rightarrow F(x_0)$.
\item The map $\partial_0$ takes the $x_0 < \cdots < x_k$ summand of
$\CC_k(\bX;F)$ to the $x_1 < \cdots < x_k$ summand of $\CC_{k-1}(\bX;F)$ via the
induced map $F(x_0) \rightarrow F(x_1)$.  
\end{compactitem}
We define $\HH_k(\bX;F) = \HH_k(\CC_{\bullet}(\bX;F))$.
\end{definition}

\begin{example}
Fix a poset $\bX$.  For a commutative ring $R$, we will write $\uR$ for the constant functor
on $X$ with value $R$.  We then have $\HH_k(\bX;\uR) \cong \HH_k(|\bX|;R)$, where
$|\bX|$ is the geometric realization of $\bX$.  We will often simply write this as $\HH_k(\bX;R)$. 
\end{example}

These homology groups can be very difficult to calculate.  One case where there is an
easy formula is where the functor $F$ is {\em supported on elements of height $m$}, i.e.\ where
$F(x) = 0$ for all $x \in \bX$ with $\height(x) \neq m$.  We then have the following lemma.
See e.g.\ \cite[Lemma 3.2]{MPWY} for a proof.

\begin{lemma}
\label{lemma:supportm}
Let $\bX$ be a poset and let $F\colon \bX \rightarrow \Ab$ be a functor that is supported
on elements of height $m$.  Then
\[\HH_k(\bX;F) \cong \bigoplus_{\height(x)=m} \RH_{k-1}(|\bX_{>x}|;F(x)),\]
where the coefficients $F(x)$ are simply regarded as an abelian group.
\end{lemma}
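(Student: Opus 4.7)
The hypothesis that $F$ vanishes off height $m$ forces $\CC_\bullet(\bX;F)$ to split as a direct sum of chain subcomplexes indexed by the height-$m$ elements, and each summand will turn out to compute the reduced homology of an upper interval with constant coefficients in $F(x)$. So the plan is: produce the splitting, identify each piece with a (shifted) reduced chain complex, and take homology.

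First, I would sort the generators of $\CC_k(\bX;F) = \bigoplus_{x_0 < \cdots < x_k} F(x_0)$ by the minimum element $x_0$. Only chains with $\height(x_0) = m$ contribute, since $F(x_0) = 0$ otherwise. For each $x$ with $\height(x) = m$, let $\CC_\bullet^{(x)} \subset \CC_\bullet(\bX;F)$ be the subgroup spanned by chains of the form $x = x_0 < x_1 < \cdots < x_k$; as abelian groups this is manifestly a direct sum decomposition.

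Next I would check that $\partial$ respects this decomposition. The face maps $\partial_i$ for $i \geq 1$ do not touch $x_0$, so they preserve each $\CC_\bullet^{(x)}$ tautologically. The dangerous term is $\partial_0$, which replaces $x_0$ with $x_1$ via the functoriality map $F(x_0) \to F(x_1)$; but $\height(x_1) > m$, so $F(x_1) = 0$ and $\partial_0$ vanishes identically on $\CC_\bullet^{(x)}$. Hence $\CC_\bullet(\bX;F) \cong \bigoplus_{\height(x)=m} \CC_\bullet^{(x)}$ as chain complexes.

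Finally, I would identify $\CC_\bullet^{(x)}$ with $\RC_{\bullet-1}(|\bX_{>x}|;F(x))$. The summand indexed by the chain $x = x_0 < x_1 < \cdots < x_k$ carries a fixed copy of $F(x)$ (independent of $x_1,\ldots,x_k$, precisely because $\partial_0$ played no role), and it corresponds to the $(k-1)$-simplex $\{x_1,\ldots,x_k\}$ of $|\bX_{>x}|$; the solitary degree-$0$ summand $F(x)$ corresponds to the augmentation in degree $-1$. Under this identification, $\sum_{i\geq 1}(-1)^i \partial_i$ matches the simplicial boundary on reduced chains up to an overall sign, which is irrelevant for homology. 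Taking $\HH_k$ and summing over $x$ then gives the stated formula. The main obstacle is purely bookkeeping: verifying that $\partial_0$ vanishes for the reason I claim and that the degree shift and signs line up; no genuinely new idea is needed.
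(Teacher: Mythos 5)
Your argument is correct, and it is the standard one: sort the chain complex by the minimal element $x_0$ of each flag, observe that only height-$m$ minima survive, note that $\partial_0$ kills each such summand because $\height(x_1)>m$ forces $F(x_1)=0$, and identify the resulting direct summand with the degree-shifted reduced chain complex of $|\bX_{>x}|$ with constant coefficients $F(x)$. I checked the one place a careless version could slip: since $x_0<x_1$ strictly, any maximal chain into $x_0$ extends to one into $x_1$, so $\height(x_1)\geq\height(x_0)+1>m$ and $\partial_0$ really does vanish; and the residual differential $\sum_{i\geq 1}(-1)^i\partial_i$ matches the reduced simplicial boundary after reindexing $x_{i}\leftrightarrow y_{i-1}$, up to the global sign $(-1)$ you flagged, which does not affect homology. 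I also note, for your awareness, that the paper in front of you does not prove this lemma at all — it cites Lemma 3.2 of [MPWY] for it — so there is no in-paper proof to compare against; your proof fills that in cleanly, and it is the proof one would expect to find there.
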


Our main interest in the homology of a poset with coefficients in a functor is
due to the following spectral sequence.  See
Quillen \cite[Section 7]{Q78} or Charney \cite[Section 1]{Charney} for a proof, and
see Remark \ref{remark:nonstandard} for why we use the nonstandard indices $(k,h)$.

\begin{theorem}[Map-of-posets spectral sequence]
\label{theorem:spectralsequence}
Let $f\colon \bY \m \bX$ be a map of posets.  Then there is a homologically graded spectral sequence
\[\EE^2_{kh}=\HH_k(\bX;[x \mapsto \HH_h(f_{\leq x})]) \implies \HH_{k+h}(\bY).\]
\end{theorem}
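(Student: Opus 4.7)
The plan is to build a first-quadrant bicomplex whose two iterated spectral sequences have respectively the claimed $\EE^2$-page and the claimed abutment. Define
\[D_{k,h} = \bigoplus_{\substack{x_0 < \cdots < x_k \text{ in } \bX \\ y_0 < \cdots < y_h \text{ in } \bY \\ f(y_h) \leq x_0}} \Z,\]
with horizontal differential the standard alternating sum of face maps on the $x$-chain and vertical differential the standard alternating sum of face maps on the $y$-chain, with the usual sign twist $(-1)^k$ inserted so that the two differentials anticommute. When $x_0$ is dropped the condition $f(y_h) \leq x_0$ is relaxed to $f(y_h) \leq x_1$ (which is weaker and so automatic on the domain summand), and all other face maps leave the constraint unchanged.

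For the first spectral sequence, filter by the $k$-direction and take vertical homology first. For a fixed chain $x_0 < \cdots < x_k$ in $\bX$, the vertical column is precisely the standard chain complex of the poset $f_{\leq x_0}$, because the condition $f(y_h) \leq x_0$ is equivalent to $y_0,\ldots,y_h$ all lying in $f_{\leq x_0}$. Vertical homology therefore yields $\HH_h(f_{\leq x_0})$, and the surviving horizontal differential assembles these into $\CC_\bullet(\bX;[x \mapsto \HH_h(f_{\leq x})])$, with the ``drop $x_0$'' face map realizing the morphism $\HH_h(f_{\leq x_0}) \to \HH_h(f_{\leq x_1})$ induced by the inclusion of subposets. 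Taking horizontal homology produces the asserted $\EE^2$-page $\HH_k(\bX;[x \mapsto \HH_h(f_{\leq x})])$.

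For the second spectral sequence, filter by the $h$-direction and take horizontal homology first. For a fixed chain $y_0 < \cdots < y_h$ in $\bY$, the horizontal row indexes chains $x_0 < \cdots < x_k$ with $f(y_h) \leq x_0$, so it is precisely the standard chain complex of the subposet $\bX_{\geq f(y_h)}$. This subposet has a minimum element $f(y_h)$, hence contractible geometric realization, so the horizontal homology is $\Z$ in degree $0$ and vanishes otherwise. The surviving $\EE^1$-page is the single column $\bigoplus_{y_0 < \cdots < y_h} \Z = \CC_h(\bY;\underline{\Z})$, and taking vertical homology yields $\HH_h(\bY)$ in column zero and $0$ elsewhere. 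The spectral sequence collapses at $\EE^2$, and so the total homology of $D_{\bullet,\bullet}$ is $\HH_\bullet(\bY)$; by comparison, this is also the abutment of the first spectral sequence, completing the argument.

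The main routine check is that the horizontal and vertical differentials genuinely anticommute once the $(-1)^k$ sign is inserted and that the argument is well-defined in the edge cases $k=0$ or $h=0$; both are standard. Strong convergence of both spectral sequences is automatic from the first-quadrant nature of $D_{\bullet,\bullet}$.
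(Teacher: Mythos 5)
Your proof is correct, and it is essentially the standard argument from the references the paper cites for this result (Quillen and Charney) rather than an original proof of the paper, which simply quotes the theorem. The bicomplex you define (pairs of chains in $\bX$ and $\bY$ with $f(y_h)\leq x_0$) is exactly the Grothendieck-style double complex used there; the two checks you carry out — that vertical columns compute $\HH_h(f_{\leq x_0})$ with $d^1$ reproducing $\CC_\bullet(\bX;[x\mapsto\HH_h(f_{\leq x})])$, and that horizontal rows compute the homology of the contractible poset $\bX_{\geq f(y_h)}$, collapsing the second spectral sequence onto $\HH_\bullet(\bY)$ — are precisely the two lemmas one needs, and they are argued correctly.
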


\begin{remark}
\label{remark:nonstandard}
We use the nonstandard indices $(k,h)$ since for us, $p$ is always a prime (so we cannot use $(p,q)$) and
$n$ is always a dimension (so we cannot use $(n,m)$).
\end{remark}

We will need a way to show that the map-of-posets spectral sequence vanishes in a large range.
The following lemma will be the key to this.

\begin{lemma}
\label{lemma:vanishingrange}
Let $f\colon \bY \m \bX$ be a map of posets and let $\EE^2_{kh}$ be the map-of-posets spectral sequence
for it given by Theorem \ref{theorem:spectralsequence}.  For some 
$d,e,r \geq 0$, assume that the following hold for all $x \in \bX$.
\begin{compactitem}
\item $\RH_h(|f_{\leq x}|)=0$ for all $h \notin [\height(x)+d-r, \height(x)+d]$.
\item $\RH_k(|\bX_{>x}|)=0$ for all $k \neq e-\height(x)-1$
\end{compactitem}
Then $\EE^2_{kh} = 0$ for all $k \geq 0$ and $h \geq 1$ satisfying $k+h \notin [d+e-r,d+e]$.
\end{lemma}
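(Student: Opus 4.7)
The plan is to decompose the coefficient functor $F_h := [x \mapsto \HH_h(f_{\leq x})]$ appearing on the $\EE^2$-page by a height filtration and reduce to Lemma \ref{lemma:supportm}. Because $h \geq 1$, we have $F_h(x) = \RH_h(f_{\leq x})$, so the first hypothesis asserts that $F_h$ vanishes outside the set of elements of height in $[h-d,\,h-d+r]$. For each integer $M$, the assignment $F_h^{\geq M}(x) = F_h(x)$ if $\height(x) \geq M$ and $0$ otherwise is a subfunctor (since $x \leq y$ forces $\height(x) \leq \height(y)$, so the inclusion is compatible with all structure maps); setting $M_1 = h-d$ and $M_0 = h-d+r$, this produces a finite filtration
$$0 = F_h^{\geq M_0+1} \subseteq F_h^{\geq M_0} \subseteq \cdots \subseteq F_h^{\geq M_1} = F_h,$$
whose successive quotients $G^M := F_h^{\geq M}/F_h^{\geq M+1}$ are each supported on the single height $M$, agreeing with $F_h$ there.

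Applying Lemma \ref{lemma:supportm} to each $G^M$ yields
$$\HH_k(\bX; G^M) \cong \bigoplus_{\height(x)=M} \RH_{k-1}(|\bX_{>x}|;\, F_h(x)),$$
and the second hypothesis forces this to vanish unless $k = e-M$. Combined with the support condition $M \in [h-d,\,h-d+r]$, nonvanishing of $\HH_k(\bX; G^M)$ requires $k + h = (e-M) + h$ with $h - M \in [d-r,\,d]$, so $k+h \in [d+e-r,\,d+e]$. Thus $\HH_k(\bX; G^M) = 0$ for every $M$ whenever $k + h$ lies outside $[d+e-r,\,d+e]$.

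To pass from the associated graded pieces $G^M$ back to $F_h$ itself, each short exact sequence $0 \to F_h^{\geq M+1} \to F_h^{\geq M} \to G^M \to 0$ of functors on $\bX$ induces a short exact sequence of the chain complexes $\CC_\bullet(\bX;-)$ (which are direct sums of the $F(x_0)$ summand over chains), and hence a long exact sequence in poset homology. A descending induction on $M$ starting from the trivial $F_h^{\geq M_0+1} = 0$ then propagates vanishing of $\HH_k(\bX;-)$ outside $[d+e-r,d+e]$ from $F_h^{\geq M+1}$ to $F_h^{\geq M}$ via the three-term segment
$$\HH_k(\bX;F_h^{\geq M+1}) \longrightarrow \HH_k(\bX;F_h^{\geq M}) \longrightarrow \HH_k(\bX;G^M),$$
whose outer groups are zero by the inductive hypothesis and the previous paragraph. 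Specializing to $M = M_1$ yields $\EE^2_{kh} = \HH_k(\bX; F_h) = 0$ in the required range. The one delicate point I expect is justifying that $\RH_{k-1}(|\bX_{>x}|;\, F_h(x))$ really does vanish off $k-1 = e-M-1$: a priori the universal coefficient theorem could produce a Tor contribution in degree $e-M$, shifting the range by one. In the intended applications $|\bX_{>x}|$ is Cohen--Macaulay with free top-dimensional homology (compare Propositions \ref{proposition:tbasescon} and \ref{proposition:bases1con}), so no such Tor term exists; a fully general statement of the lemma would need to record this freeness explicitly.
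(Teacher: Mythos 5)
Your proof is correct and follows essentially the same strategy as the paper's. The paper isolates the key step as a separate Lemma~\ref{lemma:supportrange} (for a functor $F$ supported in heights $[a,b]$, then $\HH_k(\bX;F)=0$ for $k\notin[e-b,e-a]$), proved by induction on $b-a$ via the short exact sequence $0\to G\to F\to F/G\to 0$ that peels off the \emph{lowest} height $a$; you instead run a height filtration $F_h^{\geq M}$ and descending induction peeling off the \emph{highest} height, but the mechanism (short exact sequences of functors, long exact sequences in poset homology, reduction of the associated graded to Lemma~\ref{lemma:supportm}) is identical, and so is the final arithmetic.

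Your remark about a potential $\Tor$ contribution is a genuine observation, but it applies equally to the paper's argument: the paper also passes from the integral vanishing hypothesis $\RH_k(|\bX_{>x}|)=0$ for $k\neq e-\height(x)-1$ to vanishing of $\RH_{k-1}(|\bX_{>x}|;F(x))$ with abelian-group coefficients without comment. In all intended uses $\bX_{>x}$ is Cohen--Macaulay of the relevant dimension (Lemma~\ref{lemma:tdconn}), so its only nonzero reduced homology group sits in the top degree and is a subgroup of a free chain group, hence free; thus the universal coefficient $\Tor$ term never appears. A maximally careful statement of Lemma~\ref{lemma:vanishingrange} would either add a freeness hypothesis or phrase the second bullet with arbitrary abelian-group coefficients, but as an internal lemma applied only under Cohen--Macaulay hypotheses the argument is sound.
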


For the proof of Lemma \ref{lemma:vanishingrange}, we need the following lemma.

\begin{lemma} 
\label{lemma:supportrange}
Let $\bX$ be a poset and let $F\colon \bX \rightarrow \Ab$ be a functor.  
For some $b \geq a \geq 0$ and $e \geq 0$, assume that the following hold
for all $x \in \bX$.
\begin{compactitem}
\item $F(x) = 0$ whenever $\height(x) \notin [a,b]$.
\item $\RH_k(|\bX_{>x}|)=0$ for all $k \neq e-\height(x)-1$.
\end{compactitem}
Then $\HH_k(\bX;F) = 0$ for all $k \notin [e-b,e-a]$.
\end{lemma}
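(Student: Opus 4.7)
The plan is to induct on $b - a$, using a short exact sequence of functors to peel off the lowest nonvanishing height at each step. For the base case $b = a$, the functor $F$ is supported entirely on height-$a$ elements, so Lemma~\ref{lemma:supportm} gives
\[ \HH_k(\bX; F) \cong \bigoplus_{\height(x) = a} \RH_{k-1}(|\bX_{>x}|; F(x)), \]
and the second hypothesis forces each summand to vanish unless $k - 1 = e - a - 1$, i.e.\ $k = e - a$, which matches the claimed range $[e-b, e-a] = \{e-a\}$.

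For the inductive step I would split $F$ using the subfunctor $F' \subseteq F$ defined by $F'(x) = F(x)$ when $\height(x) > a$ and $F'(x) = 0$ otherwise. This is well-defined as a subfunctor because $x \leq x'$ forces $\height(x) \leq \height(x')$, and the quotient $F/F'$ is then supported on elements of height exactly $a$. Since $\CC_\bullet(\bX; -)$ is an exact functor on the category of functors $\bX \to \Ab$ (being a direct sum of evaluation functors at the minimum element of each chain), the short exact sequence $0 \to F' \to F \to F/F' \to 0$ induces a long exact sequence in homology. The base case applied to $F/F'$ yields $\HH_k(\bX; F/F') = 0$ for $k \neq e - a$, while the induction hypothesis applied to $F'$ with parameters $(a+1, b, e)$ yields $\HH_k(\bX; F') = 0$ for $k \notin [e-b, e-a-1]$. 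Reading this off against the long exact sequence immediately forces $\HH_k(\bX; F) = 0$ for $k \notin [e-b, e-a]$.

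The only point requiring real care is the passage, in the base case, from the hypothesis $\RH_j(|\bX_{>x}|) = 0$ for $j \neq e - \height(x) - 1$ to the vanishing of $\RH_{k-1}(|\bX_{>x}|; F(x))$ in the correct range; a naive universal coefficient argument could a priori introduce a $\Tor$ contribution in degree $k = e - a + 1$. In the intended applications, however, the spaces $|\bX_{>x}|$ are Cohen--Macaulay with free top reduced homology, so no $\Tor$ term appears and the inductive argument goes through cleanly.
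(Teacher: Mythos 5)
Your proof follows the paper's argument essentially exactly: induction on $b-a$, with the base case from Lemma~\ref{lemma:supportm} and the inductive step peeling off the height-$a$ contribution via the subfunctor supported on heights greater than $a$ together with the resulting long exact sequence. Your caveat about a possible $\Tor$ contribution in the base case is apt---the paper's own argument silently passes over this, and the lemma as literally stated does require either that the top reduced homology group $\RH_{e-\height(x)-1}(|\bX_{>x}|)$ be free (as it is in every application here, since those posets are Cohen--Macaulay) or that the vanishing hypothesis be read as holding with arbitrary abelian coefficients.
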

\begin{proof}
The proof will be by induction on $b-a$.  The base case $b-a=0$ follows
from Lemma \ref{lemma:supportm}, which says that setting $m = a = b$ we have
\[\HH_k(\bX;F) = \bigoplus_{\height(x)=m} \RH_{k-1}(|\bX_{>x}|;F(x)).\]
Since $F(x)$ here is just an abelian group, this vanishes by assumption
when $k-1 \neq e-m-1$.  Assume now that $b-a>0$.
Define $G\colon \bX \rightarrow \Ab$ via the formula
\[G(x) = \begin{cases}
F(x) & \text{if $a < \height(x) \leq b$},\\
0 & \text{otherwise}.
\end{cases}\]
We then have a short exact sequence of functors
\[0 \longrightarrow G \longrightarrow F \longrightarrow F/G \longrightarrow 0,\]
where $G(x) = 0$ for all $x \in \bX$ with $\height(x) \notin [a+1,b]$ and
$F/G(x) = 0 $ for all $x \in \bX$ with $\height(x) \neq a$.  The associated long exact sequence
in homology contains segments of the form
\[\HH_k(\bX;G) \longrightarrow \HH_k(\bX;F) \longrightarrow \HH_k(\bX;F/G).\]
Our inductive hypothesis says that $\HH_k(\bX;G) = 0$ for all
$k \notin [e-b,e-a-1]$ and that $\HH_k(\bX;F/G) = 0$ for all
$k \neq e-a$.  We conclude that $\HH_k(\bX;F) = 0$ for all $k$
such that $k \notin [e-b,e-a-1]$ and $k \neq e-a$, i.e.\ such that $k \notin [e-b,e-a]$.
\end{proof}

\begin{proof}[Proof of Lemma \ref{lemma:vanishingrange}]
Consider some $h \geq 1$.  Let $F_h\colon \bX \rightarrow \Ab$ be the functor defined via the formula
$F_h(x) = \HH_h(f_{\leq x})$.  By assumption, for all $x \in \bX$ we have that $F_h(x) = 0$
whenever $h \notin [\height(x)+d-r, \height(x)+d]$, i.e.\ whenever $\height(x) \notin [h-d,h-d+r]$.
Applying Lemma \ref{lemma:supportrange}, we see that $\EE^2_{kh} = \HH_k(\bX;F_h) = 0$ for all
$k \notin [e-(h-d+r),e-(h-d)]$, i.e.\ for all $k$ satisfying $k+h \notin [d+e-r,d+e]$, as desired.
\end{proof}

\subsubsection{The quotient of the Tits building by a congruence subgroup}
\label{section:titsquotient}

In order to prove/disprove the Lee--Szczarba conjecture, we need a concrete description of 
the quotient of the Tits building for $\Q$ by a congruence subgroup.  We begin by 
generalizing the definition of the Tits building to an arbitrary commutative ring.

\begin{definition}[Tits building]
Let $R$ be a commutative ring and let $V$ be a finite-rank free $R$-module.  Define
$\bT(V)$ to be the poset of proper nonzero direct summands of $R^n$, ordered by inclusion.
Also, let $\cT(V)$ denote the geometric realization of $\bT(V)$, viewed as a simplicial complex.
For $n \geq 1$, we will write $\bT_n(R) = \bT(R^n)$ and $\cT_n(R) = \cT(R^n)$.
\end{definition}

The following lemma helps clarify the action of $\SL_n(\Z)$ on $\bT_n(\Q)$.

\begin{lemma}
\label{lemma:identifybuilding}
For $n \geq 1$, we have $\bT_n(\Z) \cong \bT_n(\Q)$.
\end{lemma}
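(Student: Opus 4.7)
The plan is to produce a poset isomorphism
\[\Phi \colon \bT_n(\Z) \longrightarrow \bT_n(\Q), \qquad M \longmapsto M \otimes_{\Z} \Q,\]
where we view $M \otimes_{\Z} \Q$ as a $\Q$-subspace of $\Q^n = \Z^n \otimes_{\Z} \Q$ in the obvious way, with inverse
\[\Psi \colon \bT_n(\Q) \longrightarrow \bT_n(\Z), \qquad V \longmapsto V \cap \Z^n.\]
Since passing to tensor products and intersecting with $\Z^n$ both manifestly preserve inclusions, the only content is to check that $\Phi$ and $\Psi$ are well-defined between the indicated posets and are mutually inverse.

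First I would verify that $\Phi$ lands in $\bT_n(\Q)$. If $M$ is a proper nonzero direct summand of $\Z^n$, choose a complement $N$ so that $\Z^n = M \oplus N$; tensoring with $\Q$ gives $\Q^n = (M \otimes \Q) \oplus (N \otimes \Q)$, with both summands nonzero. Next I would verify that $\Psi$ lands in $\bT_n(\Z)$: the key step is showing $V \cap \Z^n$ is a direct summand of $\Z^n$. For this, I would observe that $\Z^n / (V \cap \Z^n)$ injects into $\Q^n / V$, which is $\Q$-vector space and hence torsion-free; thus $\Z^n / (V \cap \Z^n)$ is a finitely generated torsion-free, hence free, abelian group, so the short exact sequence $0 \to V \cap \Z^n \to \Z^n \to \Z^n/(V\cap \Z^n) \to 0$ splits and $V \cap \Z^n$ is a summand. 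Properness and nonzeroness are immediate.

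The final step is to check that $\Psi \circ \Phi = \mathrm{id}$ and $\Phi \circ \Psi = \mathrm{id}$. For $\Psi \circ \Phi$: if $M \subset \Z^n$ is a direct summand with complement $N$, then the decomposition $\Q^n = (M \otimes \Q) \oplus (N \otimes \Q)$ shows any $x \in (M \otimes \Q) \cap \Z^n$ must have zero $N$-component, hence $x \in M$, so $(M \otimes \Q) \cap \Z^n = M$. For $\Phi \circ \Psi$: given $V \subset \Q^n$, the inclusion $(V \cap \Z^n) \otimes \Q \subset V$ is clear; for the reverse, any $v \in V$ can be written as $v = \tfrac{1}{k} w$ for some positive integer $k$ with $w \in V \cap \Z^n$ (clear denominators), so $v \in (V \cap \Z^n) \otimes \Q$.

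There is no real obstacle here; the only non-formal step is the torsion-free argument showing $V \cap \Z^n$ is a direct summand, and that is standard. The lemma is essentially the classical bijection between $\Q$-subspaces of $\Q^n$ and direct summands of $\Z^n$ already invoked informally in the introduction.
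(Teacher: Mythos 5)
Your proof is correct and takes the same approach the paper does: both establish the bijection $M \mapsto M \otimes_\Z \Q$, $V \mapsto V \cap \Z^n$. The paper simply cites this bijection without proof, whereas you have filled in the standard details (the torsion-free quotient argument to show $V \cap \Z^n$ is a summand, and the verification that the maps are mutually inverse).
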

\begin{proof}
This follows from the fact that there is a bijection between subspaces of $\Q^n$ and direct summands
of $\Z^n$ taking a subspace $V \subset \Q^n$ to $V \cap \Z^n$ and a direct summand $W \subset \Z^n$
to $W \otimes \Q$.
\end{proof}

We now decorate our buildings by appropriate versions of orientations.

\begin{definition}[$\pm$-orientation]
Let $R$ be a commutative ring and let $V$ be a rank-$d$ free $R$-module, so $\wedge^d V \cong R^1$.
An {\em orientation} on $V$ is an element $\omega \in \wedge^d V$ that generates it as an $R$-module.  The
group $R^{\times}$ of units acts simply transitively on the set of orientations on $V$ by scalar multiplication.
A {\em $\pm$-orientation} on $V$ is a $\pm$-vector $\pm \omega$ such that $\omega$ is an orientation on $V$.
\end{definition}

\begin{example}
If $V$ is a rank-$d$ free $\Z$-module, then $\wedge^d V \cong \Z^1$.  Since the units of $\Z$ are $\{\pm 1\}$,
there is a unique $\pm$-orientation on $V$.
\end{example}

\begin{definition}[$\pm$-oriented Tits building]
Let $R$ be a commutative ring and let $V$ be a finite-rank free $R$-module.  Define
$\bTD(V)$ to be the poset of proper nonzero direct summands of $V$ equipped with 
a $\pm$-orientation.  The poset structure is simply inclusion; the $\pm$-orientations play no
role in it.  Let $\TD(V)$ denote the geometric realization of $\bTD(V)$, viewed
as a simplicial complex.  Finally, let $\bTD_n(R) = \bTD(R^n)$ and
$\TD_n(R) = \TD(R^n)$.  We call $\TD_n(R)$ the {\em $\pm$-oriented
Tits building}.
\end{definition}

\begin{remark}
\label{remark:orientationirrelevant}
We have $\TD_n(R) = \cT_n(R)$ if and only if $R^{\times} = \{\pm 1\}$.  In particular,
$\TD_n(\Z) = \cT_n(\Z)$ and $\TD_n(\Field_p) = \cT_n(\Field_p)$ if and only if $p \in \{2,3\}$.
\end{remark}

For a field $\Field$, the Solomon--Tits theorem \cite{Solomon, BrownBuildings} says that $\cT_n(\Field)$ is
Cohen--Macaulay of dimension $(n-2)$.  The following is the analogue of this for the $\pm$-oriented
Tits building.

\begin{lemma} 
\label{lemma:tdconn}
For any field $\F$ and any $n \geq 1$, the complex $\TD_n(\F)$ is Cohen--Macaulay
of dimension $(n-2)$.
\end{lemma}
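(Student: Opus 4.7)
The plan is to exhibit $\TD_n(\F)$ as a complete join complex over the ordinary Tits building $\cT_n(\F)$ in the sense of Hatcher--Wahl, and then invoke the classical Solomon--Tits theorem together with Hatcher--Wahl's result that the complete join preserves the Cohen--Macaulay property. This is essentially the same strategy used in Proposition~\ref{proposition:basescon} and Proposition~\ref{proposition:augbasescon} above.

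First I would construct the forgetful map of posets $\pi\colon \bTD_n(\F) \to \bT_n(\F)$ defined by $(V,\pm\omega) \mapsto V$. On geometric realizations this gives a simplicial projection $\TD_n(\F) \to \cT_n(\F)$. For each nonzero proper subspace $V \subset \F^n$ of dimension $d$, the set of $\pm$-orientations on $V$ is the quotient of $(\wedge^d V) \setminus \{0\} \cong \F^\times$ by the action of $\{\pm 1\}$; in particular, it is always nonempty. Consequently, vertices of $\TD_n(\F)$ lying over a vertex $V$ of $\cT_n(\F)$ can be identified with a nonempty label set $\Omega(V)$, and a set $\{(V_0,\pm\omega_0),\dots,(V_k,\pm\omega_k)\}$ forms a $k$-simplex of $\TD_n(\F)$ if and only if $\{V_0,\dots,V_k\}$ forms a $k$-simplex of $\cT_n(\F)$ (since the $\pm$-orientations impose no further constraint). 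This is precisely the definition of a complete join complex in \cite[Definition 3.2]{HatcherWahl}.

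Next I would recall the Solomon--Tits theorem \cite{Solomon, BrownBuildings}, which states that $\cT_n(\F)$ is Cohen--Macaulay of dimension $(n-2)$. Applying \cite[Proposition 3.5]{HatcherWahl}, which says that a complete join complex over a Cohen--Macaulay complex of dimension $r$ is itself Cohen--Macaulay of dimension $r$, immediately yields that $\TD_n(\F)$ is Cohen--Macaulay of dimension $(n-2)$. The base case $n=1$ is trivial since then $\TD_n(\F)$ is empty (which is $-1$-dimensional and $(-2)$-connected, fitting the Cohen--Macaulay convention vacuously).

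I do not expect a substantial obstacle here: the argument is a direct formal analogue of the reductions in Proposition~\ref{proposition:basescon} and Proposition~\ref{proposition:augbasescon}. The only point requiring a moment of care is the verification that the link description matches the complete join formalism uniformly in the characteristic of $\F$. When $\Char(\F) = 2$ each subspace carries $|\F^\times|$ many $\pm$-orientations, while in odd characteristic it carries $|\F^\times|/2$ many, but in both cases the label set is nonempty and depends only on the subspace, which is all that is needed.
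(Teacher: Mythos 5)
Your proof is correct and uses essentially the same strategy as the paper: exhibit $\TD_n(\F)$ as a complete join complex over the Tits building $\cT_n(\F)$ and invoke the Solomon--Tits theorem together with Hatcher--Wahl's \cite[Proposition 3.5]{HatcherWahl}. The paper's proof is a terse version of exactly this argument; the extra details you supply (the explicit forgetful poset map, the description of the fibers, the degenerate base case $n=1$, the characteristic-$2$ caveat) are all correct but not part of the paper's one-line proof.
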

\begin{proof}
As we said above, it follows from the Solomon--Tits theorem \cite{Solomon, BrownBuildings}
that $\cT_n(\Field)$ is Cohen--Macaulay of dimension $(n-2)$.  The complex
$\TD_n(\F)$ is a complete join complex over $\cT_n(\F)$ in the sense
of Hatcher--Wahl \cite[Definition 3.2]{HatcherWahl}, so the lemma
follows from \cite[Proposition 3.5]{HatcherWahl}.
\end{proof}

We now come to the main result of this section.

\begin{proposition} 
\label{proposition:tquotient} 
For all primes $p$ and all $n \geq 1$, we have $\cT_n(\Q)/\Gamma_n(p) \cong \TD_n(\Field_p)$.
\end{proposition}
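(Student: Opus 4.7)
The plan is to construct a $\Gamma_n(p)$-invariant isomorphism of posets $\bT_n(\Q)/\Gamma_n(p) \to \bTD_n(\Field_p)$, which yields the desired simplicial isomorphism on order complexes (this passage is legitimate because the $\Gamma_n(p)$-action preserves the rank of a direct summand, so no nontrivial element can permute the vertices of a simplex). Using Lemma \ref{lemma:identifybuilding} to identify $\bT_n(\Q)$ with $\bT_n(\Z)$, I define $\Phi\colon \bT_n(\Z) \to \bTD_n(\Field_p)$ by sending a rank-$d$ direct summand $W \subseteq \Z^n$ to the pair $(W \otimes \Field_p,\ \pm\overline{\omega}_W)$, where $\pm \overline{\omega}_W$ is the reduction modulo $p$ of the canonical $\pm$-generator of $\wedge^d W \cong \Z$. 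This is monotone and $\Gamma_n(p)$-invariant: for $g \in \Gamma_n(p)$ and any $\Z$-basis $\vec{v}_1,\ldots,\vec{v}_d$ of $W$, the congruence $g \equiv I \pmod p$ gives $g\vec{v}_i \equiv \vec{v}_i \pmod p$, so $gW$ and $W$ produce identical images in $\Field_p^n$ with identical reduced wedges. Thus $\Phi$ factors through $\overline{\Phi}\colon \bT_n(\Z)/\Gamma_n(p) \to \bTD_n(\Field_p)$.

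For surjectivity, I lift an arbitrary $\pm$-oriented flag $\overline{W}_0 \subsetneq \cdots \subsetneq \overline{W}_k$ in $\Field_p^n$ (with ranks $d_0 < \cdots < d_k$) by choosing a basis of $\Field_p^n$ adapted to the flag whose successive partial wedges realize the prescribed $\pm$-orientations, rescaling one basis vector outside the top of the flag so that the overall basis lies in $\SL_n(\Field_p)$, and lifting to $\SL_n(\Z)$ via the classical surjection $\SL_n(\Z) \twoheadrightarrow \SL_n(\Field_p)$. The first $d_i$ columns span direct summands $W_i \subseteq \Z^n$ with $\Phi(W_i) = (\overline{W}_i, \pm\overline{\omega}_i)$ by construction.

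The substance of the argument is injectivity. Given flags $W_\bullet$ and $W'_\bullet$ with $\Phi(W_\bullet) = \Phi(W'_\bullet)$, my aim is to construct adapted $\Z$-bases $\{\vec{v}_j\}$ of $\Z^n$ refining $W_\bullet$ and $\{\vec{v}'_j\}$ refining $W'_\bullet$ such that $\vec{v}_j \equiv \vec{v}'_j \pmod p$ for every $j$ and such that both bases determine matrices of determinant $1$; the unique $g \in \SL_n(\Z)$ sending $\vec{v}_j$ to $\vec{v}'_j$ then lies automatically in $\Gamma_n(p)$ and satisfies $g W_i = W'_i$. The bases are built one subquotient at a time from the bottom of the flag up: having already matched the reductions through $W_{i-1}$ and $W'_{i-1}$, the coincidence of the $\pm$-orientations on $\overline{W}_i$ forces the change-of-basis between the two candidate reductions on $\overline{W}_i/\overline{W}_{i-1}$ to lie in $\SL$ of the relevant rank (after at most one sign flip on a newly added basis vector), and such a matrix lifts to $\SL$ over $\Z$, letting me replace the extension of the basis of $W'_{i-1}$ to $W'_i$ with one whose reduction agrees with that of $W_i$ on the subquotient. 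A final adjustment in the part of the basis outside $W_k$ (which is nonempty since $W_k \subsetneq \Z^n$) fixes the overall determinant: for $p \geq 3$, matching reductions together with $\det \in \{\pm 1\}$ force $\det = 1$ automatically, while for $p = 2$ a sign flip leaves reductions unchanged. The main obstacle is coordinating the $\pm$-sign choices consistently across the flag, but this reduces to a system over $\{\pm 1\}$ solvable by induction on $i$.
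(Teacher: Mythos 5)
Your proposal is correct and takes essentially the same approach as the paper: define the reduction-mod-$p$ map on oriented direct summands, prove surjectivity by lifting an adapted basis via $\SL_n(\Z) \twoheadrightarrow \SL_n(\Field_p)$, and prove injectivity by constructing adapted $\Z$-bases for the two flags reducing to a common basis of $\Field_p^n$, so that the change-of-basis automorphism lies in $\Gamma_n(p)$ (with a sign adjustment for $p=2$). The paper isolates the lifting step as Lemma \ref{lemma:liftbasis} and applies it recursively, whereas you build the bases one subquotient at a time, but the substance of the argument is the same.
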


For the proof of this proposition, we need two definitions and a lemma.  

\begin{definition}
Let $V$ be a rank-$n$ free $\Z$-module, let $\oV$ be an $n$-dimensional $\Field_p$-vector space, and
let $\pi\colon V \rightarrow \oV$ be a surjection (so $\ker(\pi) = p V$).  The image under $\pi$
of the unique $\pm$-orientation on $V$ is the $\pm$-orientation on $\oV$ that is {\em induced} by $\pi$.
\end{definition}

\begin{definition}
Let $V$ be a finite-dimensional vector space equipped with a $\pm$-orientation $\pm \omega$.  A basis
$\{\vec{x}_1,\ldots,\vec{x}_n\}$ for $V$ is {\em compatible} with $\pm \omega$ if 
$\pm \omega = \pm(\vec{x}_1 \wedge \cdots \wedge \vec{x}_n)$.
\end{definition}

\begin{lemma}
\label{lemma:liftbasis}
Let $V$ be a rank-$n$ free $\Z$-module, let $\oV$ be an $n$-dimensional $\Field_p$-vector space, and
let $\pi\colon V \rightarrow \oV$ be a surjection.  Let $\pm \omega$ be the $\pm$-orientation on $\oV$ induced
by $\pi$ and let $\{\vec{x}_1,\ldots,\vec{x}_n\}$ be a basis for $\oV$ that is compatible with $\pm \omega$.  For
some $0 \leq m < n$, let $\{\vec{X}_1,\ldots,\vec{X}_m\}$ be a partial basis for $V$ such that
$\pi(\vec{X}_i) = \vec{x}_i$ for $1 \leq i \leq m$.  We can then complete our partial basis to a basis
$\{\vec{X}_1,\ldots,\vec{X}_n\}$ for $V$ such that $\pi(\vec{X}_i) = \vec{x}_i$ for $1 \leq i \leq n$.
\end{lemma}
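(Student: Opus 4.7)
My plan is to reduce to the base case $m=0$ by quotienting $V$ by the span of the given partial basis, and then to handle that base case via a single change-of-basis argument using the surjectivity of $\SL_n(\Z) \twoheadrightarrow \SL_n(\Field_p)$. For the reduction, set $V' = V/\langle \vec{X}_1,\ldots,\vec{X}_m\rangle$, a free $\Z$-module of rank $n-m$, and $\oV' = \oV/\langle \vec{x}_1,\ldots,\vec{x}_m\rangle$. The surjection $\pi$ descends to $\pi'\colon V' \twoheadrightarrow \oV'$. Assuming the base case, I would apply it to $(V',\pi')$ and the basis $\{\bar{\vec{x}}_{m+1},\ldots,\bar{\vec{x}}_n\}$ of $\oV'$ to obtain a basis $\{\vec{Y}_1,\ldots,\vec{Y}_{n-m}\}$ of $V'$ lifting it. Lift each $\vec{Y}_j$ arbitrarily to some $\vec{Z}_j \in V$; then $\pi(\vec{Z}_j) = \vec{x}_{m+j} + \sum_{k=1}^m c_k \vec{x}_k$ for some $c_k \in \Field_p$, and choosing integer lifts $\tilde c_k \in \Z$ of the $c_k$, the element $\vec{X}_{m+j} := \vec{Z}_j - \sum_k \tilde c_k \vec{X}_k$ satisfies $\pi(\vec{X}_{m+j}) = \vec{x}_{m+j}$ while still reducing to $\vec{Y}_j$ in $V'$, so $\{\vec{X}_1,\ldots,\vec{X}_n\}$ is the desired basis.

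Before invoking the base case I have to verify that $\{\bar{\vec{x}}_{m+1},\ldots,\bar{\vec{x}}_n\}$ is compatible with the $\pm$-orientation on $\oV'$ induced by $\pi'$. I would pick any extension of $\{\vec{X}_1,\ldots,\vec{X}_m\}$ to a basis $\{\vec{X}_1,\ldots,\vec{X}_n\}$ of $V$, set $\vec{y}_j = \pi(\vec{X}_{m+j})$, and expand $\vec{x}_i = \sum_k a_{ik}\vec{x}_k + \sum_j b_{ij}\vec{y}_j$ for $i > m$. Wedging and comparing with the compatibility of $\{\vec{x}_1,\ldots,\vec{x}_n\}$ with $\pm\omega$, only the ``$\vec{y}$-block'' contributes, forcing $\det(b_{ij}) = \pm 1$ in $\Field_p$; reducing modulo $\langle \vec{x}_1,\ldots,\vec{x}_m\rangle$ is precisely the compatibility needed in $\oV'$.

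For the base case $m=0$, pick any basis $\{\vec{X}'_1,\ldots,\vec{X}'_n\}$ of $V$ and let $C = (c_{ij}) \in \GL_n(\Field_p)$ be the matrix with $\vec{x}_i = \sum_j c_{ij}\,\pi(\vec{X}'_j)$. Since both $\pm(\pi(\vec{X}'_1)\wedge\cdots\wedge\pi(\vec{X}'_n))$ and $\pm(\vec{x}_1\wedge\cdots\wedge\vec{x}_n)$ represent the same $\pm$-orientation $\pm\omega$, we have $\det C = \pm 1$. This $\pm 1$ condition is precisely what allows a lift of $C$ to some $\tilde C \in \GL_n(\Z)$: if $\det C = 1$, use the classical surjection $\SL_n(\Z) \twoheadrightarrow \SL_n(\Field_p)$ (cf.\ the proof of Lemma \ref{lemma:quotientbd}), and if $\det C = -1$, multiply a row of $C$ by $-1$, lift via $\SL_n$, and multiply the corresponding row of the lift back by $-1$. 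Setting $\vec{X}_i = \sum_j \tilde c_{ij}\vec{X}'_j$ gives the desired basis. The main obstacle throughout is keeping orientations straight through the quotient so that the $\pm 1$ determinant condition (rather than an arbitrary unit condition, which would only give a lift to $\GL_n(\Z_{(p)})$ and not to $\GL_n(\Z)$) is preserved at each stage; this is exactly where the hypothesis that the basis is compatible with the induced $\pm$-orientation is used.
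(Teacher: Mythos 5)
Your argument is correct, and it takes a somewhat different route from the paper's. The paper argues directly at the ``relative'' level: it observes that $\GL(V,W)$ (automorphisms of $V$ fixing $W=\langle \vec{X}_1,\ldots,\vec{X}_m\rangle$ pointwise) acts simply transitively on the set of free bases of $V$ extending the given partial basis, that $\SL^\pm(\oV,\oW)$ (determinant-$\pm 1$ automorphisms of $\oV$ fixing $\oW$ pointwise) acts simply transitively on the set of bases of $\oV$ extending $\{\vec{x}_1,\ldots,\vec{x}_m\}$ compatible with $\pm\omega$, and then cites the surjectivity of the reduction map $\GL(V,W)\to\SL^\pm(\oV,\oW)$ to transport one to the other. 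You instead reduce to $m=0$ by passing to $V'=V/W$ and $\oV'=\oV/\oW$: the wedge computation showing $\det(b_{ij})=\pm 1$ is exactly what is needed to see that the orientation condition descends, and then the $m=0$ case is the classical surjection $\GL_n(\Z)\twoheadrightarrow\SL_n^\pm(\Field_p)$, with the final subtraction $\vec{X}_{m+j}=\vec{Z}_j-\sum_k\tilde c_k^{(j)}\vec{X}_k$ correcting the arbitrary lift. The two proofs rest on the same surjectivity fact (your base case is the $m=0$ specialization of the paper's relative surjection, and your correction step is effectively the off-diagonal $\Hom(V/W,W)$-block of $\GL(V,W)$), but yours is more self-contained: the paper leaves the surjectivity of $\GL(V,W)\to\SL^\pm(\oV,\oW)$ unproved, whereas your version spells out exactly the orientation bookkeeping, at the cost of a slightly longer reduction step.
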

\begin{proof}
Let $W \subset V$ be the span of $\{\vec{X}_1,\ldots,\vec{X}_m\}$ and let $\GL(V,W)$ be the subgroup
of $\GL(V)$ consisting of automorphisms of $V$ acting as the identity on $W$.  Also, let
$\oW \subset \oV$ be the span of $\{\vec{x}_1,\ldots,\vec{x}_m\}$, let
$\SL^{\pm}(\oV)$ be the subgroup of $\GL(\oV)$ consisting of matrices with determinant $\pm 1$, and
let $\SL^{\pm}(\oV,\oW)$ be the subgroup of $\SL^{\pm}(\oV)$ consisting of automorphisms of $\oV$ with
determinant $\pm 1$ acting
as the identity on $\oW$.  We then have a surjection $\GL(V,W) \rightarrow \SL^{\pm}(\oV,\oW)$.  
The group $\GL(V,W)$ acts simply transitively
on the set of free bases for $V$ containing $\{\vec{X}_1,\ldots,\vec{X}_m\}$, 
and the group $\SL^{\pm}(\oV,\oW)$ acts simply transitively on the set
of bases for $\oV$ that contain $\{\vec{x}_1,\ldots,\vec{x}_m\}$ and are compatible with $\pm \omega$.  The lemma follows.
\end{proof}

\begin{proof}[Proof of Proposition \ref{proposition:tquotient}]
By Lemma \ref{lemma:identifybuilding}, the proposition is equivalent to the assertion
that $\cT_n(\Z)/\Gamma_n(p) \cong \TD_n(\Field_p)$.  Let $\pi\colon \Z^n \rightarrow \Field_p^n$ be
the mod-$p$ reduction map, and let $\psi\colon \cT_n(\Z) \rightarrow \TD_n(\Field_p)$
be the map taking a direct summand $V \subset \Z^n$ to $\pi(V) \subset \Field_p^n$ equipped
with the $\pm$-orientation induced by the restriction of $\pi$ to $V$.  The map $\psi$
is clearly $\Gamma_n(p)$-invariant, and thus induces a map $\cT_n(\Z)/\Gamma_n(p) \rightarrow \TD_n(\Field_p)$.
To prove this is an isomorphism, we must prove the following two facts.

\begin{claim}
Let $\osigma$ be a simplex of $\TD_n(\Field_p)$.  Then there exist a simplex $\sigma$ of
$\cT_n(\Z)$ such that $\psi(\sigma) = \osigma$.
\end{claim}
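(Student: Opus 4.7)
The plan is to explicitly lift the simplex $\osigma$ one direct summand at a time by building a compatible basis of $\Field_p^n$ from bottom to top, then applying Lemma \ref{lemma:liftbasis} to lift the whole basis to $\Z^n$.

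Write $\osigma$ as a chain $\oV_0 \subsetneq \oV_1 \subsetneq \cdots \subsetneq \oV_k$ of proper nonzero direct summands of $\Field_p^n$, where each $\oV_i$ is equipped with a $\pm$-orientation $\pm \omega_i$. Let $d_i = \dim_{\Field_p} \oV_i$ and let $\pm\omega$ be the $\pm$-orientation on $\Field_p^n$ induced by the standard projection $\pi\colon \Z^n \to \Field_p^n$. First I would construct a basis $\{\vec{x}_1,\ldots,\vec{x}_n\}$ of $\Field_p^n$ with the following two properties: for each $0 \leq i \leq k$, the subset $\{\vec{x}_1,\ldots,\vec{x}_{d_i}\}$ is a basis for $\oV_i$ compatible with $\pm\omega_i$; and the full basis $\{\vec{x}_1,\ldots,\vec{x}_n\}$ is compatible with $\pm\omega$. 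This is built inductively: pick any basis of $\oV_0$ and rescale one vector to make it compatible with $\pm\omega_0$; extend from a compatible basis of $\oV_i$ to any basis of $\oV_{i+1}$ and rescale the newly added ``top'' vector by a unit of $\Field_p$ to achieve compatibility with $\pm\omega_{i+1}$; and likewise extend from the top layer $\oV_k$ to all of $\Field_p^n$ and rescale a final vector to match $\pm\omega$.

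Next, apply Lemma \ref{lemma:liftbasis} with $V=\Z^n$, $\oV=\Field_p^n$, and $m=0$, which lifts $\{\vec{x}_1,\ldots,\vec{x}_n\}$ to a basis $\{\vec{X}_1,\ldots,\vec{X}_n\}$ of $\Z^n$ with $\pi(\vec{X}_i) = \vec{x}_i$ for all $i$. For each $0 \leq i \leq k$, define $V_i \subset \Z^n$ to be the $\Z$-span of $\{\vec{X}_1,\ldots,\vec{X}_{d_i}\}$. Since these vectors form part of a $\Z$-basis of $\Z^n$, each $V_i$ is a rank-$d_i$ direct summand of $\Z^n$, and the chain $V_0 \subsetneq V_1 \subsetneq \cdots \subsetneq V_k$ is thus a simplex $\sigma$ of $\cT_n(\Z)$.

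Finally I would verify that $\psi(\sigma) = \osigma$. By construction, $\pi(V_i) = \Span{\vec{x}_1,\ldots,\vec{x}_{d_i}} = \oV_i$. The unique $\pm$-orientation on the free $\Z$-module $V_i$ is $\pm(\vec{X}_1 \wedge \cdots \wedge \vec{X}_{d_i})$, so the $\pm$-orientation induced on $\oV_i$ by $\pi|_{V_i}$ is $\pm(\vec{x}_1 \wedge \cdots \wedge \vec{x}_{d_i})$, which equals $\pm\omega_i$ by the compatibility arranged in the first step. Hence $\psi(V_i) = \oV_i$ with the correct $\pm$-orientation for every $i$, so $\psi(\sigma) = \osigma$. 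The only delicate point is the orientation bookkeeping; the main obstacle is organizing the inductive construction of the basis so that every intermediate compatibility condition is satisfied simultaneously, and Lemma \ref{lemma:liftbasis} handles the rest.
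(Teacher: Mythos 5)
Your proof is correct and follows essentially the same route as the paper: construct a basis of $\Field_p^n$ compatible with all the $\pm$-orientations in the flag and with the induced $\pm$-orientation on the whole space, lift it via Lemma \ref{lemma:liftbasis}, and take spans of initial segments. You spell out the inductive rescaling used to build the compatible basis and the final orientation check in slightly more detail than the paper does, but the underlying argument is the same.
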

\begin{proof}[Proof of claim]
Let $\osigma$ be the flag
\begin{equation}
\label{eqn:surjectionflag}
0 \subsetneq \oV_0 \subsetneq \cdots \subsetneq \oV_k \subsetneq \Field_p^n,
\end{equation}
where $\oV_i$ is equipped with the $\pm$-orientation $\pm \omega_i$.  Set $n_i = \dim(\oV_i)$.  We can then find
a basis $\{\vec{x}_1,\ldots,\vec{x}_n\}$ for $\Field_p^n$ with the following two properties:
\begin{compactitem}
\item $\{\vec{x}_1,\ldots,\vec{x}_n\}$ is compatible with the $\pm$-orientation on $\Field_p^n$ induced by
the surjection $\pi\colon \Z^n \rightarrow \Field_p^n$.
\item For $0 \leq i \leq k$, the set $\{\vec{x}_1,\ldots,\vec{x}_{n_i}\}$ is a basis for
$\oV_i$ that is compatible with $\pm \omega_i$.
\end{compactitem}
Using Lemma \ref{lemma:liftbasis}, we can find a basis $\{\vec{X}_1,\ldots,\vec{X}_n\}$ such
that $\pi(\vec{X}_i) = \vec{x}_i$ for $1 \leq i \leq n$.  
For $0 \leq i \leq k$, let $V_i$ be the span of $\{\vec{X}_1,\ldots,\vec{X}_{n_i}\}$.  We thus have a flag
\[0 \subsetneq V_0 \subsetneq \cdots \subsetneq V_k \subsetneq \Z^n\]
of direct summands of $\Z^n$, and hence a simplex $\sigma$ of $\cT_n(\Z)$.  By construction, $\psi(\sigma) = \osigma$.
\end{proof}

\begin{claim}
Let $\sigma$ and $\sigma'$ be simplices of $\cT_n(\Z)$ such that $\psi(\sigma) = \psi(\sigma')$.  Then
there exists some $f \in \Gamma_n(p)$ such that $f(\sigma) = \sigma'$.
\end{claim}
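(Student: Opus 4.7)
The plan is to lift a common basis of $\Field_p^n$ adapted to the image flag $\psi(\sigma) = \psi(\sigma')$ to bases of $\Z^n$ adapted to the flags $\sigma$ and $\sigma'$ respectively, and then take $f$ to be the change-of-basis map between those two lifts.

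Write $\sigma$ and $\sigma'$ as flags $0 \subsetneq V_0 \subsetneq \cdots \subsetneq V_k \subsetneq \Z^n$ and $0 \subsetneq V'_0 \subsetneq \cdots \subsetneq V'_k \subsetneq \Z^n$, and let $\oV_i = \pi(V_i) = \pi(V'_i)$, equipped with the common induced $\pm$-orientation $\pm\omega_i$; set $n_i = \dim \oV_i$. First I would pick a basis $\{\vec{x}_1,\ldots,\vec{x}_n\}$ of $\Field_p^n$ such that, for each $0 \leq i \leq k$, the subset $\{\vec{x}_1,\ldots,\vec{x}_{n_i}\}$ is a basis of $\oV_i$ compatible with $\pm\omega_i$, and such that $\{\vec{x}_1,\ldots,\vec{x}_n\}$ itself is compatible with the $\pm$-orientation on $\Field_p^n$ induced by $\pi\colon \Z^n \to \Field_p^n$. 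The last compatibility is free to arrange: if it fails, replace $\vec{x}_n$ by $-\vec{x}_n$, which is harmless because $\vec{x}_n \notin \oV_k$ (as $n_k < n$). Applying Lemma \ref{lemma:liftbasis} successively -- first lifting $\{\vec{x}_1,\ldots,\vec{x}_{n_0}\}$ to a basis of $V_0$, then extending across each $V_i$, and finally across all of $\Z^n$ -- I obtain a basis $\{\vec{X}_1,\ldots,\vec{X}_n\}$ of $\Z^n$ with $\pi(\vec{X}_j) = \vec{x}_j$ for all $j$ and $V_i = \Span{\vec{X}_1,\ldots,\vec{X}_{n_i}}$ for all $i$; the same procedure applied to $\sigma'$ yields $\{\vec{X}'_1,\ldots,\vec{X}'_n\}$ with the analogous properties.

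Now let $f \in \GL_n(\Z)$ be the automorphism defined by $f(\vec{X}_j) = \vec{X}'_j$. By construction $f(V_i) = V'_i$ for all $i$, and the equality $\pi(\vec{X}_j) = \pi(\vec{X}'_j) = \vec{x}_j$ immediately gives $f \equiv I \pmod p$. To conclude $f \in \Gamma_n(p)$, it only remains to verify $\det f = +1$: combining $\det f \in \{\pm 1\}$ with $\det f \equiv 1 \pmod p$ forces $\det f = +1$ whenever $p \geq 3$. For $p = 2$ the constraint is inconclusive because $-1 \equiv 1 \pmod 2$, but in that case, if $\det f = -1$, I would simply replace $\vec{X}'_n$ by $-\vec{X}'_n$: this remains a valid lift of $\vec{x}_n$ (since $-\vec{x}_n = \vec{x}_n$ in $\Field_2$), it leaves the flag $\sigma'$ unchanged (since $\vec{X}'_n \notin V'_k$), and it flips the sign of $\det f$.

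The argument is almost mechanical once the data is set up correctly; the only genuine subtlety, and thus the main obstacle, is the determinant bookkeeping needed to land in $\SL_n(\Z)$ rather than $\GL_n(\Z)$, which is resolved by the orientation compatibility of the chosen $\{\vec{x}_j\}$ together with the small-prime correction at $p = 2$ just described.
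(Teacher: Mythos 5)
Your proof is correct and takes essentially the same approach as the paper: choose a basis of $\Field_p^n$ adapted to the flag $\psi(\sigma)$ and compatible with all the relevant $\pm$-orientations, lift it via Lemma \ref{lemma:liftbasis} (applied recursively through the flag) to adapted bases of $\Z^n$ for both $\sigma$ and $\sigma'$, and take $f$ to be the change-of-basis map; the determinant is $+1$ automatically for $p\geq 3$ and is fixed by a sign flip at $p=2$ (the paper flips $\vec{X}_1$, you flip $\vec{X}'_n$ --- both are fine).

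One small slip in an aside: replacing $\vec{x}_n$ by $-\vec{x}_n$ does \emph{not} change the $\pm$-orientation $\pm(\vec{x}_1\wedge\cdots\wedge\vec{x}_n)$, so it cannot repair a failure of compatibility with the $\pm$-orientation on $\Field_p^n$ induced by $\pi$. To arrange that compatibility you should instead rescale $\vec{x}_n$ by a suitable unit $\lambda\in\Field_p^{\times}$ (this is harmless for the same reason you give, namely that $\vec{x}_n\notin\oV_k$). This does not affect the structure or validity of the rest of the argument.
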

\begin{proof}[Proof of claim]
Set $\osigma = \psi(\sigma) = \psi(\sigma')$.  Let $\osigma$ be the flag 
\begin{equation}
\label{eqn:injectionflag1}
0 \subsetneq \oV_0 \subsetneq \cdots \subsetneq \oV_k \subsetneq \Field_p^n,
\end{equation}
where $\oV_i$ is equipped with the $\pm$-orientation $\pm \omega_i$.  Set $n_i = \dim(\oV_i)$.  Finally, 
let $\sigma$ and $\sigma'$ be the flags
\[0 \subsetneq V_0 \subsetneq \cdots \subsetneq V_k \subsetneq \Z^n \quad \text{and} \quad 0 \subsetneq V'_0 \subsetneq \cdots \subsetneq V'_k \subsetneq \Z^n.\]
Just like in the previous claim, we can find a basis $\{\vec{x}_1,\ldots,\vec{x}_n\}$ for $\Field_p^n$ with
the following two properties:
\begin{compactitem}
\item $\{\vec{x}_1,\ldots,\vec{x}_n\}$ is compatible with the $\pm$-orientation on $\Field_p^n$ induced by
the surjection $\pi\colon \Z^n \rightarrow \Field_p^n$.
\item For $0 \leq i \leq k$, the set $\{\vec{x}_1,\ldots,\vec{x}_{n_i}\}$ is a basis for
$\oV_i$ that is compatible with $\pm \omega_i$.
\end{compactitem}
Applying Lemma \ref{lemma:liftbasis} recursively to each $V_i$ and then finally to $\Z^n$, we can find a free basis
$\{\vec{X}_1,\ldots,\vec{X}_n\}$ for $\Z^n$ such that $\pi(\vec{X}_i) = \vec{x}_i$ for all $1 \leq i \leq n$
and such that $\{\vec{X}_1,\ldots,\vec{X}_{n_i}\}$ is a basis for $V_i$ for all $1 \leq i \leq k$.
Similarly applying Lemma \ref{lemma:liftbasis} recursively to each $V'_i$ and then finally to $\Z^n$, we can find a free
basis $\{\vec{X}'_1,\ldots,\vec{X}'_n\}$ for $\Z^n$ such that $\pi(\vec{X}'_i) = \vec{x}_i$ for all $1 \leq i \leq n$
and such that $\{\vec{X}'_1,\ldots,\vec{X}'_{n_i}\}$ is a basis for $V'_i$ for all $1 \leq i \leq k$.
Let $f\colon \Z^n \rightarrow \Z^n$ be the automorphism taking $\vec{X}_i$ to $\vec{X}'_i$ for all $1 \leq i \leq n$.
By construction, we have $f(\sigma) = \sigma'$.  Moreover, we also have
$f \in \ker(\GL_n(\Z) \rightarrow \GL_n(\Field_p))$.  If $p \neq 2$, then this implies that $f \in \Gamma_n(p)$ and
we are done.  If $p=2$, then this might not hold since $f$ might have determinant $-1$ instead of $1$; however, in
this case we can replace $\vec{X}_1$ by $-\vec{X}_1$ and fix $f$ to have determinant $1$. 
\end{proof}

This completes the proof of Proposition \ref{proposition:tquotient}.
\end{proof}

\subsection{Resolution of the Lee--Szczarba conjecture}
\label{section:maintheorem}

The proof of Theorem \ref{theorem:main} is in \S \ref{section:mainproof}, which
is preceded by the preliminary \S \ref{section:preliminaryls}, which explains how to relate our complexes
of augmented partial bases to the Steinberg module. 

\subsubsection{Relating augmented partial bases to the Steinberg module}
\label{section:preliminaryls}

Recall from Lemma \ref{lemma:identifybuilding} 
that the Steinberg module $\St_n(\Q)$ is isomorphic to $\RH_{n-2}(\cT_n(\Z))$.  We now explain how to
relate this to our complexes of augmented partial bases.  We start with the following definition.

\begin{definition}
Let $R$ be a commutative ring.  Define $\BDA_n(R)'$ to be the subcomplex of
$\BDA_n(R)$ consisting of simplices $\{\pm \vec{v}_0,\ldots,\pm \vec{v}_k\}$
such that the $R$-span of the $\vec{v}_i$ is a proper submodule of $R^n$.
\end{definition}

In \cite{CP}, Church--Putman gave a new proof of a beautiful presentation for $\St_n(\Q)$ that was
originally proved by {\Byk} \cite{Byk}.  During their proof, they established the following 
result.  For a simplicial complex $X$, write $\Poset(X)$ for the poset of simplices of $X$.

\begin{lemma}[{\cite[\S 2.2]{CP}}]
\label{lemma:churchputman}
For $n \geq 2$, we have isomorphisms
\[\HH_{n-1}(\BDA_n(\Z),\BDA_n(\Z)') \xrightarrow[\cong]{\partial} \RH_{n-2}(\BDA_n(\Z)') \xrightarrow[\cong]{\Phi_{\ast}} \RH_{n-2}(\cT_n(\Z)) \cong \St_n(\Q),\]
where $\partial$ and $\Phi$ are as follows:
\begin{compactitem}
\item $\partial$ is the boundary map in the long exact sequence of a pair in reduced homology.
\item $\Phi\colon \Poset(\BDA_n(\Z)') \rightarrow \bT_n(\Z)$ is the poset map taking
a simplex $\{\pm \vec{v}_0,\ldots,\pm \vec{v}_k\}$ of $\BDA_n(\Z)'$ to the $\Z$-span of the $\vec{v}_i$.
\end{compactitem}
\end{lemma}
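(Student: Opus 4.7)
The plan is to handle the two isomorphisms separately. For the first, Theorem \ref{theorem:zaugtbasescon} together with the identification $\BDA_n(\Z) = \BTA_n(\Z)$ shows that $\BDA_n(\Z)$ is Cohen--Macaulay of dimension $n$, hence $(n-1)$-connected; the reduced long exact sequence of the pair $(\BDA_n(\Z),\BDA_n(\Z)')$ therefore degenerates to
\[
0 \to \HH_{n-1}(\BDA_n(\Z), \BDA_n(\Z)') \xrightarrow{\partial} \RH_{n-2}(\BDA_n(\Z)') \to 0,
\]
forcing $\partial$ to be an isomorphism.

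For the second isomorphism I would feed $\Phi\colon \Poset(\BDA_n(\Z)') \to \bT_n(\Z)$ into the map-of-posets spectral sequence of Theorem \ref{theorem:spectralsequence}. For $V \in \bT_n(\Z)$ of rank $d$, the fiber $\Phi_{\leq V}$ is the face poset of the full subcomplex of $\BDA_n(\Z)'$ on vertices in $V$; after picking a basis of $V$, this is $\Poset(\BDA_d(\Z))$, a single point if $d=1$ and Cohen--Macaulay of dimension $d$ if $d \geq 2$ (again by Theorem \ref{theorem:zaugtbasescon}). Setting $F_h(V) := \HH_h(\Phi_{\leq V})$, one finds that $F_0$ is the constant functor $\underline{\Z}$, that $F_1 = 0$, and that for $h \geq 2$ the functor $F_h$ is supported on $V$ of rank exactly $h$, i.e.\ height $h-1$. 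Combining the identification $\bT_n(\Z)_{>V} \cong \bT_{n-d}(\Z)$ with Solomon--Tits (which makes the latter a wedge of $(n-d-2)$-spheres) and then applying Lemma \ref{lemma:supportm}, the $E^2$ page is supported on the bottom row, where $E^2_{k,0} = \HH_k(\cT_n(\Z))$, together with a single antidiagonal at total degree $n-1$ carrying the contributions from $h \geq 2$. In total degree $n-2$ the only surviving entry is therefore $E^2_{n-2,0} = \RH_{n-2}(\cT_n(\Z))$: incoming differentials would require negative $h$, while outgoing differentials would land in total degree $n-3$, where the $E^2$ page vanishes by our analysis. Hence $E^\infty_{n-2,0} = \RH_{n-2}(\cT_n(\Z))$ is the unique filtration quotient of $\HH_{n-2}(\BDA_n(\Z)')$.

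The final step is to match this isomorphism with $\Phi_*$. Since every fiber $\Phi_{\leq V}$ is nonempty and connected, the bottom-row edge homomorphism of the map-of-posets spectral sequence is induced by the geometric realization $|\Phi|$, and via the standard identification of $|\Poset(X)|$ with the barycentric subdivision of $X$ this is precisely the map $\Phi_*$ in the statement; the identification $\RH_{n-2}(\cT_n(\Z)) \cong \St_n(\Q)$ is then Lemma \ref{lemma:identifybuilding} together with the definition of $\St_n(\Q)$. The most delicate point of the argument is this edge-map identification, which is a standard feature of the map-of-posets spectral sequence (Quillen \cite{Q78}, Charney \cite{Charney}) but requires unwinding the construction; everything else reduces to the collapse-on-a-single-antidiagonal observation above.
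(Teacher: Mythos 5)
Your argument is correct, and it gives a self-contained proof of a lemma that the paper itself only cites from Church--Putman \cite[\S 2.2]{CP}. Both halves check out: the first isomorphism follows from the reduced long exact sequence of the pair together with the fact (Theorem \ref{theorem:zaugtbasescon}, using $\BDA_n(\Z) = \BTA_n(\Z)$) that $\BDA_n(\Z)$ is Cohen--Macaulay of dimension $n$, hence $(n-1)$-connected, killing both $\RH_{n-1}$ and $\RH_{n-2}$ of the ambient complex. For the second half, your identification of the fiber $\Phi_{\leq V}$ with $\Poset(\BDA_d(\Z))$ for $d = \rk V$ is right (the determinant condition is vacuous because the vertices lie in a proper summand, and a partial basis of $\Z^n$ contained in the direct summand $V$ is the same as a partial basis of $V$), and the resulting vanishing pattern — bottom row concentrated at $k = 0, n-2$, upper rows concentrated on the antidiagonal $k+h = n-1$ with $F_1 = 0$ — does force the collapse and makes $\EE^2_{n-2,0} = \HH_{n-2}(\cT_n(\Z))$ the only contributor in total degree $n-2$, with no differentials in or out. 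The edge-map identification you flag as delicate is indeed the crux, but it is exactly the feature the paper itself relies on in the proof of Theorem \ref{theorem:mainbetter3} (``This edge value in our spectral sequence is the image of the map $\Psi_*$''), so invoking it is consistent with the paper's conventions.

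Two small expository remarks. First, your phrase ``the $E^2$ page vanishes in total degree $n-3$'' is literally false when $n=3$, since $\EE^2_{0,0} = \Z$; what matters is that all differentials leaving $\EE^2_{n-2,0}$ land in positions with $h \geq 1$, and at those positions the $E^2$ page does vanish in total degree $n-3$. Your conclusion is unaffected. Second, this route has a pleasant feature worth noting: it never needs to decide whether $\BDA_d(\Z)$ is contractible (which would give the result via Quillen's fiber lemma); the weaker Cohen--Macaulay/connectivity information already suffices, because the potentially nontrivial top-dimensional homology of the fibers is isolated on a single antidiagonal that cannot interfere with total degree $n-2$.
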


\begin{remark}
The map $\Phi$ in the above lemma makes sense since $\BDA_n(\Z)'$ is precisely the subcomplex
of $\BDA_n(\Z)$ where the indicated span is a {\em proper} summand of $\Z^n$.
\end{remark}

The Lee--Szczarba conjecture (Conjecture \ref{conjecture:leeszczarba}) concerns the map
\begin{equation}
\label{eqn:leeszczarba1}
(\St_n(\Q))_{\Gamma_n(p)} \longrightarrow \RH_{n-2}(\cT_n(\Q) / \Gamma_n(p)).
\end{equation}
Our next goal is to understand this map in terms of our complexes using 
Lemma \ref{lemma:churchputman}.  The first result is as follows.

\begin{lemma}
\label{lemma:coinvariants}
For all $n \geq 2$ and all primes $p$, we have
\[(\St_n(\Q))_{\Gamma_n(p)} \cong \HH_{n-1}(\BDA_n(\Field_p),\BDA_n(\Field_p)').\]
\end{lemma}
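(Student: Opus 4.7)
The plan is to start from the isomorphism $\St_n(\Q) \cong \HH_{n-1}(\BDA_n(\Z),\BDA_n(\Z)')$ supplied by Lemma \ref{lemma:churchputman}, which is an isomorphism of $\SL_n(\Z)$-modules and in particular of $\Gamma_n(p)$-modules, and then pass to $\Gamma_n(p)$-coinvariants on both sides.

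The key observation is that the relative chain complex $\CC_\bullet(\BDA_n(\Z),\BDA_n(\Z)')$ is concentrated in just two adjacent degrees. Indeed, a simplex of $\BDA_n(\Z)$ avoids $\BDA_n(\Z)'$ exactly when its vertices $\Z$-span all of $\Z^n$: among standard simplices this forces the partial $\pm$-basis to have $n$ elements, producing an $(n-1)$-simplex, while among augmented simplices it forces the underlying $\pm$-basis to have $n$ elements, producing an $n$-simplex. So the relative complex is the two-term complex $[\CC_n \xrightarrow{\partial} \CC_{n-1}]$, and therefore
\[
\HH_{n-1}(\BDA_n(\Z),\BDA_n(\Z)') \;=\; \coker(\partial).
\]
Since $(-)_{\Gamma_n(p)} = -\otimes_{\Z[\Gamma_n(p)]} \Z$ is right exact and commutes with cokernels,
\[
\bigl(\HH_{n-1}(\BDA_n(\Z),\BDA_n(\Z)')\bigr)_{\Gamma_n(p)} \;\cong\; \HH_{n-1}\!\bigl(\CC_\bullet(\BDA_n(\Z),\BDA_n(\Z)')_{\Gamma_n(p)}\bigr).
\]

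Next I will identify the coinvariant chain complex with $\CC_\bullet(\BDA_n(\Field_p),\BDA_n(\Field_p)')$. Lemma \ref{lemma:quotientbda} (together with $\BDA_n(\Z)=\BA_n(\Z)$) gives $\BDA_n(\Z)/\Gamma_n(p) \cong \BDA_n(\Field_p)$, and the same mod-$p$ reduction map restricts to $\BDA_n(\Z)'/\Gamma_n(p) \cong \BDA_n(\Field_p)'$, since the rank of the $\Z$-span of a partial $\pm$-basis equals the dimension of its $\Field_p$-span after reduction. To upgrade this bijection on simplices to an isomorphism of chain complexes, I need to verify that the $\Gamma_n(p)$-stabilizer of each top simplex in $\BDA_n(\Z) \setminus \BDA_n(\Z)'$ fixes its vertices pointwise; this is the standard condition guaranteeing $\CC_\bullet(X)_G \cong \CC_\bullet(X/G)$.

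The main obstacle is this stabilizer analysis, which is elementary but must be carried out with some care at $p=2$. For a top simplex the vertex set spans $\Z^n$, so any stabilizing $g \in \Gamma_n(p)$ is a signed permutation matrix in that basis. The underlying permutation reduces mod $p$ to a permutation matrix congruent to $I$, hence is the identity, so $g$ is a diagonal matrix with $\pm 1$ entries. For $p \geq 3$, the congruence $g \equiv I \pmod p$ forces each entry to be $+1$, so $g = I$ and the action on top simplices is actually free (consistent with $\Gamma_n(p)$ being torsion-free). For $p = 2$ the diagonal entries may be $\pm 1$, but each such $g$ satisfies $g\vec{v}_i = \pm\vec{v}_i$ and hence acts trivially on the $\pm$-vector $\pm\vec{v}_i$. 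In both cases stabilizers fix vertices pointwise, so the chain-level identification goes through, and combining it with the displayed coinvariant formula yields the lemma.
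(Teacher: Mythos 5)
Your proof is correct and follows essentially the same route as the paper's: both observe that $\CC_\bullet(\BDA_n(\Z),\BDA_n(\Z)')$ is concentrated in degrees $n-1$ and $n$, invoke right-exactness of coinvariants to identify $(\coker\partial)_{\Gamma_n(p)}$ with the cokernel of the coinvariant differential, and then identify the coinvariant chain groups with those over $\Field_p$ via mod-$p$ reduction (the paper delegates this last step to ``the same argument as in Lemma 2.16/2.22''). One small point worth tightening in your stabilizer check: for an $n$-dimensional augmented simplex the vertex set has $n+1$ elements, so a stabilizing $g$ is not automatically a signed permutation matrix in the underlying basis $\{\vec{v}_1,\dots,\vec{v}_n\}$; a priori $g$ could send a basis vector to $\pm\vec{v}_0=\pm(\lambda\vec{v}_1+\nu\vec{v}_2)$. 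Ruling this out is easy (the reduction of $\pm\vec{v}_0$ has two nonzero coordinates in the basis, so $g\equiv I \pmod p$ forbids it), but it deserves to be said explicitly before concluding that $g$ is a diagonal $\pm 1$ matrix.
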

\begin{proof}
Since $\BDA_n(\Z)'$ is an $(n-1)$-dimensional complex containing the $(n-2)$-skeleton of $\BDA_n(\Z)$ 
and $\BDA_n(\Z)$ is $n$-dimensional, we have
\[\HH_{n-1}(\BDA_n(\Z),\BDA_n(\Z)') \cong \coker(\CC_n(\BDA_n(\Z)) \rightarrow \CC_{n-1}(\BDA_n(\Z),\BDA_n(\Z)')).\]
Similarly, we have
\[\HH_{n-1}(\BDA_n(\Field_p),\BDA_n(\Field_p)') \cong \coker(\CC_n(\BDA_n(\Field_p)) \rightarrow \CC_{n-1}(\BDA_n(\Field_p),\BDA_n(\Field_p)')).\]
The relative chains $\CC_{n-1}(\BDA_n(\Z),\BDA_n(\Z)'))$ are the free abelian group with basis
the standard simplices of $\BDA_n(\Z)$, and similarly over $\Field_p$.
Using this, the same argument as in the proof of Lemma \ref{lemma:quotientbd} shows
that 
\[(\CC_{n-1}(\BDA_n(\Z),\BDA_n(\Z)'))_{\Gamma_n(p)} \cong \CC_{n-1}(\BDA_n(\Field_p),\BDA_n(\Field_p)')).\]
Also, the same argument as in the proof of Lemma \ref{lemma:quotientbda} shows that
\[(\CC_{n}(\BDA_n(\Z)))_{\Gamma_n(p)} \cong \CC_{n}(\BDA_n(\Field_p)).\]
The lemma follows from the above four equations along with the fact that taking coinvariants is right-exact.
\end{proof}

Proposition \ref{proposition:tquotient} says that $\cT_n(\Q)/\Gamma_n(p) \cong \TD_n(\Field_p)$.  Combining
this with Lemma \ref{lemma:coinvariants}, we see that the map \eqref{eqn:leeszczarba1} can be
identified with a map
\begin{equation}
\label{eqn:leeszczarba2}
\HH_{n-1}(\BDA_n(\Field_p),\BDA_n(\Field_p)') \longrightarrow \RH_{n-2}(\TD_n(\Field_p)).
\end{equation}
This map is described in the following lemma.

\begin{lemma}
\label{lemma:themap}
For $n \geq 2$ and $p$ a prime, the map \eqref{eqn:leeszczarba2} equals the composition
\[\HH_{n-1}(\BDA_n(\Field_p),\BDA_n(\Field_p)') \xrightarrow{\partial} \RH_{n-2}(\BDA_n(\Field_p)') \xrightarrow{\Psi_{\ast}} \RH_{n-2}(\TD_n(\Field_p)),\]
where the maps are as follows:
\begin{compactitem}
\item $\partial$ is the boundary map in the long exact sequence of a pair in reduced homology.
\item $\Psi\colon \Poset(\BDA_n(\Field_p)') \rightarrow \bTD_n(\Field_p)$ is the poset map taking
a simplex $\sigma = \{\pm \vec{v}_0,\ldots,\pm \vec{v}_k\}$ of $\BDA_n(\Field_p)'$ to the $\Field_p$-span of the $\vec{v}_i$
equipped with the following $\pm$-orientation:
\begin{compactitem}
\item If $\sigma$ is a standard simplex, then the $\pm$-orientation is $\pm (\vec{v}_0 \wedge \cdots \wedge \vec{v}_k)$.
\item If $\sigma$ is an additive simplex and is ordered such that $\vec{v}_0 = \lambda \vec{v}_1 + \nu \vec{v}_2$ with
$\lambda, \nu \in \{\pm 1\}$, then the $\pm$-orientation is $\pm (\vec{v}_1 \wedge \cdots \wedge \vec{v}_k)$.
\end{compactitem}
\end{compactitem}
Moreover, $\partial$ is always surjective and is injective if $p \leq 5$.
\end{lemma}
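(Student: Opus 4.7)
The plan is to identify the composition via a naturality argument that lifts everything to $\Z$ and then applies $(-)_{\Gamma_n(p)}$, and then to deduce the injectivity/surjectivity statements directly from the long exact sequence of the pair $(\BDA_n(\Field_p), \BDA_n(\Field_p)')$ together with the connectivity results of Section \ref{section:simplicialcomplexes}.

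For the identification, I would start from the $\Gamma_n(p)$-equivariant composition
\[
\HH_{n-1}(\BDA_n(\Z),\BDA_n(\Z)') \xrightarrow{\partial_\Z} \RH_{n-2}(\BDA_n(\Z)') \xrightarrow{\Phi_*} \RH_{n-2}(\cT_n(\Z))
\]
whose total map is the isomorphism to $\St_n(\Q)$ supplied by Lemma \ref{lemma:churchputman}. Applying the right-exact functor $(-)_{\Gamma_n(p)}$ and using Lemma \ref{lemma:coinvariants} on the left, Proposition \ref{proposition:tquotient} on the right, and the analogous coinvariants computation $(\RH_{n-2}(\BDA_n(\Z)'))_{\Gamma_n(p)} \cong \RH_{n-2}(\BDA_n(\Field_p)')$ in the middle (proved exactly as in Lemma \ref{lemma:coinvariants}, using Lemma \ref{lemma:quotientbda} and right-exactness), the composition descends to the composition $\Psi_\ast \circ \partial$ claimed in the statement. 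The first arrow is identified by naturality of the connecting homomorphism. For the second arrow, on a standard simplex $\{\pm\vec v_0,\dots,\pm\vec v_k\}$ of $\BDA_n(\Z)'$ the $\Z$-span carries the unique $\pm$-orientation $\pm(\vec v_0\wedge\cdots\wedge\vec v_k)$, whose mod-$p$ reduction is $\pm(\bar{\vec v}_0\wedge\cdots\wedge\bar{\vec v}_k)$; on an additive simplex with $\vec v_0=\lambda\vec v_1+\nu\vec v_2$ and $\lambda,\nu\in\{\pm1\}$, the $\Z$-span coincides with the $\Z$-span of $\{\vec v_1,\dots,\vec v_k\}$ since $\vec v_0$ already lies in it, so the induced $\pm$-orientation reduces to $\pm(\bar{\vec v}_1\wedge\cdots\wedge\bar{\vec v}_k)$. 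These are precisely the values of $\Psi$.

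For the surjectivity and injectivity of $\partial$, I would read off the relevant segment of the long exact sequence of the pair,
\[
\RH_{n-1}(\BDA_n(\Field_p)') \to \RH_{n-1}(\BDA_n(\Field_p)) \to \HH_{n-1}(\BDA_n(\Field_p),\BDA_n(\Field_p)') \xrightarrow{\partial} \RH_{n-2}(\BDA_n(\Field_p)') \to \RH_{n-2}(\BDA_n(\Field_p)).
\]
Surjectivity of $\partial$ is equivalent to the rightmost arrow being zero, and it suffices that its target vanish; Proposition \ref{proposition:bases1acon} says $\BDA_n(\Field_p)$ is $(n-2)$-connected for $n\ge 2$, so $\RH_{n-2}(\BDA_n(\Field_p))=0$. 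For injectivity of $\partial$, it suffices that the map $\RH_{n-1}(\BDA_n(\Field_p)') \to \RH_{n-1}(\BDA_n(\Field_p))$ be surjective; when $p\le 5$, Proposition \ref{proposition:bases1conimproved} upgrades the connectivity to $(n-1)$-connected, forcing $\RH_{n-1}(\BDA_n(\Field_p))=0$ and making surjectivity automatic.

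The only nontrivial step is the orientation bookkeeping showing that $\Phi$ descends to $\Psi$ on additive simplices — once that is in place, every remaining piece is either a direct application of naturality, right-exactness of coinvariants, or one of the connectivity propositions proved earlier in Section \ref{section:simplicialcomplexes}.
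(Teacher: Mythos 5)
Your approach to the surjectivity and injectivity of $\partial$ is exactly the paper's: read off the long exact sequence of the pair $(\BDA_n(\Field_p),\BDA_n(\Field_p)')$, and apply Proposition~\ref{proposition:bases1acon} (resp.\ Proposition~\ref{proposition:bases1conimproved} when $p\le 5$) to kill the group just to the right (resp.\ just to the left). Your identification step is also in the same spirit the paper has in mind when it says ``immediate from the definitions'': lift to the $\Z$-level composition supplied by Lemma~\ref{lemma:churchputman}, and observe that the two squares in the resulting ladder commute, the left one by naturality of the connecting map, and the right one by the $\pm$-orientation bookkeeping you carry out. That orientation check is correct, including the key point for additive simplices that the $\Z$-span of $\{\vec v_0,\dots,\vec v_k\}$ equals the $\Z$-span of $\{\vec v_1,\dots,\vec v_k\}$ when $\vec v_0=\lambda\vec v_1+\nu\vec v_2$ with $\lambda,\nu\in\{\pm1\}$.

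One thing you should not have asserted, though: the ``analogous coinvariants computation'' $(\RH_{n-2}(\BDA_n(\Z)'))_{\Gamma_n(p)} \cong \RH_{n-2}(\BDA_n(\Field_p)')$ is in fact \emph{false} for $p>5$, and it cannot be proved by the method of Lemma~\ref{lemma:coinvariants} because $\RH_{n-2}(\BDA_n(\Z)')$ is not a top-dimensional cokernel (its numerator is a kernel of a chain map, which right-exactness does not control). Indeed, combining the isomorphism $\RH_{n-2}(\BDA_n(\Z)')\cong\St_n(\Q)$ from Lemma~\ref{lemma:churchputman} with Lemma~\ref{lemma:coinvariants} shows that the natural map $(\RH_{n-2}(\BDA_n(\Z)'))_{\Gamma_n(p)}\to\RH_{n-2}(\BDA_n(\Field_p)')$ is precisely the $\partial$ of the lemma under the stated identifications; so it is an isomorphism if and only if $p\le 5$, which is exactly what the lemma goes on to prove. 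Fortunately this does not damage your argument: all you actually need is that the ladder
\[
\begin{CD}
\HH_{n-1}(\BDA_n(\Z),\BDA_n(\Z)') @>{\partial_\Z}>> \RH_{n-2}(\BDA_n(\Z)') @>{\Phi_*}>> \RH_{n-2}(\cT_n(\Z))\\
@VVV @VVV @VVV \\
\HH_{n-1}(\BDA_n(\Field_p),\BDA_n(\Field_p)') @>{\partial}>> \RH_{n-2}(\BDA_n(\Field_p)') @>{\Psi_*}>> \RH_{n-2}(\TD_n(\Field_p))
\end{CD}
\]
commutes, where the vertical maps are induced by the quotient by $\Gamma_n(p)$. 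Then, since the left vertical map is identified with the coinvariants projection via Lemma~\ref{lemma:coinvariants} and the top row followed by the right vertical map is by definition \eqref{eqn:leeszczarba2}, surjectivity of the left vertical map gives $\Psi_*\circ\partial = \eqref{eqn:leeszczarba2}$ with no isomorphism needed in the middle column. If you delete the false intermediate claim and replace it with this commutativity observation, your proof is correct.
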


\begin{remark}
It is an easy exercise to see that the $\pm$-orientations described in Lemma \ref{lemma:themap} are independent
of the various choices.
\end{remark}

\begin{proof}[Proof of Lemma \ref{lemma:themap}]
That \eqref{eqn:leeszczarba2} is the indicated map is immediate from the definitions,
so all we must prove are the claims about $\partial$.  The long exact sequence in reduced homology of
the pair $(\BDA_n(\Field_p),\BDA_n(\Field_p)')$ contains the segment
\begin{align*}
\RH_{n-1}(\BDA_n(\Field_p)) &\rightarrow \HH_{n-1}(\BDA_n(\Field_p),\BDA_n(\Field_p)') \\
&\quad\quad \xrightarrow{\partial} \RH_{n-2}(\BDA_n(\Field)') \rightarrow \RH_{n-2}(\BDA_n(\Field_p)).
\end{align*}
Proposition \ref{proposition:bases1acon} says that $\BDA_n(\Field_p)$ is $(n-2)$-connected, so 
$\RH_{n-2}(\BDA_n(\Field_p)) = 0$ and $\partial$ is surjective.
Also, Proposition \ref{proposition:bases1conimproved} says that if $p \leq 5$, then
$\BDA_n(\Field_p)$ is $(n-1)$-connected, so $\RH_{n-1}(\BDA_n(\Field_p))=0$ and
$\partial$ is injective.
\end{proof}

\subsubsection{The proof of Theorem \ref{theorem:main}}
\label{section:mainproof}

Theorem \ref{theorem:main} asserts that for a prime $p$ and $n \geq 2$, the induced map
\begin{equation}
\label{eqn:maintoprove}
(\St_n(\Q))_{\Gamma_n(p)} \longrightarrow \RH_{n-2}(\cT_n(\Q) / \Gamma_n(p))
\end{equation}
is always a surjection, but is an injection if and only if $p \leq 5$.  

We will prove something more precise than this.
Since the mechanisms
in the cases $n=2$ and $n \geq 3$ are slightly different, we will treat these two
cases separately.  The case $n=2$ is dealt with in the following theorem.

\begin{theorem}
\label{theorem:mainbetter2}
For all odd primes $p$, we have a short exact sequence
\[0 \longrightarrow \Z^{\frac{(p+2)(p-3)(p-5)}{12}} \longrightarrow (\St_2(\Q))_{\Gamma_2(p)} \longrightarrow \RH_{0}(\cT_2(\Q) / \Gamma_2(p)) \longrightarrow 0.\]
Also, $(\St_2(\Q))_{\Gamma_2(2)} \cong \RH_0(\cT_2(\Q)/\Gamma_2(2))$.
\end{theorem}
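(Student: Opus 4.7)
The plan is to run the long exact sequence of the pair $(\BDA_2(\Field_p), \BDA_2(\Field_p)')$ and match up its terms with the three groups in the desired exact sequence using the identifications established in the preliminaries. By Lemma \ref{lemma:coinvariants}, we have $(\St_2(\Q))_{\Gamma_2(p)} \cong \HH_1(\BDA_2(\Field_p), \BDA_2(\Field_p)')$. By Proposition \ref{proposition:tquotient}, $\cT_2(\Q)/\Gamma_2(p) \cong \TD_2(\Field_p)$, and Lemma \ref{lemma:themap} expresses the map in question as the composition of the connecting homomorphism $\partial$ with the map $\Psi_\ast$ induced by the poset map $\Psi\colon \Poset(\BDA_2(\Field_p)') \to \bTD_2(\Field_p)$.

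The key structural observation is that $\BDA_2(\Field_p)'$ is a discrete set of vertices: every edge of $\BDA_2(\Field_p)$ is a determinant-$1$ partial $\pm$-basis and hence spans all of $\Field_p^2$, and every additive simplex contains such an edge. The vertices of $\BDA_2(\Field_p)'$ are all $\pm$-vectors in $\Field_p^2$, of which there are $(p^2-1)/2$. On the other hand $\TD_2(\Field_p)$ is also $0$-dimensional, and its vertices are pairs $(L, \pm v)$ of a line with a $\pm$-orientation; these are again in bijection with $\pm$-vectors in $\Field_p^2$, and $\Psi$ realizes this bijection. Consequently $\Psi_\ast\colon \RH_0(\BDA_2(\Field_p)') \to \RH_0(\TD_2(\Field_p))$ is an isomorphism, and $\RH_1(\BDA_2(\Field_p)') = 0$.

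Now I would feed this into the long exact sequence of the pair. Proposition \ref{proposition:bases1acon} gives that $\BDA_2(\Field_p)$ is $0$-connected, so $\RH_0(\BDA_2(\Field_p)) = 0$, and the sequence collapses to
\[
0 \longrightarrow \RH_1(\BDA_2(\Field_p)) \longrightarrow \HH_1(\BDA_2(\Field_p), \BDA_2(\Field_p)') \xrightarrow{\partial} \RH_0(\BDA_2(\Field_p)') \longrightarrow 0.
\]
Under the identifications above, this is exactly the short exact sequence we want, once we compute $\RH_1(\BDA_2(\Field_p))$. For odd $p$, Lemma \ref{lemma:n2} identifies $\BDA_2(\Field_p)$ with a closed oriented surface of genus $g = (p+2)(p-3)(p-5)/24$, so $\RH_1(\BDA_2(\Field_p)) \cong \Z^{2g} = \Z^{(p+2)(p-3)(p-5)/12}$. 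For $p=2$, Lemma \ref{lemma:n2} gives contractibility, so $\RH_1 = 0$ and we obtain the isomorphism $(\St_2(\Q))_{\Gamma_2(2)} \cong \RH_0(\cT_2(\Q)/\Gamma_2(2))$.

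There is no serious obstacle here; the only point that deserves care is verifying that the simplex count makes $\BDA_2(\Field_p)'$ discrete and that $\Psi_\ast$ really is a bijection on vertices (so no accidental collapsing occurs when passing to the $\pm$-oriented building). Once that combinatorial check is dispatched, everything else is a mechanical assembly of Lemmas \ref{lemma:coinvariants}, \ref{lemma:themap}, \ref{lemma:n2} and Propositions \ref{proposition:bases1acon}, \ref{proposition:tquotient}.
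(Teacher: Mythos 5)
Your proposal matches the paper's argument in both structure and detail: you feed the long exact sequence of the pair $(\BDA_2(\Field_p),\BDA_2(\Field_p)')$ through the identifications of Lemma \ref{lemma:coinvariants}, Proposition \ref{proposition:tquotient}, and Lemma \ref{lemma:themap}, note that $\BDA_2(\Field_p)'$ is discrete and that $\Psi$ gives a bijection on vertices (the paper records this as Lemma \ref{lemma:orientedlines}), and read off $\HH_1(\BDA_2(\Field_p))$ from Lemma \ref{lemma:n2}. This is exactly the paper's proof; no substantive differences.
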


For the proof, we need the following observation.

\begin{lemma}
\label{lemma:orientedlines}
Let $V$ be a vector space over a field.  Then there is a bijection between the following two sets:
\begin{compactitem}
\item The set of $\pm \vec{v}$ with $\vec{v} \in V$ nonzero.
\item The set of $\pm$-oriented $1$-dimensional subspaces of $V$.
\end{compactitem}
\end{lemma}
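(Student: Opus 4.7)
The plan is to exhibit the obvious tautological bijection and check it is well-defined in both directions. The key observation is that for a $1$-dimensional subspace $L \subset V$, the exterior power $\wedge^1 L$ is canonically identified with $L$ itself, so an orientation on $L$ in the sense of the definition earlier in the paper is literally a choice of nonzero vector in $L$, and a $\pm$-orientation on $L$ is literally a $\pm$-vector supported in $L$.

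First I would define the forward map $\Phi$ from $\pm$-vectors to $\pm$-oriented lines: send $\pm \vec{v}$ to the pair $(\Span{\vec{v}},\, \pm \vec{v})$, where the second coordinate is viewed as an element of $\wedge^1 \Span{\vec{v}} = \Span{\vec{v}}$. This is well-defined: replacing $\vec{v}$ by $-\vec{v}$ does not change either $\Span{\vec{v}}$ or the set $\{\vec{v}, -\vec{v}\}$. Moreover $\pm \vec{v}$ generates $\Span{\vec{v}}$ as a module over the ground field, so $\pm \vec{v}$ is indeed a $\pm$-orientation on the line.

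Next I would define the inverse map $\Psi$: given a $\pm$-oriented $1$-dimensional subspace $(L, \pm \omega)$, send it to $\pm \omega$, interpreted as a $\pm$-vector in $V$ via the inclusion $L \hookrightarrow V$. This is well-defined since $\omega$ is nonzero by the definition of an orientation, and the ambiguity $\omega \mapsto -\omega$ is already absorbed into the $\pm$ notation.

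Finally I would verify $\Psi \circ \Phi = \mathrm{id}$ and $\Phi \circ \Psi = \mathrm{id}$, which is immediate from the definitions. The main obstacle is essentially nonexistent; this lemma is really a bookkeeping statement identifying two pieces of notation that the paper will want to use interchangeably when $n = 2$. The only thing to be careful about is remembering that a $\pm$-orientation on $L$ of dimension $1$ lives in $\wedge^1 L = L$, not in some external object, so that the correspondence is entirely internal to $V$.
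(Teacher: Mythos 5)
Your proposal is correct and matches the paper's proof: both send $\pm\vec v$ to $\Span{\vec v}$ equipped with the $\pm$-orientation $\pm\vec v$, using the identification $\wedge^1 L = L$. You simply spell out the well-definedness and the inverse map a bit more explicitly than the paper does.
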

\begin{proof}
The bijection takes $\pm \vec{v}$ with $\vec{v} \in V$ nonzero to the subspace spanned by $\vec{v}$ equipped
with the $\pm$-orientation $\pm \vec{v}$.
\end{proof}

\begin{proof}[Proof of Theorem \ref{theorem:mainbetter2}]
Lemma \ref{lemma:coinvariants} says that
\[(\St_2(\Q))_{\Gamma_2(p)} \cong \HH_{1}(\BDA_2(\Field_p),\BDA_2(\Field_p)'),\]
and Proposition \ref{proposition:tquotient} says that 
\[\cT_2(\Q)/\Gamma_2(p) \cong \TD_2(\Field_p).\]
Identifying the domain and codomain of \eqref{eqn:maintoprove} using these isomorphisms,
Lemma \ref{lemma:themap} says that for $n=2$ the map \eqref{eqn:maintoprove} can be identified with the composition
\[\HH_{1}(\BDA_2(\Field_p),\BDA_2(\Field_p)') \xrightarrow{\partial} \RH_{0}(\BDA_2(\Field_p)') \xrightarrow{\Psi_{\ast}} \RH_{0}(\TD_2(\Field_p)),\]
where $\partial$ is the boundary map in the long exact sequence of a pair and 
$\Psi\colon \Poset(\BDA_2(\Field_p)') \rightarrow \bTD_2(\Field_p)$ is a poset map
defined in that lemma.  

The simplicial complex $\BDA_2(\Field_p)'$ is the discrete
set $\Set{$\pm \vec{v}$}{$\vec{v} \in \Field_p^2$ nonzero}$, and $\bTD_2(\Field_p)$ is
the set of $\pm$-oriented $1$-dimensional subspaces of $\Field_p^2$.  By Lemma
\ref{lemma:orientedlines}, the map $\Psi$ is a bijection, so $\Psi_{\ast}$ is an isomorphism.  Moreover,
since $\BDA_2(\Field_p)'$ is discrete, we have $\HH_1(\BDA_2(\Field_p)') = 0$.  Finally,
Lemma \ref{lemma:themap} says that $\partial$ is a surjection onto $\RH_0(\BDA_2(\Field_p)')$.

Summarizing, we see that the long exact sequence for the pair
$(\BDA_2(\Field_p),\BDA_2(\Field_p)')$ contains the segment
\[\minCDarrowwidth20pt\begin{CD}
0 @>>> \HH_1(\BDA_2(\Field_p)) @>>> \HH_1(\BDA_2(\Field_p),\BDA_2(\Field_p)') @>>> \RH_0(\BDA_2(\Field_p)') @>>> 0  \\
@.     @VV{=}V                      @VV{\cong}V                                    @VV{\cong}V                   @. \\
0 @>>> \HH_1(\BDA_2(\Field_p)) @>>> (\St_2(\Q))_{\Gamma_2(p)}                 @>>> \RH_0(\TD_2(\Field_p))         @>>> 0.
\end{CD}\]
Lemma \ref{lemma:n2} implies that
\[\HH_1(\BDA_2(\Field_p)) = \begin{cases}
\Z^{\frac{(p+2)(p-3)(p-5)}{12}} & \text{if $p>2$},\\
0                          & \text{if $p=2$}.
\end{cases}\]
The theorem follows.
\end{proof}

The case $n \geq 3$ is as follows.

\begin{theorem}
\label{theorem:mainbetter3}
Fix a prime $p$ and some $n \geq 3$, and let $P_2^n$ denote the set of $\pm$-oriented $2$-dimensional
subspaces of $\Field_p^n$.  Then the map
\[(\St_n(\Q))_{\Gamma_n(p)} \longrightarrow \RH_{n-2}(\cT_n(\Q) / \Gamma_n(p))\]
is surjective.  It is injective for $p \leq 5$, while for $p>5$ its kernel surjects onto
$\Z[P_2^n] \otimes \RH_{n-4}(\TD_{n-2}(\F_p)) \otimes \HH_1(\BDA_2(\F_p))$.
\end{theorem}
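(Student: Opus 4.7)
By Lemma \ref{lemma:coinvariants}, Proposition \ref{proposition:tquotient}, and Lemma \ref{lemma:themap}, the map in question is identified with the composition
\[
\HH_{n-1}(\BDA_n(\F_p),\BDA_n(\F_p)') \xrightarrow{\partial} \RH_{n-2}(\BDA_n(\F_p)') \xrightarrow{\Psi_{\ast}} \RH_{n-2}(\TD_n(\F_p)),
\]
where $\partial$ is a connecting map in the long exact sequence of the pair and $\Psi$ is the span-with-$\pm$-orientation poset map to the $\pm$-oriented Tits building. Lemma \ref{lemma:themap} already records that $\partial$ is always surjective and that it is injective for $p\le 5$, so the theorem reduces to two subproblems: (a) show that $\Psi_{\ast}$ is an isomorphism for every prime $p$; and (b) for $p>5$, produce a surjection from $\ker(\partial)$ onto the stated tensor product.

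For (a), the plan is to run the map-of-posets spectral sequence of Theorem \ref{theorem:spectralsequence} for $\Psi\colon \Poset(\BDA_n(\F_p)') \to \bTD_n(\F_p)$. Over an element $(W,\pm\omega)$ of height $h$, the fiber $\Psi_{\le (W,\pm\omega)}$ is the subcomplex of $\BDA(W)$ consisting of $\BDA(W)'$ (simplices spanning a proper subspace of $W$, with no orientation constraint) together with those top simplices of $\BDA(W)$ whose $\Psi$-orientation equals $\pm\omega$; its connectivity is controlled by Propositions \ref{proposition:bases1con} and \ref{proposition:bases1acon}. The upper poset $\bTD_n(\F_p)_{>(W,\pm\omega)}$ is a $\pm$-oriented Tits building for the quotient $\F_p^n/W$ and is Cohen--Macaulay by Lemma \ref{lemma:tdconn}. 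Feeding both inputs into Lemma \ref{lemma:vanishingrange} should collapse the spectral sequence onto a single diagonal and realize $\Psi_{\ast}$ as an isomorphism in degree $n-2$. Granting (a), surjectivity in the theorem is immediate, and injectivity for $p\le 5$ follows because Proposition \ref{proposition:bases1conimproved} gives $\RH_{n-1}(\BDA_n(\F_p))=0$ and hence makes $\partial$ injective via the long exact sequence of the pair.

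For (b), assume $p>5$. The long exact sequence of $(\BDA_n(\F_p),\BDA_n(\F_p)')$ exhibits $\ker(\partial)$ as a quotient of $\RH_{n-1}(\BDA_n(\F_p))$, so it suffices to construct a surjection from $\RH_{n-1}(\BDA_n(\F_p))$ onto $\Z[P_2^n] \otimes \RH_{n-4}(\TD_{n-2}(\F_p)) \otimes \HH_1(\BDA_2(\F_p))$. For each $(W,\pm\omega)\in P_2^n$ the subcomplex $\BDA_2(W)\subset\BDA_n(\F_p)$ is a closed oriented surface of positive genus by Lemma \ref{lemma:n2}, and I would use a ``decorated apartment'' join construction: given a loop class $\alpha\in\HH_1(\BDA_2(W))$ and an apartment class $\beta\in\RH_{n-4}(\TD(\F_p^n/W))\cong\RH_{n-4}(\TD_{n-2}(\F_p))$, choose a lift of $\beta$ to a partial $\pm$-basis of a complement of $W$ and form the join with a representative of $\alpha$ to produce an $(n-1)$-cycle in $\BDA_n(\F_p)$; the identification with $\RH_{n-4}(\TD_{n-2}(\F_p))$ uses part (a) applied in dimension $n-2$. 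The hardest part will be proving surjectivity onto the full tensor product: this requires extending the generator analysis of Lemma \ref{lemma:identifygen} from $\F_5$ to general $\F_p$, recognizing that the only ``initial'' $(n-1)$-spheres that remain non-nullhomotopic in $\BDA_n(\F_p)$ for $p>5$ are precisely those whose $2$-dimensional additive triangle factor represents a nontrivial class on the modular curve $\BDA_2(W)$ (the analogues of D-triangle maps whose scalars $\lambda,\nu$ lie in $\F_p^\times\setminus\{\pm 1\}$), and matching these combinatorially with the tensor product summands.
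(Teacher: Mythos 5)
Your reduction to the composite $\Psi_\ast \circ \partial$ is correct, and your description of the fiber $\Psi_{\le (W,\pm\omega)}$ and of the upper poset $\bTD_n(\F_p)_{>(W,\pm\omega)}$ matches the paper's setup. However, step~(a) of your plan---showing $\Psi_\ast$ is an isomorphism for \emph{every} prime $p$---is not merely unproven, it is false, and the error propagates through your whole strategy for $p>5$. Feeding the connectivity of the fibers $\BDA_{\dim V}(\F_p)$ from Propositions~\ref{proposition:bases1con} and~\ref{proposition:bases1acon} together with Lemma~\ref{lemma:tdconn} into Lemma~\ref{lemma:vanishingrange} yields vanishing of $\EE^2_{kh}$ only for $k+h \notin [n-2,n-1]$; the spectral sequence does \emph{not} collapse to a single diagonal. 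In particular the term $\EE^2_{n-3,1}$ lies on the $k+h=n-2$ diagonal, and by the observation~\eqref{eqn:concentrate} together with Lemma~\ref{lemma:supportm} it is isomorphic to $\Z[P_2^n]\otimes\RH_{n-4}(\TD_{n-2}(\F_p))\otimes\HH_1(\BDA_2(\F_p))$, which is nonzero for $p>5$. No differentials touch this entry, so it survives to $E^\infty$ as a subquotient of $\ker(\Psi_\ast)$---that is, $\Psi_\ast$ is \emph{not} injective for $p>5$, and this is precisely where the kernel of the theorem's map comes from. The sole role of Proposition~\ref{proposition:bases1conimproved} (valid only for $p\le 5$) is to kill this off-diagonal entry, making $\Psi_\ast$ an isomorphism in the small-prime cases.

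Consequently, your part~(b) aims at the wrong subgroup. The paper obtains the stated surjection directly from the fact that $\ker(\Psi_\ast)$ surjects onto the surviving $\EE^2_{n-3,1}$, and then uses the surjectivity of $\partial$ to transport this to the kernel of the composite; it never asserts anything about $\ker(\partial)$. Your route through $\ker(\partial)$ has independent gaps as well: producing a surjection from $\RH_{n-1}(\BDA_n(\F_p))$ onto the tensor product would not by itself suffice, since $\ker(\partial)$ is only the \emph{image} of $\RH_{n-1}(\BDA_n(\F_p))$ in the relative group, so your map would also need to annihilate everything coming from $\RH_{n-1}(\BDA_n(\F_p)')$ in order to descend. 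And the generator analysis of Lemma~\ref{lemma:identifygen} is tailored to $\F_5$ (it exploits the specific arithmetic of units there), so ``extending it to general $\F_p$'' is a nontrivial project, not a footnote. The clean fix is to abandon claim~(a) as stated and read the kernel off the nonvanishing $\EE^2_{n-3,1}$ term of the very spectral sequence you already set up.
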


\begin{remark}
To deduce the fact that the map is not injective for $p>5$, we need to know two things:
\begin{compactitem}
\item $\HH_1(\BDA_2(\F_p))$ is a nontrivial free $\Z$-module.  In fact, by Lemma \ref{lemma:n2} it
is isomorphic to $\Z^{\frac{(p+2)(p-3)(p-5)}{12}}$.
\item $\RH_{n-4}(\TD_{n-2}(\F_p))$ is a nontrivial free $\Z$-module (we remark that
in the degenerate case $n=3$, we have $\TD_{1}(\F_p) = \emptyset$ and thus
$\RH_{-1}(\TD_1(\F_p)) \cong \Z$).  In fact, Lemma \ref{lemma:tdconn}
says that $\TD_{n-2}(\Field_p)$ is Cohen--Macaulay of dimension $(n-4)$, so
$\RH_{n-4}(\TD_{n-2}(\F_p))$ is automatically a free $\Z$-module.  The fastest way to see that it
is nontrivial is to use the fact that forgetting the $\pm$-orientations gives a map
\[\RH_{n-4}(\TD_{n-2}(\Field_p)) \longrightarrow \RH_{n-4}(\cT_{n-2}(\Field_p)) \cong \St_{n-2}(\Field_p).\]
The Solomon--Tits theorem \cite{Solomon, BrownBuildings} says that $\St_{n-2}(\Field_p) \neq 0$, and
it is easy to see that its generators (given by ``apartments'') lift to nontrivial elements of
$\RH_{n-4}(\TD_{n-2}(\Field_p))$.  We remark that Theorem \ref{theorem:recursive} (proved in
\S \ref{section:computation} below) actually calculates $\RH_{n-4}(\TD_{n-2}(\Field_p))$. 
\end{compactitem}
\end{remark}

\begin{proof}[Proof of Theorem \ref{theorem:mainbetter3}]
Lemma \ref{lemma:themap} says that the map we are concerned with can be identified with the map
\[\HH_{n-1}(\BDA_n(\Field_p),\BDA_n(\Field_p)') \xrightarrow{\partial} \RH_{n-2}(\BDA_n(\Field_p)') \xrightarrow{\Psi_{\ast}} \RH_{n-2}(\TD_n(\Field_p)),\]
where $\partial$ is the boundary map in the long exact sequence of a pair and 
$\Psi\colon \Poset(\BDA_n(\Field_p)') \rightarrow \bTD_n(\Field_p)$ is a poset map
defined in that lemma.  Lemma \ref{lemma:themap} also says that $\partial$ is always surjective
and is injective for $p \leq 5$.  It is thus enough to show that the map
\[\Psi_{\ast}\colon \RH_{n-2}(\BDA_n(\Field_p)') \rightarrow \RH_{n-2}(\TD_n(\Field_p))\]
is always surjective, is injective for $p \leq 5$, and has a kernel surjecting onto
$\HH_1(\BDA_2(\F_p)) \otimes \Z[P_2^n] \otimes \RH_{n-4}(\TD_{n-2}(\F_p))$ for $p>5$.
Since $n \geq 3$, we have $n-2 \geq 1$ and thus we can work with unreduced homology.

We will do this by studying the map-of-posets spectral sequence
(Theorem \ref{theorem:spectralsequence}) of the 
poset map $\Psi\colon \Poset(\BDA_n(\Field_p)') \rightarrow \bTD_n(\Field_p)$.  
This takes the form
\[\EE^2_{kh}=\HH_k(\bTD_n(\Field_p);[V \mapsto \HH_h(\Psi_{\leq V})]) \implies \HH_{k+h}(\Poset(\BDA_n(\Field_p)')).\]
We wish to apply Lemma \ref{lemma:vanishingrange} to this to deduce a vanishing range.
This requires the following two facts.  Consider $V \in \bTD_n(\Field_p)$.
\begin{compactitem}
\item We have
\[\Psi_{\leq V} \cong \BDA_{\dim(V)}(\Field_p) = \BDA_{\height(V)+1}(\Field_p).\]
Proposition \ref{proposition:bases1acon} says that this is $(\height(V)-1)$-connected.  Since
it has dimension at most $(\height(V)+1)$, we conclude that
\begin{equation}
\label{eqn:vanish1}
\RH_h(|\Psi_{\leq V}|) = 0 \quad \quad \text{for all $h \notin [\height(V),\height(V)+1]$}.
\end{equation}
For later use, observe that the dimension is exactly $(\height(V)+1)$ except in the
degenerate case of $\height(V)=0$.  In this case, $\BDA_1(\Field_p)$ is a single point,
so $\RH_h(\BDA_1(\Field_p)) = 0$ for all $h$.  The upshot is that
\begin{equation}
\label{eqn:concentrate}
\text{the functor $V \mapsto \HH_1(\Psi_{\leq V})$ is supported on elements of height $1$.}
\end{equation}
\item We have
\[\left(\bTD_n\left(\Field_p\right)\right)_{>V} \cong \bTD_{n-\dim(V)}(\Field_p) = \bTD_{n-1-\height(V)}(\Field_p).\]
Lemma \ref{lemma:tdconn} says that this is Cohen--Macaulay of dimension $(n-3-\height(V))$, which implies
that 
\begin{equation}
\label{eqn:vanish2}
\RH_k(|(\bTD_n(\Field_p))_{>V}|) = 0 \quad \quad \text{for all $k \neq n-3-\height(V)$}.
\end{equation}
\end{compactitem}
Facts \eqref{eqn:vanish1} and \eqref{eqn:vanish2} imply that we can apply Lemma \ref{lemma:vanishingrange} with $d = 1$ and $r=1$ and
$e = n-2$.  This lemma implies that
\[\EE^2_{hk} = 0 \quad \text{for all $k \geq 0$ and $h \geq 1$ satisfying $k+h \notin [d+e-r,d+e] = [n-2,n-1]$}.\]
We now analyze the bottom row $E^2_{k0}$.  Proposition \ref{proposition:bases1acon} implies
that $\BDA_{\dim(V)}(\Field_p)$ is connected when $\dim(V) \geq 2$, and
$\BDA_1(\Field_p)$ is a single point and is also thus connected (this is one key place
where it is important that we are using $\pm$-vectors and 
requiring the determinant to be $\pm 1$).  We thus see that
\[\EE^2_{k0} = \HH_k(\bTD_n(\Field_p);[V \mapsto \HH_0(\Psi_{\leq V})]) = \HH_k(\bTD_n(\Field_p);\underline{\Z}) = \begin{cases}
\HH_{n-2}(\TD_n(\Field_p)) & \text{if $k=n-2$},\\
\Z & \text{if $k=0$},\\
0 & \text{if $k \neq 0,n-2$}.
\end{cases}\]
This last equality uses Lemma \ref{lemma:tdconn}.

Summarizing the above two calculations, the only potentially nonzero terms in our spectral sequence are of the form
\begin{center}
\begin{tabular}{|llllll}
\footnotesize{$\EE^2_{0,n-1}$} &               &          &               &               & \\
\footnotesize{$\EE^2_{0,n-2}$} & \footnotesize{$\EE^2_{1,n-2}$} &          &               &               & \\
              & \footnotesize{$\EE^2_{1,n-3}$} & $\ddots$         &               &               & \\
              &               & $\ddots$ & \footnotesize{$\EE^2_{n-4,3}$}             &               & \\
              &               &          & \footnotesize{$\EE^2_{n-4,2}$} & \footnotesize{$\EE^2_{n-3,2}$} & \\
              &               &          &               & \footnotesize{$\EE^2_{n-3,1}$} & \footnotesize{$\EE^2_{n-2,1}$} \\
\footnotesize{$\Z$}          &               &          &               &               & \footnotesize{$\HH_{n-2}(\TD_n(\Field_p))$}\\
\cline{1-6}
\end{tabular}
\end{center}

Observe that no nontrivial differentials come into or out of the
$\EE^2_{n-2,0} = \HH_{n-2}(\TD_n(\Field_p))$ term, so this term survives until
$E^{\infty}$.  This edge value in our spectral sequence is the image
of the map
\[\Psi_{\ast}\colon \HH_{n-2}(\BDA_n(\Field_p)') \rightarrow \HH_{n-2}(\TD_n(\Field_p)),\]
so we deduce that this map is surjective, which is one of the conclusions of the
theorem.

As for the other conclusions of the theorem, we separate things into two cases.

\begin{casea}
$p \leq 5$.
\end{casea}

In this case, we can replace our invocations of Proposition \ref{proposition:bases1acon}
with Proposition \ref{proposition:bases1conimproved}, which improves the
degree of connectivity of $\BDA_n(\Field_p)$ by $1$.  This causes
all the terms on the $k+h = n-2$ diagonal other than
$\EE^2_{n-2,0} = \HH_{n-2}(\TD_n(\Field_p))$ to vanish.  The conclusion
is that the map
\[\Psi_{\ast}\colon \HH_{n-2}(\BDA_n(\Field_p)') \rightarrow \HH_{n-2}(\TD_n(\Field_p))\]
is an isomorphism, as desired.

\begin{casea}
$p > 5$.
\end{casea}

In this case, observe that there are no nontrivial differentials going into or out
of the $\EE^2_{n-3,1}$-term, so this term survives until $E^{\infty}$.  By definition,
this implies that the kernel of the map
\[\Psi_{\ast}\colon \HH_{n-2}(\BDA_n(\Field_p)') \rightarrow \HH_{n-2}(\TD_n(\Field_p))\]
surjects onto $\EE^2_{n-3,1}$, so it is enough to prove that
\begin{equation}
\label{eqn:finaltoprove}
\EE^2_{n-3,1} \cong \Z[P_2^n] \otimes \RH_{n-4}(\TD_{n-2}(\F_p)) \otimes \HH_1(\BDA_2(\F_p)),
\end{equation}
where we recall that $P_2^n$ is the set of $\pm$-oriented $2$-dimensional
subspaces of $\Field_p^n$.

By definition,
\[\EE^2_{n-3,1} = \HH_{n-3}(\bTD_n(\Field);[V \mapsto \HH_1(\Psi_{\leq V})]).\]
As we observed in \eqref{eqn:concentrate}, 
the functor $V \mapsto \HH_1(\Psi_{\leq V})$ is supported on elements of height
$1$.  Applying Lemma \ref{lemma:supportm}, we see that $\EE^2_{n-3,1}$ is isomorphic
to
\begin{align*}
\EE^2_{n-3,1} 
&\cong \bigoplus_{\height(V)=1} \RH_{n-4}((\bTD_n(\Field_p))_{>V};\HH_1(\Psi_{\leq V})) \\
&\cong \bigoplus_{\dim(V)=2} \RH_{n-4}(\bTD_{n-2}(\Field_p);\HH_1(\BDA_2(\Field_p))) \\
&\cong \bigoplus_{\dim(V)=2} \RH_{n-4}(\bTD_{n-2}(\Field_p)) \otimes \HH_1(\BDA_2(\Field_p)).
\end{align*}
There are precisely $|P_2^n|$ terms in this direct sum, so
\eqref{eqn:finaltoprove} follows.
\end{proof}

\section{Computational results}
\label{section:computation}

We close the paper by proving Theorem \ref{theorem:recursive} in \S \ref{section:recursive} and
Theorem \ref{theorem:evenbetter} in \S \ref{section:evenbetter}.

\subsection{The recursive formula for the rank}
\label{section:recursive}

Our goal in this section is to prove Theorem \ref{theorem:recursive}, which gives a recursive
formula for the rank of $\RH_{n-2}(\cT_n(\Q)/\Gamma_n(p))$.
Before we do this, we will prove the following combinatorial lemma.  For a vector
space $V$ and a line $\ell \subset V$, write $X_k(V,\ell)$ for the set of all
$W \in \Gr_k(V)$ such that $\ell \notin W$.
In the following lemma (and throughout this section),
we emphasize to the reader that $|S|$ means the cardinality of the set $S$ 
(as opposed to something like a geometric realization).

\begin{lemma}
\label{lemma:countit}
Let $p$ be a prime.  For some $n \geq 2$, let $\ell$ be a line in $\Field_p^n$.  Then
\[|X_k(\Field_p^n,\ell)| = p^k |\Gr_k(\Field_p^{n-1})|.\]
\end{lemma}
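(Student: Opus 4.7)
The plan is to count $|X_k(\Field_p^n,\ell)|$ by fibering over $\Gr_k(\Field_p^{n-1})$ using a projection along $\ell$. First I would observe that because $\ell$ is one-dimensional, the condition $\ell\not\subset W$ (for $W \in \Gr_k(\Field_p^n)$) is equivalent to $W \cap \ell = 0$; this is the only content extracted from the hypothesis.

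Next, I would choose a hyperplane $H \subset \Field_p^n$ complementary to $\ell$, so that $\Field_p^n = \ell \oplus H$ and $H \cong \Field_p^{n-1}$. Let $\pi\colon \Field_p^n \to H$ denote the projection along $\ell$. For any $W \in X_k(\Field_p^n,\ell)$, the restriction $\pi|_W\colon W \to H$ is injective (since its kernel is $W \cap \ell = 0$), hence $\pi(W) \in \Gr_k(H)$. This gives a well-defined map
\[
\Pi\colon X_k(\Field_p^n,\ell) \longrightarrow \Gr_k(H), \qquad W \longmapsto \pi(W).
\]

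The main step is to identify the fiber of $\Pi$ over a fixed $W' \in \Gr_k(H)$. A subspace $W$ with $\pi(W) = W'$ and $W \cap \ell = 0$ is exactly the graph of some linear map $\varphi\colon W' \to \ell$, namely $W = \{w + \varphi(w) : w \in W'\}$, and conversely every such graph is a $k$-dimensional subspace transverse to $\ell$ projecting to $W'$. Thus the fiber is in bijection with $\Hom_{\Field_p}(W',\ell)$, which has cardinality $p^{\dim W'} = p^k$ since $\dim \ell = 1$. Because this count is independent of $W'$, we conclude
\[
|X_k(\Field_p^n,\ell)| = |\Gr_k(H)| \cdot p^k = p^k \cdot |\Gr_k(\Field_p^{n-1})|,
\]
as required. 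There is no genuine obstacle here; the only thing to be careful about is verifying that the graph construction lands in $\Field_p^n$ and that its transversality to $\ell$ is automatic, both of which are immediate from $\varphi$ taking values in $\ell$.
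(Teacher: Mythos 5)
Your proof is correct. It is the same fibration idea as the paper's, expressed in dual coordinates: you fix a complement $H$ to $\ell$, project each $W\in X_k(\Field_p^n,\ell)$ to $\pi(W)\in\Gr_k(H)$, and describe the fiber as $\Hom(\pi(W),\ell)\cong\Field_p^k$ via the graph construction. The paper instead associates to $W$ the pair $(U,\zeta)$, where $U=W+\ell$ is $(k{+}1)$-dimensional and $\zeta\in U^*$ is the unique functional with $\zeta|_W=0$ and $\zeta(\vec{x})=1$ for a fixed $\vec x\in\ell$; the base is then $\Gr_k(\Field_p^n/\ell)$ (choices of $U$) and the fiber is the affine space of such $\zeta$, again of size $p^k$. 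Your graph-of-a-map description and the paper's kernel-of-a-functional description are dual ways of parametrizing complements of $\ell$ in $U$; your version is a touch more concrete, while the paper's sidesteps the non-canonical choice of $H$ by working with the quotient $\Field_p^n/\ell$. Both are complete proofs, and the small point you flag at the end (that the graph is automatically transverse to $\ell$) is indeed immediate.
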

\begin{proof}
Pick some nonzero $\vec{x} \in \ell$.  Define
\begin{align*}
X = \{\text{$(U,\zeta)$ $|$ }&\text{$U \subset \Field_p^n$ a $(k+1)$-dimensional subspace with $\vec{x} \in U$} \\
&\text{and $\zeta\colon U \rightarrow \Field_p$ a linear map with $\zeta(\vec{x}) = 1$}\}.
\end{align*}
The map $X \rightarrow X_k(\Field_p^n,\ell)$ taking $(U,\zeta)$ to $\ker(\zeta)$ is a bijection; its inverse
takes $W \in X_k(\Field_p^n,\ell)$ to the pair $(U,\zeta)$ where $U = \Span{W,\vec{x}}$ and 
$\zeta\colon U \rightarrow \Field_p$ is the unique linear map satisfying $\zeta|_W = 0$ and $\zeta(\vec{x})=1$.
It is thus enough to count $|X|$.  The possible choices for $U$ are in bijection with $\Gr_{k}(\Field_p^n/\ell)$, so there
are $|\Gr_{k}(\Field_p^{n-1})|$ of them.  For a fixed $U$, there are $p^k$ choices of $\zeta$ such that
$(U,\zeta) \in X$; indeed, picking a basis $\{\vec{x}_1,\ldots,\vec{x}_{k+1}\}$ for $U$ with $\vec{x}_1 = \vec{x}$,
the linear map $\zeta$ must satisfy $\zeta(\vec{x}_1) = 1$, but the values of $\zeta(\vec{x}_i)$ for
$2 \leq i \leq k+1$ can be arbitrary elements of $\Field_p$.  The lemma follows.
\end{proof}

\begin{proof}[Proof of Theorem \ref{theorem:recursive}]
Recall that the statement we must prove is as follows.  Fix a prime $p \geq 3$.  For
$n \geq 1$, let $t_n$ be the rank of $\RH_{n-2}(\cT_n(\Q)/\Gamma_n(p))$,
so trivially $t_1 = 1$.  We then must prove that
\[t_n = \left(\frac{p-3}{2}+\left(\frac{p-1}{2}\right)\cdot p^{n-1}\right) t_{n-1}
+ \frac{(p-1)(p-3)}{4} \sum_{k=1}^{n-2} p^k \cdot |\Gr_k(\Field_p^{n-1})| \cdot t_k t_{n-k-1}\]
for $n \geq 2$.

Proposition \ref{proposition:tquotient} says that 
\[\cT_n(\Q)/\Gamma_n(p) \cong \TD_n(\Field_p),\]
so we must calculate the rank of $\RH_{n-2}(\TD_n(\Field_p))$.  Our argument for this
is inspired by the discrete Morse theory proof of the Solomon--Tits theorem in \cite[Proof of Theorem 5.1]{BestvinaMorse}.

Fix a line $\ell \subset \Field_p^n$.
For $0 \leq k \leq n-1$, define subcomplexes $Y_k$ of $\TD_n(\Field_p)$ as follows.
\begin{compactitem}
\item Let $Y_0$ be the full subcomplex of $\TD_n(\Field_p)$ spanned by $\pm$-oriented subspaces $V$ of $\Field_p^n$
such that $\ell \subset V$.
\item For $1 \leq k \leq n-1$, let $Y_k$ be the full subcomplex of $\TD_n(\Field_p)$ spanned
by $Y_{k-1}$ along with all $\pm$-oriented subspaces $V$ of $\Field_p^n$ such that $\ell \not\subset V$
and $\dim(V) = k$.
\end{compactitem}
We thus have
\[Y_0 \subset Y_1 \subset \cdots \subset Y_{n-1} = \TD_n(\Field_p).\]
We inductively determine the homotopy type of these $Y_k$ as follows.  For a subspace $V$ of $\Field_p^n$, 
let $\Or(V)$ be the discrete set of all $\pm$-orientations on $V$, so $|\Or(V)| = \frac{p-1}{2}$ and
$\Or(V)$ is a wedge of $\frac{p-3}{2}$ copies of $S^0$.

\begin{claim}
$Y_0$ is homotopy equivalent to a wedge of $(\frac{p-3}{2}) \cdot t_{n-1}$ copies of $S^{n-2}$.
\end{claim}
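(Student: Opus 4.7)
The plan is to exhibit $Y_0$ as a simplicial join of two simpler pieces and then compute the homotopy type of each factor.

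First I would decompose $Y_0$ as a join. Let $D \subset Y_0$ be the discrete set of $\frac{p-1}{2}$ vertices whose underlying subspace is $\ell$ itself, and let $Y_0' \subset Y_0$ be the full subcomplex spanned by those vertices whose underlying subspace strictly contains $\ell$. Because chains in $\bTD_n(\Field_p)$ require strict inclusions, two distinct elements of the poset with the same underlying subspace are incomparable, so no simplex of $Y_0$ contains two vertices of $D$. Every vertex of $Y_0$ either lies in $D$ or has underlying subspace strictly containing $\ell$, and any $\pm$-oriented vertex of $Y_0'$ is comparable to every vertex of $D$ (since $\ell$ is strictly contained in its underlying subspace). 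A direct check on chains then shows $Y_0 = D \ast Y_0'$.

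Second I would identify $Y_0' \cong \TD_{n-1}(\Field_p)$. Pick an auxiliary $\pm$-orientation $\pm \vec{x}$ on $\ell$. The projection $\pi\colon \Field_p^n \twoheadrightarrow \Field_p^n/\ell \cong \Field_p^{n-1}$ is an order-isomorphism from the poset of subspaces strictly between $\ell$ and $\Field_p^n$ onto the poset of proper nonzero subspaces of $\Field_p^{n-1}$; writing a generator of $\wedge^{\dim V} V$ as $\vec{x} \wedge \omega'$ and sending $\pm(\vec{x} \wedge \omega')$ to $\pm \pi(\omega')$ gives an order-preserving bijection on $\pm$-oriented subspaces extending this identification. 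By Proposition \ref{proposition:tquotient} we have $\TD_{n-1}(\Field_p) \cong \cT_{n-1}(\Q)/\Gamma_{n-1}(p)$, and by Lemma \ref{lemma:tdconn} this complex is Cohen--Macaulay of dimension $n-3$, hence homotopy equivalent to a wedge of $t_{n-1}$ copies of $S^{n-3}$ (the count being the definition of $t_{n-1}$).

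Finally, $D$ is $\frac{p-1}{2}$ discrete points and so is homotopy equivalent to $\bigvee_{\frac{p-3}{2}} S^0$. Using $X \ast Y \simeq \Sigma(X \wedge Y)$ for CW complexes and distributing the smash product over wedges yields
\[Y_0 \simeq \Bigl(\bigvee_{t_{n-1}} S^{n-3}\Bigr) \ast \Bigl(\bigvee_{\frac{p-3}{2}} S^0\Bigr) \simeq \bigvee_{\frac{p-3}{2}\cdot t_{n-1}} S^{n-2},\]
which is the claim. No step here is a genuine obstacle; the one place to be careful is the bookkeeping of $\pm$-orientations in $Y_0' \cong \TD_{n-1}(\Field_p)$, and the sanity checks $p=3$ (so $D$ is a point, making $Y_0$ a cone and hence contractible, matching the empty wedge) and $p=5$ (so $D \simeq S^0$ and $Y_0$ is a suspension of $Y_0'$) confirm the formula.
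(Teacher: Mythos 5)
Your proof is correct and follows essentially the same route as the paper: decompose $Y_0$ as the join of the discrete set of $\pm$-orientations on $\ell$ (a wedge of $\frac{p-3}{2}$ copies of $S^0$) with the full subcomplex of $\pm$-oriented subspaces strictly containing $\ell$ (which is $\TD_{n-1}(\Field_p)$, a wedge of $t_{n-1}$ copies of $S^{n-3}$), and then apply the join formula. The extra detail you give on why the decomposition is a join and on the orientation bookkeeping in the identification with $\TD_{n-1}(\Field_p)$ is a harmless elaboration of what the paper leaves implicit.
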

\begin{proof}[Proof of claim]
Indeed, $Y_0$ is homeomorphic to the join of the following two spaces:
\begin{compactitem}
\item The discrete set $\Or(\ell)$, which is homeomorphic to a wedge of $\frac{p-3}{2}$ copies of $S^0$.
\item The full subcomplex of $\TD_n(\Field_p)$ spanned by $\pm$-oriented subspaces $V$ with $\ell \subsetneq V$.
This is homeomorphic to $\TD(\Field_p^n/\ell) \cong \TD_{n-1}(\Field_p)$, and thus is homotopy equivalent
to a wedge of $t_{n-1}$ copies of $S^{n-3}$.
\end{compactitem}
We conclude that $Y_0$ is homotopy equivalent to a wedge of $(\frac{p-3}{2}) \cdot t_{n-1}$
copies of $S^0 \ast S^{n-3} \cong S^{n-2}$, as desired.
\end{proof}

\begin{claim}
For $1 \leq k \leq n-2$, the complex $Y_k$ is homotopy equivalent to the wedge of $Y_{k-1}$ and
\[\frac{(p-1)(p-3)}{4} \cdot p^k \cdot |\Gr_k(\Field_p^{n-1})| \cdot t_k t_{n-k-1}\]
copies of $S^{n-2}$.
\end{claim}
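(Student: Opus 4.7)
The plan is to analyze the attachment of $Y_k$ to $Y_{k-1}$ vertex by vertex and then bootstrap via induction on $k$, with base case $k=0$ being the first claim (which establishes $Y_0$ as a wedge of $(n-2)$-spheres). First I would observe that any two new vertices $\bar V = (V,\pm\omega_V)$ and $\bar V' = (V',\pm\omega_{V'})$ in $Y_k \setminus Y_{k-1}$, where $V, V'$ are $k$-dimensional subspaces with $\ell \not\subset V, V'$, are never adjacent in $Y_k$, since their underlying subspaces have the same dimension and hence are not strictly nested. Thus $Y_k$ is obtained from $Y_{k-1}$ by independently attaching, for each such $\bar V$, the closed star of $\bar V$ in $Y_k$, which is a cone on the link $L_{\bar V} := \link_{Y_k}(\bar V) \subseteq Y_{k-1}$.

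Next I would identify $L_{\bar V}$ as a simplicial join. Its vertices are the $\pm$-oriented $W$ comparable with $V$ that lie in $Y_k$: either $W \subsetneq V$, in which case $W \in Y_{k-1}$ since $\dim W < k$, or $V \subsetneq W \subsetneq \Field_p^n$ with $\ell \subset W$, in which case $W \supseteq W_0 := V + \ell$ and $W \in Y_0 \subseteq Y_{k-1}$. Since every ``lower'' vertex is adjacent to every ``upper'' vertex, $L_{\bar V} \cong \TD(V) \ast \Lambda(V)$, where $\TD(V) \cong \TD_k(\Field_p)$ and $\Lambda(V)$ is the full subcomplex of $\TD(\Field_p^n/V)$ spanned by $\pm$-oriented subspaces of $U := \Field_p^n/V$ containing the line $\bar\ell := (V+\ell)/V$. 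The identification of $\pm$-orientations on $W \supseteq V$ with $\pm$-orientations on $W/V$ uses a fixed $\pm$-orientation on $V$ together with the canonical isomorphism $\wedge^{\dim W} W \cong \wedge^{\dim V} V \otimes \wedge^{\dim W - \dim V}(W/V)$. Recognizing $\Lambda(V)$ as exactly the $Y_0$-construction of the proof, now applied to the $(n-k)$-dimensional space $U$ with the line $\bar\ell$, the preceding claim gives $\Lambda(V) \simeq \bigvee S^{n-k-2}$ with $\tfrac{p-3}{2}\, t_{n-k-1}$ copies; combining with Lemma \ref{lemma:tdconn}, which yields $\TD_k(\Field_p) \simeq \bigvee_{t_k} S^{k-2}$, via the standard join formula gives $L_{\bar V} \simeq \bigvee_N S^{n-3}$ with $N = t_k \cdot \tfrac{p-3}{2} \cdot t_{n-k-1}$.

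The main obstacle will be the careful verification of this join decomposition, since tracking $\pm$-orientations through the quotient $\Field_p^n \twoheadrightarrow \Field_p^n/V$ in a way that gives a simplicial isomorphism $\Lambda(V) \cong \{Y_0\text{-complex of } U\}$ is the delicate combinatorial step. Granting this, the rest is formal. By the inductive hypothesis, $Y_{k-1}$ is a wedge of $(n-2)$-spheres, hence $(n-3)$-connected. Because $L_{\bar V}$ has the homotopy type of a wedge of $(n-3)$-spheres, every inclusion $L_{\bar V} \hookrightarrow Y_{k-1}$ is null-homotopic, since each component factors through the vanishing group $\pi_{n-3}(Y_{k-1}) = 0$ (with path-connectedness handling the degenerate case $n=3$, where $L_{\bar V}$ is a wedge of $S^{0}$'s and $Y_{k-1}$ is path-connected). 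The cone attachments therefore split off as wedge summands, giving $Y_k \simeq Y_{k-1} \vee \bigvee_{\bar V} \Sigma L_{\bar V} \simeq Y_{k-1} \vee \bigvee_M S^{n-2}$.

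Counting finishes the proof. Lemma \ref{lemma:countit} gives $p^k \cdot |\Gr_k(\Field_p^{n-1})|$ subspaces $V$ of dimension $k$ in $\Field_p^n$ not containing $\ell$, each carrying $\tfrac{p-1}{2}$ $\pm$-orientations, so the total number of vertices $\bar V$ is $\tfrac{p-1}{2} \cdot p^k \cdot |\Gr_k(\Field_p^{n-1})|$. Multiplying by $N$ yields
\[
M \;=\; \frac{(p-1)(p-3)}{4} \cdot p^k \cdot |\Gr_k(\Field_p^{n-1})| \cdot t_k\, t_{n-k-1},
\]
matching the claim.
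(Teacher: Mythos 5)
Your proof is correct and takes essentially the same route as the paper: identify the link $L(\bar V)$ of each new vertex as a join, compute its homotopy type as a wedge of $(n-3)$-spheres, use $(n-3)$-connectivity of $Y_{k-1}$ to split off suspensions as wedge summands, and count. The only cosmetic difference is that the paper writes $L(V)$ directly as a three-fold join $\TD(V) \ast \Or(V') \ast \{W : V' \subsetneq W\}$ (with $V' = \Span{V,\ell}$), whereas you first pass to the quotient $U = \Field_p^n/V$ and recognize the ``upper'' piece as the $Y_0$-complex of $U$, then invoke the first claim to decompose it further; after transferring $\pm$-orientations via the fixed orientation on $V$, these two descriptions coincide.
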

\begin{proof}[Proof of claim]
The new vertices that are added to $Y_{k-1}$ to form $Y_k$ consist of the $\pm$-oriented
$k$-dimensional subspaces $V$ of $\Field_p^n$ such that $\ell \not\subset V$.  There are
$\frac{p-1}{2}$ possible $\pm$-orientations on each such $k$-dimensional subspace, so by
Lemma \ref{lemma:countit} there are
\begin{equation}
\label{eqn:yk1}
\frac{p-1}{2} \cdot p^k |\Gr_k(\Field_p^{n-1})|
\end{equation}
new vertices.
 
Let $V$ be one of these new vertices and let $L(V)$ be its link in $Y_k$.
The vertex $V$ is not adjacent to any other new vertices, so 
$L(V)$ is entirely contained in $Y_{k-1}$.  We deduce that $Y_k$ is homeomorphic
to the space obtained from $Y_{k-1}$ by coning off all these $L(V)$.  Below we will prove
that $L(V)$ is homotopy equivalent to a wedge of 
\begin{equation}
\label{eqn:yk2}
\left(\frac{p-3}{2}\right) \cdot t_k t_{n-k-1}
\end{equation}
copies of $S^{n-3}$.  Since by induction we already know that $Y_{k-1}$ is homotopy
equivalent to a wedge of copies of $S^{n-2}$, this will imply that $L(V)$ is nullhomotopic
in $Y_{k-1}$ and thus that coning it off changes the homotopy type of $Y_{k-1}$ by wedging
it with the suspension $\Sigma L(V)$, which is homotopy equivalent to a wedge of \eqref{eqn:yk2}
copies of $S^{n-2}$.  Since we are doing this \eqref{eqn:yk1} times, the claim follows.

It remains to prove that $L(V)$ is homotopy equivalent to a wedge of \eqref{eqn:yk2} copies
of $S^{n-3}$.  The link of $V$ in the whole complex $\TD_n(\Field_p)$ consists of
flags of $\pm$-oriented subspaces such that the flag does not contain $V$,
but such that $V$ can be inserted into it.  In other words, the link of 
$V$ in $\TD_n(\Field_p)$ consists of flags of $\pm$-oriented subspaces of 
$\Field_p^n$ of the form
\begin{equation}
\label{eqn:linkflag}
0 \subsetneq A_0 \subsetneq \cdots \subsetneq A_r \subsetneq B_{r+1} \subsetneq \cdots \subsetneq B_{r+s} \subsetneq \Field_p^n,
\end{equation}
where each $A_i$ is properly contained in $V$ and each $B_j$ properly contains $V$.
For this flag to lie in $L(V)$, each $\pm$-oriented subspace in it must lie
in $Y_k$.  The $A_i$ have dimension less than $\dim(V) = k$, so they automatically
lie in $Y_k$.  For the $B_{j}$'s, however, the only way they can lie in $Y_k$ is for
them to lie in $Y_0$, i.e.\ for them to contain $\ell$.  Since $B_j$ already contains
$V$, we deduce that it must contain $V' = \Span{V,\ell}$.  This containment need
not be proper, i.e.\ possibly $B_j = V'$ with some $\pm$-orientation.

In summary, $L(V)$ consists of flags of $\pm$-oriented subspaces of $\Field_p^n$ as
in \eqref{eqn:linkflag} where each $A_i$ is properly contained in $V$ and each
$B_j$ contains (and possibly even equals) $V'$.  This implies that $L(V)$ is homeomorphic
to the join of the following spaces:
\begin{compactitem}
\item The subcomplex $\TD(V)$, which since $V$ is $k$-dimensional is by induction homotopy equivalent to a wedge of
$t_k$ copies of $S^{k-2}$.
\item The discrete subspace $\Or(V')$ of $Y_{k-1}$, which is homotopy
equivalent to a wedge of $\frac{p-3}{2}$ copies of $S^0$.
\item The full subcomplex of $\TD_n(\Field_p)$ spanned by $\pm$-oriented subspaces $W$ with $V' \subsetneq W$.
This is homeomorphic to $\TD(\Field_p^n/V') \cong \TD_{n-k-1}(\Field_p)$, and thus is homotopy equivalent
to a wedge of $t_{n-k-1}$ copies of $S^{n-k-3}$.
\end{compactitem}
We conclude that $L(V)$ is homotopy equivalent to a wedge of 
\[t_k \cdot \left(\frac{p-3}{2}\right) \cdot t_{n-k-1}\]
copies of $S^{k-2} \ast S^0 \ast S^{n-k-3} \cong S^{n-3}$, as desired.
\end{proof}

\begin{claim}
The complex $Y_{n-1}$ is homotopy equivalent to the wedge of $Y_{n-2}$ and
\[\frac{p-1}{2} \cdot p^{n-1} \cdot t_{n-1}\]
copies of $S^{n-2}$.
\end{claim}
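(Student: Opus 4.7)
The plan is to follow the template of the previous two claims, but exploit the special fact that new vertices now have codimension one, which greatly simplifies the link analysis. First I would identify the new vertices of $Y_{n-1}$ relative to $Y_{n-2}$: these are precisely the $\pm$-oriented $(n-1)$-dimensional subspaces $V$ of $\Field_p^n$ with $\ell \not\subset V$. Counting them with Lemma \ref{lemma:countit} (using $|\Gr_{n-1}(\Field_p^{n-1})|=1$) and multiplying by the $\frac{p-1}{2}$ possible $\pm$-orientations yields exactly
\[
\frac{p-1}{2}\cdot p^{n-1}
\]
new vertices. Any two such vertices have the same dimension and distinct underlying subspaces, so neither contains the other and they are pairwise non-adjacent in $\TD_n(\Field_p)$.

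Next I would identify the link $L(V)$ in $Y_{n-1}$ of one of these new vertices. By the same flag description used in the previous claim, $L(V)$ parametrizes flags $A_0 \subsetneq \cdots \subsetneq A_r \subsetneq B_{r+1} \subsetneq \cdots \subsetneq B_{r+s}$ with each $A_i\subsetneq V$ and each $B_j\supsetneq V$. Because $\dim V = n-1$, there is no $\pm$-oriented subspace strictly between $V$ and $\Field_p^n$, so the $B_j$ part is empty and $L(V)$ reduces to $\TD(V)\cong \TD_{n-1}(\Field_p)$. These lower-dimensional subspaces all lie in $Y_0\subset Y_{n-2}$ automatically, so $L(V)\subset Y_{n-2}$, confirming that no other new vertex participates. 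By the inductive definition of $t_{n-1}$ together with Lemma \ref{lemma:tdconn}, $L(V)$ is homotopy equivalent to a wedge of $t_{n-1}$ copies of $S^{n-3}$.

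Finally I would assemble the homotopy type. Since $V$ is a cone point in $Y_{n-1}$ over the link $L(V)$ lying inside $Y_{n-2}$, the space $Y_{n-1}$ is obtained from $Y_{n-2}$ by coning off one copy of $L(V)$ for each new vertex $V$. By induction (via the previous two claims) $Y_{n-2}$ is a wedge of copies of $S^{n-2}$, hence is $(n-3)$-connected, so each $L(V)$ is nullhomotopic in $Y_{n-2}$. Coning off a nullhomotopic subcomplex is homotopy equivalent to wedging on its suspension, and $\Sigma L(V)$ is a wedge of $t_{n-1}$ copies of $S^{n-2}$. Doing this once per new vertex wedges on
\[
\frac{p-1}{2}\cdot p^{n-1}\cdot t_{n-1}
\]
copies of $S^{n-2}$, as claimed.

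The only potential subtlety, and what I would check most carefully, is the verification that the $B_j$-part of the link is genuinely empty; this is where the codimension-one position of $V$ plays its role and is what makes this claim combinatorially simpler than the previous one (no $\Or(V')$ factor appears since $V' = \Field_p^n$ is not a proper subspace). Everything else is a direct specialization of the same cone-attaching/suspension argument already used for $1\le k\le n-2$.
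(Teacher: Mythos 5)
Your proof is correct and takes essentially the same approach as the paper: you specialize the cone-off/suspension argument from the previous claim to $k=n-1$, where the link simplifies because there is no proper subspace of $\Field_p^n$ strictly containing $V$ (equivalently, $\Span{V,\ell}=\Field_p^n$).

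One minor slip to flag: you assert that the proper subspaces of $V$ "all lie in $Y_0 \subset Y_{n-2}$", but a proper subspace of $V$ need not contain $\ell$ and hence need not lie in $Y_0$. The correct reason they lie in $Y_{n-2}$ is that they all have dimension at most $n-2$, and $Y_{n-2}$ contains every $\pm$-oriented subspace of dimension $\le n-2$. Your conclusion $L(V)\subset Y_{n-2}$, and everything built on it, is nonetheless correct. Also note that you correctly identify $L(V)\simeq \TD_{n-1}(\Field_p)$ as a wedge of $t_{n-1}$ copies of $S^{n-3}$ (the paper's proof has a typo writing $S^{n-2}$ at this point); the factor of $S^{n-2}$ arises only after taking the suspension $\Sigma L(V)$, exactly as you say.
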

\begin{proof}[Proof of claim]
This is almost identical to the proof of the previous claim, so we only list the differences:
\begin{compactitem}
\item There are now
\[\frac{p-1}{2} \cdot p^{n-1} |\Gr_{n-1}(\Field_p^{n-1})| = \frac{p-1}{2} \cdot p^{n-1}\]
new vertices.
\item This time the link $L(V)$ is just homeomorphic to $\TD(V) \cong \TD_{n-1}(\Field_p)$ since $\Span{V,\ell}$ is the whole
vector space $\Field_p^n$ and thus does not contribute vertices to the building.  It is thus homotopy equivalent
to $t_{n-1}$ copies of $S^{n-2}$.\qedhere
\end{compactitem}
\end{proof}

Adding up the contributions coming from the above three claims, we deduce the desired recursive formula.
\end{proof}

\subsection{Improving the bound on the top cohomology group}
\label{section:evenbetter}

We close by proving Theorem \ref{theorem:evenbetter}.

\begin{proof}[Proof of Theorem \ref{theorem:evenbetter}]
We first recall what we must prove.  
Fix a prime $p \geq 3$.  For $n \geq 1$, let $t_n$ be the rank of 
\[\RH_{n-2}(\cT_n(\Q) / \Gamma_n(p)) \cong \RH_{n-2}(\TD_n(\Field_p))\]
given by Theorem \ref{theorem:recursive}.  Also, set $t_0 = 1$.  We must prove that for $n \geq 3$, the rank of
\[\HH^{\binom{n}{2}}(\Gamma_n(p)) \cong (\St_n(\Q))_{\Gamma_n(p)}\]
is at least
\[t_n+\frac{(p+2)(p-3)(p-5)(p-1)}{24} \cdot |\Gr_2(\Field_p)| \cdot t_{n-2}\]
with equality if $p=3$ or $p=5$.

For $p=3$ and $p=5$, this follows from
Theorem \ref{theorem:main}, so we can assume that $p>5$. 
Let $P_2^n$ be the set of $\pm$-oriented $2$-dimensional
subspaces of $\Field_p^n$.  Theorem \ref{theorem:mainbetter3} says that
there is a surjective map
\begin{equation}
\label{eqn:finalsurj}
(\St_n(\Q))_{\Gamma_n(p)} \longrightarrow \RH_{n-2}(\cT_n(\Q) / \Gamma_n(p))
\end{equation}
whose kernel surjects onto
\begin{equation}
\label{eqn:finalker}
\Z[P_2^n] \otimes \RH_{n-4}(\TD_{n-2}(\F_p)) \otimes \HH_1(\BDA_2(\F_p)).
\end{equation}
Since there are $\frac{p-1}{2}$ choices of $\pm$-orientation on a $2$-dimensional
subspace of $\Field_p^n$, the rank of $\Z[P_2^n]$ is $\frac{p-1}{2} \cdot |\Gr_2(\Field_p)|$.
Lemma \ref{lemma:n2} says that the rank of $\HH_1(\BDA_2(\Field_p))$ is 
$\frac{(p+2)(p-3)(p-5)}{12}$.  Finally, the rank of $\RH_{n-4}(\TD_{n-2}(\F_p))$ is $t_{n-2}$.
We deduce that the rank of \eqref{eqn:finalker} is
\[\frac{(p+2)(p-3)(p-5)(p-1)}{24} \cdot |\Gr_2(\Field_p)| \cdot t_{n-2}.\]
Since the rank of the target of \eqref{eqn:finalsurj} is $t_n$, the theorem follows.
\end{proof}

\begin{remark}
It follows from work of Lee--Schwermer (\cite{LeeSchwermer}; see \cite{Adem} for an alternate, more topological
proof) that the rank of $\HH^3(\Gamma_3(p))$ is at least $\frac{(p^3-1)(p^3-3p^2-p+15)}{12}+1$.  This 
is generally larger than the bound we give in Theorem \ref{theorem:evenbetter} for $n=3$.  One can likely use this
bound to give a lower bound for the rank of $\HH_{2}(\BDA_3(\Field_p))$ which can then be plugged into the 
map-of-posets spectral sequence to obtain an even better lower bound for the rank of 
$\HH^{\binom{n}{2}}(\Gamma_n(p))$ for $n>3$. We do not pursue this approach here.
\end{remark}

\begin{footnotesize}
\noindent
\begin{tabular*}{\linewidth}[t]{@{}p{\widthof{Department of Mathematics}+0.5in}@{}p{\widthof{Department of Mathematics}+0.5in}@{}p{\linewidth - \widthof{Department of Mathematics} - \widthof{Department of Mathematics} - 1in}@{}}
{\raggedright
Jeremy Miller\\
Department of Mathematics\\
Purdue University\\
150 N University St\\
West Lafayette, 47907 IN \\
{\tt jeremykmiller@purdue.edu}} &
{\raggedright
Peter Patzt\\
Department of Mathematics\\
Purdue University\\
150 N University St\\ 
West Lafayette, 47907 IN \\
{\tt ppatzt@purdue.edu}} &
{\raggedright
Andrew Putman\\
Department of Mathematics\\
University of Notre Dame \\
255 Hurley Hall\\
Notre Dame, IN 46556\\
{\tt andyp@nd.edu}}
\end{tabular*}\hfill
\end{footnotesize}

\end{document}